\newtheorem{thm}{Theorem}[section]
\newtheorem{definition}[thm]{Definition}
\newtheorem{proposition}[thm]{Proposition}
\newtheorem{corollary}[thm]{Corollary}
\newtheorem{lemma}[thm]{Lemma}
\newtheorem{remark}[thm]{Remark}
\newtheorem{assumption}{Assumption}
\newcommand{\secref}[1]{Section~\ref{sec:#1}}
\newcommand{\seclab}[1]{\label{sec:#1}}
\newcommand{\eqlab}[1]{\label{eq:#1}}
\renewcommand{\eqref}[1]{(\ref{eq:#1})}
\newcommand{\figref}[1]{Fig.~\ref{fig:#1}}
\newcommand{\figlab}[1]{\label{fig:#1}}
\newcommand{\propref}[1]{Proposition~\ref{proposition:#1}}
\newcommand{\proplab}[1]{\label{proposition:#1}}
\newcommand{\lemmaref}[1]{Lemma~\ref{lemma:#1}}
\newcommand{\lemmalab}[1]{\label{lemma:#1}}
\newcommand{\remref}[1]{Remark~\ref{remark:#1}}
\newcommand{\remlab}[1]{\label{remark:#1}}
\newcommand{\thmref}[1]{Theorem~\ref{theorem:#1}}
\newcommand{\thmlab}[1]{\label{theorem:#1}}
\newcommand\response[1]{{\color{black}{#1}}}
\newcommand\responsenew[1]{{\color{black}{#1}}}
\title{Blowup analysis of a hysteresis model based upon singular perturbations} 
\author {Kristiansen, K. U.} 
\date{%
Department of Applied Mathematics and Computer Science, \\
Technical University of Denmark, \\
2800 Kgs. Lyngby, \\
Denmark,\\
krkri@dtu.dk\\
\vspace{0.5cm}
\today
}
\begin{document}
\maketitle

% \vspace* {-2em}
% \begin{center}
% \begin{tabular}{c}
% Department of Mathematics and Computer Science, \\
% Technical University of Denmark, \\
% 2800 Kgs. Lyngby, \\
% DK
% % % \begin{lrbox}
% % %  
% % %  \end{lrbox}
% 
% % and\\
% % Department of Mathematics, \\
% % University of Surrey, \\
% % Guildford, GU2 7XH\\
% % DK \\
% \end{tabular}
% \end{center}
% \input{macros}

\begin{abstract}
In this paper, we provide a geometric analysis of a new hysteresis model that is based upon singular perturbations. Here hysteresis refers to a type of regularization of piecewise smooth differential equations where the past of a trajectory, in a small neighborhood of the discontinuity set, determines the vector-field at present. In fact, in the limit where the neighborhood of the discontinuity  \response{vanishes}, hysteresis converges in an appropriate sense to Filippov's sliding vector-field. Recently \response{(2022)}, however, \response{Bonet and Seara} showed that hysteresis, in contrast to regularization through smoothing, leads to chaos in the regularization of grazing bifurcations, even in two dimensions. The hysteresis model we analyze in the present paper -- which was developed by \response{Bonet et al in a paper from 2017} as an attempt to unify different regularizations of piecewise smooth systems -- involves two singular perturbation parameters and includes a combination of slow-fast and nonsmooth effects. The description of this model is therefore -- from the perspective of singular perturbation theory -- challenging, even in two dimensions. Using blowup as our main technical tool, we prove existence of an invariant cylinder carrying fast dynamics in the azimuthal direction and a slow drift in the axial direction. We find that the slow drift is given by Filippov's sliding \responsenew{vector-field} to leading order. Moreover, in the case of grazing, we identify two important parameter regimes that relate the model to smoothing (through a saddle-node bifurcation of limit cycles) and hysteresis (through chaotic dynamics, due to a folded saddle and a novel return mechanism).
% In this paper, we describe a novel type of relaxation oscillations occurring in a model of substrate-depletion oscillators. Using geometric singular perturbation theory, with blow-up as a key technical tool, we show that the oscillations in this planar model are produced by a complicated interplay between two stable nodes and a discontinuity set in the singular limit $\varepsilon\rightarrow 0$. This interplay produces a new mechanism for producing relaxation-type oscillations, which we also describe in a more general setting. 
% to do: Include figures: Fig. pws add $\mu=0$, Fig. Relax,  Fig. 10 $\Delta\rightarrow \sigma$, in final proof add fig. blowup relax and fig no relax. Re-order. What can be left out? What to include in numerics section?
\end{abstract}
%  
% \baselineskip 24pt % Double spacingnow
% \newpage

\noindent
\textbf{Keywords}: regularization, hysteresis, piecewise smooth systems, blow-up, canards

\tableofcontents
\section{Introduction}
In this paper, we consider piecewise smooth (PWS) systems of the following form:
\begin{align}
 \dot z &= \begin{cases}
            Z_+(z) & y>0\\
            Z_-(z) & y<0
           \end{cases},\eqlab{zPWS}
\end{align}
where $z=(x,y)\in \mathbb R^{n+1}$, $Z_\pm(z)=(X_\pm(z),Y_\pm(z))$. The set $\Sigma:\,y=0$ is called the discontinuity set or switching manifold. In a more general setting, one could define the switching manifold $\Sigma$ as a smooth hypersurface $h(z)=0$ for some regular function $h:\mathbb R^{n+1}\rightarrow \mathbb R$. Locally, however, we can always introduce coordinates $(x,y)$ so that $h(x,y)=y$. We will suppose that $Z_\pm$ are smooth vector-fields, each defined in a neighborhood of $\Sigma$. 

The basic problem of \eqref{zPWS} is how to define solutions of \eqref{zPWS} \response{on} $\Sigma$. The case when $Y_+(x,0)<0$ and $Y_-(x,0)>0$ is most interesting from a technical point of view, because in this case orbits of either system $\dot z = Z_\pm (z)$ reach $\Sigma$ in finite time, see \figref{pws}. This is known as (stable) sliding. To be able to define a forward flow, a vector-field must be assigned on $\Sigma$. The most common way to do this, is through the Filippov vector-field defined by
% 
% 
% Filippov:
\begin{align}
 \response{X_{sl}(x):=X_+(x,0)p(x)+X_-(x,0)(1-p(x)),\quad p(x):= \frac{ Y_-(x,0)}{Y_-(x,0)- Y_+(x,0)}\in (0,1).\eqlab{Xsl}}
\end{align}
The PWS systems, where \eqref{Xsl} is assigned along the subset of the switching manifold with $Y_+(x,0)Y_-(x,0)<0$, are called Filippov systems. Filippov systems may also be viewed more abstractly in the sense of differential inclusions \cite{filippov1988differential}. They occur naturally in mechanics, e.g. in friction modelling \cite{bossolini2017a,kristiansen2021a}. However, even such mechanical models may suffer from nonuniqueness of solutions, and a meaningful forward flow may not be defined at all points \cite{bossolini2017a}.

\begin{figure}
\begin{center}
\includegraphics[width=.45\textwidth]{./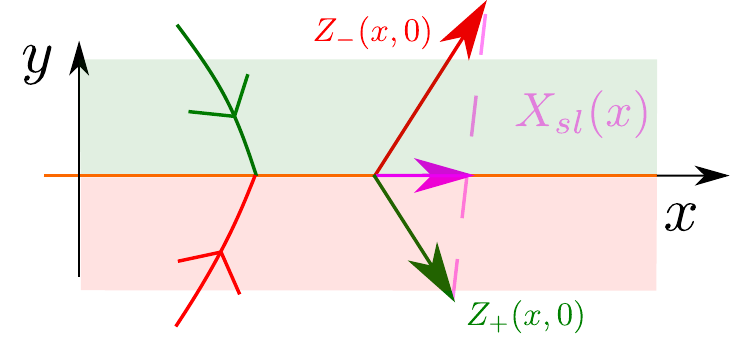}
\end{center}
 \caption{Illustration of Filippov's sliding vector-field $X_{sl}$ in the case of stable sliding.}
  \figlab{pws}
% \caption{$q=r_2q_2$, $x=r_2x_2$, $\epsilon=r_2^2$}
              \end{figure}

From a modelling perspective, nonuniqueness may be interpreted as an insufficient model where additional information or complexity has to be added in order to select a unique forward trajectory. From this point of view, it is therefore important to study regularizations of \eqref{zPWS}. There are two basic \responsenew{examples of} regularizations of \eqref{zPWS}, one is smoothing and another one is hysteresis. In this paper, we shall -- following Sotomayor and Teixeira \cite{Sotomayor96} -- define regularization by smoothing as replacing \eqref{zPWS} with an $\epsilon$-family of smooth systems:

\begin{equation}\eqlab{xysm}
\begin{aligned}
 \dot x &=X\left(z,\phi\left(\frac{y}{\epsilon}\right)\right),\\
 \dot y &=Y\left(z,\phi\left(\frac{y}{\epsilon}\right)\right),
\end{aligned}
\end{equation}
with $Z(z,p)=(X(z,p),Y(z,p))$ defined by:
\begin{align}
 Z(z,p):=Z_+(z) p + Z_-(z)(1-p),\eqlab{Zpaf}
\end{align}
for $Z_\pm =(X_\pm,Y_\pm)$. Regarding the function $\phi$ in \eqref{xysm} , we assume the following assumption, so that \eqref{xysm} approaches \eqref{zPWS} pointwise for $\epsilon\rightarrow 0$ for $y\ne 0$. 
\begin{assumption}
	\label{assumption:1}
	The smooth `regularization function' $\phi : \mathbb R \to \mathbb R$ satisfies the monotonicity condition
	\[
	\response{\phi'(s)} > 0 ,
	\]
	for all $s \in \mathbb R$ and, moreover,
	\begin{equation}
	\label{eq:mono}
	\phi(s) \to
	\begin{cases}
	1  & \text{for } s \to \infty , \\
	0  & \text{for } s \to -\infty .
	\end{cases}
	\end{equation}
\end{assumption}
%  Results of \cite{} show that smoothness can be gained in the extended system $(x,y,\epsilon)$ in terms of a fast time:
% \begin{equation}
% \begin{aligned}
%  x' &=\epsilon X\left(z,\phi\left(\frac{y}{\epsilon}\right)\right),\\
%  y' &=\epsilon Y\left(z,\phi\left(\frac{y}{\epsilon}\right)\right),\\
%  \epsilon'&=0,
% \end{aligned} \eqlab{xyeps}
% \end{equation}
On the other hand, in hysteresis, solutions of $\dot z=Z_+(z)$ are extended  to $y=-\alpha$ before switching to $\dot z=Z_-(z)$. Here $\alpha>0$ is some small parameter.  Solutions of $\dot z=Z_-(z)$ are similarly extended to $y=\alpha$ before switching occurs, see \figref{pws_hysteresis_0}.

In smoothing, we basically introduce a boundary layer of order $\mathcal O(\epsilon)$ around $y=0$ where $p=\phi(y\epsilon^{-1})$ changes by an $\mathcal O(1)$-amount. From this point of view, it is also useful to think of hysteresis as introducing a ``negative'' boundary layer around $y=0$ of size $2\alpha$. 
\begin{figure}
\begin{center}
\includegraphics[width=.45\textwidth]{./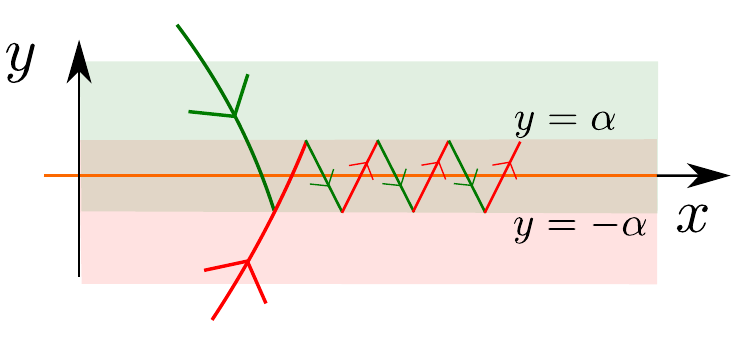} \end{center}
 \caption{Illustration of hysteresis in the case of stable sliding. In comparison with regularization by smoothing, hysteresis can be interpreted as introducing a \textit{negative} boundary layer, the size of which is given by $2\alpha$. }
 \figlab{pws_hysteresis_0}
% \caption{$q=r_2q_2$, $x=r_2x_2$, $\epsilon=r_2^2$}
              \end{figure}
              \begin{figure}
\begin{center}
\includegraphics[width=.65\textwidth]{./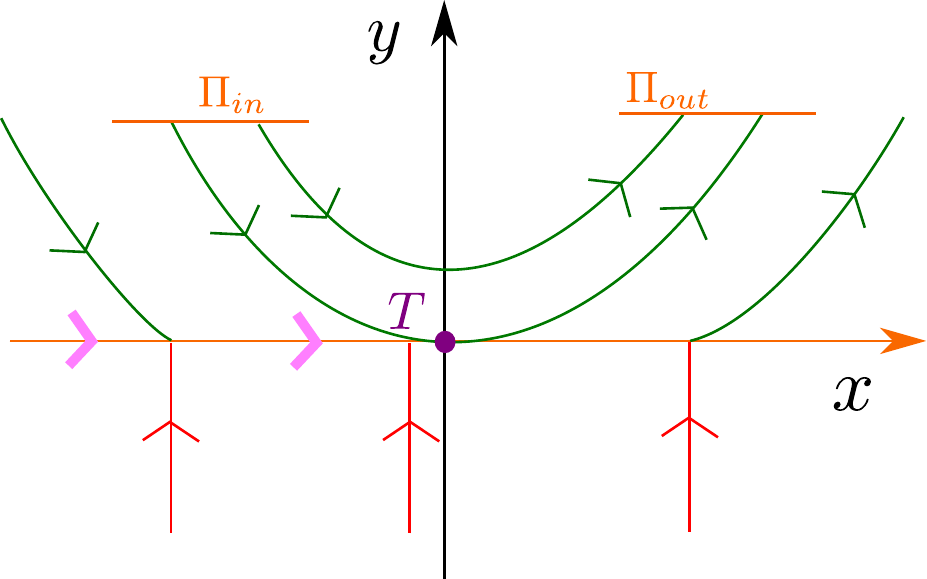}
 \end{center}
 \caption{Illustration of the planar visible fold, which is important for the grazing bifurcation. The visible fold separates the switching manifold $\Sigma$ into stable sliding ($x<0$) and crossing points ($x>0$).}
  \figlab{visible0}
% \caption{$q=r_2q_2$, $x=r_2x_2$, $\epsilon=r_2^2$}
              \end{figure}
              
In both types of regularizations, forward solutions can be uniquely defined for all $\epsilon>0,\alpha>0$, respectively, and in some cases, $\epsilon,\alpha\rightarrow 0$ can be analyzed. For example, in \cite{kristiansen2018a} the authors studied the regularization by smoothing of a visible-invisible two-fold in $\mathbb R^3$. The two-fold is a well-known singularity of Filippov systems that give rise to nonuniqueness of solutions. The results of \cite{kristiansen2018a} showed that the smooth system has a well-defined limit for $\epsilon\rightarrow 0$ which selects a distinguished forward trajectory through the two-fold. In this way, the nonuniqueness has (in a certain sense) been resolved.

The reference \cite{2019arXiv190806781U} also studied regularization by smoothing but considered the planar grazing bifurcation scenario, where a limit cycle of $Z_+$ grazes $\Sigma$ while $Z_-$ remains transverse and points towards $\Sigma$. The results showed, in line with \cite{reves_regularization_2014} and analysis based upon the associated Filippov system \cite{Kuznetsov2003}, that in the case of a repelling limit cycle, the smooth system has a locally unique saddle-node bifurcation of limit cycles. The analysis rested upon a careful description of the local dynamics, through a local transition map $\mathcal P_{loc}:\Pi_{in}\rightarrow \Pi_{out}$, near the grazing point which is given by a visible fold, see illustration in \figref{visible0}. These results, together with \cite{jelbart2021a,jelbart2021c,jelbart2021b,uldall2021a} working on similar systems, were obtained by adapting methods from Geometric Singular Perturbation Theory \cite{fen3,jones_1995}. In particular, these references use a modification of the blowup method \cite{dumortier1996a,krupa_extending_2001} to gain smoothness of systems of the form \eqref{xysm}. 

Recently, in \cite{rev2021a} the authors performed a related study of the grazing bifurcation, but using regularization by hysteresis instead. Interestingly, the results are completely different in this case. In fact,  hysteresis leads to chaotic dynamics for any $0<\alpha\ll 1$ under the same assumptions. 

% In this article, we will begin the analysis of a new type of regularization that seek to combine different aspects of smoothing and hysteresis into one single unified framework.
In this paper, we consider a new regularization of \eqref{zPWS} developed by \cite{bonet2017a}:
\begin{equation}\eqlab{xypmodel}
\begin{aligned}
 \dot x &= X(z,p),\\
 \dot y &=Y(z,p),\\
 \epsilon \vert \alpha\vert \dot p & =\phi\left(\frac{y+  \alpha p}{\epsilon \vert\alpha\vert }\right)-p,
\end{aligned}
\end{equation}
for $0<\epsilon,\vert\alpha\vert\ll 1$.\footnote{In contrast to \cite{bonet2017a}, we write their $\kappa$ as $\epsilon$.} Notice that the dimension of \eqref{xypmodel} is one greater than the dimension of \eqref{zPWS}. The connection between \eqref{xypmodel} and \eqref{zPWS} at the pointwise level is as follows: By assumption \ref{assumption:1}, \eqref{xypmodel} converges pointwise to 
\begin{equation}
\begin{aligned}
 \dot z &= Z(z,p),\\
           p &=\begin{cases}
                1 &y>0\\
                0 & y<0
               \end{cases},
\end{aligned}
\end{equation}
for $\epsilon,\alpha\rightarrow 0$ for $y\ne 0$,
which upon using \eqref{Zpaf} projects to \eqref{zPWS}. 
% The function $\phi:\mathbb R\rightarrow \mathbb R$ is a smooth transition function satisfying:
This model was introduced by \cite{bonet2017a}, in a general framework where $Z(z,p)$ depends nonlinearly on $p$, with the purpose to incorporate smoothing and hysteresis in one single unified framework. The authors present asymptotic results for both $\alpha<0$ and $\alpha>0$, connecting the dynamics of \eqref{xypmodel} in the latter case with Filippov's sliding vector-field. Since trajectories in hysteresis cross each-other in the ``negative boundary layer'', recall \figref{pws_hysteresis_0}, it makes sense that the smooth model \eqref{xypmodel} is defined in an extended space.  

In \cite{bonet2017a}, the authors consider functions $\phi$, see assumption \ref{assumption:1}, that reach $0$ and $1$ at finite values:
\begin{align}
 \response{\phi(s)=\begin{cases}
          1 & \text{for all $s\ge 1$}\\
        0 & \text{for all $s\ge -1$}
         \end{cases},\quad \phi'(s)>0\quad \text{for all $s\in (-1,1)$.}\eqlab{Sotomayor}}
\end{align}
Such functions have -- following \cite{Sotomayor96} -- been called \responsenew{Sotomayor-Teixeira regularization functions}. In this paper, also to exemplify the power of our approach, we will follow \cite{jelbart2021a,jelbart2021c,jelbart2021b,uldall2021a,2019arXiv190806781U} and consider general regularization functions that are truly asymptotic, like analytic ones, e.g.
\begin{align}
 \phi(s)= \frac12 +\frac{1}{\pi}\arctan(s).\eqlab{arctan}
\end{align}
For this purpose, we add the following technical assumption:
\begin{assumption}
	\label{assumption:2}
	The regularization function $\phi$ has algebraic decay as $s \to \pm\infty$, i.e. there exists a $k \in \mathbb N$ and smooth functions $\phi_\pm:[0,\infty \response{)} \to [0,\infty)$ such that
	\begin{equation}
	\label{eq:reg_asymptotics}
	\phi(s^{-1})=
	\begin{cases}
	1-\phi_+(s)s^k
	, &\qquad s >0\,,\\
	\phi_-(-s)(-s)^k , &\qquad  s<0\,,
	\end{cases}
	\end{equation}
% 	$\phi_+(0)=1$, $\phi_-(0)=0$, with $k$ the smallest value of $j\in \mathbb N$ such $\phi_\pm^{(j)}(0) \ne 0$.
% In particular, if
% \begin{align*}
% \phi_+(s) = 1-\\phi_+(s)s^{k},\quad \phi_-(s) = \beta_-(s)s^{k},
% \end{align*}
% then we assume that
and 
	\begin{equation}
	\eqlab{eq:beta}
	\beta_+ :=\phi_+(0)>0,\,\beta_- = \phi_-(0)>0.
% % 	\beta_\pm(0) :=  > 0 . %,  \qquad \phi_-(0) > 0.
	\end{equation}
\end{assumption}
There could be different $k$-values $k_\pm$ for $s\response{\rightarrow} \pm \infty$, respectively, but for simplicity we take these to be identical. In the following, $\beta_-$ will play little role so we will therefore for simplicity write $\beta_+$ as $\beta$. \response{For \eqref{arctan}, $k=1$ and $\beta=\frac{1}{\pi}$.}

% \response{By assumption \ref{assumption:2} we have the folllowing:}
\begin{lemma}\lemmalab{Zpreg}
Suppose that assumption \ref{assumption:2} holds and consider \eqref{xypmodel} on a compact domain $\mathcal U_+$ upon which $y >0$. This system has an attracting slow manifold $S_{\epsilon,\alpha}$ -- of the graph form $p=1+\mathcal O(\epsilon^k \vert \alpha\vert^k)$ -- which carries the reduced problem:
\begin{equation}\eqlab{xypmodelslowplus}
\begin{aligned}
 \dot z &=Z_+(z)+\mathcal O(\epsilon^k \vert \alpha\vert^k),
%  \dot y &=Y_+(z)+\mathcal O(\epsilon^k \vert \alpha\vert^k).
\end{aligned}
\end{equation}
This holds uniformly and smoothly on the compact subset $\mathcal U_+$ and on this set \eqref{xypmodelslowplus} is therefore a smooth $\mathcal O(\epsilon^k \vert \alpha\vert^k)$-perturbation of $\dot z=Z_+(z)$. 
\end{lemma}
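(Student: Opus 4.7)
\medskip
\noindent\textbf{Proof plan.} The plan is to cast \eqref{xypmodel} as a slow-fast system with combined small parameter $\mu := \epsilon|\alpha|$ and single fast direction $p$, and to apply Fenichel's theorem on $\mathcal U_+$. First I would remove the apparent singularity at $\mu = 0$ inside $\phi$. On the compact set $\mathcal U_+$, $y$ is bounded below by some $y_0 > 0$; taking $\epsilon,|\alpha|$ small enough keeps $y + \alpha p \ge y_0/2 > 0$ for bounded $p$, so $(y + \alpha p)/\mu$ stays in the tail regime where assumption \ref{assumption:2} applies. Setting $s := \mu/(y + \alpha p) > 0$ and using $\phi(s^{-1}) = 1 - \phi_+(s)s^k$ rewrites the fast equation as
\begin{equation*}
\mu \dot p = (1 - p) - \phi_+\!\left(\frac{\mu}{y + \alpha p}\right)\!\left(\frac{\mu}{y + \alpha p}\right)^{k},
\end{equation*}
whose right-hand side is smooth in $(z,p,\mu,\alpha)$ on the relevant set.

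At $\mu = 0$ the layer problem reduces to $p' = 1 - p$, so $\{p = 1\}$ is a globally attracting, normally hyperbolic critical manifold with fast linearization $-1$. A naive application of Fenichel here only gives a slow manifold at distance $\mathcal O(\mu)$ from $\{p=1\}$, whereas the lemma asks for $\mathcal O(\mu^{k})$. To access this sharper scale I would perform the directional blowup $p = 1 + \mu^{k} q$; after dividing by $\mu^{k}$ the equation reads
\begin{equation*}
\mu \dot q = -q - \phi_+\!\left(\frac{\mu}{y + \alpha(1 + \mu^{k} q)}\right)\frac{1}{(y + \alpha(1 + \mu^{k} q))^{k}},
\end{equation*}
a standard slow-fast problem with smooth right-hand side and, at $\mu = 0$, critical manifold $q = Q_{0}(z,\alpha) := -\beta/(y + \alpha)^{k}$, still normally hyperbolic with linearization $-1$ in $q$. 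Fenichel's theorem then produces an attracting, locally invariant slow manifold $q = Q(z,\epsilon,\alpha) = Q_{0}(z,\alpha) + \mathcal O(\mu)$, smooth in all its arguments. Unravelling the rescaling gives $S_{\epsilon,\alpha} : p = 1 + \mu^{k} Q(z,\epsilon,\alpha) = 1 + \mathcal O(\epsilon^{k}|\alpha|^{k})$, the graph form claimed in the lemma.

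Finally, on $S_{\epsilon,\alpha}$ we have $1 - p = \mathcal O(\mu^{k})$, so substituting into $\dot z = Z(z,p) = Z_+(z)p + Z_-(z)(1 - p) = Z_+(z) + (Z_-(z) - Z_+(z))(1 - p)$ immediately yields $\dot z = Z_+(z) + \mathcal O(\epsilon^{k}|\alpha|^{k})$, namely \eqref{xypmodelslowplus}, uniformly and smoothly on $\mathcal U_+$ since $\mu$ and $\alpha$ enter Fenichel's construction only through smooth, bounded quantities. The main obstacle, as indicated, is recognizing that a straightforward Fenichel application to the original system only gives the $\mathcal O(\mu)$ bound; invoking the rescaling $p = 1 + \mu^{k} q$ -- natural from the blowup viewpoint adopted throughout the paper -- is what upgrades this to the sharper $\mathcal O(\mu^{k}) = \mathcal O(\epsilon^{k}|\alpha|^{k})$ estimate required by the statement.
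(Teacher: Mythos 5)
Your proposal is correct and follows essentially the same route as the paper: use assumption \ref{assumption:2} to rewrite the $p$-equation smoothly on $\mathcal U_+$, pass to the fast time, identify the normally hyperbolic attracting critical manifold $p=1$ of the layer problem $p'=1-p$, and invoke Fenichel's theory. The one thing you add is the rescaling $p=1+(\epsilon\vert\alpha\vert)^k q$ to pin down the $\mathcal O(\epsilon^k\vert\alpha\vert^k)$ graph form; the paper subsumes this under ``follows from Fenichel's theory'' (it also drops out of the standard asymptotic expansion of the slow manifold, whose first nontrivial term is $-\beta(\epsilon\vert\alpha\vert)^k/(y+\alpha)^k$), so your extra step is a legitimate and slightly more explicit justification of the same estimate rather than a different argument.
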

\begin{proof}
 \responsenew{By assumption \ref{assumption:2}, we have the following on $\mathcal U_+$
 \begin{align*}
 x' &=\epsilon \vert \alpha\vert X(z,p),\\
 y' &=\epsilon \vert \alpha\vert  Y(z,p),\\
 p' &= 1-p-\left(\frac{\epsilon\vert \alpha\vert}{y+\alpha p}\right)^k \phi_+\left(\frac{\epsilon\vert \alpha\vert}{y+\alpha p}\right),
\end{align*}
in terms of the fast time defined by $()'=\epsilon \vert \alpha\vert \dot{()}$. Setting $\epsilon=0,\alpha=0$ on $y>0$ gives the layer problem
\begin{align*}
 x' &=0,\\
 y' &=0,\\
 p' &= 1-p,
\end{align*}
for which $S_0=\{(x,y,p)\in \mathcal U_+\,\vert\,p=1\}$ clearly is a normally hyperbolic and attracting critical manifold. The result then follows from Fenichel's theory \cite{fen3}, see also \cite[Theorem 2]{jones_1995}).}
\end{proof}

A similar result clearly holds within $y<0$. The objective of our analysis is to uncover what occurs near $y=0$.
% $\mathcal O(\epsilon^k \vert \alpha\vert^k)$

% In this paper, we will focus on $\alpha>0$; although the case $\alpha<0$ is somewhat different, it can be handled by the same methods and we therefore  leave out its details of the present manuscript.

It is possible to obtain some intuition on the dynamics of \eqref{xypmodel} by looking at the equation for the $p$-nullcline:
\begin{align}
\phi\left(\frac{y+  \alpha p}{\epsilon \vert\alpha\vert }\right)-p = 0,\eqlab{peqn}
\end{align}
see also \cite[Fig. 3]{bonet2017a}. Given that $p$ is a fast variable of \eqref{xypmodel}, it is tempting to think about the set defined by \eqref{peqn} as a critical manifold (ignoring for the moment that it depends on $\epsilon$ and $\alpha$ in a singular way). We can solve \eqref{peqn} for $y$ as a function of $p,\epsilon$ and $\alpha$ by using $\phi^{-1}$. This gives
\begin{align}
y  = F(p,\epsilon,\alpha):=\epsilon \vert \alpha \vert \phi^{-1}(p) -\alpha .\eqlab{Fgraph}
\end{align}
Now, the graph $y=F(p,\epsilon,\alpha)$, $p\in (0,1)$, of the function $F$ has fold points at $(y,p)=(y_f,p_f)$ whenever
\begin{align*}
 F'_p(p_f,\epsilon,\alpha) =\epsilon \vert \alpha \vert \frac{1}{\phi'\left( \phi^{-1}(p_f)\right)} -\alpha =0,\quad F''_{pp}(p_f,\epsilon,\alpha)\ne 0,
\end{align*}
\response{see  \figref{Fgraph}.}
Since, the former condition can be written as
\begin{align*}
 \phi'\left( \phi^{-1}(p_f)\right) =\epsilon \operatorname{sign}\alpha,
\end{align*}
we only have fold points (using assumption \ref{assumption:1}) for $\operatorname{sign}\alpha=1$. In this case, assuming that assumption \ref{assumption:2} holds, it is a simple calculation to show that there exist two fold points $(y_f^\pm,p_f^\pm)$ and that these have the following asymptotics
\begin{align*}
(y_f^-,p_f^-) &= (\mathcal O(\alpha \epsilon^{\frac{k}{k+1}}),\mathcal O(\epsilon^{\frac{k}{k+1}})),\\
(y_f^+,p_f^+) &= (\mathcal O(\alpha),1+\mathcal O(\epsilon^{\frac{k}{k+1}})),
% y_f^- &= -\alpha \frac{k+1}{k}\left(\beta_- k^k\right)^{\frac{1}{k+1}} \epsilon^{\frac{k}{k+1}}\left(1+\mathcal O(\epsilon^{\frac{1}{k+1}})\right),\\
% p_f^-&=\left({\beta_-}{k^{-k}}\right)^{\frac{1}{k+1}} \epsilon^{\frac{k}{k+1}}\left(1+\mathcal O(\epsilon^{\frac{1}{k+1}})\right),
% \end{align*}
% and 
% y&=\alpha\left(1+\frac{k+1}{k}(\beta k)^{1/(k+1)} \epsilon^{k/(k+1)}\left(1+\mathcal O(\epsilon^{\frac{1}{k+1}})\right),
% \begin{align*}
% y_f^+&=\alpha\left(1+\frac{k+1}{k}\left(\beta_+ k^k\right)^{\frac{1}{k+1}} \epsilon^{\frac{k}{k+1}}\left(1+\mathcal O(\epsilon^{\frac{1}{k+1}})\right)\right),\\
% p_f^+&=1-\left({\beta_+}{k^{-k}}\right)^{\frac{1}{k+1}} \epsilon^{\frac{k}{k+1}}\left(1+\mathcal O(\epsilon^{\frac{1}{k+1}})\right), 
% %  \end{align*}
\end{align*}
with respect to $\epsilon,\alpha\rightarrow 0$,
% and 
% \begin{align*}
%  
 near $p=0$ and $p=1$, respectively. The leading order terms can expressed in terms of $\beta$ and $k$, see also \eqref{fp} below.
 
 In this paper, we will focus on $\alpha>0$; the case $\alpha<0$ is simpler and can be handled by the same methods. %we therefore  leave out its details of the present manuscript.

   \response{The graph of $F$ has an $S$-shape, see \figref{Fgraph}}, but since $F$ converges pointwise to $0$ for $p\in (0,1)$ as $\epsilon,\alpha\rightarrow 0$, the folds (black disks) are only visible upon magnification/blowup of $y$.  \responsenew{Moreover, due to the singular nature it is apriori unclear whether this folded structure behaves like folds in slow-fast systems, see e.g. \cite{szmolyan_canards_2001,szmolyan2004a}}. Nevertheless, if we continue to think of the graph of $F$ as a critical manifold and $p$ as the fast variable, the $S$-shape structure hints at a hysteresis-like mechanism for fast transitions between $p=0$ and $p=1$ through the fold points.   The folded structure becomes more profound for larger values of $\alpha>0$.  Our blowup approach  \response{(see \secref{lem:P22})} will describe this in further details and motivate coordinates, including $(\nu_{213},p_{213},\rho_{213})$ defined by
  \begin{align}\eqlab{213}
   \begin{cases}
    y &= -\alpha (1+\rho_{213}^k p_{213})+\alpha \rho_{213}^k \nu_{213},\\
    p&=(1+\rho_{213}^k p_{213}),\\
    \epsilon &=\rho_{213}^{k+1},
   \end{cases}
  \end{align}
   that can be used to describe the dynamics in a rigorous way. In fact, \eqref{213} leads to the following equations: $\dot x=0$ and
\begin{equation}\nonumber
\begin{aligned}
%  \dot x &=0,\\
 \dot \nu_{213} &=-\nu_{213} \left(\beta \nu_{213}^{-k}+p_{213}\right),\\
 \dot p_{213} &=-\nu_{213}\left(\beta \nu_{213}^{-k}+p_{213}\right),
\end{aligned}
\end{equation}
with $\beta=\phi_+(0)$, 
in the dual singular limit $\alpha,\rho_{213}\rightarrow 0$. This system has the set $R_{213}$ defined by $p_{213}=-\beta \nu_{213}^{-k}$, $\nu_{213}>0$, as a manifold of equilibria. $R_{213}$ is normally hyperbolic everywhere except at 
\begin{align}\eqlab{fp}
 p_{213,f}=-\beta \left(k\beta\right)^{-\frac{k}{k+1}}, \quad \nu_{213,f}=\left(k\beta\right)^{\frac{1}{k+1}},
\end{align}
which is a fold point,
see the left subfigure in \figref{pws_hysteresis_3} below for an illustration. In the case of sliding, we find that the fold point is a simple jump point, whereas in the case of grazing it becomes a canard point (folded saddle singularity, within a certain parameter regime). Notice that the location of the fold point \eqref{fp} is by \eqref{213} in agreement with the asymptotics for $(y_f^+,p_f^+)$ above. 
  
  %In the case of grazing, we will find a folded saddle singularity on the blowup of the fold points.  %In particular, in the case of grazing we will find a folded saddle singularity on the blowup of these folds. 
 %Nevertheless, the folded structure do indicate the possibility of fast transitions between $p=0$ and $p=1$ akin to those that occur in relaxation oscillations of the van der Pol system \cite{krupa_extending_2001}. 
 
  \begin{figure}
\begin{center}
\includegraphics[width=.65\textwidth]{./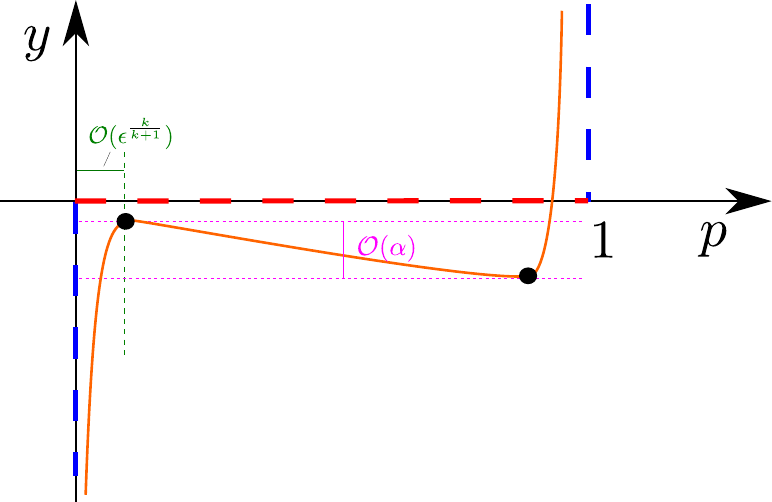}
 \end{center} \caption{Illustration of the graph of $F$ (in orange), see \eqref{Fgraph}. For $\epsilon,\alpha>0$ small enough, $F$ has two folds (black disks) located $\mathcal O(\epsilon^{\frac{k}{k+1}})$-close to $p=0$ and $p=1$ respectively. The parameter $\alpha$ measures the separation of the fold points in the $y$-direction. The pointwise limit for $\epsilon,\alpha\rightarrow 0$ is indicated in red and blue. In particular, the blue lines at $p=0$ and $p=1$ are asymptotes for the graph \eqref{Fgraph}.}
 \figlab{Fgraph}
% \caption{$q=r_2q_2$, $x=r_2x_2$, $\epsilon=r_2^2$}
              \end{figure}

We anticipate that our approach will have general interest. 
It is clear that \eqref{xypmodel} involves a combination of slow-fast and nonsmooth effects. The analysis of such system seems to be rare. The reference \cite{kristiansen2021a} offers an exception. This manuscript studied a model of a friction oscillator, also of the form \eqref{xysm} but with $\phi$ non-monotone, in the presence of a time scale separation. The combination of slow-fast and nonsmooth effects was shown to lead to chaotic dynamics through a horseshoe obtained through a folded saddle singularity \cite{szmolyan_canards_2001} and a novel return mechanism. We will obtain something similar for \eqref{xypmodel} in the case of the grazing bifurcation. However, the analysis of \eqref{xypmodel} is more involved, and as opposed to the system in \cite{szmolyan_canards_2001}, the slow-fast and nonsmooth effects are more combined. On top of that, \eqref{xypmodel} involves two small parameters $0<\epsilon,\vert\alpha\vert\ll 1$. 

We hope that our analysis of \eqref{xypmodel} will provide a template for the analysis of similar systems, with several singular parameters as well as a combination of slow-fast and nonsmooth phenomena. At the same time, it is our anticipation that our results, in particular on the grazing bifurcation and the unification of known results on smoothing and hysteresis, will stimulate further research on the model \eqref{xypmodel}.% as a unification of smoothing and grazing.

\subsection{Overview}
The remainder of the paper is organized as follows. 
In \secref{blowup}, we present the blowup approach which will form the basis for our analysis of \eqref{xypmodel} with $\alpha>0$. This will include a review of this method in the context of regularization by smoothing. Although the results on smoothing are well-known to experts, we believe that the use of the blowup approach in this context provides a good platform to extend it to the analysis of \eqref{xypmodel}. In \secref{main1}, we then present the first main results, summarized in \thmref{main1}, on the dynamics of \eqref{xypmodel} for $\epsilon,\alpha>0$ both sufficiently small in the case of stable sliding. In proving this result, we also lay out the geometry of the dynamics using our blowup approach. In \secref{lem:P22}, we prove an important lemma on a return map (resting upon the description of a transition map near the blowup of the folds in \figref{Fgraph}) that is used to prove \thmref{main1}. Finally, in \secref{main2}, we turn our attention to the grazing bifurcation. 
% For this part, we will for simplicity assume that \begin{assumption}
% 	\label{assumption:3}
% 	$Z(z,p)$ is affine with respect to $p$ as in \eqref{Zpaf}.
% 	\end{assumption}
% This could be generalized
% . 
The main results of this section are stated in \thmref{main2}.
In particular, for the grazing bifurcation,
we identify two separate parameter regimes in the $(\epsilon,\alpha)$-plane. In one regime, we obtain a locally unique saddle-node bifurcation, as in the regularization by smoothing \cite{2019arXiv190806781U}, while in the other regime, we obtain chaotic dynamics, consistent with the results in \cite{rev2021a} on regularization by hysteresis. The chaotic dynamics is obtained through a horseshoe and folded saddle singularities of the blowup of the folds in \figref{Fgraph}. 

\section{Blowup}\seclab{blowup}
% Regarding $\phi$, we will as in \cite{} suppose the following technical assumption. 
% \begin{assumption}
% 	\label{assumption:1}
% 	The smooth `regularization function' $\phi : \mathbb R \to \mathbb R$ satisfies the monotonicity condition
% 	\[
% 	\frac{\partial \phi(s)}{\partial s} > 0 ,
% 	\]
% 	for all $s \in \mathbb R$ and, moreover,
% 	\begin{equation}
% 	\label{eq:mono}
% 	\phi(s) \to
% 	\begin{cases}
% 	1  & \text{for } s \to \infty , \\
% 	0  & \text{for } s \to -\infty .
% 	\end{cases}
% 	\end{equation}
% \end{assumption}
% We also impose one more technical assumption, which restricts the class of regularization functions $\phi$:
% The system \eqref{xypmodel} involves a combination of slow-fast and nonsmooth effects and depends upon two singular parameters $0<\epsilon,\alpha \ll 1$. In fact, \eqref{xypmodel} is nonsmooth with respect to both $\epsilon\rightarrow 0$ and $\alpha\rightarrow 0$. 
The blowup approach \cite{dumortier1996a,krupa_extending_2001}, which in its original framework was developed as a method to deal with lack of hyperbolicity, has recently been adapted \cite{kristiansen2018a} to deal with smooth systems approaching nonsmooth ones. Within this framework, we gain smoothness rather than hyperbolicity by applying blowup. %Since it will be important to us, we describe this approach in the context of \eqref{xysm} in the following.

\subsection{A blowup approach for regularization by smoothing}\seclab{sec:xysm}
A particular emphasis in the development of blowup for smooth systems approaching nonsmooth ones, has been on systems of the form \eqref{xysm}. Within our context these systems correspond to regularization of the PWS system \eqref{zPWS} by smoothing and the blowup approach proceeds as follows: 

Firstly, we work in the extended space $(x,y,\epsilon)$ by adding the trivial equation $\dot \epsilon=0$. At the same time, to ensure that $\epsilon=0$ is well-defined, we consider this extended system in terms of a fast time:
\begin{equation}
\begin{aligned}
 x' &=\epsilon X\left(z,\phi\left(\frac{y}{\epsilon}\right)\right),\\
 y' &=\epsilon Y\left(z,\phi\left(\frac{y}{\epsilon}\right)\right),\\
 \epsilon'&=0.
\end{aligned} \eqlab{xyeps}
\end{equation}
% obtained from \eqref{xysm} on a fast time scale
Then $(x,y,0)$ is a set of equilibria, but $(x,0,0)$ is extra singular due to the lack of smoothness there. This set is therefore blown up through a cylindrical blowup transformation defined by  
% Then under the assumption \ref{assumption:2}, by performing a radial blowup transformation of points $(x,0,0)$:
\begin{align}
 (r,(\bar y,\bar \epsilon))\mapsto \begin{cases}
                                    y &=r\bar y,\\
                                    \epsilon &=r\bar \epsilon,
                                   \end{cases}\eqlab{blowup0}
\end{align}
for $r\ge 0$, $(\bar y,\bar \epsilon)\in S^1$, leaving $x$ fixed. Here $S^1$ is the unit circle in $\mathbb R^2$. Notice that $r=0,(\bar y,\bar \epsilon)\in S^1$ maps to $(y,\epsilon)=(0,0)$ and \response{the preimage of the set of points $(x,0,0)$ is a cylinder; it is in this sense that the set of point $(x,0,0)$ is blown up by \eqref{blowup0}}.  See \figref{pws_smoothing0}.

Under the assumption \ref{assumption:2}, \response{$\phi(y\epsilon^{-1})=\phi(\bar y\bar\epsilon^{-1})$ extends smoothly to $(\bar y,\bar \epsilon)\in S^1\cap \{\bar \epsilon\ge 0\}$. This leads to the following, see \cite{kristiansen2018a}.}
\response{\begin{lemma}\lemmalab{blowup0}
Let $V$ denote the vector-field associated with \eqref{xyeps} and let $\Phi:(x,r,(\bar y,\bar \epsilon))\mapsto (x,y,\epsilon)$ be the blowup transformation defined by \eqref{blowup0}. Moreover, let $\Phi^*(V)$ be the pull-back of $V$. Then
$$\widehat V:=\bar \epsilon^{-1}\Phi^*(V)\quad \text{defined on}\quad (x,r,(\bar y,\bar \epsilon))\in \mathbb R^n\times [0,\infty)\times S^1\cap \{\bar \epsilon>0\},$$ extends smoothly and nontrivially to $\bar \epsilon=0$.
\end{lemma}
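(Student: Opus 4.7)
The plan is to prove the lemma chart-by-chart on the blowup cylinder, covering $\{r\ge 0\}\times (S^1\cap\{\bar\epsilon\ge 0\})$ by three directional charts: an entry chart $K_1$ corresponding to $\bar y>0$ with coordinates $(r_1,\epsilon_1)$ defined by $y=r_1$, $\epsilon=r_1\epsilon_1$; a rescaling chart $K_2$ corresponding to $\bar\epsilon>0$ with coordinates $(r_2,\bar y_2)$ defined by $y=r_2\bar y_2$, $\epsilon=r_2$; and a symmetric exit chart $K_3$ corresponding to $\bar y<0$. Since the set $\{\bar\epsilon=0\}$ is disjoint from $K_2$, only $K_1$ and $K_3$ are relevant for the extension claim; within $K_2$ the pull-back is manifestly smooth and $\bar\epsilon^{-1}$ is a smooth non-vanishing factor, and the (layer) dynamics $\dot{\bar y_2}=Y(x,0,\phi(\bar y_2))$ on $\{r_2=0\}$ is visibly non-trivial.

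In $K_1$, I would differentiate $y=r_1$ and $\epsilon=r_1\epsilon_1$ along the flow to obtain
\begin{align*}
x'&=r_1\epsilon_1\,X\bigl(x,r_1,\phi(\epsilon_1^{-1})\bigr),\\
r_1'&=r_1\epsilon_1\,Y\bigl(x,r_1,\phi(\epsilon_1^{-1})\bigr),\\
\epsilon_1'&=-\epsilon_1^2\,Y\bigl(x,r_1,\phi(\epsilon_1^{-1})\bigr),
\end{align*}
where the third equation follows from $0=\epsilon'=r_1'\epsilon_1+r_1\epsilon_1'$. The key step is to invoke Assumption~\ref{assumption:2} with $s=\epsilon_1>0$, yielding $\phi(\epsilon_1^{-1})=1-\phi_+(\epsilon_1)\epsilon_1^k$ with $\phi_+$ smooth on $[0,\infty)$; hence $\phi(\epsilon_1^{-1})$ extends smoothly across $\epsilon_1=0$ with limiting value $1$. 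In $K_1$ one has $\bar\epsilon=\epsilon_1(1+\epsilon_1^2)^{-1/2}$, so division by $\bar\epsilon$ amounts, up to a smooth non-vanishing factor, to division by $\epsilon_1$. Since each component of the right-hand side already carries a factor of $\epsilon_1$, this division yields a smooth field whose restriction to $\{\epsilon_1=0\}$ reads $(x',r_1',\epsilon_1')=\bigl(r_1 X_+(x,r_1),\,r_1 Y_+(x,r_1),\,0\bigr)$, which is non-trivial and coincides, after the additional desingularizing factor $r_1$, with the $Z_+$-dynamics on $\{y>0\}$. The chart $K_3$ is handled identically using the second case of Assumption~\ref{assumption:2}; the chart overlaps are smooth coordinate changes and do not require further checking.

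The main obstacle is bookkeeping: one must verify that a factor of $\bar\epsilon$ is available in every component in every chart, and that the remainder extends smoothly at $\bar\epsilon=0$. The first point rests on the overall prefactor $\epsilon=r\bar\epsilon$ multiplying $X$ and $Y$ in \eqref{xyeps}, together with the computed form of $\epsilon_1'$ in the directional charts, which contributes the missing factor for the $\bar\epsilon$-component. The second point rests crucially on the algebraic decay encoded in Assumption~\ref{assumption:2}, as the weaker pointwise convergence of Assumption~\ref{assumption:1} alone would not guarantee a smooth extension across $\epsilon_1=0$. Non-triviality of $\widehat V$ is then the easiest part—any point of $\{r>0,\,\bar\epsilon=0\}$ at which $Z_\pm$ is non-zero witnesses it.
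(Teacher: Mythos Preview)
Your proposal is correct and follows the same approach the paper takes. The paper does not give a detailed proof of this lemma; it merely records the key observation that under Assumption~\ref{assumption:2} the function $\phi(y\epsilon^{-1})=\phi(\bar y\bar\epsilon^{-1})$ extends smoothly to $(\bar y,\bar\epsilon)\in S^1\cap\{\bar\epsilon\ge 0\}$ and then cites \cite{kristiansen2018a}, while the directional-chart computations you carry out are precisely those the paper develops immediately afterwards in \eqref{y1eps1yN1} and \eqref{xy2}.
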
}

We suppose that the following holds.
\begin{assumption}
	\label{assumption:4}
	The PWS system \eqref{zPWS} has stable sliding along the discontinuity set $\Sigma$:
	\begin{align*}
	 Y_+(x,0)<0,\,Y_-(x,0)>0.
	\end{align*}
	We also assume that $\Sigma$ is a compact domain in $\mathbb R^n$. 
\end{assumption}
In this way, $Z_\pm$ are each transverse to $\Sigma$. This leads to $\widehat V$ having hyperbolic properties along $r=0,\bar \epsilon=0$, see \cite{kristiansen2018a}. This also holds true even if $Z(z,p)$ depends nonlinearly on $p$. However, for the purpose of this section, we suppose the following.
\begin{assumption}
\label{assumption:3}
	$Z(z,p)$ is affine with respect to $p$ as in \eqref{Zpaf}.
	\end{assumption}
% 	Suppose also that:

% Whereas we will use assumption \label{assumption:4} henceforth, we will only suppose assumption \ref{assumption:3}, for the purpose of this section. 
Then we have the following.
\begin{proposition}\proplab{smZsl}\cite{kristiansen2018a,Llibre09,Sotomayor96}
Consider \eqref{xysm} and suppose that assumptions \ref{assumption:4} and \ref{assumption:3} both hold. Then $\widehat V$ has a normally hyperbolic critical manifold, carrying a reduced slow flow \response{defined by $\dot x=X_{sl}(x)$, where $X_{sl}$ is the Filippov sliding vector-field, see \eqref{Xsl}}.
%that coincides on $x$ with the Filippov flow.
 \end{proposition}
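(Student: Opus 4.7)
The plan is to work in the ``central'' chart of the blowup \eqref{blowup0} given by $\bar\epsilon=1$, with coordinates $(x,r,\bar y)$ and $(\epsilon,y)=(r,r\bar y)$, $\bar y\in\mathbb R$. Substituting into \eqref{xyeps} and dividing by $\bar\epsilon$ as in \lemmaref{blowup0}, the desingularized vector-field $\widehat V$ takes the form
\begin{align*}
 \dot x &= X(z,\phi(\bar y)),\\
 \dot{\bar y} &= Y(z,\phi(\bar y)),\\
 \dot r &= 0,
\end{align*}
which is manifestly smooth, since the singular composition $\phi(y\epsilon^{-1})$ has been resolved to $\phi(\bar y)$.

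First I would restrict to $r=0$, so that $z=(x,0)$, and use Assumption \ref{assumption:3} to expand $Y(z,\phi(\bar y)) = Y_+(x,0)\phi(\bar y) + Y_-(x,0)(1-\phi(\bar y))$. The equilibrium condition $\dot{\bar y}=0$ then becomes $\phi(\bar y) = p(x)$ with $p(x)$ as in \eqref{Xsl}. Assumption \ref{assumption:4} (stable sliding, with $\Sigma$ compact) gives $Y_+<0<Y_-$, hence $p(x)\in(0,1)$ for every $x\in\Sigma$; Assumption \ref{assumption:1} then furnishes a unique smooth inverse $\bar y^*(x):=\phi^{-1}(p(x))$, which stays uniformly bounded on the compact $\Sigma$. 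The set $C:=\{(x,0,\bar y^*(x)):x\in\Sigma\}$ is thus a smooth one-dimensional manifold of equilibria of $\widehat V$, sitting inside the single chart $\bar\epsilon=1$.

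Next I would verify normal hyperbolicity. The only nontrivial linearization is in the $\bar y$ direction, where
\[
 \partial_{\bar y}Y(z,\phi(\bar y))\big|_{C}=\phi'(\bar y^*(x))\bigl(Y_+(x,0)-Y_-(x,0)\bigr).
\]
By Assumption \ref{assumption:1} we have $\phi'>0$, while stable sliding yields $Y_+-Y_-<0$, so this eigenvalue is strictly negative and uniformly bounded away from $0$ on the compact $\Sigma$. Hence $C$ is a normally hyperbolic attracting critical manifold. Computing the reduced flow on $C$ by substituting $\phi(\bar y^*(x))=p(x)$ into the $\dot x$-equation, and again using affineness, yields
\[
 \dot x = X_+(x,0)p(x)+X_-(x,0)(1-p(x)) = X_{sl}(x),
\]
i.e.\ the Filippov sliding vector-field \eqref{Xsl}, as claimed.

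The only subtle point is smoothness of $\widehat V$ at $r=0$, and this has already been supplied by \lemmaref{blowup0} (which uses Assumption \ref{assumption:2} to patch the central chart to the outer charts $\bar y=\pm 1$). Because $\bar y^*(x)$ remains bounded over the compact $\Sigma$, the critical manifold $C$ lies entirely inside the central chart, so one never has to work in the outer charts to locate or normalize it. Thus, modulo that earlier smoothness statement, the present proposition reduces to a direct computation; the only step requiring genuine care is noting that the affine structure of $Z(\cdot,p)$ is precisely what converts the scalar equilibrium condition $\phi(\bar y)=p(x)$ into the Filippov sliding vector field on the slow manifold.
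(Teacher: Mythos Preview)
Your approach is the same as the paper's: work in the scaling chart $(\bar\epsilon=1)_2$, find the critical manifold from $Y=0$, check the sign of $\partial_{\bar y}Y$, and read off the reduced flow (this is exactly the content of \eqref{xy2} and the paragraph following it).

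One slip to fix: in this chart the $x$-equation of $\widehat V$ still carries a factor of $r$, i.e.\ $\dot x = r\,X(z,\phi(\bar y))$ (cf.\ \eqref{xy2} with $r_2=\epsilon$), because the pull-back of \eqref{xyeps} only cancels the common prefactor in the $\bar y$-equation. Without that factor, your set $C$ is not a manifold of equilibria, since generically $X\ne 0$ there, and your remark that ``the only nontrivial linearization is in the $\bar y$ direction'' would be false. With the factor restored, $x$ is genuinely slow, $C$ is the layer-problem critical manifold at $r=0$, and passing to the slow time (dividing by $r$) recovers $\dot x = X_{sl}(x)$ as you state. (Minor: $C$ is $n$-dimensional, not one-dimensional, since $x\in\mathbb R^n$.)
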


 \begin{figure}
\begin{center}
\includegraphics[width=.65\textwidth]{./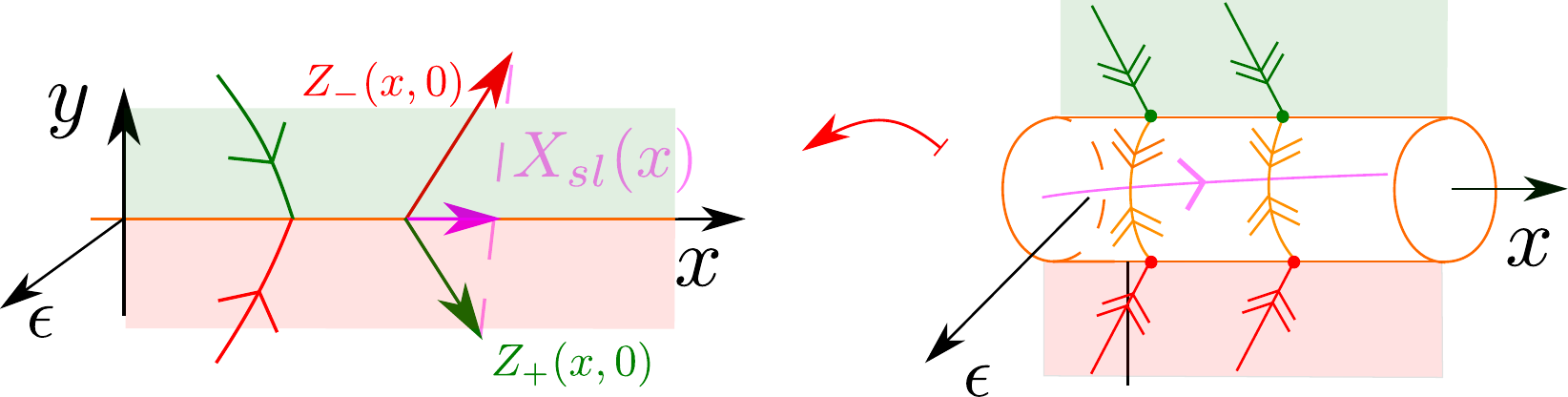}
 \end{center} \caption{Illustration of blowup in the case of regularization by smoothing. Upon blowup we gain smoothness and hyperbolicity along the edges of the cylinder (indicated by the double-headed arrows, see \secref{notation}) as well as a critical manifold (in pink) when the associated PWS system has stable sliding. The most fundamental result, see \propref{smZsl}, is then that the slow-flow on this critical manifold is given by Filippov's sliding vector-field. }
 \figlab{pws_smoothing0}
% \caption{$q=r_2q_2$, $x=r_2x_2$, $\epsilon=r_2^2$}
              \end{figure}
              The result is illustrated in \figref{pws_smoothing0}, see figure caption for further details, and has been known to experts for many years, see also \cite{Sotomayor96}.
              
              \subsection{Directional charts}\seclab{newDirectional}
 In practice, the analysis of $\widehat V$ is performed in directional charts. \response{Since we will use different directional charts in the sequel, 
 we now define these blowup-dependent charts (in some generality, following \cite[Definition 3.1]{szmolyan_canards_2001} and \cite{kristiansen2018a}) before we apply these concepts to \eqref{blowup0}. 

Consider 
 $x=(x_1,\ldots,x_n)\in \mathbb R^n$, $\kappa =(\kappa_1,\ldots,\kappa_n)\in \mathbb N^n$ and the following general, weighted (or quasihomogeneous \cite{kuehn2015}), blowup transformation:
 \begin{align}
  \Phi:\,[0,\infty)\times S^{n-1}\rightarrow \mathbb R^n:\, (\rho,\bar{ x})\mapsto  x,\quad (\rho,\bar{x})\mapsto (\rho^{\kappa_1}\bar x_1,\ldots,\rho^{\kappa_n}\bar x_n),\eqlab{generalBlowup}
 \end{align}
%  defined by
%  \begin{align}
%  x_i = \rho_i^{\kappa_i} \bar x_i,\quad i=1,\ldots, n,\eqlab{generalBlowup}
%  \end{align}
Here the pre-image of $x = 0$ is $\{0\}\times S^{n-1}$ where
\begin{align*}
 S^{n-1} = \left\{\bar{x}=(\bar x_1,\ldots,\bar x_n)\in \mathbb R^n \vert \sum_{i=1}^n \bar x_i^2=1 \right\},
\end{align*}
is the unit $(n-1)$-sphere. The positive integers $\kappa_i\in \mathbb N$ are called the {\it weights} of the blowup, see \cite{kuehn2015}. 

{\begin{definition}
  Let $j\in \{1,\ldots,n\}$ and write $\hat{x}^j=(\hat x_1,\ldots,\hat x_{j-1},\hat x_{j+1},\ldots,\hat x_n)\in \mathbb R^{n-1}$. 
Then the \textit{directional blowup} in the positive $j$-th direction is the {mapping}
 \begin{align*}
  \Psi^{j}:\,[0,\infty)\times \mathbb R^{n-1}\rightarrow \mathbb R^n, 
  \end{align*}
  obtained by setting $\bar x_j=1$ in \eqref{generalBlowup}:
%  \end{align*}
% % given by
% % \begin{align}
% given by
\begin{align}
\Psi^{j}:\,(\hat \rho,\hat{x}^j)&\mapsto x = (\hat \rho^{\kappa_1} \hat x_1,\ldots, \hat \rho^{\kappa_{j-1}}\hat x_{j-1},\hat \rho^{\kappa_{j}}, \hat \rho^{\kappa_{j+1}} \hat x_{j+1},\ldots,\hat \rho^{\kappa_{n}}\hat x_n).\eqlab{directionPos}
  \end{align}
%   respectively,
%  { The \textit{directional chart} ``$\bar x_j=1$'' is then a coordinate chart 
%   \begin{align*}
%   \Xi^j_\kappa:\, [0,\infty) \times S^{n-1} \rightarrow [0,\infty)\times \mathbb R^{n-1},
%   \end{align*}
%   such that 
%   \begin{align*}
%   \Phi_\kappa = \Psi^j_\kappa \circ \Xi^j_\kappa,
%   \end{align*} 
%     }
%   
%   Similarly, we define the \textit{directional blowup} in the negative $j$-th direction as the mapping (new symbols compared to \eqref{directionPos})   \begin{align*}
%   \Psi_\kappa^{j}:\,[0,\infty)\times \mathbb R^{n-1}\rightarrow \mathbb R^n, 
%   \end{align*}
% %  \end{align*}
% % given by
% % \begin{align}
% given by 
% \begin{align}
% (\hat \rho,\hat{x}^{j})&\mapsto x = (\hat \rho^{\kappa_1} \hat x_1,\ldots, \hat \rho^{\kappa_{j-1}}\hat x_{j-1},-\hat \rho^{\kappa_{j}}, \hat \rho^{\kappa_{j+1}} \hat x_{j+1},\ldots,\hat \rho^{\kappa_{n}}\hat x_n).\eqlab{directionNeg}
%   \end{align}
%   respectively
% %   where $\hat{x}^j=(\hat x_1,\ldots,\hat x_{j-1},\hat x_{j+1},\ldots,\hat x_n)\in \mathbb R^{n-1}$
  {   The \textit{directional chart} $(\bar x_j=1)$ is then the coordinate chart 
   \begin{align*}
  \Xi^{j}:\, [0,\infty) \times S^{n-1}\rightarrow [0,\infty)\times \mathbb R^{n-1},
  \end{align*}
  such that 
  \begin{align*}
  \Phi=\Psi^{j} \circ \Xi^{j}.
  \end{align*} }
%   that makes the diagram in \figref{commute} commute. 
  
  \end{definition}}
  The directional blowup in the negative $j$-th direction and the associated directional chart $(\bar x_j=-1)$ are defined completely analogously (by setting $\bar x_j=-1$ in \eqref{generalBlowup}). 
  \begin{figure}[h!] 
\begin{center}
{\includegraphics[width=.69\textwidth]{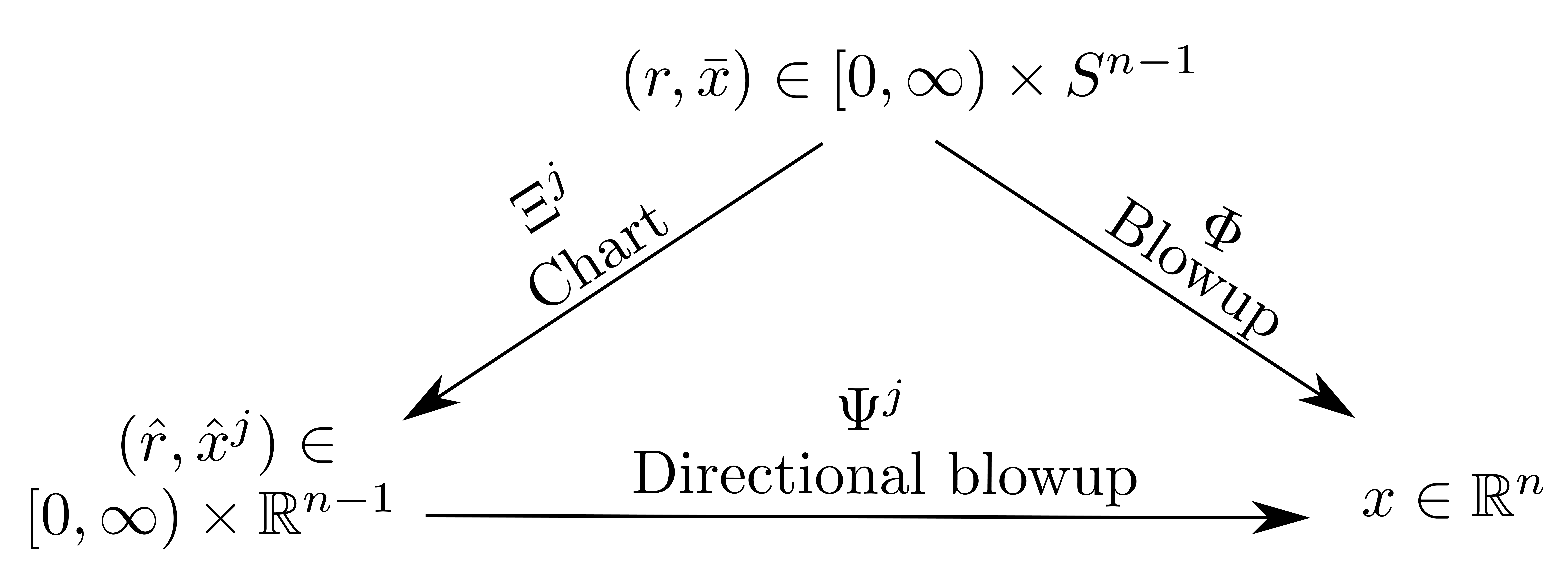}}
% \subfigure[]{\includegraphics[width=.49\textwidth]{chartsxjEqN1.pdf}}
% \input{centerManifoldCa2.ps_tex}
\end{center}
 \caption{{Given a blowup and an associated directional blowup (two edges in the diagram), we define the corresponding chart as the mapping (the final, third edge in the diagram) that makes the diagram commute  (on a subset of $[0,\infty)\times S^{n-1}$)}. }%THERE IS NO DASHED LINE IN THIS FIGURE (SEE TEXT).}
%  The quantities $\chi_{\pm}$ determining the eigenvectors satisfy $\chi_{\pm}\text{sign}(b)\gtrless 0$ for all values of $c$ and $\gamma$. The different diagrams illustrate the singular canards within $\Sigma_{sl}^-$ in the different parameter regions. For example in the region gray shaded region, corresponding to case (S) we see $b>0$ within the region $c+\gamma>0$ and $b<0$ within the region $c+\gamma<0$ the illustration of two true singular canards. For $b>0$ the true canard is positive while it is negative (dashed line) for $b<0$. Within the region where $c+\gamma >\sqrt{(c-\gamma)^2+4b\beta}$ corresponding to case (N) we see a diagram for $b<0$ to the right and a diagram for $b>0$ to the left. For $b>0$ the sliding region $\Sigma_{sl}$ is filled with positive singular canards (indicated by gray shaded regions). 
% }
\figlab{commute}
\end{figure}
%   Notice that we can obtain the directional blowups by simply equating $\bar x_j=\pm 1$ in \eqref{generalBlowup}. Now, %   
%   \begin{definition}
%      
%   \end{definition}

We illustrate the concepts of a directional blowup and a directional chart in \figref{commute}. Notice that the directional blowup \eqref{directionPos} is a diffeomorphism for $\hat \rho>0$. But the preimage of $x=0$ is $\hat \rho=0,\hat{x} \in \mathbb R^{n-1}$. $\Xi^j$ exists and is unique, see \cite[Equation (5.5)]{kristiansen2018a}. The details are not important and therefore omitted.

% \note{need to cut down text here}
% 
% Now, straightforward calculations show that the chart $\bar x_j = 1$ is uniquely given by
% \begin{align}
% (\rho,\bar{\textbf {x}})\mapsto \left\{\begin{array}{cc}
%  \hat \rho &= \rho \bar x_j^{1/\kappa_j}\\
%  \hat x_i &=\bar x_j^{-\kappa_i/\kappa_j} \bar x_i
%  \end{array}\right.,
% \eqlab{chartPos}
%  \end{align}
% for all $i\ne j$, 
% and that the associated coordinate patch covers $S^{n-1}\cap \{\bar x_j>0\}$. Similarly, the chart $\bar x_j = -1$ is uniquely given by
% \begin{align}
% (\rho,\bar{\textbf {x}})\mapsto \left\{\begin{array}{cc}
%  \hat \rho &= \rho (-\bar x_j)^{1/\kappa_j}\\
%  \hat x_i &=(-\bar x_j)^{-\kappa_i/\kappa_j} \bar x_i
%  \end{array}\right.
%  ,\eqlab{chartNeg}
% \end{align}
% for all $i\ne j$ and the associated coordinate patch covers $S^{n-1}\cap \{\bar x_j<0\}$. Therefore the collection of all the charts $\bar x_i = \pm 1$, $i=1,\ldots,n$ provides an atlas on $(\rho,\bar{x})\in [0,\infty) \times S^{n-1}$ with smooth coordinate changes between charts that overlap. In particular, if $\kappa_i=1$ for all $i$ (in which case the blowup is said to be homogeneous or radial) then the charts $\bar x_j=\pm 1$ simply parametrise $S^{n-1}$ using stereographic projections onto the associated coordinate planes, see e.g. \figref{InitialCharts}. }

With slight abuse of notation, we will, as is common in the literature, simply refer to \eqref{directionPos} as the (directional) charts $\bar x_j = 1$ (although they are actually the coordinate transformations in the local coordinates of the charts themselves,  see also \figref{commute}). %The main reason for doing so is the following: For calculations, the directional blowups are of main interest. They provide the coordinate transformations of our vector-field. The details of the charts themselves are less important for calculations. But we still refer to \eqsref{directionPos}{directionNeg} as charts rather than just directional blowups to have a more apparent connection to the geometry. Furthermore, 
Notice that the directional blowups are easy to compute: We just substitute $\bar x_j=1$ into \eqref{generalBlowup}, see \eqref{directionPos}.
 }

%  \textit{Consider a blowup The directional blow-ups Fi
% , i=1, ..., 2m are obtained by
% setting one blown-up variable on Sm
% equal to ±1 in the definition of the
% mapping F. The directional charts oi , i=1, ..., 2m are defined such that the diagram in Fig. 9 commutes. In chart oi the blown-up vector field X¯ is described by a vector field Xi
% , i=1, ..., 2m.
%  In particular, consider $(y_2,r_2)$ defined by $r_2=r\bar \epsilon$, $y_2=\bar \epsilon^{-1} \bar y$ such that 
%  \begin{align*}
%   y &=r_2 y_2,\quad  \epsilon =r_2,
%  \end{align*}
 \response{In the context of \eqref{blowup0}, we have three directional charts $(\bar y=\pm 1)$ and $(\bar \epsilon=1)$ so that \eqref{blowup0} takes the following local forms}:
 \begin{equation}\eqlab{y1eps1yN1}
\begin{aligned}
 (\bar y=1)_1:&\quad \begin{cases}
                     y&=r_1,\\
                     \epsilon &=r_1\epsilon_{1},
                    \end{cases}\\
(\bar \epsilon=1)_2:&\quad \begin{cases}
                     y&=r_2 y_2,\\
                     \epsilon &=r_2,
                    \end{cases}\\
 (\bar y=-1)_3:&\quad \begin{cases}
                     y&=-r_3,\\
                     \epsilon &=r_3\epsilon_3.
                    \end{cases}                   
\end{aligned}
\end{equation}
\response{(In the radial case of \eqref{blowup0}, they simply correspond to central projections onto the lines $\bar y=1$, $\bar \epsilon=1$ and $\bar y=-1$, respectively, see also \cite[Fig. 6]{kristiansen2018a}.)}
These charts cover the relevant part of the cylinder with $\bar \epsilon \ge 0$.  \response{As indicated, we refer to the three charts in \eqref{y1eps1yN1} by $(\bar y=1)_1$, $(\bar \epsilon=1)_2$, $(\bar y=-1)_3$ respectively, and the subscripts relate to the numbering used on the corresponding coordinates $(r_1,\epsilon_{1})$, $(r_2,y_2)$ and $(r_3,\epsilon_3)$, respectively.} The charts $(\bar y=1)_1$ and $(\bar \epsilon=1)_2$ overlap for $\bar y>0$ and the equations 
\begin{align}
r_1=r_2 y_2, \quad \epsilon_{1}= y_2^{-1}, \nonumber
\end{align}
define smooth change of coordinates there. \response{Similarly, $(\bar y=-1)_3$ and $(\bar \epsilon=1)_2$ overlap for $\bar y<0$ and the equations $r_3=r_2 y_2$, $\epsilon_3 = -y_2^{-1}$ define smooth change of coordinates there. (Obviously, $(\bar y=1)_1$ and $(\bar y=-1)_3$ do not overlap.)} Notice also that in $(\bar \epsilon=1)_2$ we have $y=\epsilon y_2$ upon eliminating $r_2$ and the blowup transformation therefore relates to this important scaling where 
\begin{align*}
 \phi\left(\frac{y}{\epsilon}\right)=\phi(y_2),
\end{align*}
changes by an $\mathcal O(1)$-amount. Moreover, in terms of $(x,y_2,r_2)$, $\widehat V$ becomes slow-fast:
\begin{equation}\eqlab{xy2}
\begin{aligned}
 \dot x &=\epsilon X(x,\epsilon y_2,\phi(y_2)),\\
 \dot y_2&=Y(x,\epsilon y_2,\phi(y_2)),
\end{aligned}
\end{equation}
with $r_2=\epsilon=\text{const}.$

Using assumptions \ref{assumption:4} and \ref{assumption:3}, it follows that \eqref{xy2} has a normally hyperbolic critical manifold for $\epsilon=0$, carrying reduced slow-flow given by \eqref{Xsl}. This essentially proves \propref{smZsl}. We illustrate the local dynamics in \figref{pws_smoothing2}. 

 \begin{figure}
\begin{center}
\includegraphics[width=.5\textwidth]{./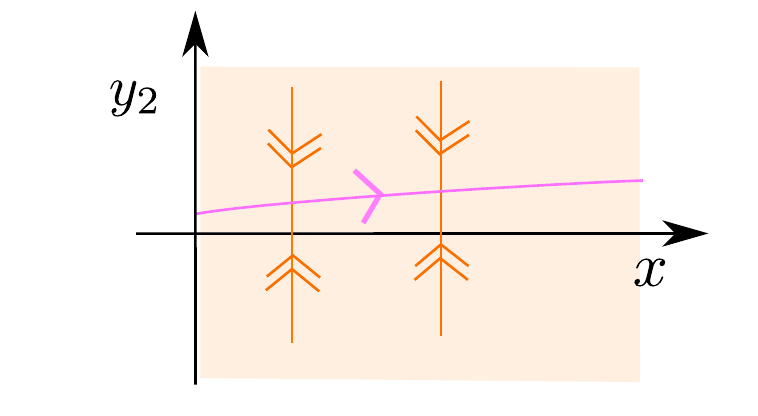}
 \end{center} \caption{Slow-fast dynamics in the $(\bar \epsilon=1)_2$-chart (for $\epsilon=0$) in the case of regularization by smoothing. The reduced problem is given by Filippov.}
 \figlab{pws_smoothing2}
% \caption{$q=r_2q_2$, $x=r_2x_2$, $\epsilon=r_2^2$}
              \end{figure}

              \subsection{A different version of blowup}\seclab{diffblowup}
              \response{We emphasize that, while the blowup \eqref{blowup0} -- following \lemmaref{blowup0} -- leads to gain of smooothness, blowup is traditionally associated with gain of hyperbolicity. In this version of blowup, the starting point is a vector-field $V$ having a fully nonhyperbolic equilibrium point (or a set of degenerate equilibria) with the linearization having only zero eigenvalues. Assuming that the equilibrium is at the origin, a blowup transformation $\Phi$ is then of the form \eqref{generalBlowup} with the weights $\kappa$ chosen such that 
              \begin{align*}
               \widehat V:=\rho^{-k} \Phi^*(V),
              \end{align*}
              on $(\rho,\bar x)\in (0,\rho_0]\times S^{n-1}$, 
extends smoothly and nontrivially to $\rho=0$ for some $k\in \mathbb N$. The most useful situation is when the division by $\rho^{-k}$ (desingularization) leads to hyperbolicity of equilibria within $\rho=0$, so that the usual hyperbolic methods (linearization, stable, unstable and center manifolds, etc) of dynamical systems theory, see e.g. \cite{wiggins2003a}, can be applied. See also \cite[Chapter 3.3]{dumortier2006a} for general results on blowup (including the use of Newton polygons to select the weights) for planar systems.

Blowup has been extremely succesful in the analysis of slow-fast systems, \cite{dumortier1996a,Gucwa2009783,krupa_extending_2001,szmolyan_canards_2001}, where loss of hyperbolicity occurs persistently in the layer problem. Here the weights $\kappa$ of the blowup transformation can often be directly related to the geometry of the problem. For example, for the planar fold jump point, see e.g. \cite[Equation 2.5]{krupa_extending_2001} where 
\begin{equation}\eqlab{foldplanar}
\begin{aligned}
 x' &\approx-y+x^2,\\
 y' &=0,
\end{aligned}
\end{equation}
for $\epsilon=0$,
we have a quadratic tangency between the critical manifold $y\approx x^2$ and the (degenerate) fiber $y=0$. In order to gain hyperbolicity, the weights $\kappa$ have to be so that this tangency is ``broken''. This can be achieved by $x=\rho \bar x,\,y=\rho^2\bar y$, $\rho\ge 0$, $(\bar x,\bar y)\in S^1$. Indeed, $y\approx x^2$ leads $\bar y\approx \bar x^2$, $(\bar x,\bar y)\in S^1$ ($\theta\approx \pm 0.67$ if $\bar x^{-1}\bar y=\tan \theta$), while $y=0$ leads to $\bar y=0$. For further details, we refer to \cite{krupa_extending_2001}.
}

\response{In this paper, we will combine these two different versions of blowup (gaining smooothness and gaining hyperbolicity) to study \eqref{xypmodel}. Similar combinations of blowup has been used to study bifurcations in systems of the form \eqref{xysm}, see e.g.  \cite{2019arXiv190806781U} for an analysis of the grazing bifurcation and \cite{jelbart2021c,jelbart2021b} for an analysis of  boundary equilibrium bifurcations (where equilibria of either $Z_\pm$ collide with $\Sigma$ upon parameter variation). 
              }
% For this, the blowup transformation \eqref{blowup0} is combined with additional blowup transformations in order to deal with lack of hyperbolicity. In this paper, we will adapt the blowup approach to study \eqref{xypmodel}.

% In \cite{}, the present author studied the situation where the transversality condition of $X_\pm$ with $\Sigma$ is violated; this leads to loss of hyperbolicity of $\widehat V$. We illustrate the setting of \cite{} in \figref{visible}. The detailed analysis of this situation, allowed the present author to prove existence of a saddle-node bifurcation.

\subsection{A blowup approach for \eqref{xypmodel}}\seclab{sec:xypblowup}
To study \eqref{xypmodel} with $\alpha>0$, we now proceed as in \secref{sec:xysm}.  First, however, due to the time scale separation of \eqref{xypmodel}, we introduce a fast time and augment trivial equations for $\epsilon$ and $\alpha\ge 0$:
\begin{equation}\eqlab{xypmodelfast}
\begin{aligned}
 x' &=\epsilon \alpha X(z,p),\\
 y' &=\epsilon \alpha Y(z,p),\\
 p' & =\phi\left(\frac{y+  \alpha p}{\epsilon\alpha }\right)-p,\\
 \epsilon'&=0,\\
 \alpha'&=0.
\end{aligned}
\end{equation}
 Now, since \eqref{xypmodel} is PWS with respect to both $\epsilon\rightarrow \response{0}$ and $\alpha\rightarrow 0$, we anticipate that we will need to perform two blowup transformation. In light of this, \secref{sec:xysm} suggests that we should consider \eqref{xypmodelfast} with respect to an even faster time-scale, corresponding to multiplying the right hand side by $\epsilon \alpha$ again. But notice, despite the similarities, there is also a fundamental difference between \eqref{xypmodelfast} and \eqref{xyeps} insofar that the discontinuity set of \eqref{xypmodelfast} for $\epsilon,\alpha\rightarrow 0$ is $y=0$, $x\in \Sigma$, $p\in \mathbb R$, but the discontinuity only enters the $p$-equation. To avoid too many multiplications and subsequent divisions by the same quantities, we will therefore proceed more ad hoc in the following; in fact, the analysis will show that it is only necessary to multiply the right hand side of \eqref{xypmodelfast} by $\epsilon$ in order gain smoothness.
%  We therefore consider \eqref{xypmodelfast} with $\epsilon'=\alpha'=0$ augmented in terms of an even faster time that corresponding to multiplication by $\epsilon$:
%  \begin{equation}\eqlab{xypmodelfast}
% \begin{aligned}
%  x' &=\epsilon^2 \alpha X(z,p),\\
%  y' &=\epsilon^2 \alpha Y(z,p),\\
%  p' & =\epsilon\left(\phi\left(\frac{y+  \alpha p}{\epsilon \vert\alpha\vert }\right)-p\right).
%  \epsilon'&=0,\\
%  \alpha'&=0.
% \end{aligned}
% \end{equation}

 Apriori it is not obvious how the two blowup transformations should be organized and whether the order is important, but leaving $\epsilon$ and $\alpha$ as independent small parameters, we will show that it is \response{convenient} to first blowup with respect to $\alpha$. (See the end of the section for a further discussion of this.)
 We therefore first apply the following blowup transformation
 \begin{align}
  (r,(\bar y,\bar \alpha))\mapsto \begin{cases}
                                     y &=-r\bar \alpha p+r\bar y,\\
                                     \alpha &= r\bar \alpha,
                                    \end{cases}\eqlab{blowupcyl1}
 \end{align}
 where $r\ge 0$, $(\bar y,\bar \alpha)\in S^1$,
 leaving all other variables $x$, $p$ and and $\epsilon$ untouched. In this way, we gain smoothness with respect to $\alpha\ge 0$ for any $\epsilon>0$. Indeed, the transformation \eqref{blowupcyl1} gives a smooth vector-field $\overline V$ for $\epsilon>0$ on $(x,p,\epsilon,(r,(\bar y,\bar \alpha))$, with $r\ge 0$, $(\bar y,\bar \alpha)\in S^1$, $\bar \alpha\ge 0$, by pull-back of \eqref{xypmodelfast} (without the need for further transformation of time, as the division by $\bar \epsilon$ in \lemmaref{blowup0}). Notice specifically, that using \eqref{blowupcyl1} the first term in the $p$-equation in \eqref{xypmodelfast} becomes:
 \begin{align}
  \phi\left(\frac{y+  \alpha p}{\epsilon \alpha }\right) = \phi\left(\frac{\bar y}{\epsilon \bar \alpha}\right),\eqlab{termp}
 \end{align}
 which for each $\epsilon>0$ is smooth on $(\bar y,\bar \alpha) \in S^1\cap \{\bar \alpha\ge 0\}$.  However, there is still a lack of smoothness along $(\bar \alpha,\bar y)=(1,0)$ as $\epsilon\rightarrow 0$. To deal with this, we perform a second blowup transformation:
 \begin{align}
  (\nu,(\bar{\bar y},\bar \epsilon))\mapsto \begin{cases}
                                     \bar \alpha^{-1} \bar y &=\nu \bar{\bar y},\\
                                     \epsilon &= \nu\bar \epsilon,
                                    \end{cases}\eqlab{blowupcyl2}
 \end{align}
 where $\nu\ge 0$, $(\bar{\bar y},\bar \epsilon)\in S^1$. Indeed, in this way, \eqref{termp} becomes regular
  \begin{align}
  \phi\left(\frac{y+  \alpha p}{\epsilon \vert\alpha\vert }\right) = \phi\left(\frac{\bar{\bar y}}{\bar \epsilon}\right),\nonumber
 \end{align}
 under assumption \ref{assumption:2}.
  We illustrate the blowup transformations in \figref{pws_blowup12}.

 \begin{figure}
\begin{center}
\includegraphics[width=.85\textwidth]{./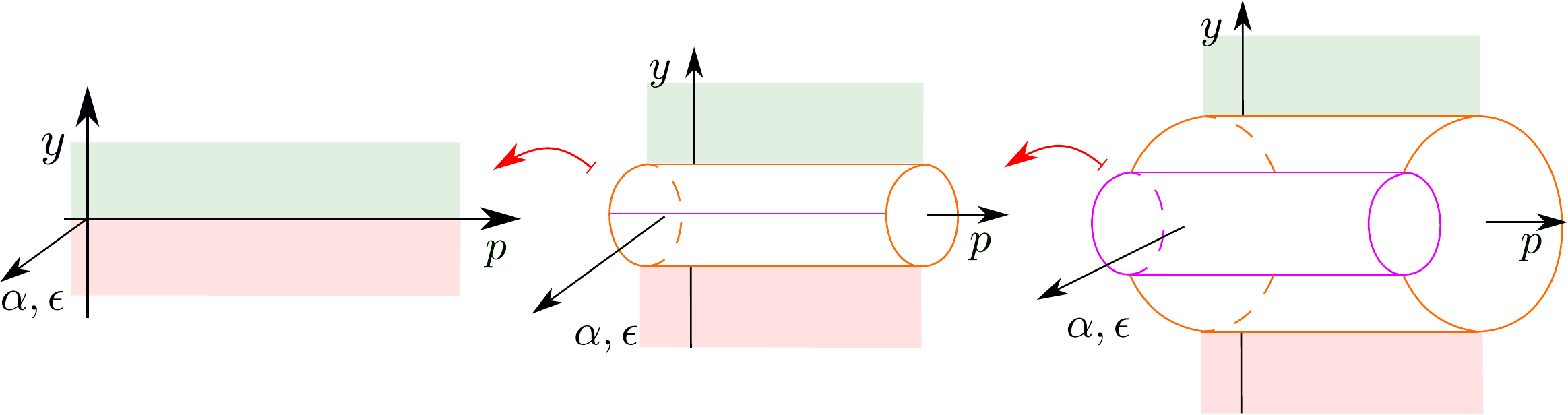}
 \end{center} 
\caption{Illustration of the two consecutive blowup transformations relating to \eqref{xypmodel}. The first cylinder corresponds to \eqref{blowupcyl1}. The second one corresponds to \eqref{blowupcyl2}. }
 \figlab{pws_blowup12}
% \caption{$q=r_2q_2$, $x=r_2x_2$, $\epsilon=r_2^2$}
              \end{figure}

              As described in \secref{sec:xysm} in the context of regularization by smoothing, we will also use different directional charts in the analysis of \eqref{xypmodel} to cover the two cylinders. In particular, to cover the first cylinder, defined by \eqref{blowupcyl1} and $(\bar y,\bar \alpha)\in S^1$, we (re-)consider the two charts defined by: 
\begin{align}
(\bar y=1)_1:&\quad \begin{cases}
                  y &=-r_1\alpha_1 p+r_1,\\
                  \alpha &= r_1 \alpha_1,
                 \end{cases}\eqlab{y1}\\
                 (\bar \alpha=1)_2:&\quad \begin{cases}
                  y &=-r_2 p+r_2 y_2,\\
                  \alpha &= r_2.
                 \end{cases}\eqlab{a1}
                 \end{align}
                                  We will refer to these charts by $(\bar y=1)_1$ and $(\bar \alpha=1)_2$, respectively, henceforth. %Here the subscripts refer to the subscripts used on the corresponding coordinates $(r_1,\alpha_1)$ and $(r_2,y_2)$, respectively.                  
                                  In principle, we will also need the chart $(\bar y=-1)_3$ that covers $\bar y<0$ of cylinder, but since the analysis there is identical to the analysis in the $(\bar y=1)_1$-chart we skip this. The change of coordinates between the charts $(\bar y=1)_1$ and $(\bar \alpha=1)_2$ are given by the expressions:
                                  \begin{align}
                                   r_1 = r_2 y_2,\quad \alpha_1=y_2^{-1}.\eqlab{cc12}
                                  \end{align}

%                  Clearly, the two charts $(\bar y=1)_1$ and $(\bar \alpha=1)_2$ overlap on $\bar y>0$, corresponding to $y_2>0$ and $\alpha_1$, respectively, and the change of coordinates are given by
%                  \begin{align*}
%                   r_2 &=r_1\alpha_1,\\
%                   y_2 &=\alpha_1^{-1},
%                  \end{align*}
% there. 
                 
                 Subsequently, to cover the second cylinder due to \eqref{blowupcyl2}, we notice that in $(\bar \alpha=1)_2$, \eqref{blowupcyl2} becomes
                 \begin{align*}
                 y_2 &= \nu \bar{\bar y},\\
                 \epsilon &=\nu \bar \epsilon,
                 \end{align*}
for $\nu\ge 0$, $(\bar{\bar y},\bar\epsilon)\in S^1$. Therefore we define the following charts
                 \begin{align}
                 (\bar \alpha=1,\bar{\bar y}=1)_{21}:&\quad \begin{cases}
                  y_2 &=\nu_{21},\\
                                 \epsilon &= \nu_{21} \epsilon_{21},                                
                 \end{cases}\eqlab{y21}\\
                 (\bar \alpha=1,\bar \epsilon=1)_{22}:&\quad \begin{cases}
                  y_2 &=\nu_{22} y_{22},\\
%                   \alpha &= r_2,\\
                  \epsilon &= \nu_{22}.
                 \end{cases} \eqlab{eps22}
\end{align}
% $\ne 1$\eqlab{cc12}
In both charts, we have $\alpha=r_2$. (The chart corresponding to $\bar{\bar y}=-1$ is again similar to $\bar{\bar y}=1$ and therefore left out.) %Notice that the chart $(\bar \alpha=1,\bar \epsilon=1)_{22}$ directly relates to the scaling in \eqref{y22scaling}.
The change of coordinates are given by the expressions
\begin{align}
 \nu_{21} = \nu_{22}y_{22},\quad \epsilon_{21} = y_{22}^{-1}, \eqlab{cc2122}
\end{align}
valid for $y_{22}>0$.

 The two blowup transformations relate to two important scalings. Firstly, in the $(\bar \alpha=1)_2$-chart, we have %$y=-\alpha p + \alpha y_2$ upon eliminating $r_2$ and consequently
%  let $(y_2,r_2)$ be defined by $r_2=r\bar \alpha$, $y_2 = \bar \alpha^{-1} \bar y$. Then
 \begin{equation}\eqlab{y2r2}
 \begin{aligned}
  y &= -\alpha p + \alpha y_2,
 \end{aligned}
 \end{equation}
 upon eliminating $r_2$ and consequently
%  and 
\begin{align}
  \phi\left(\frac{y+  \alpha p}{\epsilon \vert\alpha\vert }\right) = \phi\left(\frac{y_2}{\epsilon}\right).\eqlab{term2}
 \end{align}
Through the coordinate $y_2$, we therefore zoom in on a $\mathcal O(\alpha)$-neighborhood of $y=0$. From \eqref{termp}, we understand that the resulting vector-field $V_2$ in terms of $(x,y_2,p,r_2,\epsilon)$ is itself PWS in the limit $\epsilon\rightarrow 0$. 
%  More precisly, let $y_2:=\bar \alpha^{-1} \bar y$. Then in terms of the coordinates $(x,y_2,p,\epsilon,\alpha)$ we obtain a vector-field $V_2$ as the local version of $\overline V$. It is discontinuous along $y_2=0$ for $\epsilon=0$, but regular with respect to $\alpha\ge 0$. 
Consequently, following \secref{sec:xysm} and the results for gaining smoothness of \eqref{xyeps}, we see that through \eqref{blowupcyl2}, we obtain a smooth vector-field $\overline V_2$ on $(x,p,\alpha,\nu,(\bar{\bar y},\bar \epsilon))$, $\nu \ge 0$, $\bar \epsilon\ge 0$, by pullback of $\epsilon V_2$. This system has $\bar \epsilon$ as a common factor and it is therefore $\widehat V_2:=\bar \epsilon^{-1} \overline V_2$ that we will study \response{(please compare with \lemmaref{blowup0})}.

Next, we emphasize that in the $(\bar \alpha=1,\bar \epsilon=1)_{22}$-chart, we have $y_2=\epsilon y_{22}$ upon eliminating $\nu_{22}$ and consequently
% let $(y_{22},\nu_{212})$ denote the coordinates of the $\bar \epsilon=1$ chart, related to \eqref{blowupcyl2}, defined by $\nu_{212}=\nu \bar \epsilon$, $y_{22}=\bar \epsilon^{-1} \bar \alpha^{-1} \bar y$. Then $y_2=\nu_{212} y_{22}$, $\epsilon=\nu_{212}$ or simply through \eqref{y2r2}
\begin{align}
y&=-\alpha p+\alpha \epsilon y_{22}.\eqlab{y22scaling}
% \epsilon &= \nu_{212},
\end{align}
Therefore we also have that
\begin{align}
  \phi\left(\frac{y+  \alpha p}{\epsilon \vert\alpha\vert }\right) = \phi\left(y_{22}\right),\eqlab{termp2}
 \end{align}
 and we see that coordinate $y_{22}$ provides a zoom on a $\mathcal O(\alpha\epsilon)$-neighborhood of $y=-\alpha p$. %From our previous analysis of smooth systems approaching nonsmooth ones, we understand that the resulting system in terms of $(x,y_{22},p,r_2,\nu_{212})$ is slow-fast.
 
 It is obvious that the scaling defined by \eqref{y22scaling} is important; this captures the region where the first term in the $p$-equation \eqref{xypmodelfast} changes by an $\mathcal O(1)$ amount with respect to $\epsilon,\alpha\rightarrow 0$. It also seems reasonable that the scaling \eqref{y2r2} is useful, but it not obvious why the scaling defined by 
 \begin{align}\eqlab{y1sc}
 y=-\alpha p+\epsilon y_{1},
 \end{align}seems to play no role. To see this we have to insert this expression into \eqref{xypmodelfast}. This gives
 \begin{align*}
  \epsilon \dot y_1 &= \alpha \left(\phi\left(\frac{y_1}{\alpha}\right)-p\right)+\epsilon \alpha Y(x,y,p).
 \end{align*}
Here we would like to divide by $\epsilon$ on the left hand side, but for this we will have to make assumptions on $\epsilon$ relative to $\alpha$ (i.e. whether $\epsilon^{-1}\alpha$ is small, moderate or large). If we insert \eqref{y2r2} instead, then we obtain 
\begin{align*}
  \alpha \dot y_2 &= \alpha \left(\phi\left(\frac{y_2}{\epsilon}\right)-p\right)+\epsilon \alpha Y(x,y,p).
 \end{align*}
Here $\alpha$ is a common factor on both sides which can therefore be divided out. This explains why \eqref{y2r2} and \eqref{y22scaling} are both important in our analysis and why \eqref{y1sc} will not be used.

\response{Finally, we emphasize that, while it might seem tempting to include $y,\,\epsilon$ and $\alpha$ in a single spherical blowup transformation, this only works well upon imposing specific order dependency on $\epsilon$ and $\alpha$. In contrast, our approach based on two separate blowup transformations allows us to consider the small parameters $0<\epsilon,\alpha\ll 1$ independently and thus cover a full neighborhood of $(\epsilon,\alpha)=(0,0)$. }
\subsection{Notation}\seclab{notation}
Throughout the paper we follow the convention that a set $S$ in the blowup space is given a subscript when viewed in a chart. I.e. the subset of a set $S$, which is visible in the chart $(\bar y=1)_1$, will be called $S_1$. Similarly, $S_2$ in the chart $(\bar \alpha=1)_2$. In the charts, $(\bar \alpha=1)_2$ and $(\bar \alpha=1,\bar \epsilon=1)_{22}$,  $r_2=\alpha$ and $\nu_{22}=\epsilon$ are constants, so when working in these charts, it is most \response{convenient} to eliminate $r_2$ and $\nu_{22}$, respectively, and return to treat $\epsilon$ and $\alpha$ as parameters. The only important thing to keep in mind in regards to this, is that when we change coordinates (e.g. through \eqref{cc12} and \eqref{cc2122}) then this has to be viewed in the appropriate space. For example, in the $(\bar \alpha=1)_2$-chart, we will obtain a slow manifold $S_{\epsilon,\alpha,2}$ in the $(x,y_{22},p)$-space. When writing this in the $(\bar y=1)_1$-chart, we first have to embed $S_{\epsilon,\alpha,2}$ in the extended $(x,y_{22},p,\epsilon,\alpha)$-space in the obvious way. We can then apply the change of coordinates \eqref{cc12} with $r_2=\alpha$ and obtain $S_{\epsilon,\alpha,1}$. We will henceforth perform similar change of coordinates without further explanation, moving back and forth between different spaces, treating $\epsilon$ and $\alpha$ as parameters whenever it is \response{convenient} to do so. 

\response{
% \subsection{Notation}\seclab{notation}
Moreover, when illustrating phase space diagrams,
we follow the convention of using different arrows on
orbits to separate slow and fast directions. In particular, fast orbits are indicated by double-headed arrows, while slow orbits are 
indicated by single-headed ones. More generally, we adapt a similar notation to separate hyperbolic directions (double-headed arrows) from center/nonhyperbolic
directions (single-headed arrows).}

\section{Main results in the case of stable sliding }\seclab{main1}
In this section, we will use the blowup approach, outlined in the previous section, to describe the dynamics of \eqref{xypmodel} under the assumption \ref{assumption:4} of stable sliding. 
More specifically, we will provide a detailed study of the dynamics in each of the charts $(\bar y=1)_1$, $(\bar \alpha=1)_2$, $(\bar \alpha=1,\bar{\bar y}=1)_{21}$, $(\bar \alpha=1,\bar \epsilon=1)_{22}$. 
In summary, this analysis reveals the existence of two critical manifolds $C$ and $M$; these are essentially related to the blue and red dotted curves in \figref{Fgraph}. Whereas $C$ extends onto the first blowup cylinder, obtained by \eqref{blowupcyl1}, $M$ lies on the subsequent blowup cylinder, obtained by \eqref{blowupcyl2}. Moreover, $C$ is normally attracting and enables an extension of $S_{\epsilon,\alpha}$ in \lemmaref{Zpreg} up to $y=c\alpha$, for $c>0$ and $\epsilon,\alpha>0$ small enough. On the other hand, $M$ is normally repelling. Using the geometric representation used in \figref{pws_blowup12}, we illustrate the findings in \figref{pws_hysteresis_final0}. On both $C$ and $M$, we obtain a desingularized slow flow; the direction of this flow is also indicated in the figure but we emphasize that $x$ (not shown) is a constant for this reduced flow. This leads to a singular cycle $\Gamma_x$ for each $x\in \Sigma$, which we indicate  in \figref{pws_hysteresis_final0} using curves of increased thickness. Due to the desingularization along $C$, $\Gamma_x$ is akin to a relaxation cycle in slow-fast systems. In the full blowup space, the curves $\Gamma_x$ make up a singular cylinder $\Gamma=\{\Gamma_x\}_{x\in \Sigma}$ of dimension $n+1$. The first main result basically says that this singular cylinder persists for $0<\epsilon,\alpha\ll 1$ and that this manifold carries a reduced flow, which  can be related to the Filippov sliding vector-field \eqref{Xsl}.

\begin{figure}
\begin{center}
\includegraphics[width=.7\textwidth]{./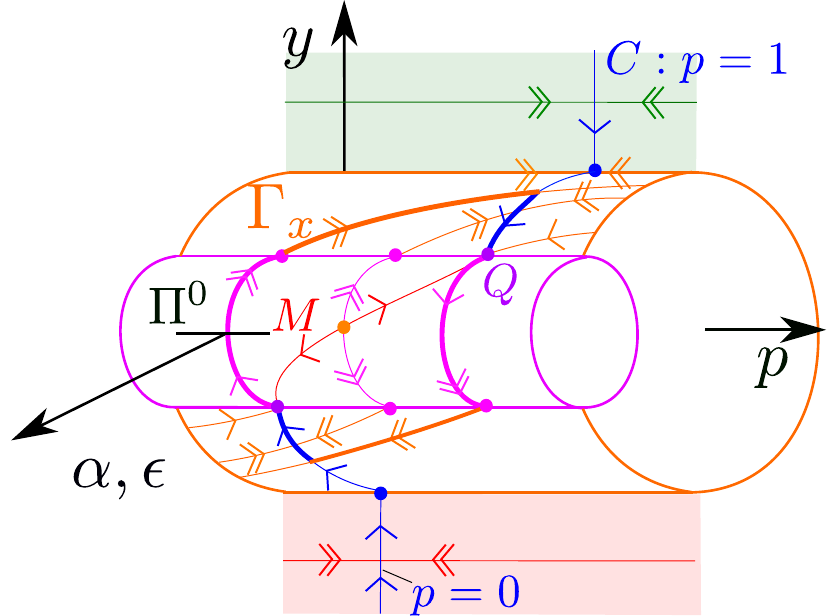}
 \end{center} \caption{Illustration of the dynamics on the blowup system. Our analysis reveals two normally hyperbolic critical manifolds $C$ and $M$. In case of stable sliding, the reduced slow flow on these invariant manifolds reveals closed a singular cycles $\Gamma_x$ (thick curves) for each $x\in \Sigma$. This cycle does not have completely desirable hyperbolicity properties due to the degeneracy at the point $Q$ \response{(indicated by the single-headed arrows, see \secref{notation})}. The slow flow on $M$ is given by $p'=-Y(x,0,p)$ and we illustrate the situation consistent with the assumption \ref{assumption:3}. In this case, we also have that $\dot x = X_{sl}(x)$ on the critical manifold defined by $Y(x,0,p)=0$, see \lemmaref{K22}. }
 \figlab{pws_hysteresis_final0}
% \caption{$q=r_2q_2$, $x=r_2x_2$, $\epsilon=r_2^2$}
              \end{figure}
              
\begin{thm}\thmlab{main1}
%  Consider \eqref{xypmodel}, fix $K>1$, and suppose that $Y_+(x,0)<0,Y_-(x,0)>0$ such that $\Sigma$ consist of stable sliding. Then t
Suppose that assumptions \ref{assumption:1}, \ref{assumption:2} and \ref{assumption:4} all hold true and let $K>1$. Then there exists a $\delta>0$ such that for any  $0<\epsilon,\alpha<\delta$, \eqref{xypmodel} has an invariant cylinder $\mathcal C_{\epsilon,\alpha}$ of dimension ${n+1}$, contained within $y\in (-K\alpha,K\alpha)$. $C_{\epsilon,\alpha}$ is uniformly Lipschitz in the blowup space and converges to $\Gamma$ in the Hausdorff-distance as $\epsilon,\alpha\rightarrow 0$. 

Let $\Pi^0$ be a local section on $\{y=-\alpha p\}$ transverse to $\mathcal C_{\epsilon,\alpha}$ and define $x\mapsto x_+(x,\epsilon,\alpha)$ and $x\mapsto T(x,\epsilon,\alpha)$ to be the corresponding return map and the transition time, respectively. Then
 \begin{align}
  x_+(x,\epsilon,\alpha) &= x+\alpha \left[\vert Y_+(x,0)\vert^{-1} + \vert Y_-(x,0)\vert^{-1}\right]X_{sl}(x) + \mathcal O(\alpha^2,\epsilon^{\frac{k}{k+1}}\alpha),\eqlab{xpXsl}\\
  T_+(x,\epsilon,\alpha) &=\alpha \left[\vert Y_+(x,0)\vert^{-1} + \vert Y_-(x,0)\vert^{-1}\right] + \mathcal O(\alpha^2,\epsilon^{\frac{k}{k+1}}\alpha),
 \end{align}
 where the order of the remainder remains unchanged upon differentiation with respect to $x$.
Specifically, 
\begin{align*}
 \lim_{\epsilon,\alpha\rightarrow 0} \frac{x_+(x,\epsilon,\alpha)-x}{T_+(x,\epsilon,\alpha)} = X_{sl}(x).
\end{align*}

%  The reduced problem on $\mathcal C_{\epsilon,\alpha}$ has a well-defined limit in the $x$-direction for $\epsilon,\alpha\rightarrow 0$ given by
%  \begin{align*}
%   x' = X_{sl}(x).
%  \end{align*}
% for $\epsilon,\alpha=0$. 
 
%  Let $V_{\epsilon,\alpha}$ denote the restriction of the vector-field defined by \eqref{xypmodel} to the invariant cylinder $\mathcal C_{\epsilon,\alpha}$. Then $V_{\epsilon,\alpha}$ is $C^n$-conjugated to a system of the from:
%  \begin{align*}
%   \dot x &= X_{sl}(x)+o(1),\\
%   \epsilon \alpha \dot \theta &= \Theta(x,\theta,\epsilon,\alpha).
%  \end{align*}
%  ??

%  
%  Then $X_{0,0}=\lim_{\epsilon,\alpha\rightarrow 0} X_{\epsilon,\alpha}$ in $C^n$ (?) and
%  \begin{align*}
%   X_{0,0}=X_{sl}. 
%  \end{align*}

\end{thm}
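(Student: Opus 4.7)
The plan is to prove \thmref{main1} by assembling the cylinder $\mathcal C_{\epsilon,\alpha}$ as a Fenichel-type perturbation of the singular cycle $\Gamma$ depicted in \figref{pws_hysteresis_final0}, and then integrating the reduced slow flow on its two slow branches to derive \eqref{xpXsl}.

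First I would establish the two normally hyperbolic critical manifolds $C$ (attracting) and $M$ (repelling) on the two blowup cylinders, together with the fold points connecting them. \lemmaref{Zpreg} already gives an attracting slow manifold $S_{\epsilon,\alpha}$ for $y>0$, and its mirror image handles $y<0$; the first blowup \eqref{blowupcyl1} extends these into the layer $|y|\lesssim \alpha$ as the attracting manifold $C$. The second blowup \eqref{blowupcyl2} and the $(213)$-rescaling \eqref{213} make the fine structure of $y=F(p,\epsilon,\alpha)$ visible and reveal $M$, together with the fold points whose leading-order locations are given by \eqref{fp}. Working primarily in $(\bar\alpha=1)_2$, $(\bar\alpha=1,\bar\epsilon=1)_{22}$, and the $(213)$-chart, Fenichel's theorem yields smooth perturbations $C_{\epsilon,\alpha}$ and $M_{\epsilon,\alpha}$, and direct computation of the reduced slow flow gives $\dot z=Z_+(z)+\mathcal O(\epsilon^k\alpha^k)$ on $C_{\epsilon,\alpha}\cap\{y>0\}$, and the analogue with $Z_-$ on $C_{\epsilon,\alpha}\cap\{y<0\}$, consistent with \lemmaref{Zpreg}.

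Next I would construct the singular cycle $\Gamma_x$ for each $x\in\Sigma$ by concatenating four pieces: a slow descent along $C$ (with $\dot y=Y_+(x,0)<0$ to leading order) from the upper slab boundary to the upper fold near $(y_f^+,p_f^+)$; a fast fiber passage through the fold onto the lower side of the critical structure; a slow ascent with $\dot y=Y_-(x,0)>0$ along the mirror branch; and a fast fiber back up through the lower fold. The resulting family $\Gamma=\{\Gamma_x\}_{x\in\Sigma}$ is a singular $(n+1)$-cylinder, relaxation-oscillation-like in its cross sections. Persistence to an invariant Lipschitz cylinder $\mathcal C_{\epsilon,\alpha}$ of dimension $n+1$ then follows from gluing the Fenichel perturbations of the slow pieces to the fold-transition map provided by \secref{lem:P22}; the size of the corrections on each piece forces Hausdorff convergence $\mathcal C_{\epsilon,\alpha}\to\Gamma$ as $(\epsilon,\alpha)\to 0$, and containment in $|y|<K\alpha$ is built in by the choice of cross-section $\Pi^0$ on $\{y=-\alpha p\}$.

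Having \eqref{xypmodelslowplus} on the slow branches, the return-map and transition-time formulas follow by quadrature. On the branch with $y>0$, $\dot y=Y_+(x,0)+\mathcal O(\alpha,\epsilon^{k/(k+1)})$ gives a traversal time
\begin{equation*}
\tau_+(x,\epsilon,\alpha)=\frac{\alpha}{|Y_+(x,0)|}+\mathcal O(\alpha^2,\epsilon^{k/(k+1)}\alpha),
\end{equation*}
during which $x$ drifts by $\tau_+(x,\epsilon,\alpha)\,X_+(x,0)$ to leading order; an identical computation on the branch with $y<0$ gives $\tau_-=\alpha/|Y_-(x,0)|+\mathcal O(\alpha^2,\epsilon^{k/(k+1)}\alpha)$ with $x$-drift $\tau_-X_-(x,0)$. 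The two fast fiber passages through the folds contribute only $\mathcal O(\epsilon^{k/(k+1)}\alpha)$ by \secref{lem:P22}. Summing and invoking the algebraic identity
\begin{equation*}
\frac{X_+(x,0)}{|Y_+(x,0)|}+\frac{X_-(x,0)}{|Y_-(x,0)|}=\bigl(|Y_+(x,0)|^{-1}+|Y_-(x,0)|^{-1}\bigr)X_{sl}(x),
\end{equation*}
which is immediate from \eqref{Xsl} and the sign convention of assumption \ref{assumption:4}, yields \eqref{xpXsl}. Control of the derivatives in $x$ is inherited from the $C^1$-smoothness of the Fenichel manifolds $C_{\epsilon,\alpha}$, $M_{\epsilon,\alpha}$ and of the fold-transition map in \secref{lem:P22}.

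The main technical obstacle is the passage through the folds, which is singular in the original variables and only becomes regular after the combined blowup \eqref{blowupcyl1}--\eqref{blowupcyl2} and the further $(213)$-rescaling. Establishing a transition map that is Lipschitz (indeed $C^1$) in $x$ with remainders of the claimed size $\mathcal O(\epsilon^{k/(k+1)}\alpha)$ uniformly in $x\in\Sigma$ is precisely the content of the lemma proven in \secref{lem:P22}; once that lemma is available, the rest of the proof amounts to the concatenation and quadrature outlined above.
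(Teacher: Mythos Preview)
Your outline follows the paper's strategy closely: build the singular cycle $\Gamma_x$ from slow pieces on $C$ and fast jumps through the folds, invoke the fold-transition lemma of \secref{lem:P22}, and compute $x_+$ and $T_+$ by quadrature on the slow branches together with the identity relating $|Y_+|^{-1}X_++|Y_-|^{-1}X_-$ to $X_{sl}$.

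There is one step where you are vaguer than the paper and where ``gluing Fenichel perturbations'' does not quite suffice. The singular cycle $\Gamma_x$ passes through the degenerate point $Q$ (the interface of $C$ and $M$), where normal hyperbolicity is lost; the paper stresses that $\Gamma_x$ ``does not have completely desirable hyperbolicity properties'' there. Because of this, the paper does \emph{not} obtain $\mathcal C_{\epsilon,\alpha}$ by directly gluing slow-manifold pieces. Instead, \lemmaref{lem:P22} gives the full return map $\mathcal P_{22}:\Pi^0_{22}\to\Pi^0_{22}$ (not just a fold-transition map) in the explicit form $x_+=x+\alpha[(1-p)|Y_+|^{-1}X_++|Y_-|^{-1}X_-]+\mathcal O(\alpha^2,\epsilon^{k/(k+1)}\alpha)$, $p_+=s_{22}(x,\epsilon,\alpha)+\mathcal O(e^{-c/\epsilon})$ with $s_{22}=\mathcal O(\epsilon^{k/(k+1)})$. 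The invariant cylinder then comes from applying the invariant-curve theorem \cite[Theorem A.1]{szmolyan2004a} to $\mathcal P_{22}$ (\propref{invcur}), after which one flows the resulting graph $p=c_{22}(x,\epsilon,\alpha)$ forward to get $\mathcal C_{\epsilon,\alpha}$. This is the step that justifies the uniform Lipschitz statement and the Hausdorff convergence; you should make it explicit.

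Once the invariant curve is in hand, your quadrature argument and the paper's reduction of $\mathcal P_{22}$ to that curve yield exactly the same computation: on the invariant graph $p=\mathcal O(\epsilon^{k/(k+1)})$, the factor $(1-p)$ becomes $1+\mathcal O(\epsilon^{k/(k+1)})$, producing the claimed remainders. So your mechanism for \eqref{xpXsl} is correct; only the existence argument for $\mathcal C_{\epsilon,\alpha}$ needs the return-map route rather than direct gluing.
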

\thmref{main1} generalizes \propref{smZsl} to the framework of \eqref{xypmodel} with $0<\epsilon,\alpha\ll 1$ without any order dependency on $\epsilon$ and $\alpha$.  
%  We illustrate the result in \figref{}. 

\response{We firmly believe that our approach can be modified to obtain a similar result for the Sotomayor-Teixeira regularization functions, see \eqref{Sotomayor}. Here the role of $k$ will be replaced by the order of smooothness of $\phi$ at $\pm 1$ (assuming finite smoothness), see \cite[p. 10]{reves_regularization_2014} (where $k$ is called $p$). In fact, as discussed in \cite[Section 3.1 and App. A]{kristiansen2017a}, the Sotomayor-Teixeira regularization functions are somewhat easier to handle in general as they do not require compactification. }
% but on the perturbation of $C$ we obtain a reduced problem where $\dot x\approx \mathcal O(\alpha)$ upon passing to a slow time. 

We prove \thmref{main1} in the following. In \secref{bary1}--\secref{sec:y21}, we first analyze the dynamics in each of the charts $(\bar y=1)_1$, $(\bar \alpha=1)_2$, $(\bar \alpha=1,\bar \epsilon=1)_{22}$, $(\bar \alpha=1,\bar{\bar y}=1)_{21}$, respectively. 
 In \secref{sec:finalbd}, we then collect the findings in the local charts into a global result, see \figref{pws_hysteresis_final0}. This includes a detailed description of $\Gamma_x$. Following this in \secref{sec:invcurve}, we first present a description of the return map defined on the section $\Pi^0_{22}:\,y_{22}=0$ transverse to $\Gamma$ in the $(\bar \alpha=1,\bar \epsilon=1)_{22}$-chart, see \lemmaref{lem:P22}. %Upon relying on a general result \cite[Theorem A.1]{szmolyan2004a}, the form of the return map in \lemmaref{lem:P22} allows us in \propref{invcur} to conclude the existence of an invariant curve of the return mapping. 
  The description of this mapping rests upon a subsequent blowup transformation of the degenerate point $Q$, which sits at the interface between $C$ and $M$, with the purpose of gaining hyperbolicity. The details of this blowup analysis of $Q$ and the proof of \lemmaref{lem:P22} are delayed to \secref{lem:P22}. \response{(The main idea of the proof of \thmref{main1} can be understood without this blowup)}. Before this in \secref{main1proof}, we show how \lemmaref{lem:P22} implies \thmref{main1}. Here we rely on a general result \cite[Theorem A.1]{szmolyan2004a} on the existence of an invariant curve for a return mapping. 
\subsection{Analysis in the $(\bar y=1)_1$-chart}\seclab{bary1}
% Consider \eqref{xypmodelfast} in terms of a faster time corresponding to multiplication of the right hand side by $\epsilon$. Th
In this chart, we insert \eqref{y1} into \eqref{xypmodelfast} and obtain
\begin{equation}\eqlab{y1eqns}
\begin{aligned}
 x' &=\epsilon r_1 \alpha_1 X_1(x,r_1,p,\alpha_1,\epsilon),\\
 r_1' &=r_1\alpha_1 \left(1-\phi_+\left(\epsilon\alpha_1\right)\epsilon^k\alpha_1^k-p+\epsilon Y_1(x,r_1,p,\alpha_1,\epsilon)\right),\\
 p' &=1-\phi_+\left(\epsilon\alpha_1\right)\epsilon^k\alpha_1^k-p,\\
 \alpha_1' &= -\alpha_1^2 \left(1-\phi_+\left(\epsilon\alpha_1\right)\epsilon^k\alpha_1^k-p+\epsilon Y_1(x,r_1,p,\alpha_1,\epsilon)\right),
%  \epsilon'&=0,
\end{aligned}
\end{equation}
and $\epsilon'=0$, 
using assumption \ref{assumption:2}. This system is the local form of $\overline V$ in the $(\bar y=1)_1$-chart.  As already advertised above, we will henceforth treat $\epsilon$ as parameter in this chart. In \eqref{y1eqns},  we have defined 
\begin{align*}
 X_1(x,r_1,p,\alpha_1,\epsilon):= X(x,y,p),\quad Y_1(x,r_1,p,\alpha_1,\epsilon):= Y(x,y,p),
\end{align*}
with $y=-\alpha p +r_1$ and $\alpha=r_1\alpha_1$ on the right hand sides.
The system \eqref{y1eqns} is a slow-fast system in nonstandard form with respect to the small perturbation parameter $\epsilon$. Indeed for $\epsilon=0$, the set $C_1$ defined by $p=1$ is a critical manifold of the layer problem:
\begin{align*}
 x' &=0,\\
 r_1' &=r_1\alpha_1 \left(1-p\right),\\
 p' &=1-p,\\
 \alpha_1' &= -\alpha_1^2 \left(1-p\right),
\end{align*}
see illustration in \figref{pws_hysteresis_1}. The linearization around any point in $C_1$ produces $-1$ as the only nonzero eigenvalue. $C_1$ is therefore normally attracting.
\begin{lemma}\lemmalab{Seps1}
 Consider any compact submanifold $S_{0,1}$ of $C_1$, defined as the graph $p=1$ over a compact domain $D_1$ in the $(x,r_1,\alpha_1)$-space. Then for all $0<\epsilon\ll 1$, there exists a locally invariant slow manifold $S_{\epsilon,1}$, which is also a smooth graph over $D_1$:
 \begin{align*}
  p = P_1(x,r_1,\alpha_1,\epsilon), 
 \end{align*}
 where
 \begin{align*}
  P_1(x,r_1,\alpha_1,\epsilon)=1-\phi_+\left(\epsilon\alpha_1\right)\epsilon^k\alpha_1^k + \mathcal O(\epsilon^{k+1}  \alpha_1^{k+1}).
 \end{align*}

\end{lemma}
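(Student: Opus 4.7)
The plan is to view \eqref{y1eqns} as a slow--fast system in nonstandard form with fast variable $p$ and singular parameter $\epsilon$, and apply Fenichel's theorem exactly as in \lemmaref{Zpreg}. Setting $\epsilon=0$ in \eqref{y1eqns} (and noting that the assumption~$k\geq 1$ in assumption~\ref{assumption:2} makes the term $\phi_+(\epsilon\alpha_1)\epsilon^k\alpha_1^k$ vanish there) yields the layer problem on the compact domain $D_1$, for which $C_1=\{p=1\}$ is a critical manifold. Linearization at a point of $C_1$ gives the single nonzero eigenvalue $-1$, transverse to $C_1$, with the remaining eigenvalues all zero; hence $C_1$ is normally hyperbolic and attracting, uniformly on $D_1$. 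Fenichel's theorem \cite{fen3,jones_1995} then produces, for all $0<\epsilon\ll 1$, a smooth locally invariant slow manifold $S_{\epsilon,1}$ that is $\mathcal O(\epsilon)$-close to $S_{0,1}$ and can be written as a graph $p = P_1(x,r_1,\alpha_1,\epsilon)$ over $D_1$.

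The remaining task is to extract the stated leading-order form of $P_1$. For this I would use the invariance equation: substituting $p=P_1$ into the $p'$-equation and equating it with the total derivative $\partial_x P_1\cdot x' + \partial_{r_1}P_1\cdot r_1' + \partial_{\alpha_1}P_1\cdot\alpha_1'$ computed along \eqref{y1eqns} gives
\begin{equation*}
\partial_x P_1\cdot x' + \partial_{r_1}P_1\cdot r_1' + \partial_{\alpha_1}P_1\cdot\alpha_1' \;=\; 1-\phi_+(\epsilon\alpha_1)\epsilon^k\alpha_1^k - P_1.
\end{equation*}
To leading order one balances the right-hand side at zero, which immediately gives $P_1 = 1 - \phi_+(\epsilon\alpha_1)\epsilon^k\alpha_1^k + R$ for some correction $R$. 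The natural next step is to insert this ansatz on both sides and estimate the size of $R$.

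The key observation for the error bound is that on $S_{\epsilon,1}$ we have $1-p = \mathcal O(\epsilon^k\alpha_1^k)$, and inspection of \eqref{y1eqns} then shows $r_1' = \mathcal O(\epsilon^k\alpha_1^{k+1}) + \mathcal O(\epsilon\alpha_1)$ and $\alpha_1' = \mathcal O(\epsilon^k\alpha_1^{k+2}) + \mathcal O(\epsilon\alpha_1^2)$, while $x' = \mathcal O(\epsilon\alpha_1)$. Combined with the fact that differentiating the explicit leading term $1-\phi_+(\epsilon\alpha_1)\epsilon^k\alpha_1^k$ in $\alpha_1$ produces a factor $\mathcal O(\epsilon^k\alpha_1^{k-1})$, the left-hand side of the invariance equation is $\mathcal O(\epsilon^{k+1}\alpha_1^{k+1})$, forcing $R = \mathcal O(\epsilon^{k+1}\alpha_1^{k+1})$. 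This yields the claimed asymptotics for $P_1$.

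The main subtlety I anticipate is purely bookkeeping: one must carefully track how the small factor $1-p$ on the slow manifold feeds back into $r_1'$ and $\alpha_1'$, so that all the right-hand-side terms that contribute to the remainder are of the advertised order $\mathcal O(\epsilon^{k+1}\alpha_1^{k+1})$ rather than just $\mathcal O(\epsilon^{k+1})$. No genuine dynamical difficulty arises here because normal hyperbolicity is uniform on $D_1$; the whole content of the lemma beyond Fenichel is the explicit asymptotic form of the graph, which is obtained by this straightforward invariance-equation computation.
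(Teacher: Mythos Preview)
Your approach is correct and is precisely what the paper intends: its proof reads in full ``Direct calculation,'' and what you have written is that calculation spelled out --- Fenichel for existence of the graph, then the invariance equation to extract the asymptotics. One small bookkeeping remark: when you write $r_1' = \mathcal O(\epsilon^k\alpha_1^{k+1}) + \mathcal O(\epsilon\alpha_1)$, note that on the slow manifold the bracket in $r_1'$ collapses to $-R+\epsilon Y_1$, so the first contribution is really $\mathcal O(\alpha_1 R)$; since the $\epsilon Y_1$ term dominates this is harmless, but strictly speaking one closes the estimate by a bootstrap (start from $R=\mathcal O(\epsilon)$ from Fenichel, then feed back).
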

\begin{proof}
%  Straightforward.
Direct calculation.
\end{proof}
For any $\alpha> 0$ small enough, we let $S_{\epsilon,\alpha,1}$ denote the constant $\alpha$-section, defined by $\alpha = r_1\epsilon_1$, of the center manifold $S_{\epsilon,1}$. The resulting invariant manifold $S_{\epsilon,\alpha,1}$ provides an extension of the slow manifold $S_{\epsilon,\alpha}$ in \lemmaref{Zpreg} into the $(\bar y=1)_1$-chart. 

\begin{figure}
\begin{center}
\includegraphics[width=.5\textwidth]{./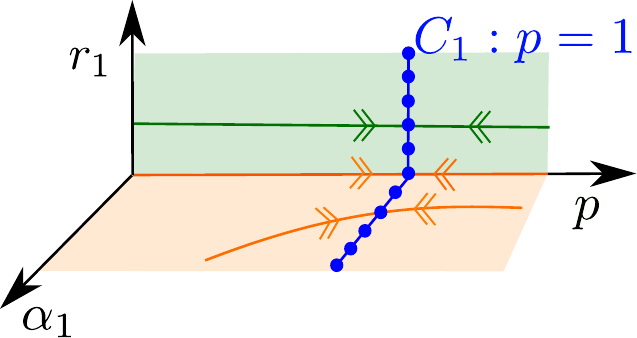}
  \end{center}\caption{Dynamics in the $(\bar y=1)_1$-chart. The manifold $C_1$ is normally hyperbolic. $C_1$ actually extends to any $\alpha=r_1\alpha_1$ but in this picture we illustrate the $\alpha=0$ limit.}
 \figlab{pws_hysteresis_1}
% \caption{$q=r_2q_2$, $x=r_2x_2$, $\epsilon=r_2^2$}
              \end{figure} 
On $S_{\epsilon,1}$, we have a reduced flow defined by 
\begin{equation}\eqlab{xr1alpha1}
\begin{aligned}
 \dot x &= r_1 X_1(x,r_1,P(x,r_1,\alpha_1,\epsilon),\alpha_1,\epsilon),\\
 \dot r_1 &= r_1 \left(Y_1(x,r_1,1-\beta\epsilon^k\alpha_1^k,\alpha_1,\epsilon)+ \mathcal O(\epsilon^{k+1}\alpha_1^{k+1} )\right),\\
 \dot \alpha_1 &= -\alpha_1 \left(Y_1(x,r_1,1-\beta\epsilon^k\alpha_1^k,\alpha_1,\epsilon)+ \mathcal O(\epsilon^{k+1}  \alpha_1^{k+1})\right),
\end{aligned}
\end{equation}
upon desingularization, corresponding division of the right hand side by $\epsilon\alpha_1$. 
\begin{lemma}
Consider \eqref{xr1alpha1}. Then $(x,0,0)$ defines a set of equilibria for all $\epsilon\ge 0$ and it is normally hyperbolic and of saddle type if $Y_+(x,0)\ne 0$.
\end{lemma}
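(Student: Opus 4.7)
The plan is a direct linearization at the candidate equilibrium set $\{(x,0,0)\}$. First I would observe that the right-hand sides of the $x$- and $r_1$-equations in \eqref{xr1alpha1} carry an explicit factor $r_1$, while the $\alpha_1$-equation carries an explicit factor $\alpha_1$; all three velocities therefore vanish whenever $r_1=\alpha_1=0$, independently of $\epsilon$, so $(x,0,0)$ is a set of equilibria for every $\epsilon\ge 0$.

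Next I would compute the Jacobian there. Since $\alpha=r_1\alpha_1$ and $y=-\alpha p+r_1$ both vanish at $r_1=\alpha_1=0$, and since $P_1(x,r_1,0,\epsilon)=1$ exactly by \lemmaref{Seps1} (every correction term carries a factor $\alpha_1^k$), one has
\begin{align*}
X_1(x,0,1,0,\epsilon)=X(x,0,1)=X_+(x,0),\qquad Y_1(x,0,1,0,\epsilon)=Y(x,0,1)=Y_+(x,0),
\end{align*}
using assumption \ref{assumption:3}. Any contribution from a partial derivative of $X_1$ or $Y_1$ to the first two rows of the Jacobian in the $x$- or $\alpha_1$-direction is killed by the explicit factor $r_1$; similarly the $\alpha_1$-equation contributions in the $x$- and $r_1$-directions are killed by $\alpha_1$. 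What remains is the block-triangular form
\begin{align*}
\begin{pmatrix} 0 & X_+(x,0) & 0 \\ 0 & Y_+(x,0) & 0 \\ 0 & 0 & -Y_+(x,0) \end{pmatrix},
\end{align*}
where rows and columns are ordered $(x,r_1,\alpha_1)$ and the top-left zero denotes an $n\times n$ block.

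From this structure I would read off $n$ zero eigenvalues spanning the tangent direction of the equilibrium set, together with two transverse eigenvalues $\pm Y_+(x,0)$ associated (up to the nilpotent off-diagonal entry $X_+(x,0)$) with the $r_1$- and $\alpha_1$-axes. Whenever $Y_+(x,0)\ne 0$ these transverse eigenvalues are nonzero and of opposite sign, which is exactly the statement that the equilibrium set is normally hyperbolic of saddle type as claimed. Under assumption \ref{assumption:4} the sign is $Y_+(x,0)<0$, so the $r_1$-direction is attracting and the $\alpha_1$-direction repelling; this is consistent with $S_{\epsilon,1}$ providing the attracting extension of the slow manifold from \lemmaref{Zpreg} in \secref{bary1}.

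There is no genuine obstacle in the argument, which is really a bookkeeping exercise on which partial derivatives survive multiplication by $r_1$ or $\alpha_1$. The only point worth double-checking is the exact identity $P_1(x,r_1,0,\epsilon)=1$, so that the values $X_+(x,0)$ and $Y_+(x,0)$ appear without any $\epsilon$-correction; this follows directly from the explicit expression for $P_1$ in \lemmaref{Seps1}, since every correction term therein carries the prefactor $\alpha_1^k$.
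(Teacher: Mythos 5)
Your computation is correct and is exactly the verification the paper leaves implicit (the lemma is stated without proof there): the explicit prefactors $r_1$ and $\alpha_1$ make $(x,0,0)$ a set of equilibria, and the Jacobian reduces to the block-triangular matrix you display, with $n$ zero eigenvalues along the set and transverse eigenvalues $\pm Y_+(x,0)$ coming from the value $Y_1(x,0,1,0,\epsilon)=Y_+(x,0)$ (using $P_1(x,r_1,0,\epsilon)=1$ and $Z(z,1)=Z_+(z)$). This gives normal hyperbolicity of saddle type precisely when $Y_+(x,0)\ne 0$, as claimed.
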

The reduced problem is illustrated in \figref{pws_hysteresis_Reduced}. Notice it is identical to what is found by smoothing the PWS system, recall \eqref{xyeps} and \figref{pws_smoothing0}, near the edge of the blowup cylinder defined by \eqref{blowup0}.
% normally hyperbolic and therefore within any compact set there exists a slow manifold 
% \newpage 

              \begin{figure}
\begin{center}
\includegraphics[width=.56\textwidth]{./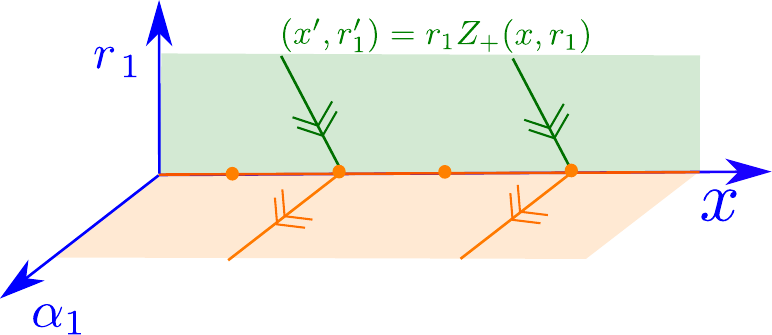}
 \end{center} \caption{Reduced dynamics on the normally hyperbolic critical manifold $C_1$ (blue in \figref{pws_hysteresis_1}) in the case when $Y_+(x,0)<0$. Within $\alpha_1=0$ the system is equivalent to $z'=Z_+$ upon time reparametrization for $r_1=y>0$. The line $r_1=\alpha_1=0$ is normally hyperbolic, each point having stable and unstable manifolds (green and orange, respectively) under the assumption $Y_+(x,0)<0$. In particular, the former invariant manifold lies within $r_1=0$, and along this set $x$ is a constant.}
 \figlab{pws_hysteresis_Reduced}
% \caption{$q=r_2q_2$, $x=r_2x_2$, $\epsilon=r_2^2$}
              \end{figure}
              
              \subsection{Analysis in the $(\bar \alpha=1)_2$-chart}
              In this chart, we insert \eqref{a1} into \eqref{xypmodelfast} and obtain the following equations
              \begin{equation}\eqlab{xypmodelfast2}
\begin{aligned}
 x' &=\epsilon \alpha X(x,-\alpha p+\alpha y_2,p),\\
 y_2' &=\phi\left(\frac{y_2}{\epsilon }\right)-p+ \epsilon Y(x,-\alpha p+\alpha y_2,p),\\
 p' & =\phi\left(\frac{y_2}{\epsilon}\right)-p,
%  \epsilon'&=0,\\
%  \alpha'&=0.
\end{aligned}
\end{equation}
with $\epsilon'=\alpha'=0$. 
Within $\epsilon=0$, we re-discover the manifold of equilibria $C_1$ from the $(\bar y=1)_1$-chart, in the following form:
\begin{align*}
 C_2:\quad p = 1.
\end{align*}
Notice that the dependency on $\alpha$ is regular. In particular, note that $C_2$ is a critical manifold for any $\alpha\ge 0$. We will often view it within $\alpha=0$ (as in \figref{pws_hysteresis_1} since $\alpha=r_1\alpha_1$ in the $(\bar y=1)_1$-chart).

% Seeing that the two systems in the charts $(\bar y=1)_1$ and $(\bar \alpha=1)_2$ are equivalent within $y_2>0$, 
The manifold $C_2$ is also normally attracting for \eqref{xypmodelfast2} and carries the following reduced problem
\begin{align*}
 x' &=0,\\
 y_2'&=Y_+(x,0),
\end{align*}
upon passage to the slow time for $\epsilon=\alpha=0$.

In further details, let $S_{0,\alpha,2}\subset C_2$ be a compact submanifold contained within $y_2>0$ for any $\alpha\ge 0$. Then $S_{0,\alpha,2}$ perturbs to a slow manifold $S_{\epsilon,\alpha,2}$ by Fenichel's theory for $0<\epsilon\ll 1$ and an easy calculation shows that it takes the following graph form:
\begin{align*}
 S_{\epsilon,\alpha,2}:\quad p=P_{2}(y_2,\epsilon,\alpha),
\end{align*}
where
\begin{align*}
 P_{2}(y_2,\epsilon,\alpha)=1-\phi_+(y_2^{-1} \epsilon) y_2^{-k} \epsilon^k +\mathcal O(\epsilon^{k+1}).
\end{align*}
As a slow manifold, $S_{\epsilon,\alpha,2}$ is nonunique but may a copy such that it extends $S_{\epsilon,\alpha,1}$.
The reduced problem on $S_{\epsilon,\alpha,2}$ is given by
\begin{equation}\eqlab{redS2eps}
\begin{aligned}
 x' &= \alpha X(x,-\alpha P_2(y_2,\epsilon,\alpha)+\alpha y_2,P_2(y_2,\epsilon,\alpha)),\\
 y_2' &= Y(x,-\alpha+\alpha y_2,1)+\mathcal O(\epsilon^k)\\
 &=Y_+(x,0)+\mathcal O(\epsilon^k,\alpha).
\end{aligned}
\end{equation}
% Consider \eqref{redS2eps} and suppose that $Y_+(x,0)<0$. Let $\Pi_{2}^3$ and $\Pi_2^4$ be two sections defined by $y_2 = c_{in}$ and $y_2=c_{out}$, respectively, with $0<c_{out}<c_{in}$. 
% \begin{lemma}
% The transition map $\mathcal P_{212}^3$ from $\Pi_2^3$ to $\Pi_2^4$, obtained by the forward flow of \eqref{redS2eps}, is then given by 
% \begin{align*}
% x\mapsto x-\alpha (c_{in}-c_{out}) \frac{X_+(x,0)}{Y_+(x,0)}+\mathcal O(\alpha^2,\alpha \epsilon^k).
% \end{align*}
% The order of the remainder is unchanged upon differentiation with respect to $x$.
% \end{lemma}
% \begin{proof}
%  Straightforward.
% \end{proof}

% \note{The transition time satisfies a similar expression $T=-\alpha (c_{in}-c_{out})Y_+(x,0)^{-1}+\mathcal O(\alpha,\alpha \epsilon^k)$}

 \subsection{Analysis in the $(\bar \alpha =1,\bar \epsilon=1)_{22}$-chart}
 Consider \eqref{xypmodelfast} in terms of a faster time corresponding to multiplication of the right hand side by $\epsilon$. Then by inserting \eqref{eps22} into these equations, we obtain the following
\begin{equation}\eqlab{eps22eqns}
 \begin{aligned}
  \dot x &= \epsilon^2 \alpha X(x,-\alpha p+\epsilon \alpha y_{22},p),\\
   \dot y_{22}&=\phi(y_{22})-p+\epsilon Y(x,-\alpha p+\epsilon \alpha y_{22},p),\\
  \dot p &=\epsilon (\phi(y_{22})-p),
 \end{aligned}
 \end{equation}
  and $\epsilon'=\alpha'=0$. %, upon multiplication of the right hand side by $\epsilon$.
%  Here we have defined $X_2$ and $Y_{212}$ by
%  \begin{align*}
%   X_2(x,\epsilon \alpha y_{22},p) = 
%  \end{align*}
% 
The system \eqref{eps22eqns} is now a slow-fast system with respect to $\epsilon\ge 0$ in standard form, $x$ and $p$ being slow while $y_{22}$ is fast. For $\epsilon=0$, we obtain the following layer problem:
\begin{equation}\eqlab{layer22}
\begin{aligned}
  \dot x &= 0,\\
   \dot y_{22}&=\phi(y_{22})-p,\\
  \dot p &=0,
 \end{aligned}
\end{equation}
 and consequently the set $M_{22}$ defined by $(x,y_{22},\phi(y_{22}))$ is a critical manifold, even for $\alpha>0$. As with $C$, we will often think of $M_{22}$ within $\alpha=0$.

 The manifold $M_{22}$ is normally repelling, since the linearization of \eqref{layer22} around any point $(x,y_{22},\phi(y_{22}))$ produces $\phi'(y_{22})>0$ as a single nonzero eigenvalue, see assumption \ref{assumption:1}. 
 %The reduced problem on $M_{22}$ is given by 
 \begin{lemma}
  Consider any compact submanifold $N_{0,\alpha,22}$ of $M_{22}$, defined as the graph $p=\phi(y_{22})$ over a compact domain $E_{22}$ in $(x,y_{22})$-space for any $\alpha\ge 0$. Then for all $0< \epsilon\ll 1$ there exists a locally invariant slow manifold $N_{\epsilon,\alpha,22}$ which is also a smooth graph over $E_{22}$:
  \begin{align*}
   p = P_{22}(x,y_{22},\epsilon,\alpha),
  \end{align*}
  where
  \begin{align*}
   P_{22}(x,y_{22},\epsilon,\alpha)=\phi(y_{22}) + \epsilon Y(x,0,\phi(y_{22})) + \mathcal O(\epsilon^2,\epsilon \alpha).
  \end{align*}
  The reduced problem on $N_{\epsilon,\alpha,22}$ is given by
  \begin{equation}
  \begin{aligned}
%     \begin{align*}
  x' &= \alpha X(x,-\alpha P_{22}(x,y_{22},\epsilon,\alpha)+\epsilon \alpha y_{22},P_{22}(x,y_{22},\epsilon,\alpha)),\\
%    \dot y_{22}&=\phi(y_{22})-p+\epsilon Y(x,-\alpha p+\epsilon \alpha y_{22},p),\\
  y_{22}'&=-\phi'(y_{22})^{-1} Y(x,0,\phi(y_{22}))+\mathcal O(\epsilon,\alpha),
%  \end{align*}
  \end{aligned}\eqlab{reducedN22}
  \end{equation}
  in terms of a slow time (that corresponds to dividing the right hand side of \eqref{eps22eqns} by $\epsilon^2$). 
 \end{lemma}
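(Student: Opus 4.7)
The plan is to apply Fenichel's theorem to the standard-form slow-fast system \eqref{eps22eqns}, in which $y_{22}$ is the fast variable, $(x,p)$ are slow, and $\alpha\ge 0$ enters regularly as a parameter. The critical manifold $M_{22}=\{p=\phi(y_{22})\}$ is a graph over $(x,y_{22})$ that is independent of $\alpha$, and the linearization of the layer problem \eqref{layer22} at any point of $M_{22}$ has $\phi'(y_{22})>0$ as its only nontrivial eigenvalue by assumption \ref{assumption:1}. Hence $N_{0,\alpha,22}$ is uniformly normally hyperbolic (repelling) over the compact base $E_{22}$ and over a small neighbourhood of $\alpha=0$, and Fenichel's theorem \cite{fen3,jones_1995} yields a smooth family of locally invariant slow manifolds $N_{\epsilon,\alpha,22}$, each of graph form $p=P_{22}(x,y_{22},\epsilon,\alpha)$ with $P_{22}(\cdot,\cdot,0,\alpha)=\phi$.

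To obtain the expansion of $P_{22}$, I would write $P_{22}=\phi(y_{22})+\epsilon P_1(x,y_{22},\alpha)+\mathcal{O}(\epsilon^2)$ and insert it into the invariance identity $\dot p=\partial_x P_{22}\,\dot x+\partial_{y_{22}}P_{22}\,\dot y_{22}$, i.e.
\begin{equation*}
\epsilon(\phi-P_{22})=\partial_x P_{22}\cdot\epsilon^2\alpha X + \partial_{y_{22}}P_{22}\bigl(\phi-P_{22}+\epsilon Y(x,-\alpha P_{22}+\epsilon\alpha y_{22},P_{22})\bigr).
\end{equation*}
Using $-\alpha P_{22}+\epsilon\alpha y_{22}=-\alpha\phi(y_{22})+\mathcal{O}(\epsilon\alpha)$ and matching at order $\epsilon$ collapses the identity to $\phi'(y_{22})\bigl(Y(x,-\alpha\phi(y_{22}),\phi(y_{22}))-P_1\bigr)=0$, so that $P_1=Y(x,0,\phi(y_{22}))+\mathcal{O}(\alpha)$, in agreement with the stated expansion.

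The $\dot x$-equation of the reduced flow is obtained by direct substitution. The delicate step is extracting the leading behaviour of $\dot y_{22}$ on $N_{\epsilon,\alpha,22}$: because $M_{22}$ is dynamically inert at $\epsilon=0$, the leading $\epsilon$-term of $\phi-p$ cancels the $\epsilon Y$ term in $\dot y_{22}=\phi-p+\epsilon Y$, leaving $\dot y_{22}=\mathcal{O}(\epsilon^2,\epsilon\alpha)$. Rather than continue a naive substitution (which would require tracking $P_{22}$ to second order), I would solve the invariance identity directly for $\dot y_{22}$:
\begin{equation*}
\dot y_{22}=\frac{\epsilon(\phi-P_{22})-\epsilon^2\alpha X\,\partial_x P_{22}}{\partial_{y_{22}}P_{22}}=\frac{-\epsilon^2 P_1+\mathcal{O}(\epsilon^3)}{\phi'(y_{22})+\mathcal{O}(\epsilon)}=-\epsilon^2\phi'(y_{22})^{-1}Y(x,0,\phi(y_{22}))+\mathcal{O}(\epsilon^3,\epsilon^2\alpha),
\end{equation*}
where I used $\partial_x P_{22}=\mathcal{O}(\epsilon)$. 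Dividing by $\epsilon^2$ yields \eqref{reducedN22}.

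The main obstacle is precisely this degeneracy of the reduced flow: because the nontrivial dynamics on $M_{22}$ appear only on the $\epsilon^2$-timescale, a naive restriction of $\dot y_{22}$ does not suffice, and one must exploit the invariance identity together with the first-order expansion of $P_{22}$ to recover the coefficient $-\phi'(y_{22})^{-1}Y(x,0,\phi(y_{22}))$. The remaining ingredients — normal hyperbolicity, regular dependence on $\alpha$, and existence and smoothness of the graph — are routine Fenichel machinery.
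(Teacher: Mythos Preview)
Your proposal is correct and follows essentially the same approach as the paper: the paper computes $\dot p=\epsilon(\phi(y_{22})-p)=-\epsilon^2\bigl(Y(x,0,\phi(y_{22}))+\mathcal O(\epsilon,\alpha)\bigr)$ on $N_{\epsilon,\alpha,22}$ using the expansion of $P_{22}$, then inverts $p=\phi(y_{22})+\mathcal O(\epsilon)$ to read off $\dot y_{22}$, which is exactly your invariance-identity computation written more tersely. Your treatment is simply more explicit about the Fenichel setup and the order-matching for $P_1$, but there is no substantive difference.
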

\begin{proof}
 For the reduced problem, we first use that 
 \begin{align*}
  \dot p &= \epsilon \left(\phi(y_{22})-p\right) = -\epsilon^2 \left(Y(x,0,\phi(y_{22})) +\mathcal O(\epsilon,\alpha)\right),
 \end{align*}
on $N_{\epsilon,\alpha,22}$. Then upon realizing that $p=\phi(y_{22}) + \mathcal O(\epsilon)$, we obtain
 the desired result. 
\end{proof}
Notice that for $\epsilon=\alpha=0$, we can also write \eqref{reducedN22} as 
\begin{equation}\eqlab{reducedM22}
\begin{aligned}
x'&=0,\\
 p' &=- Y(x,0,p), 
\end{aligned}
\end{equation}
which is more \response{convenient}. %In particular, assuming \eqref{Zpaf} such that $Z(z,p)$ is affine in $p$, we  have the following: 
\begin{lemma}\lemmalab{K22}
Suppose that assumptions \ref{assumption:4} and \ref{assumption:3} both hold. Then \eqref{reducedM22} has a critical manifold $K_{22}$ defined by 
\begin{align*}
 Y(x,0,p)=0,
\end{align*}
which is normally repelling. The reduced problem on $K_{22}$ is given by 
\begin{align}
 \dot x &=X_{sl}(x),\eqlab{Xsl2}
\end{align}
recall \eqref{Xsl}, with respect to the original (slow) time of \eqref{xypmodel} for $\epsilon=\alpha=0$.
\end{lemma}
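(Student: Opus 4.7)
The plan is to exploit the affine structure imposed by Assumption \ref{assumption:3}. Since $Z(z,p)=Z_+(z)p+Z_-(z)(1-p)$, we have in particular that $Y(x,0,p)=pY_+(x,0)+(1-p)Y_-(x,0)$ is an affine function of $p$. Solving $Y(x,0,p)=0$ for $p$ therefore yields the explicit graph
\begin{align*}
p=p(x):=\frac{Y_-(x,0)}{Y_-(x,0)-Y_+(x,0)},
\end{align*}
which, by the stable-sliding condition $Y_+(x,0)<0<Y_-(x,0)$ of Assumption \ref{assumption:4}, takes values strictly inside $(0,1)$ over all of $\Sigma$. Hence $K_{22}=\{(x,p(x))\,\vert\,x\in\Sigma\}$ is a smooth graph over the compact set $\Sigma$, and under the identification $p=\phi(y_{22})$ it is nothing but the sliding coefficient appearing in \eqref{Xsl}.

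For normal hyperbolicity, I note that $x$ is a constant of motion for \eqref{reducedM22}, so the transverse dynamics to $K_{22}$ is one-dimensional and governed by $-Y(x,0,p)$ at fixed $x$. The associated nontrivial eigenvalue is
\begin{align*}
\partial_p\bigl[-Y(x,0,p)\bigr]\Big|_{p=p(x)}=Y_-(x,0)-Y_+(x,0)>0,
\end{align*}
again by Assumption \ref{assumption:4}. Hence $K_{22}$ is normally hyperbolic and repelling.

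It remains to identify the reduced flow on $K_{22}$ in the original slow time of \eqref{xypmodel}. I will track the successive time-rescalings: \eqref{xypmodelfast} is obtained from \eqref{xypmodel} by multiplying the right-hand side by $\epsilon\alpha$; \eqref{eps22eqns} is then obtained from \eqref{xypmodelfast} by a further multiplication by $\epsilon$; and the slow time in \eqref{reducedN22} is obtained from \eqref{eps22eqns} by division by $\epsilon^2$. Composing these factors, the slow time appearing in \eqref{reducedN22} equals $s/\alpha$, where $s$ denotes the original time of \eqref{xypmodel}. Since the $x$-component of \eqref{reducedN22} carries an explicit factor $\alpha$, a second desingularization -- dividing by $\alpha$ -- restores exactly the original time $s$. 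Evaluating the resulting $x$-equation on $K_{22}$ at $\epsilon=\alpha=0$ gives $\dot x=X(x,0,p(x))$, and the affine structure of Assumption \ref{assumption:3} yields
\begin{align*}
X(x,0,p(x))=p(x)X_+(x,0)+(1-p(x))X_-(x,0)=X_{sl}(x),
\end{align*}
as required.

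The main (and essentially the only) obstacle is the bookkeeping of the nested time-rescalings, to confirm that the doubly-desingularized time on $K_{22}$ really agrees with the original $t$ of \eqref{xypmodel} -- so that the identification with $X_{sl}$ is not off by a positive scalar factor. Once that accounting is in place, everything else is an immediate consequence of the affine representation and of Filippov's formula \eqref{Xsl}.
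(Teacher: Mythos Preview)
Your proof is correct and follows essentially the same approach as the paper: both solve $Y(x,0,p)=0$ via the affine structure to obtain $p(x)=Y_-(x,0)/(Y_-(x,0)-Y_+(x,0))$, compute the normal eigenvalue $Y_-(x,0)-Y_+(x,0)>0$, and recover $X_{sl}$ by inserting $p(x)$ into the $x$-equation after dividing out the factor~$\alpha$. Your explicit bookkeeping of the nested time-rescalings is more detailed than the paper's one-line ``$\dot x=\lim_{\alpha,\epsilon\to0}\alpha^{-1}x'$'', but the content is identical.
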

\begin{proof}
 \response{Using \ref{assumption:3}, we obtain that $K_{22}$ is given by 
 \begin{align*}
  p= \frac{Y_-(x,0)}{Y_-(x,0)-Y_+(x,0)}.
 \end{align*}
 Inserting this into $\dot x = \lim_{\alpha,\epsilon\rightarrow 0} \alpha^{-1}x'$, where $x'$ is given as in \eqref{reducedN22} with $P_{22}(x,y_{22},0,0)=p$, produces \eqref{Xsl2}, see also \eqref{Xsl}. Finally, the stability of $K_{22}$ is determined by the linearization of \eqref{reducedM22}. We obtain $-Y_{+}+Y_->0$ (using assumption \ref{assumption:4}) as the single nontrivial eigenvalue. This completes the proof.
}
\end{proof}

In \figref{pws_hysteresis_22} we summarize the findings. 
\begin{remark}\remlab{lambda}
 Interestingly, the contraction and expansion rates along $S_{\epsilon,\alpha}$ and $N_{\epsilon,\alpha}$ are different with respect to $\epsilon,\alpha>0$ in the following sense: Suppose that $X_1\ne 0$. Then when $x_1$ changes by an order $\mathcal O(1)$-amount for the reduced flow on $S_{\epsilon,\alpha}$, then there is contraction along the stable fibers of the order $\mathcal O(e^{-c\epsilon^{-1}\alpha^{-1}})$, $c>0$. On the other hand, under the same assumptions on $N_{\epsilon,\alpha}$, see \eqref{eps22eqns}, if $x_1$ changes by an order $\mathcal O(\alpha)$-amount for the reduced problem on $N_{\epsilon,\alpha}$ in \textnormal{backward time} then there is a contraction along the (unstable) fibers of the order $\mathcal O(e^{-c\epsilon^{-2}})$, $c>0$.
%  by Fenichel's theory we see the contraction rate is $\sim \epsilon^{-1} \alpha^{-1}$ along $S_{\epsilon,\alpha}$ whereas it is 
%  
%  
%  from \eqref{xypmodel} that if $X_1\ne 0$ thenthe contraction rate towards $S_{\epsilon,\alpha}$ is $\sim \epsilon^{-1} \alpha^{-1} dx_1$ if $X_1\ne 0$. 
\end{remark}
\begin{figure}
\begin{center}
\includegraphics[width=.5\textwidth]{./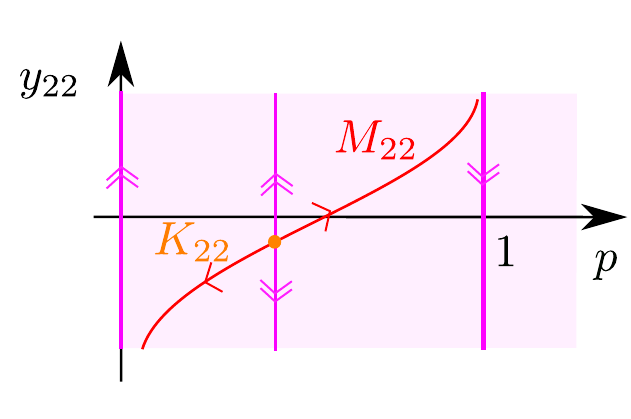}
 \end{center} \caption{Dynamics in the $(\bar \alpha=1,\bar \epsilon=1)_{22}$-chart. The critical manifold $M_{22}$ is normally hyperbolic and repelling and if assumptions \ref{assumption:3} and \ref{assumption:4} hold true then there exists an unstable critical set $K_{22}$ of the slow flow on $M_{22}$, carrying  Filippov's sliding flow as a reduced slow flow, see \lemmaref{K22}. }
 \figlab{pws_hysteresis_22}
 \end{figure}

\subsection{Analysis in the $(\bar \alpha=1,\bar{\bar y}=1)_{21}$-chart}\seclab{sec:y21}
 Consider again \eqref{xypmodelfast} in terms of a faster time corresponding to multiplication of the right hand side by $\epsilon$. Then by inserting \eqref{y21} into these equations, we obtain the following
\begin{equation}\eqlab{eqn21}
 \begin{aligned}
  \dot x &= \nu_{21}^2 \epsilon_{21} \alpha X_{21}(x,\nu_{21},p,\alpha),\\
  \dot \nu_{21} &=\nu_{21} \left[1-\phi_+(\epsilon_{21})\epsilon_{21}^k-p+\nu_{21} \epsilon_{21} Y_{21}(x,\nu_{21},p,\alpha)\right],\\
  \dot p&=\nu_{21} \left( 1-\phi_+(\epsilon_{21})\epsilon_{21}^k-p\right),\\
  \dot \epsilon_{21}&=-\epsilon_{21} \left[1-\phi_+(\epsilon_{21})\epsilon_{21}^k-p+\nu_{21} \epsilon_{21} Y_{21}(x,\nu_{21},p,\alpha)\right],
%    \dot y_{22}&=\phi(y_{22})-p+\epsilon Y(x,-\alpha p+\epsilon \alpha y_{22},p),\\
%   \dot p &=\epsilon (\phi(y_{22})-p).
 \end{aligned}
 \end{equation}
 upon desingularization through division of the right hand side by $\epsilon_{21}$. Here we treat $\alpha$ as parameter and have introduced the following quantities
 \begin{align*}
  X_{21}(x,\nu_{21},p,\alpha):=X(x,-\alpha p+\alpha \nu_{21},p),\quad Y_{21}(x,\nu_{21},p,\alpha):=Y(x,-\alpha p+\alpha \nu_{21},p).
 \end{align*}
%  \note{we are redefining $X_1,Y_1$. find new symbols}
%   \note{remove $\epsilon_{21}$-dep}
The set $B_{21}$ defined by $\nu_{21}=\epsilon_{21}=0$ is a set of equilibria for any $\alpha\ge 0$. The linearization about any point in this set has two nontrivial eigenvalues: $\pm (1-p)$. Consequently, the subset $Q_{21}\subset B_{21}$ defined by $p=1$ is fully nonhyperbolic, also for any $\alpha\ge 0$.
%For later convinience,
%  \note{The original time satisfies $t' = \nu_{21}^2 \epsilon_{21} \alpha$ in this chart. Consequently we get a transition time of order $T = \mathcal O(\nu_{21}\alpha)$ }

% We then have the following.
% \begin{lemma}
%  d
% \end{lemma}

Let $\nu_{21}=0$ in \eqref{eqn21}. Then
\begin{align*}
 \dot x &=0,\\
 \dot p&=0,\\
 \dot \epsilon_{21} &=-\epsilon_{21} \left(1-\phi_+(\epsilon_{21})\epsilon_{21}^k -p\right).
\end{align*}
Besides $B_{21}$, we see that the set $M_{21}$, defined by 
\begin{align}\eqlab{graphM21} 
M_{21}:\quad p=1-\phi_+(\epsilon_{21})\epsilon_{21}^k,\quad \epsilon_{21}>0,
\end{align}
is a set of equilibria within $\nu_{21}=0$. $M_{21}$ corresponds to the subset of $M_{22}$ with $y_{22}>0$ by \eqref{cc2122}. The corresponding graph \eqref{graphM21} ends in $Q_{21}$ for $\epsilon_{21}=0$. 

There is obviously another critical set $C_{21}$, given by $\epsilon_{21}=0,p=1$, $\nu_{21}>0$, emanating from $Q_{21}$. It corresponds to $C_1$ from the $(\bar y=1)$-chart, see \secref{bary1}.

Both sets, $M_{21}$ and $C_{21}$ are normally hyperbolic, $M_{21}$ being repelling whereas $C_{21}$ is attracting. \response{The set $Q_{21}$ -- at the interface of these critical manifolds with different normal stability -- acts like a regular fold jump point of slow-fast systems, see \cite{krupa_extending_2001,szmolyan2004a}. In particular, there is only one mechanism (a fast jump, magenta in \figref{pws_hysteresis_21}) with which one can leave $Q_{21}$ (upon entering from either $C_{21}$ or $M_{21}$). (For further details, see \secref{Qblowup} below where $Q_{21}$ is blown up.) Notice that as in the case of the planar fold piont \eqref{foldplanar}, there is tangency between $p=1$ (the jump mechanism) and $M_{21}$ within $\nu_{21}=0$ for $k>1$, but the tangency is of  order $k$ in the present case.} 

% REWRITE In \lemmaref{P211} we analyzed a transition map near the set of equilibria $p<1$, $\nu_{21}=\epsilon_{21}=0$ using the saddle structure near these points, with stable manifold contained within $\nu_{21}=0$ and unstable manifold contained within $\epsilon_{21}=0$.

Let $\epsilon_{21}=0$ in \eqref{eqn21}. Then
\begin{equation}\nonumber
 \begin{aligned}
  \dot x &= 0,\\
  \dot \nu_{21} &=\nu_{21} \left(1-p\right),\\
  \dot p&=\nu_{21} \left( 1-p\right).
  %    \dot y_{22}&=\phi(y_{22})-p+\epsilon Y(x,-\alpha p+\epsilon \alpha y_{22},p),\\
%   \dot p &=\epsilon (\phi(y_{22})-p).
 \end{aligned}
 \end{equation}
 It follows that each point on the critical set $(x,0,p,0)\in B_{21}$ with $p<1$ is connected by a heteroclinic orbit through the dynamics of \eqref{eqn21} to a point on $C_{21}$. In particular, we have the following result, which follows from a simple calculation.
 \begin{lemma}\lemmalab{het}
 Consider \eqref{eqn21}. Then for each $p<1$ and any $\alpha\ge 0$, there is a heteroclinic connection contained within $\epsilon_{21}=0$, having  $(x,0,p,0)\in B_{21}$ as the $\alpha$-limit set and $(x,1-p,1,0)\in C_{21}$ as the $\omega$-limit set. 
 \end{lemma}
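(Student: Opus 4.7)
The plan is to exploit the invariance of $\{\epsilon_{21}=0\}$ under the flow of \eqref{eqn21} and reduce the problem to a one-dimensional ODE with an explicitly solvable phase portrait. First, on the invariant set $\epsilon_{21}=0$ the $x$- and (trivially) $\alpha$-equations give $\dot x = 0$ and $\dot\alpha = 0$, so $x$ and $\alpha$ are conserved and we may fix them at any admissible values. What remains is the planar system
\begin{align*}
 \dot\nu_{21} &= \nu_{21}(1-p),\\
 \dot p &= \nu_{21}(1-p),
\end{align*}
in the $(\nu_{21},p)$-plane (with $\nu_{21}\ge 0$).

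The key observation is that $\dot\nu_{21}=\dot p$, so the quantity $H(\nu_{21},p):=p-\nu_{21}$ is a constant of motion. Along a trajectory emanating from $(\nu_{21},p)=(0,p_0)$ with $p_0<1$ we therefore have $p=\nu_{21}+p_0$ throughout, and the $\nu_{21}$-equation becomes the logistic equation
\begin{align*}
 \dot\nu_{21} = \nu_{21}(1-p_0-\nu_{21}).
\end{align*}
For $p_0<1$ this ODE has exactly two equilibria, $\nu_{21}=0$ (linearly unstable, with eigenvalue $1-p_0>0$) and $\nu_{21}=1-p_0$ (linearly stable, with eigenvalue $-(1-p_0)<0$). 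There is a unique orbit in $\nu_{21}>0$ (obtainable in closed form) connecting them, with $\alpha$-limit $\nu_{21}=0$ and $\omega$-limit $\nu_{21}=1-p_0$.

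Reinterpreting this in the $(x,\nu_{21},p,\epsilon_{21})$-coordinates of \eqref{eqn21} and using the conserved value of $H$, we obtain a heteroclinic orbit contained in $\{\epsilon_{21}=0\}$ with $\alpha$-limit set $(x,0,p_0,0)\in B_{21}$ and $\omega$-limit set $(x,1-p_0,1,0)\in C_{21}$, for arbitrary fixed $x$ and $\alpha\ge 0$. This establishes the claim. There is no real obstacle here: the result is essentially bookkeeping once the conservation law $\dot\nu_{21}=\dot p$ and the resulting logistic reduction are noticed; the only thing worth being a little careful about is that the $\alpha$-dependence enters only through $X_{21},Y_{21}$, which drop out entirely once we restrict to $\epsilon_{21}=0$ and $\dot x=0$, so the heteroclinic persists uniformly in $\alpha\ge 0$.
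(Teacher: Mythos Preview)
Your proof is correct and is exactly the ``simple calculation'' the paper alludes to (the paper gives no further details beyond that phrase). The conservation law $p-\nu_{21}=\text{const}$ and the resulting logistic reduction is the natural way to carry this out, and your remark that the $\alpha$-dependence drops out on $\epsilon_{21}=0$ is the right justification for the uniformity in $\alpha\ge 0$.
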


 We illustrate our findings in the $(\bar \alpha=1,\bar{\bar y}=1)_{21}$-chart in \figref{pws_hysteresis_21}.
% We will now proceed to blowup $Q_{21}$.
% Notice that the bracket in the definition of $P_2$ vanishes for $\rho_{212}=0$ and $P_2$ is therefore well-defined.
            \begin{figure}
\begin{center}
\includegraphics[width=.45\textwidth]{./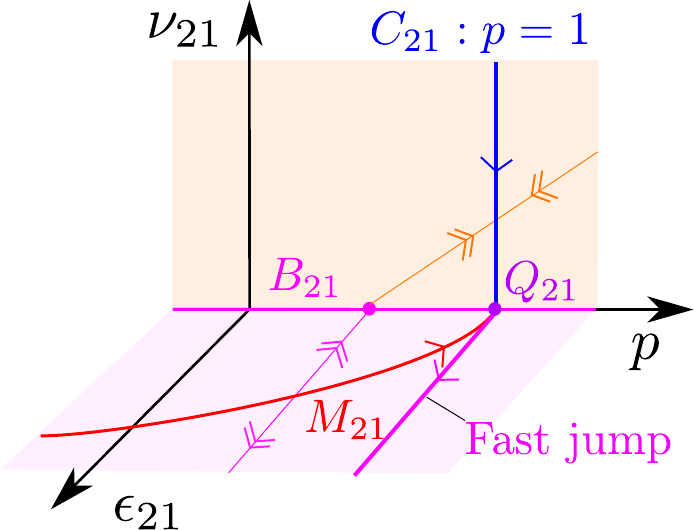}
 \end{center} \caption{Dynamics in the $(\bar \alpha=1,\bar{\bar y}=1)_{21}$-chart. The critical manifolds $M_{21}$, $B_{21}$ and $C_{21}$ are all normally hyperbolic away from the degenerate point $Q_{21}$ at $\nu_{21}=\epsilon_{21}=0,\,p=1$.}
 \figlab{pws_hysteresis_21}
 \end{figure}  

% \begin{figure}
% \begin{center}
% \includegraphics[width=.7\textwidth]{./}
%  \caption{}
%  \end{center}
%  \figlab{pws_smoothing}
% % \caption{$q=r_2q_2$, $x=r_2x_2$, $\epsilon=r_2^2$}
%               \end{figure}
 
\subsection{Collecting the local results into a global picture}  \seclab{sec:finalbd}
 \figref{pws_hysteresis_final0} summarizes the findings in the local charts. Notice specifically, that while we have focused on the upper part of the cylinders, the analysis of the lower part is identical and therefore skipped. In conclusion, we obtain a singular cycle $\Gamma_x$ for each $x\in \Sigma_{sl}$, $x$ being a constant on the two cylinders. $\Gamma_x$ is the union of six pieces $\gamma_{xi}$, $i=1,\ldots, 6$ where:
 \begin{enumerate}
  \item $\gamma_{x1}$ is a heteroclinic connection on the first cylinder. It is described in the coordinates of the $(\bar \alpha=1,\bar{\bar y})_{21}$-chart in \lemmaref{het} (corresponding to $p=0$ in this result). In particular, $x$ is constant along $\gamma_{x1}$ and its $\alpha$-limit set is given by $(\nu_{21},p,\epsilon_{21})=(0,0,0)$ on $B_{21}$, whereas the $\omega$-limit set is given by $(\nu_{21},p,\epsilon_{21})=(1,1,0)$, belonging to the normally attracting set $C_{21}$.
  \item $\gamma_{x2}$ is an orbit segment of the desingularized system on the attracting manifold $C$. In the coordinates of the $(\bar \alpha=1)_{2}$-chart, $\gamma_{2x}$ takes the following form $p=1$, $y_2\in (0,1]$, $\epsilon=\alpha=0$. In the coordinates of the $(\bar \alpha=1,\bar{\bar y}=1)_{21}$-chart, it ends at $Q_{21}$. 
  \item $\gamma_{x3}$ is a heteroclinic connection on the second cylinder, connecting the degenerate point $Q$ with a partially hyperbolic point on the other side. In the coordinates of the $(\bar \alpha=1,\bar \epsilon)_{22}$-chart, $\gamma_{x3}$ is given by $p=1$, $y_{22}\in \mathbb R$, $\epsilon=\alpha=0$. 
 \end{enumerate}The remaining pieces $\gamma_{xi}$, $i=4,5,6$ are obtained in a similar way. 
 When $x$ ranges over the compact domain $\Sigma$, we obtain a compact cylinder $\Gamma:=\{\Gamma_x\}_{x\in \Sigma}$. 

 \subsection{A return map}\seclab{sec:invcurve}
%  To prove \thmref{main1}, we then perturb the invariant cylinder $\Gamma:=\{\Gamma_x\}_{x\in \Sigma_{sl}}$. For this we first work in the
Consider the $(\bar \alpha=1,\bar \epsilon=1)_{22}$-chart and define a local section $\Pi^0_{22}$ in  the $(x,y_{22},p)$-space at $y_{22}=0$ with $p\in I^0$ a small neighborhood of $p=0$, see \figref{pws_hysteresis_final0}, treating both $\epsilon\ge 0$ and $\alpha\ge 0$ as sufficiently small parameters. $\Gamma$ then intersects $\Pi^0_{22}$ in $p=0$ (for $\epsilon=\alpha=0$). For $\epsilon>0$, $\alpha>0$, sufficiently small, we will then have a well-defined return map $\mathcal P_{22}:\Pi^0_{22}\rightarrow \Pi^0_{22}$, $(x,p)\mapsto (x_+,p_+)$ with $(x_+,0,p_+)$ being the first return of $(x,0,p)$ to $\Pi^0_{22}$ upon following the forward flow. In particular, the following holds. 
\begin{lemma}\lemmalab{lem:P22}
 The mapping $\mathcal P_{22}$ is given by
\begin{equation}\eqlab{P22}
\begin{aligned}
 x_+(x,p,\epsilon,\alpha)&=  x+\alpha \left[ (1-p)\vert Y_+(x,0)\vert^{-1} X_+(x,0)+\vert Y_-(x,0)\vert^{-1}X_-(x,0)\right]\\
 &+\mathcal O(\alpha^2,\epsilon^{\frac{k}{k+1}} \alpha),\\
 p_+(x,p,\epsilon,\alpha)&=s_{22}(x,\epsilon,\alpha)+\mathcal O(e^{-c/\epsilon}),
\end{aligned}
\end{equation}
with $s_{22}(x,\epsilon,\alpha)=\mathcal O(\epsilon^{\frac{k}{k+1}})$ smooth. The remainder terms remain unchanged upon differentiation with respect to $x$ and $p$.
\end{lemma}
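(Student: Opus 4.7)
The plan is to factor $\mathcal P_{22}$ as a composition of local transition maps, one across each of the six arcs $\gamma_{x1},\ldots,\gamma_{x6}$ of the singular cycle $\Gamma_x$, estimate each factor using the chart analyses of \secref{bary1}--\secref{sec:y21}, and then assemble the contributions to obtain \eqref{P22}. I would place small transverse cross-sections at the six interfaces between consecutive arcs. The passage through each arc is smooth and hyperbolic everywhere except near the two degenerate points $Q$ and its lower analogue, where the attracting $C$ meets the repelling $M$ on the second cylinder with a $k$-th order tangency; these two nonhyperbolic passages will require the forthcoming blowup in \secref{Qblowup}.

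The two dominant contributions to $x_+-x$ come from the slow slides $\gamma_{x2}$ and $\gamma_{x5}$ on the attracting manifold $C$. Integrating the reduced flow \eqref{redS2eps} on $S_{\epsilon,\alpha,2}$ with entry value $y_2=1-p$ on the upper branch (delivered by the heteroclinic of \lemmaref{het} at incoming $p$) and $y_2=-1$ on the lower branch (delivered by the analogous lower heteroclinic, entered via $\gamma_{x3}$ at $p=1$ independently of the starting $p$), and noting that each slide terminates $\mathcal O(\epsilon^{k/(k+1)})$-close to the corresponding fold of the second cylinder, yields
\[
x_+ - x = \alpha\bigl[(1-p)|Y_+(x,0)|^{-1} X_+(x,0) + |Y_-(x,0)|^{-1}X_-(x,0)\bigr] + \mathcal O(\alpha^2,\epsilon^{k/(k+1)}\alpha).
\]
The $(1-p)$ prefactor in the upper term is the imprint of the initial $p$ through \lemmaref{het}; the lower term carries no such prefactor because the lower heteroclinic always delivers us to $y_2=-1$ regardless of the starting $p$.

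For the remaining four arcs, the fast horizontal transits in the $22$-chart and the heteroclinic connections in the $21$-chart each contribute only $\mathcal O(\alpha\epsilon,\alpha^2)$ to $x_+-x$ and are absorbed in the stated error. The estimate $p_+ = s_{22}(x,\epsilon,\alpha)+\mathcal O(e^{-c/\epsilon})$ is driven by the exponential contraction onto the lower attracting slow manifold during the slide $\gamma_{x5}$: the transverse contraction rate along $C$ is of order $(\epsilon\alpha)^{-1}$ in original time (see \remref{lambda}) and the sliding duration is of order $\alpha$, giving a net contraction $e^{-c/\epsilon}$, so that after one full revolution $p_+$ coincides with the $p$-coordinate $s_{22}=\mathcal O(\epsilon^{k/(k+1)})$ of this slow manifold at $\Pi^0_{22}$ up to an $\mathcal O(e^{-c/\epsilon})$ remainder. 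Smoothness of the remainder under differentiation in $(x,p)$ follows from smoothness of the transition maps away from the critical sets. The main obstacle is precisely the passage through $Q$ and its lower analogue, which are fully nonhyperbolic; here I would use the blowup of \secref{Qblowup} to show that the transition behaves like a fold jump point in the spirit of \cite{krupa_extending_2001} and contributes only an $\mathcal O(\epsilon^{k/(k+1)}\alpha)$ $x$-shift.
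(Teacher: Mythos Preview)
Your proposal is correct and follows essentially the same route as the paper: a decomposition of $\mathcal P_{22}$ into local transition maps, with the leading $x$-drift coming from the slides on $C$ (the paper obtains this via \lemmaref{P114} in the entry chart of the $Q$-blowup, which amounts to exactly the integration of \eqref{redS2eps} you describe, with $c_{in}^k=1-p$ delivered by \lemmaref{het}), the $e^{-c/\epsilon}$ contraction in $p$ from normal hyperbolicity of $C$, and the $\mathcal O(\epsilon^{k/(k+1)}\alpha)$ correction from the fold-jump passage through the blowup of $Q$. The paper uses a slightly finer decomposition with sections $\Pi^0,\ldots,\Pi^8$ rather than your six arcs, and records the heteroclinic contribution as $\mathcal O(\nu_{21}\alpha\log\nu_{21})$ (see \lemmaref{P211}) rather than $\mathcal O(\alpha\epsilon)$, but this is absorbed in the stated remainder and the overall structure is the same.
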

We prove \lemmaref{lem:P22} in \secref{lem:P22}.
\subsection{Completing the proof of \thmref{main1}}\seclab{main1proof}
We now show how \lemmaref{lem:P22} implies \thmref{main1}.
 For this, we first realize that the return map in \lemmaref{lem:P22} satifies the hypothesis of \cite[Theorem A.1]{szmolyan2004a} regarding the existence of an invariant curve.
\begin{proposition}\proplab{invcur}
 The mapping $\mathcal P_{22}$ has an invariant curve given by the graph
 \begin{align*}
  p = c_{22}(x,\epsilon,\alpha),
 \end{align*}
 with $c_{22}(x,\epsilon,\alpha)=\mathcal O(\epsilon^{\frac{k}{k+1}})$ smooth in $x$ and continuous in 
$\epsilon,\alpha\rightarrow 0$. 
\end{proposition}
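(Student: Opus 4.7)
The plan is to apply \cite[Theorem A.1]{szmolyan2004a} directly to $\mathcal P_{22}$, since \lemmaref{lem:P22} is already stated in a form tailored to that theorem: the return map is a near-identity perturbation in the $x$-direction coupled with an essentially $p$-independent, exponentially strongly contracting action in the $p$-direction. First I would read off from \eqref{P22} the block structure of the Jacobian, namely $\partial_x x_+ = 1+\mathcal O(\alpha)$, $\partial_p x_+ = \mathcal O(\alpha)$, $\partial_x p_+ = \partial_x s_{22}+\mathcal O(e^{-c/\epsilon})=\mathcal O(\epsilon^{k/(k+1)})$, and $\partial_p p_+ = \mathcal O(e^{-c/\epsilon})$, which is possible because \lemmaref{lem:P22} guarantees that the remainders do not grow under differentiation in $x$ and $p$. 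Then I would verify that the cone/spectral-gap hypothesis of \cite[Theorem A.1]{szmolyan2004a} holds uniformly on the compact parameter range $0\le \epsilon,\alpha<\delta$: because the contraction rate in $p$ is exponentially small in $\epsilon$ while the $x$-direction is only $\mathcal O(\alpha)$ away from the identity, the hypothesis is satisfied with a very wide margin.

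If one preferred not to invoke the invariant-curve theorem as a black box, the alternative is an explicit graph-transform argument along the following lines. Take the function space $\mathcal X$ of continuous maps $g:\Sigma\to I^0$ with $\|g\|_\infty\le K\epsilon^{k/(k+1)}$, with $K$ large enough that $s_{22}\in \mathcal X$. Given $g\in\mathcal X$, solve $x=x_+(\xi,g(\xi),\epsilon,\alpha)$ for $\xi=\xi(x)$ by the implicit function theorem (this is a near-identity substitution, well-defined on $\Sigma$), and set $(\mathcal T g)(x)=p_+(\xi(x),g(\xi(x)),\epsilon,\alpha)$. The estimate $\partial_p p_+=\mathcal O(e^{-c/\epsilon})$ makes $\mathcal T$ a strict contraction on $\mathcal X$ in the $C^0$-norm, and a standard bootstrap — using the corresponding $C^1$-estimate for $\mathcal T$ obtained by differentiating the definition of $\mathcal T g$ — upgrades the unique fixed point $c_{22}$ to the claimed smoothness in $x$.

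The bound $c_{22}=\mathcal O(\epsilon^{k/(k+1)})$ would follow immediately from the invariance identity $c_{22}(x_+(x,c_{22}(x),\epsilon,\alpha))=p_+(x,c_{22}(x),\epsilon,\alpha)=s_{22}(x,\epsilon,\alpha)+\mathcal O(e^{-c/\epsilon})$ combined with $s_{22}=\mathcal O(\epsilon^{k/(k+1)})$ and the fact that $x\mapsto x_+$ is a diffeomorphism close to the identity. Continuity of $c_{22}$ in $(\epsilon,\alpha)\to(0,0)$ would follow because the contraction constant of $\mathcal T$ stays bounded away from $1$ uniformly on $\{0\le \epsilon,\alpha<\delta\}$ and the defining equation depends continuously on the parameters all the way to the singular limit; the limiting graph is $p=0$, in agreement with $\Gamma_x\cap \Pi^0_{22}=\{p=0\}$ from the description of the singular cycle in \secref{sec:finalbd}.

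The hard part will be essentially bookkeeping rather than dynamics. One has to ensure that the remainder estimates in \lemmaref{lem:P22} (including those for the derivatives) are uniform on a domain where $\epsilon$ and $\alpha$ vary independently, so that the function space $\mathcal X$ and the contraction constant of $\mathcal T$ can be chosen $(\epsilon,\alpha)$-uniformly; and one has to verify continuity of $c_{22}$ up to and including the doubly singular corner $(\epsilon,\alpha)=(0,0)$, where the map itself degenerates. All of the geometric content has already been extracted in the blowup of $Q$ and in \lemmaref{lem:P22}, so the invariant-curve proposition is a soft consequence built on top of those ingredients.
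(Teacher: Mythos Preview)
Your proposal is correct and follows essentially the same route as the paper: both invoke \cite[Theorem A.1]{szmolyan2004a}, with the only cosmetic difference that the paper first shifts coordinates via $\tilde p = p - s_{22}(x,\epsilon,\alpha)$ so that the limiting map becomes $(x,p)\mapsto(x,0)$, whereas you read off the block-Jacobian estimates directly. Your alternative graph-transform sketch is a valid unpacking of what the cited theorem provides, but is not needed.
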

\begin{proof}
 To apply \cite[Theorem A.1]{szmolyan2004a} we first write $\mathcal P_{22}$ in terms of $(x,\tilde p)$ 
where $\tilde p:=p-s_{22}(x,\epsilon,\alpha)$. We drop the tilde. Then following \lemmaref{lem:P22}, $\mathcal P_{22}$ for $\alpha,\epsilon\rightarrow 0$ is given by $(x,p)\mapsto (x,0)$. In comparison with \cite[Theorem A.1]{szmolyan2004a}, we therefore have $y=x$, $z=p$, $G_0(y)=y$ with $G_0'(y)\ne 0$ and $H_{2}(\epsilon) = \mathcal O(e^{-c/\epsilon})$. The conditions of \cite[Theorem A.1]{szmolyan2004a} are easily verified. 
\end{proof}
Upon applying the flow map to the invariant curve of $\mathcal P_{22}$ in \propref{invcur}, we obtain the desired invariant cylinder $\mathcal C_{\epsilon,\alpha}$ in \thmref{main1}. To finish the proof of \thmref{main1}, we just have to prove \eqref{xpXsl}. For this, we reduce the mapping $\mathcal P_{22}$ to the invariant manifold $\mathcal C_{\epsilon,\alpha}$. From the previous analysis, we obtain
\begin{align*}
x\mapsto x+\alpha \left[\vert Y_+(x,0)\vert^{-1}X_+(x,0) +\vert Y_-(x,0)\vert^{-1} X_-(x,0)\right] +\mathcal O(\alpha^2,\epsilon^{\frac{k}{k+1}} \alpha).
\end{align*}
Using \eqref{Xsl} we can write $\left[\cdots\right]$ as 
\begin{align*}
\left(\vert Y_+(x,0)\vert^{-1} +\vert Y_-(x,0)\vert^{-1} \right)X_{sl}(x).
\end{align*}
This completes the proof of the expression for $x_+$ in \eqref{xpXsl}. The expression for the transition time is similar; in fact, it can be obtained from the expression for $x$ by setting $X_+=X_-=1$ (since $\dot t=1$).

\section{Proof of \lemmaref{lem:P22}}\seclab{lem:P22}
To prove \lemmaref{lem:P22}, we will chop the return map $\mathcal P_{22}$ into several local pieces. However, to describe the local transition near the degenerate set $Q$, we have to perform an additional blowup step. In the following, we first analyze this blowup transformation and the associated dynamics in separate local charts. In this way, we obtain singular cycles $\Gamma_x$ with improved hyperbolicity properties. 
\subsection{Blowup of $Q$}\seclab{Qblowup}
 We work in the $(\bar \alpha=1,\bar{\bar y}=1)_{21}$-chart with the coordinates $(x,\nu_{21},p,\epsilon_{21})$, treating $\alpha$ as a parameter. Then $Q$ takes the local form $(x,0,1,0)$, $x\in\Sigma$, which is blown up by the following transformation 
% For this we apply the following spherical blowup transformation of $(x,0,1,0)$ by
\begin{align}
\rho\ge 0,\,(\bar \nu_{21},\bar p,\bar \epsilon_{21})\in S^2\mapsto \begin{cases}
                                                           \nu_{21}&=\rho^{k}\bar \nu_{21},\\
                                                           p &=1+\rho^k \bar p,\\
                                                           \epsilon_{21} &=\rho \bar \epsilon_{21},
                                                          \end{cases}      \eqlab{blowupsp}                                          
\end{align}
that leaves $x$ fixed. \response{Notice that the weights on $p$ and $\epsilon_{21}$ are so that the tangency between $p=1$ and $M_{21}$, see \eqref{graphM21}, is ``broken'' in the blown up space (recall the discussion around \eqref{foldplanar}).} This transformation induces a vector-field $\overline V_{21}$ by pull-back of \eqref{eqn21}, having $\rho^{k}$ as a common factor. It is therefore the desingularized vector-field $\widehat V= \rho^{-k}\overline V_{21}$ that we study in the following. 

Seeing that $\nu_{21}, \epsilon_{21}\ge 0$ we are only interested in the quarter sphere defined by $\bar \nu_{21},\bar \epsilon_{21}\ge 0$, see \figref{pws_hysteresis_3} and \figref{pws_hysteresis_final} below.
% We focus our attention on the coordinates $(\nu_{212},p_{212},\nu_{212})$ corresponding to 
% \begin{align*}
%  \nu_{21} &= \rho_{212}^k \nu_{212},\\
%  p &= \rho_{212}^{k+1} p_{212},\\
%  \epsilon_{21} &=\rho_{212} \nu_{212}^{-1}.
% \end{align*}
% In this way, $\epsilon=\rho_{212}^{2k+1}$ so $\rho_{212}$ is conserved and this chart therefore corresponds to a scaling chart. We have
% \begin{align*}
%  \bar \nu_{21} \bar \epsilon_{21}^{-k} = \nu_{212}^2.
% \end{align*}
Consider the two directional charts, $\bar \nu=1$ and $\bar\epsilon=1$ with chart-specific coordinates defined by 
\begin{align*}
(\bar \alpha=1,\bar{\bar y}=1,\bar \nu_{21}=1)_{211}:&\quad \begin{cases}
                    \nu_{21} &=\rho_{211}^k \\
                    p &=1+\rho_{211}^k p_{211},\\
                    \epsilon_{21} &=\rho_{211} \epsilon_{211}.
                   \end{cases},\\
(\bar \alpha=1,\bar{\bar y}=1,\bar \epsilon_{21}=1)_{212}:&\quad \begin{cases}
                    \nu_{21} &=\rho_{212}^k\nu_{212} \\
                    p &=1+\rho_{212}^k p_{212},\\
                    \epsilon_{21} &=\rho_{212}.
                   \end{cases}       
\end{align*}
Although these charts cover the relevant part of the sphere (except for $\bar p=\pm 1$ but this part is trivial), we prefer to cover a compact subset of $\bar \nu_{21},\bar \epsilon_{21}>0$ using a separate chart. This chart, which we will refer to as $(\bar \alpha=1,\bar{\bar y}=1,\bar \nu_{21}\bar \epsilon_{21}=1)_{213}$, is defined by the coordinates $(\rho_{213},p_{213},\nu_{213})$ and the equations
\begin{align*}
(\bar \alpha=1,\bar{\bar y}=1,\bar \nu_{21}\bar \epsilon_{21}=1)_{213}:\quad \begin{cases} \nu_{21} &= \rho_{213}^k \nu_{213},\\
 p &= 1+\rho_{213}^{k} p_{213},\\
 \epsilon_{21} &=\rho_{213} \nu_{213}^{-1}.
 \end{cases}
\end{align*}
The advantage of working with this chart, is that in these coordinates 
\begin{align}\eqlab{rho3eps}
\epsilon=\nu_{21}\epsilon_{21} = \rho_{213}^{k+1},
\end{align} and $\rho_{213}$ is therefore conserved. In comparison, we have  
\begin{align}
 \epsilon = \nu_{21}\epsilon_{21} = \rho_{211}^{k+1} \epsilon_{211}=\rho_{212}^{k+1}\nu_{212},\eqlab{eps211}
\end{align}
in the other charts. Notice that we also have $\bar \nu_{21} \bar \epsilon_{21}^{-k} = \nu_{213}^{k+1}$, which is why we only use these coordinates to cover a compact subset of $\bar \nu_{21},\bar \epsilon_{21}>0$. 
The coordinate changes between the different charts are given by the following expressions:
% Coordinate changes!
\begin{align}
 \begin{cases}
  \rho_{211} &=\rho_{213} \nu_{213}^{\frac{1}{k}},\\
  p_{211} &=p_{213}\nu_{213}^{-1},\\
  \epsilon_{211}&=\nu_{213}^{-\frac{k+1}{k}},
 \end{cases}\quad \quad \begin{cases}
  \rho_{212} &=\rho_{213} \nu_{213}^{-1},\\
  p_{212} &=p_{213}\nu_{213}^{k},\\
  \nu_{212}&=\nu_{213}^{k+1}.
 \end{cases}
 \eqlab{cc211}
\end{align}

% \fbox{We could also do $y_2=\rho_{213}^k y_{21}$, original $\epsilon=\rho_{213}^{2k+1}$, $p=1+\rho_{213}^k p_{213}$}
% On top of this, we will also a separate chart defined by $( 
    
 \subsection{Entry chart $(\bar \alpha=1,\bar{\bar y}=1,\bar \nu_{21}=1)_{211}$}\seclab{entrychartQ}
              In this chart, we obtain the following equations:
%  NEED THIS TO MEASURE THE TRANSITION TIME.
\begin{equation}\eqlab{eqns211}
\begin{aligned}
\dot x &=\rho_{211}^{k+1} \epsilon_{211} \alpha X_{211}(x,\rho_{211},p_{211},\alpha),\\
 \dot \rho_{211} &=\frac{1}{k}\rho_{211} \left[-p_{211}-\phi_+(\rho_{211} \epsilon_{211}) \epsilon_{211}^k+\rho_{211} \epsilon_{211} Y_{211}(x,\rho_{211},p_{211},\alpha)\right],\\
 \dot p_{211} &= (1-p_{211})\left(-p_{211}-\phi_+(\rho_{211} \epsilon_{211}) \epsilon_{211}^k\right)-\rho_{211} \epsilon_{211} p_{211}  Y_{211}(x,\rho_{211},p_{211},\alpha),\\
 \dot \epsilon_{211} &=-\frac{k+1}{k}\epsilon_{211}\left[-p_{211}-\phi_+(\rho_{211} \epsilon_{211}) \epsilon_{211}^k+\rho_{211} \epsilon_{211} Y_{211}(x,\rho_{211},p_{211},\alpha)\right],
 \end{aligned}
 \end{equation}
where
\begin{align*}
 X_{211}(x,\rho_{211},p_{211},\alpha) &:= X_{21}(x,\rho_{211}^k,1+\rho_{211}^k p_{211},\alpha),\\
  Y_{211}(x,\rho_{211},p_{211},\alpha)&:=Y_{21}(x,\rho_{211}^k,1+\rho_{211}^k p_{211},\alpha).
\end{align*}
Setting $\rho_{211}=\epsilon_{211}=0$, we find $\dot x=0$ and 
\begin{align*}
 \dot p_{211} &= -p_{211}(1-p_{211}).
\end{align*}
Consequently, $(x,0,0,0)$ and $(x,0,1,0)$ are both partially hyperbolic. The former allows us to extend the critical manifold $C_{21}$ in chart $(\bar \alpha=1,\bar{\bar y}=1)_{21}$ onto the blowup sphere as a normally hyperbolic invariant manifold $C_{211}$. In fact, within $\rho_{211}=0$ we have that $p_{211}=-\beta \epsilon_{211}^k$ is a manifold of equilibria $R_{211}$ and $C_{211}$ will therefore include these points, at least locally. 
We will see the resulting slow-fast structure more clearly in the chart $(\bar \alpha=1,\bar{\bar y}=1,\bar \nu_{21}\bar \epsilon_{21}=1)_{213}$ which we analyze in the following section. The hyperbolicity of $C_{211}$ allows us to extend the slow manifold $S_{\epsilon,\alpha}$ as a constant $\epsilon$-section $S_{\epsilon,\alpha,211}$, defined by \eqref{eps211}, of a center manifold $S_{\alpha,211}$.  
% In any case, we have the following.

\begin{lemma}
There exists an attracting center manifold $S_{\alpha,211}$ of $(x,0,0,0)$ for \eqref{eqns211} for all $0\le \alpha\ll 1$, which is a graph over a compact domain $D_{211}$ in the $(x,\rho_{211},\epsilon_{211})$-space:
 \begin{align*}
  p_{211}=P_{211}(x,\rho_{211},\epsilon_{211},\alpha),
 \end{align*}
 where 
 \begin{align*}
  P_{211}(x,\rho_{211},\epsilon_{211},\alpha)= -\phi_+(\rho_{211} \epsilon_{211})\epsilon_{211}^k\left(1 + k \rho_{211} \epsilon_{211} Y_+(x,\alpha \rho_{211}^k)+\mathcal O(\rho_{211} \epsilon_{211}^2) \right).
 \end{align*}
\end{lemma}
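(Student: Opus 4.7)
The plan is to combine the center manifold theorem with a direct invariance-equation computation for the Taylor expansion of $P_{211}$.

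For existence I would linearize \eqref{eqns211} at $(x,0,0,0)$. Since $\dot x$, $\dot \rho_{211}$ and $\dot \epsilon_{211}$ each carry a nonlinear prefactor in $(\rho_{211},\epsilon_{211})$, only $\dot p_{211}$ contributes a nontrivial eigenvalue, namely $-1$ from the $-p_{211}(1-p_{211})$ term. Hence the stable subspace is one-dimensional in the $p_{211}$-direction and the center subspace is spanned by $(x,\rho_{211},\epsilon_{211})$. The center manifold theorem, with $\alpha$ a smooth parameter, then produces the local attracting graph $p_{211}=P_{211}(x,\rho_{211},\epsilon_{211},\alpha)$. To promote this to a graph over the compact $D_{211}$, I would invoke Fenichel's theorem on the normally attracting critical manifolds $R_{211}:\,p_{211}=-\beta\epsilon_{211}^k$ (within $\rho_{211}=0$) and the extension $C_{211}$ of $C_1$ (within $\epsilon_{211}=0$), both of which must be contained in any center manifold through the origin, and then glue to the local graph supplied above.

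For the expansion I would substitute the ansatz $P_{211}=-\phi_+(\rho_{211}\epsilon_{211})\epsilon_{211}^k+Q$, with $Q$ of higher order, into the invariance condition
\[
\partial_x P_{211}\cdot \dot x + \partial_{\rho_{211}}P_{211}\cdot \dot \rho_{211} + \partial_{\epsilon_{211}}P_{211}\cdot \dot \epsilon_{211} \;=\; \dot p_{211}\big|_{p_{211}=P_{211}},
\]
and balance leading orders. The dominant right-hand piece $(1-p_{211})(-p_{211}-\phi_+\epsilon_{211}^k)$ is annihilated by the ansatz, leaving a remainder of the form $-Q+\rho_{211}\epsilon_{211}^{k+1}\phi_+\,Y_{211}$. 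On the left, the dominant contribution is $\partial_{\epsilon_{211}}(-\phi_+\epsilon_{211}^k)\sim -k\phi_+\epsilon_{211}^{k-1}$ multiplied by $\dot\epsilon_{211}\sim -\tfrac{k+1}{k}\rho_{211}\epsilon_{211}^2 Y_{211}$, while the $\partial_x$ and $\partial_{\rho_{211}}$ contributions fall inside $\mathcal O(\rho_{211}\epsilon_{211}^2)$. Matching the two sides yields $Q=-k\rho_{211}\epsilon_{211}^{k+1}\phi_+(\rho_{211}\epsilon_{211})\,Y_{211}$ to leading order, and evaluation of $Y_{211}$ at the appropriate base point on the center manifold then produces the stated formula.

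The conceptual ingredients are standard, so I expect the main obstacle to be the careful bookkeeping of orders in the invariance equation: several distinct monomials in $\rho_{211}$ and $\epsilon_{211}$ compete, and one must verify that the neglected partial derivatives really sit inside the advertised $\mathcal O(\rho_{211}\epsilon_{211}^2)$ remainder while simultaneously checking that the correct base-point evaluation of $Y_{211}$ reproduces $Y_+(x,\alpha\rho_{211}^k)$ to the required accuracy.
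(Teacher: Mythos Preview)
Your proposal is correct and follows precisely the approach the paper takes: the paper's proof reads simply ``Direct calculation. In the expression for $P_{211}$, we have used that $Y_{211}(x,\rho_{211},0,\alpha) = Y_+(x,\alpha \rho_{211}^k)$.'' Your write-up supplies exactly the details this sentence suppresses --- the linearization yielding the single stable eigenvalue $-1$, the invariance-equation bookkeeping that produces the $-k\phi_+\rho_{211}\epsilon_{211}^{k+1}Y_{211}$ correction, and the base-point evaluation of $Y_{211}$ --- so there is nothing to add.
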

\begin{proof}
 Direct calculation. In the expression for $P_{211}$, we have used that $Y_{211}(x,\rho_{211},0,\alpha) = Y_+(x,\alpha \rho_{211}^k)$.
\end{proof}

The reduced problem on $S_{\alpha,211}$ is given by 
\begin{equation}\nonumber
\begin{aligned}
 \dot x &=\rho_{211}^k \alpha X_{211}(x,\rho_{211},P_{211}(x,\rho_{211},\epsilon_{211},\alpha),\alpha),\\
 \dot \rho_{211} &=\frac{1}{k}\rho_{211} \left[Y_{211}(x,\rho_{211},-\beta \epsilon_{211}^k,\alpha)+k\epsilon_{211}^kY_+(x,\alpha \rho_{211}^k) + \mathcal O(\epsilon_{211}^{k+1})\right],\\
 \dot \epsilon_{211} &=-\frac{k+1}{k}\epsilon_{211}\left[Y_{211}(x,\rho_{211},-\beta \epsilon_{211}^k,\alpha)+k\epsilon_{211}^kY_+(x,\alpha \rho_{211}^k) + \mathcal O(\epsilon_{211}^{k+1})\right],
\end{aligned}
\end{equation}
upon dividing the right hand side by $\rho_{211}\epsilon_{211}$. 
% \end{theorem}
Whenever we have stable sliding, we have $Y_+(x,0)<0$ and we can therefore divide through by $-[\cdots]>0$:
\begin{equation}\eqlab{reducedS11}
\begin{aligned}
 \dot x &=\rho_{211}^k \alpha \left(-\frac{X_+(x,0)}{Y_+(x,0)} + \mathcal O(\epsilon_{211}^k,\alpha)\right),\\
 \dot \rho_{211} &=-\frac{1}{k}\rho_{211} ,\\
 \dot \epsilon_{211} &=\frac{k+1}{k}\epsilon_{211}.
\end{aligned}
\end{equation}
We will now describe a transition map $\mathcal P_{211}^4:\Pi_{11}^4\rightarrow \Pi_{11}^5$ where $\Pi_{11}^4:\, \rho_{211}=c_{in}$ to $\Pi_{11}^5:\,\epsilon_{211}=c_{out}$. We express this map in terms of $(x,\rho_{211},\tilde p_{211},\epsilon_{211})$ with $\tilde p_{211}$ defined by 
\begin{align*}
\tilde p_{211}=p_{211}-P_{211}(x,\rho_{211},\epsilon_{211},\alpha).
\end{align*}
and then restrict $\tilde p_{211}$ to a sufficiently small neighborhood of $0$. 
\begin{lemma}\lemmalab{P114}
 The transition map $\mathcal P_{211}^4$ from $\Pi_{11}^4$ to $\Pi_{11}^5$ takes the following form
 \begin{align*}
 \mathcal P_{211}^4(x,c_{in},\tilde p_{211}, \epsilon_{211},\alpha) = \begin{pmatrix}
                                                     x-c_{in}^k \alpha \frac{X_+(x,0)}{Y_+(x,0)} +\mathcal O(\alpha^2,\alpha \epsilon_{211}^{\frac{k}{k+1}})  \\
                                                     c_{in}(c_{out}^{-1}\epsilon_{211})^{\frac{1}{k+1}}\\
                                    \mathcal O(e^{-c/\epsilon_{211}})\\
                                                     c_{out}
                                                    \end{pmatrix},
 \end{align*}
 for some $c>0$. 
The order of the remainders remain unchanged upon differentiation with respect to $x$ and $\tilde p_{211}$. 
\end{lemma}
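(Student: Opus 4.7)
The plan is to reduce the transition map to the flow on the attracting center manifold $S_{\alpha,211}$ established above, integrate the resulting equations explicitly, and treat the $\tilde p_{211}$-direction separately by a Fenichel-type contraction estimate. On $S_{\alpha,211}$, the $(\rho_{211},\epsilon_{211})$-part of \eqref{reducedS11} is linear and decouples from $x$:
\begin{equation*}
\dot\rho_{211}=-\tfrac{1}{k}\rho_{211},\qquad \dot\epsilon_{211}=\tfrac{k+1}{k}\epsilon_{211},
\end{equation*}
with the exact solutions $\rho_{211}(\tau)=c_{in}e^{-\tau/k}$ and $\epsilon_{211}(\tau)=\epsilon_{211}e^{(k+1)\tau/k}$ (where $\epsilon_{211}$ now denotes the initial value on $\Pi_{11}^4$), which satisfy the conservation law $\rho_{211}^{k+1}\epsilon_{211}=\epsilon$. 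Solving for when $\epsilon_{211}(\tau)$ reaches $c_{out}$ gives the transit time $T=\tfrac{k}{k+1}\ln(c_{out}/\epsilon_{211})$ and the exit value $\rho_{211}(T)=c_{in}(c_{out}^{-1}\epsilon_{211})^{1/(k+1)}$, which is exactly the second component of the claimed map.

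Next, the plan is to integrate $\dot x=\rho_{211}^k\alpha(-X_+(x,0)/Y_+(x,0)+\mathcal O(\epsilon_{211}^k,\alpha))$ along this trajectory. Since $x$ drifts by $\mathcal O(\alpha)$ over the transit, $X_+$ and $Y_+$ may be frozen to leading order, and the principal integral evaluates to $-c_{in}^k\alpha(X_+/Y_+)(1-e^{-T})=-c_{in}^k\alpha X_+(x,0)/Y_+(x,0)+\mathcal O(\alpha\epsilon_{211}^{k/(k+1)})$ using $e^{-T}=(\epsilon_{211}/c_{out})^{k/(k+1)}$. A direct calculation using $\rho_{211}^k(\tau)\epsilon_{211}(\tau)^k=c_{in}^k\epsilon_{211}^k e^{k\tau}$ gives $\int_0^T\rho_{211}^k\epsilon_{211}(\tau)^k\,d\tau=\mathcal O(\epsilon_{211}^{k/(k+1)})$, so the $\mathcal O(\epsilon_{211}^k)$-correction in the reduced vector field contributes at the same order $\alpha\epsilon_{211}^{k/(k+1)}$, while the feedback of the $\mathcal O(\alpha)$ drift of $x$ into $X_+/Y_+$ produces the $\mathcal O(\alpha^2)$ remainder. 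The assertion that these orders are preserved under differentiation with respect to $x$ and $\tilde p_{211}$ follows by differentiating the variational equations, which are of the same form, and applying Gronwall.

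The contraction estimate $\tilde p_{211}^+=\mathcal O(e^{-c/\epsilon_{211}})$ is the most delicate step and the main obstacle. A naive argument in chart 211 time gives only polynomial decay, since the linearization of $\dot p_{211}$ has eigenvalue $-1+\mathcal O(\rho_{211}\epsilon_{211},\epsilon_{211}^k)$ and the transit time is only $\mathcal O(\ln(1/\epsilon_{211}))$, producing at best $e^{-T}=\mathcal O(\epsilon_{211}^{k/(k+1)})$. The exponential bound must instead be imported from the Fenichel estimate on the ambient slow manifold $S_{\epsilon,\alpha,211}$ (the constant-$\epsilon$ slice of $S_{\alpha,211}$), which for each fixed $\epsilon>0$ attracts nearby trajectories at rate $\mathcal O(e^{-c/\epsilon})$ in the original $p$-coordinate. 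Rewriting $\tilde p_{211}=(p-P_{211})\rho_{211}^{-k}$, and using the conservation $\epsilon=c_{in}^{k+1}\epsilon_{211}$ to convert $1/\epsilon$ into a constant multiple of $1/\epsilon_{211}$, then yields the stated bound, since the polynomial blow-up factor $\rho_{211}^{-k}$ is absorbed by the exponential. With the four components assembled, the map $\mathcal P_{211}^4$ takes the advertised form.
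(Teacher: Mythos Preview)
Your treatment of the $\rho_{211}$, $\epsilon_{211}$ and $x$ components is correct and matches the paper's proof: integrate the decoupled linear $(\rho_{211},\epsilon_{211})$-system in the desingularized time of \eqref{reducedS11}, use the conservation $\rho_{211}^{k+1}\epsilon_{211}=\epsilon$ for the second component, and estimate the $x$-drift by freezing $X_+/Y_+$ to leading order. The paper additionally invokes Fenichel's normal form to straighten the stable fibers so that the $(x,\rho_{211},\epsilon_{211})$-system is genuinely independent of $\tilde p_{211}$; you implicitly assume this but do not say so.

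Your argument for the $\tilde p_{211}$-bound, however, rests on a time-scale confusion. You claim the ``naive'' direct integration yields only polynomial decay because the eigenvalue is $-1$ and the transit time is $\mathcal O(\ln(1/\epsilon_{211}))$. But these two quantities live on \emph{different} time scales. In the original time of \eqref{eqns211} the $\tilde p_{211}$-equation is $\dot{\tilde p}_{211}=(-1+\mathcal O(\epsilon_{211}\rho_{211},\epsilon_{211}^k))\tilde p_{211}$, while the bracket governing $\dot\rho_{211},\dot\epsilon_{211}$ is $\mathcal O(\rho_{211}\epsilon_{211})$ on $S_{\alpha,211}$ --- so the transit time in \emph{that} clock is $\mathcal O(1/\epsilon_{211})$, not logarithmic. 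Equivalently, on the desingularized clock of \eqref{reducedS11} (where the transit time \emph{is} $\mathcal O(\ln(1/\epsilon_{211}))$) the $\tilde p_{211}$-equation becomes
\[
\dot{\tilde p}_{211}=\rho_{211}^{-1}\epsilon_{211}^{-1}\,|Y_+(x,0)|^{-1}\bigl(-1+\mathcal O(\epsilon_{211}\rho_{211},\epsilon_{211}^k)\bigr)\tilde p_{211},
\]
and since $\rho_{211}(\tau)\epsilon_{211}(\tau)=c_{in}\epsilon_{211,0}e^{\tau}$ one gets $\int_0^T(\rho_{211}\epsilon_{211})^{-1}d\tau\sim (c_{in}\epsilon_{211,0})^{-1}$, i.e.\ contraction of order $e^{-c/\epsilon_{211}}$ directly. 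This is precisely the paper's computation. Your detour through an ``imported'' Fenichel rate $e^{-c/\epsilon}$ in the $p$-variable, followed by conversion via $\epsilon=c_{in}^{k+1}\epsilon_{211}$, is not wrong in spirit but is unnecessary and obscures the mechanism: the exponential bound is already available from a straightforward integration once the clocks are matched correctly.
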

\begin{proof}
The proof is standard using Fenichel's theory and normal forms, see e.g. \cite{jones_1995}.
In particular, since $\tilde p_{211}=0$ is invariant, we have
\begin{align}
 \dot p_{211} &= \left(-1+\mathcal O(\epsilon_{211}\rho_{211},\epsilon_{211}^k)\right) p_{211},\eqlab{p211eqn}
\end{align}
upon dropping the tildes. Then upon invoking Fenichel's normal form \cite{jones_1995}, we straighten out the stable fibers of $S_{\alpha,211}$ by setting $\tilde x = x+\mathcal O(\rho_{211}^{k+1}\epsilon_{211}\alpha)$. Then the $(\tilde x,\rho_{211},\epsilon_{211})$-system is independent of $p_{211}$ and described by \eqref{reducedS11} upon dropping the tilde. We then simply integrate the $\rho_{211}$ and $\epsilon_{211}$-equations in \eqref{reducedS11}, insert the resulting expressions into the $x$-equation and estimate $x$. On the other hand, on the time scale of \eqref{reducedS11}, \eqref{p211eqn} becomes
\begin{align*}
 \dot p_{211} &= \rho_{211}^{-1}\epsilon_{211}^{-1} \vert Y_+(x,0)\vert^{-1} \left(-1+\mathcal O(\epsilon_{211}\rho_{211},\epsilon_{211}^k)\right) p_{211}.
\end{align*}
From here, using \eqref{eps211}, we then estimate $p_{211}=\mathcal O(e^{-c/\epsilon_{211}})$ uniformly on $\Pi_{11}^5$ for some $c>0$. The partial derivatives of $\mathcal P_{211}^4$ can be handled in a similar way. The expression for the $\rho_{211}$-component, $\rho_{211,out}$, follows from the conservation of $\epsilon$, recall \eqref{eps211}:
\begin{align*}
 c_{in}^{k+1}\epsilon_{211} = \rho_{211,out}^{k+1} c_{out}.
\end{align*}

\end{proof}
% Notice \lemmaref{P114} only describes the dynamics on the invariant manifold. The description of the full map $(x,\rho_{211},p_{211},\epsilon_{211})$ in a neighborhood of $S_{\alpha,211}$ is upon straightening out the stable fibers given by the same expressions and an exponential contraction (of order $\mathcal O(e^{-c/\epsilon_{211}})$) in $p_{211}$.
% \note{The transition time is $-\alpha c_{in}^k \frac{1}{Y_+}+\mathcal O(\alpha^2, \alpha \epsilon^{\frac{k}{k+1}})$}

 \subsection{Analysis in the $(\bar \alpha=1,\bar{\bar y}=1,\bar \nu_{21}\bar \epsilon_{21}=1)_{213}$-chart}
 In this chart, we obtain the following equations:
 \begin{equation}\eqlab{eqs213}
 \begin{aligned}
  \dot x &=\rho_{213}^{k+1}\nu_{213} \alpha X_{213}(x,\nu_{213},p_{213},\rho_{213},\alpha) ,\\
  \dot \nu_{213} &= \nu_{213}\left(\rho_{213}  Y_{213}(x,\nu_{213},p_{213},\rho_{213},\alpha) -\phi_+(\rho_{213}\nu_{213}^{-1}) \nu_{213}^{-k}-p_{213}\right),\\
  \dot p_{213} &=-\nu_{213} \left(\phi_+(\rho_{213}\nu_{213}^{-1}) \nu_{213}^{-k}+p_{213}\right) ,
 \end{aligned}
 \end{equation}
 and $\dot \rho_{213}=0$. Notice that we restrict attention to a compact set with $\nu_{213}>0$, to avoid the singularity at $\nu_{213}=0$.
Here we have defined $X_{213}$ and $Y_{213}$ by
\begin{align*}
 X_{213}(x,\nu_{213},p_{213},\rho_{213},\alpha):&=X(x,-\alpha (1+\rho_{213}^k p_{213})+\alpha \rho_{213}^{2k+1} \nu_{213},1+\rho_{213}^kp_{213}),\\
 Y_{213}(x,\nu_{213},p_{213},\rho_{213},\alpha):&=Y(x,-\alpha (1+\rho_{213}^k p_{213})+\alpha \rho_{213}^{2k+1} \nu_{213},1+\rho_{213}^kp_{213}).
 \end{align*}
For $\rho_{213}=0$, which corresponds to $\epsilon=0$, we obtain the layer problem
\begin{align*}
 \dot x &=0,\\
 \dot \nu_{213}&=- \nu_{213}\left(\beta \nu_{213}^{-k}+p_{213}\right),\\
 \dot p_{213} &=-\nu_{213}\left(\beta \nu_{213}^{-k}+p_{213}\right),
\end{align*}
recall \eqref{eq:beta},
writing $\phi_+(0)$ as $\beta_+=\beta$ for simplicity. Consequently, the set $R_{213}$ defined by $p_{213} = -\beta \nu_{213}^{-k}$, $\nu_{213}>0$, $\rho_{213}=0$ is a manifold of equilibria; it coincides with $R_{211}$ from the $(\bar \alpha=1,\bar{\bar y}=1,\bar \nu_{21}=1)_{211}$-chart upon change of coordinates, see \eqref{cc211}. The linearization about any point in $R_{213}$ gives a single nonzero eigenvalue $k\beta\nu_{213}^{-k-1}-1$. This gives the following.
\begin{lemma}\lemmalab{R213}
Let 
\begin{align}
\nu_{213,f}:=\left({k\beta}\right)^{\frac{1}{k+1}}.\eqlab{nu213f}
\end{align}Then $R_{213}$ divides into a repelling part $R_{213,r}$ for $0<\nu_{213}<\nu_{213,f}$ and an attracting part $R_{213,a}$ for $\nu_{213}>\nu_{213,f}$. Moreover, if $Y_+(x,0)<0$ for all $x$ then the degenerate subset $J_{213}$ of $R_{213}$ defined by $\nu_{213}=\nu_{213,f}$ consists of regular jump points. 
\end{lemma}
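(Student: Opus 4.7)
The plan is to verify the two assertions by a direct computation on the layer problem and by matching the geometry at $\nu_{213}=\nu_{213,f}$ against the classical jump-point conditions, e.g.\ as formulated in \cite{szmolyan2004a,krupa_extending_2001}.

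First I would compute the Jacobian of the layer vector-field (the $\rho_{213}=0$ system in \eqref{eqs213}) restricted to $R_{213}$. Since the $\dot\nu_{213}$ and $\dot p_{213}$ equations coincide at $\rho_{213}=0$, the Jacobian has rank one, so its single nontrivial eigenvalue equals its trace. A short calculation, using $p_{213}=-\beta\nu_{213}^{-k}$ on $R_{213}$, yields the eigenvalue already quoted in the paragraph preceding the lemma (up to the positive factor $\nu_{213}$, which does not affect the sign). Solving $k\beta\nu_{213}^{-k-1}-1=0$ gives $\nu_{213,f}=(k\beta)^{1/(k+1)}$, and checking the sign of this expression on either side gives the attracting/repelling split, proving the first part of the lemma.

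For the jump-point assertion, the cleanest setup is the change of coordinates $(u,w):=(\nu_{213},\,\nu_{213}-p_{213})$, in which the layer problem becomes
\begin{equation*}
\dot u=-u\bigl(h(u)-w\bigr),\qquad \dot w=0,\qquad h(u):=\beta u^{-k}+u,
\end{equation*}
so the fast foliation is $\{w=\text{const}\}$ and $R_{213}$ is the graph $w=h(u)$. The function $h$ has a strict minimum at $u=\nu_{213,f}$ with $h'(\nu_{213,f})=0$ and $h''(\nu_{213,f})=k(k+1)\beta\,\nu_{213,f}^{-k-2}>0$, which is the standard quadratic tangency condition at a fold. The full $\rho_{213}>0$ system gives the slow equation
\begin{equation*}
\dot w=\dot\nu_{213}-\dot p_{213}=\rho_{213}\,\nu_{213}\,Y_{213}(x,\nu_{213},p_{213},\rho_{213},\alpha),
\end{equation*}
and evaluating at the fold $(\nu_{213},p_{213},\rho_{213},\alpha)=(\nu_{213,f},-\beta\nu_{213,f}^{-k},0,0)$ gives $\nu_{213,f}\,Y_+(x,0)\neq 0$ by the hypothesis $Y_+(x,0)<0$. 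This is exactly the transversality of the slow flow through the fold required by the regular-jump-point definition; combined with the quadratic tangency, it identifies every point of $J_{213}$ as a regular jump point in the sense of \cite{krupa_extending_2001,szmolyan2004a}.

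I expect no serious obstacle: all three conditions (rank-one Jacobian with explicit eigenvalue, quadratic minimum of $h$, nonvanishing of the slow vector-field at the fold) reduce to one-line computations once the coordinates $(u,w)$ are introduced. The only point that deserves care is to note that the apparent degeneracy ``$\dot\nu_{213}=\dot p_{213}$ on $R_{213}$ at $\rho_{213}=0$'' is an artifact of the original coordinates; the $(u,w)$-coordinates exhibit the genuine slow-fast structure and make the fold/jump geometry transparent, so that the standard theory applies verbatim.
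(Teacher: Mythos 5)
Your proposal is correct, and the computations all check out: the rank-one Jacobian on $R_{213}$ with trace $\nu_{213}\bigl(k\beta\nu_{213}^{-k-1}-1\bigr)$ gives the attracting/repelling split at $\nu_{213,f}=(k\beta)^{1/(k+1)}$ exactly as in the paper (which quotes the eigenvalue only up to the positive factor $\nu_{213}$, as you note). For the jump-point assertion your route differs from the paper's in a useful way. The paper certifies the jump by explicitly computing the first-order slow-manifold correction \eqref{p213slowman}, projecting to get the reduced flow $\nu_{213}'=Y_+(x,0)\,\nu_{213}^{2}/(\nu_{213}-k\beta\nu_{213}^{-k})$, and reading off the simple pole at $\nu_{213,f}$. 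You instead observe that $w=\nu_{213}-p_{213}$ is conserved by the layer flow, so it is the genuine slow variable; its exact evolution $\dot w=\rho_{213}\nu_{213}Y_{213}$ requires no slow-manifold expansion, and the critical manifold becomes the graph $w=h(u)$ with a nondegenerate quadratic minimum at $u=\nu_{213,f}$. The two verifications are equivalent — your conditions $h'(\nu_{213,f})=0$, $h''(\nu_{213,f})>0$, $\dot w\ne 0$ are precisely what produce the simple pole in the paper's reduced flow, since that flow is $\nu_{213}'=Y_+\nu_{213}/h'(\nu_{213})$ — but your coordinates make the standard fold/jump normal-form hypotheses of \cite{krupa_extending_2001,szmolyan2004a} apply verbatim and avoid the computation of \eqref{p213slowman}. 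The only point you leave implicit is the direction of the slow drift: with $Y_+(x,0)<0$ one has $\dot w<0$, and since the fold is a minimum of $h$ this drives trajectories on the attracting branch $\nu_{213}>\nu_{213,f}$ into the fold, which is the remaining ingredient of the regular-jump-point definition; it is worth one added sentence but is not a gap.
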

\begin{proof}
The statement about the jump points follows from an analysis of the reduced problem on $R_{213}$:
\begin{equation}\eqlab{R213red}
\begin{aligned}
 x' &=0,\\
 \nu_{213}' &=Y_+(x,0)\frac{\nu_{213}^{2}}{\nu_{213}-k\beta\nu_{213}^{-k}}.
\end{aligned}
\end{equation}
This can be obtained from \cite{wechselberger2020a} or more directly by writing the slow manifold approximation as 
\begin{align}
 p_{213} &= -\phi_+(\rho_{213} \nu_{213}^{-1}) \nu_{213}^{-k} +\rho_{213} \frac{k\beta \nu_{213}^{-k}}{\nu_{213}-k\beta\nu_{213}^{-k}}Y_{213}(x,\nu_{213},-\beta \nu^{-k},0,0)+\mathcal O(\rho_{213}^2,\rho_{213}\alpha),\eqlab{p213slowman}
\end{align}
where $Y_{213}(x,\nu_{213},-\beta\nu^{-k},0,0)=Y_+(x,0)$, inserting the result into the $(x,\nu_{213})$-subsystem,
writing the system in terms of the slow time and then letting $\rho_{213}\rightarrow 0$.
 
\end{proof}
The dynamics of the layer problem and the reduced problem are illustrated in \figref{pws_hysteresis_3}. 
%  \begin{align*}
%   x' &=0,\\
%   y_{22}' &=
%  \end{align*}

\begin{figure}
\begin{center}
\includegraphics[width=.7\textwidth]{./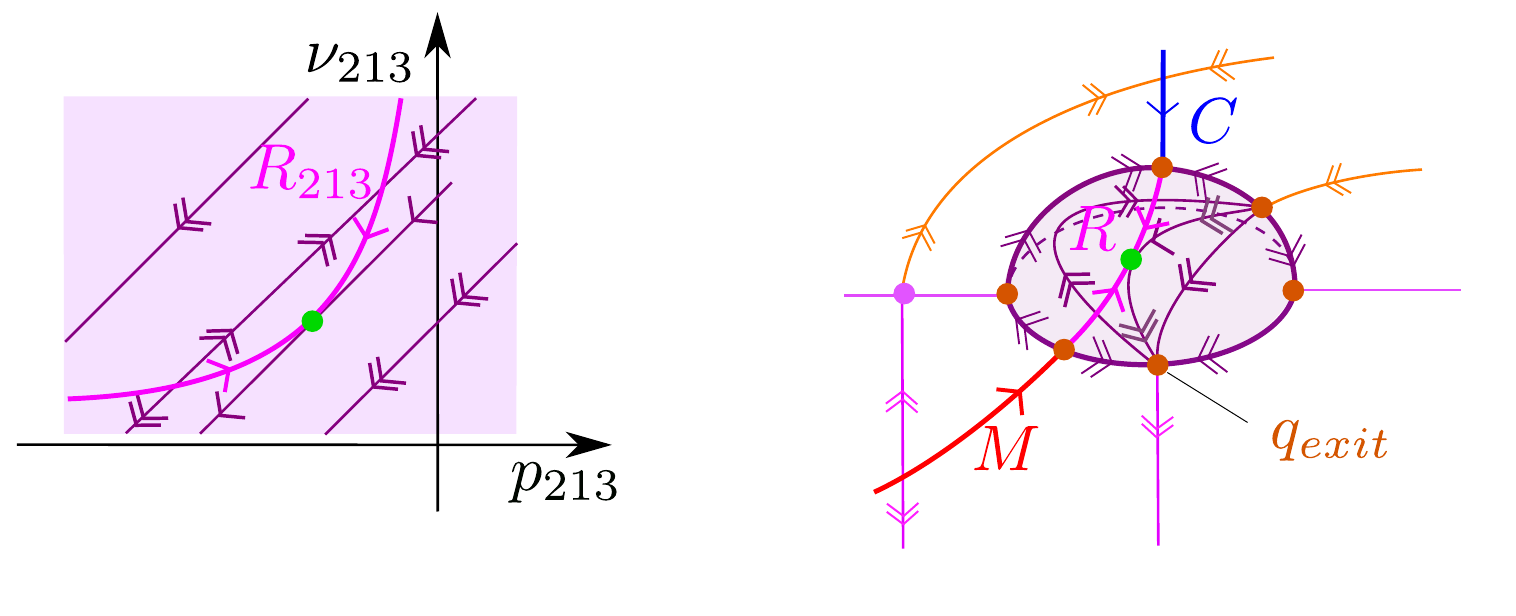}
 \end{center} \caption{To the left, we illustrate the dynamics in the $(\bar \alpha=1,\bar{\bar y}=1,\bar \nu_{21}\bar \epsilon_{21}=1)_{213}$-chart using a projection onto the $(p_{213},\nu_{213})$-coordinate plane. Here we find a critical manifold $R_{213}$ which has a regular fold jump point (in green). On the right, we summarize the findings on the blowup of $Q$. The local diagram on the left covers the subset of the sphere that is bounded away from the edges (purple).   }
 \figlab{pws_hysteresis_3}
% \caption{$q=r_2q_2$, $x=r_2x_2$, $\epsilon=r_2^2$}
              \end{figure}
              \subsection{Exit chart $(\bar \alpha=1,\bar{\bar y}=1,\bar \epsilon_{21}=1)_{212}$}
              In this chart, we obtain the following equations:
              \begin{equation}\eqlab{sphereexit}
              \begin{aligned}
               \dot x &= \rho_{212}^{k+1} \nu_{212}^2 \alpha X_{212}(x,\nu_{212},p_{212},\rho_{212},\alpha),\\
               \dot \nu_{212} &= -(1+k) \nu_{212}\left[-\rho_{212}  \nu_{212} Y_{212}(x,\nu_{212},p_{212},\rho_{212},\alpha) +\phi_+(\rho_{212})+p_{212}\right],\\
               \dot p_{212} &=-kp_{212}\left[-\rho_{212}  \nu_{212} Y_{212}(x,\nu_{212},p_{212},\rho_{212},\alpha) +\phi_+(\rho_{212})+p_{212}\right]-\nu_{212}(\phi_+(\rho_{212})+p_{212})\\
            \dot \rho_{212} &=\rho_{212} \left[-\rho_{212}  \nu_{212} Y_{212}(x,\nu_{212},p_{212},\rho_{212},\alpha) +\phi_+(\rho_{212})+p_{212}\right].
              \end{aligned}
              \end{equation}
              Here we have defined
              \begin{align*}
               X_{212}(x,\nu_{212},p_{212},\rho_{212},\alpha) &= X_{21}(x,\rho_{212}^k\nu_{212},1+\rho_{212}^kp_{212},\alpha),\\
               Y_{212}(x,\nu_{212},p_{212},\rho_{212},\alpha) &= Y_{21}(x,\rho_{212}^k\nu_{212},1+\rho_{212}^kp_{212},\alpha).
              \end{align*}
For $\alpha=0$, $\nu_{212}=0$, we re-discover the manifold of equilibria $M_{22}$ from the chart $(\bar \alpha=1,\bar \epsilon=1)_{22}$ in the following graph form
\begin{align*}
 M_{212}:\quad p_{212} = -\phi_+(\rho_{212}),\quad \rho_{212}>0.
\end{align*}
The graph ends at a partially hyperbolic point $p_{212}=-\phi_+(0)=-\beta<0$, recall \eqref{eq:beta}. On the other hand, consider $\alpha=0$ and the $(\nu_{212},p_{212},\rho_{212})$-subsystem with $x$ fixed. Then the point $q_{exit,212}:\,p_{212}=0$, $\nu_{212}=0$, $\rho_{212}=0$ is fully hyperbolic for the resulting $(\nu_{212},p_{212},\rho_{212})$-subsystem. Indeed the linearization of this system around $(0,0,0)$ produces the following eigenvalues
\begin{align*}
 -(1+k)\beta,\,-k\beta,\,\beta,
\end{align*}
independent of $x$.

For later \response{convenience}, we will now describe details of a transition map $\mathcal P_{212}^{7}:\Pi_{213}^7\rightarrow \Pi_{213}^8$ for all $\alpha\ge 0$ sufficiently near $p_{exit}$ and with $x\in \Pi_{sl}$, from $\Pi_{213}^7:\,\nu_{212}=c_{in}$ to $\Pi_{213}^8:\,\rho_{212}=c_{out}$, with $c_{in}>0$ and $c_{out}>0$ small enough. For this, we first divide the right hand side of \eqref{sphereexit} by the square bracket: $\left[-\rho_{212}  \nu_{212} Y_{212}(x,p_{212},\rho_{212},\alpha) +\phi_+(\rho_{212})+p_{212}\right]>0$, using that this quantity is $\approx \beta$ and therefore positive in a sufficiently small neighborhood of $q_{exit,212}$.
 This gives
 \begin{equation}\eqlab{sphereexit2}
              \begin{aligned}
               \dot x &= \rho_{212}^{k+1} \nu_{212}^2 \alpha \widetilde X_{212}(x,\nu_{212},p_{212},\rho_{212},\alpha),\\
               \dot \nu_{212} &= -(k+1) \nu_{212},\\
               \dot p_{212} &=-kp_{212}-\nu_{212}  + \nu_{212}^2 \rho_{212} Z_{212}(x,\nu_{212},p_{212},\rho_{212},\alpha)\\
            \dot \rho_{212} &=\rho_{212},
              \end{aligned}
              \end{equation}
              where $\widetilde X_{212} = X_{212}/\left[\cdots\right]$ and 
              \begin{align*}
               Z_{212}(x,\nu_{212},p_{212},\rho_{212},\alpha):=
               -\frac{Y_{212}(x,p_{212},\rho_{212},\alpha)}{-\rho_{212}  \nu_{212} Y_{212}(x,p_{212},\rho_{212},\alpha) +\phi_+(\rho_{212})+p_{212}}
%                \rho_{212}^{-1}\left(1-\frac{\phi_+(\rho_{212})+p_{212}}{-\rho_{212}  \nu_{212} Y_{212}(x,p_{212},\rho_{212},\alpha) +\phi_+(\rho_{212})+p_{212}}\right).
              \end{align*}
              \begin{lemma}\lemmalab{P27}
               The transition map $\mathcal P_{212}^7$ for systems \eqref{sphereexit} from $\Pi_{213}^7$ to $\Pi_{213}^8$ is given by 
               \begin{align*}
                \mathcal P_{212}^7 (x,c_{in},p_{212},\rho_{212},\alpha) = \begin{pmatrix}
                \mathcal P_{212x}^7(x,p_{212},\rho_{212},\alpha)\\
                \left(\frac{\rho_{212}}{c_{out}}\right)^{k+1}c_{in}\\
                \mathcal P_{212p}^7(x,p_{212},\rho_{212},\alpha)\\
                c_{out}                                                        
                                                       \end{pmatrix}
               \end{align*}
               where $(x,p_{212})\mapsto \mathcal P_{212x}^7(x,p_{212},\rho_{212},\alpha),\mathcal P_{212p}^7(x,p_{212},\rho_{212},\alpha)$ are both smooth and continuous with respect to $\rho_{212}$ and $\alpha$, satisfying
               \begin{align*}
                \mathcal P_{212x}^7(x,p_{212},\rho_{212},\alpha) = \mathcal O(\rho_{2}^k\alpha),\quad \mathcal P_{212p}^7(x,p_{212},\rho_{212},\alpha)=(p_{212}-c_{in})c_{out}^{-k} \rho_{212}^k+\mathcal O(\rho_{212}^{k+1}).
               \end{align*}
The order of the remainder terms remain unchanged upon differentiation with respect to $x$ and $p_{212}$. 

              \end{lemma}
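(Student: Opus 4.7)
The plan is to exploit the near-linearity and hyperbolicity of \eqref{sphereexit2} in a neighbourhood of the origin in $(\nu_{212},p_{212},\rho_{212})$-space, on which $x$ is essentially frozen during the (logarithmically long) transit from $\Pi_{213}^7$ to $\Pi_{213}^8$. The linear part of the $(\nu_{212},p_{212},\rho_{212})$-subsystem has three distinct eigenvalues $-(k+1),-k,1$ and is diagonalized by the simple linear substitution $\tilde p_{212}:=p_{212}-\nu_{212}$; in these coordinates the equations decouple as
\begin{align*}
\dot{\nu}_{212}=-(k+1)\nu_{212},\qquad \dot{\tilde p}_{212}=-k\tilde p_{212}+\mathcal O(\nu_{212}^2\rho_{212}),\qquad \dot\rho_{212}=\rho_{212},
\end{align*}
with $\dot x=\mathcal O(\rho_{212}^{k+1}\nu_{212}^2\alpha)$, while the $x$-dependence enters only through the coefficient $\widetilde X_{212}$.

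From the exact $\rho_{212}$-equation I would read off the transition time $T=\log(c_{out}/\rho_{212})$. Inserting this into the decoupled $\nu_{212}$- and $\tilde p_{212}$-equations, the latter being solved by variation of constants, yields
\begin{align*}
\nu_{212}(T)=c_{in}(\rho_{212}/c_{out})^{k+1},\qquad \tilde p_{212}(T)=(p_{212}-c_{in})(\rho_{212}/c_{out})^{k}+\mathcal O(\rho_{212}^{k+1}),
\end{align*}
so that $p_{212}(T)=\tilde p_{212}(T)+\nu_{212}(T)=(p_{212}-c_{in})c_{out}^{-k}\rho_{212}^{k}+\mathcal O(\rho_{212}^{k+1})$, as claimed. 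The nonlinear $\mathcal O(\nu_{212}^2\rho_{212})$-contribution to the $\tilde p_{212}$-equation enters only through a convolution integral which, by the exponential bounds $\nu_{212}(s)=c_{in}e^{-(k+1)s}$ and $\rho_{212}(s)=\rho_{212}e^{s}$, is of order $\rho_{212}^{k+1}$ and is therefore swallowed by the remainder. Direct integration of the $x$-equation using the same bounds gives $|x(T)-x|=\mathcal O(\alpha\rho_{212}^{k+1})=\mathcal O(\alpha\rho_{212}^{k})$, which accounts for $\mathcal P_{212x}^7$.

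Smoothness of both components in $(x,p_{212})$, and continuous dependence on $(\rho_{212},\alpha)$, then follows from the standard theorem on smooth dependence of ODE solutions on initial data and parameters. The statement about derivatives is obtained by differentiating the variation-of-constants representations: the variational equations inherit the same linear part, so the contraction factors $(\rho_{212}/c_{out})^{k+1}$ and $(\rho_{212}/c_{out})^{k}$ also control $\partial_x$ and $\partial_{p_{212}}$ of $\mathcal P_{212}^7$, keeping the remainder orders intact. The main (and only mild) obstacle is the bookkeeping of the lower-order perturbations and their derivatives along a trajectory of logarithmically long duration; but since the linearization is diagonalizable with strictly hyperbolic spectrum and no resonances affect the orders treated in the statement, all estimates can be carried out explicitly in the eigenbasis without invoking deeper normal-form theory.
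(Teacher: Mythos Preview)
Your proposal is correct and follows essentially the same route as the paper. The paper makes the explicit ansatz $p_{212}(t)=c_{in}e^{-(k+1)t}+(u_2(t)-c_{in})e^{-kt}$ and integrates the resulting $\dot u_2=\mathcal O(e^{-(k+1)t}\rho_{2,in})$ over $[0,T]$ with $T=\log(c_{out}/\rho_{2,in})$; this is exactly your diagonalization $\tilde p_{212}=p_{212}-\nu_{212}$ together with variation of constants, since $u_2(t)-c_{in}=e^{kt}\tilde p_{212}(t)$.
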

              \begin{proof}
               We solve \eqref{sphereexit2} for $\nu_{212}$ and $\rho_{212}$, so that $\nu_{212}(t) = c_{in}e^{-(k+1)t}$, $\rho_{212}(t)=e^{t} \rho_{2,in}$ and define $u_2(t)$ by
        $p_{212}(t) = c_{in}e^{-(k+1)t}+(u_2(t)-c_{in})e^{-kt}$. Inserting this into the $p_{212}$ equation gives
        \begin{align*}
         \dot u_2 &= e^{-(k+1)t} c_{in}^2 \rho_{2,in} Z_{212}(x,\nu_{212}(t),c_{in}e^{-(k+1)t}+(u_2(t)-c_{in})e^{-kt},\rho_{212}(t),\alpha),
        \end{align*}
together with
\begin{align*}
 \dot x &=e^{-(k+1)t} \rho_{2,in}^k\alpha \widetilde X_{212}(x,\nu_{212}(t),c_{in}e^{-(k+1)t}+(u_2(t)-c_{in})e^{-kt},\rho_{212}(t),\alpha).
\end{align*}
The transition time is $T=\log \left(c_{out} \rho_{2,in}^{-1}\right)$. Notice that quantities $Z_{212}(\cdots)$, $\widetilde X_{212}(\cdots)$ are uniformly bounded on this domain. By integrating the equations we therefore obtain
\begin{align*}
 u_2(T) = u_2(0) + \mathcal O(\rho_{2,in}),\quad x(T) = x(0) + \mathcal O(\rho_{2,in}).
\end{align*}
              \end{proof}
Recall that $\epsilon=\rho_{212}^{k+1}\nu_{212}$ in this chart.
% \note{the transition time is the same order as $x$: $\mathcal O(\rho_{212}^k\alpha)=\mathcal O(\epsilon^{\frac{k}{k+1}}\alpha)$}

\subsection{Completing the proof of \lemmaref{lem:P22}}
In \figref{pws_hysteresis_final}, we summarize the findings from our analysis of the two cylindrical blowups and the blowup of $Q$. In particular, the blowup of $Q$ gives rise to an improved singular cycle. 

In \figref{pws_hysteresis_final}, we also indicate different sections $\Pi^i$, $i=1,\ldots,8$, that are each transverse to $\Gamma$, that we use to decompose the return mapping $\mathcal P_{22}$ in \lemmaref{lem:P22}. (The sections $\Pi^{0,1}$ are defined in a neighborhood of $p=0$, whereas $\Pi^{4,7,8}$ are defined on $p=1$. $\Pi^{2,3}$ are defined in between $p=0$ and $p=1$, but sufficiently close to these values, respectively. The remaining sections $\Pi^{5,6}$ are defined on a blowup of $p=1$.)
We describe each of the local mappings $\Pi^{i-1}\rightarrow \Pi^i$, $i=1,\ldots,8$ in the following. We try to strike the balance between including a complete, rigorous and self-contained analysis while at the same time avoiding too many details, that can be found elsewhere in similar contexts. We provide appropriate references along the way. 
\subsubsection*{$\Pi^0\rightarrow \Pi^1$}
The transition from $\Pi^0$ and $\Pi^1$ is regular in the $(\bar \alpha=1,\bar \epsilon)_{22}$-chart. We therefore leave out further details.
\subsubsection*{$\Pi^1\rightarrow \Pi^2$}
On the other hand, the transition map from $\Pi^1$ to $\Pi^2$ is described in the coordinates $(x,\nu_{21},p,\epsilon_{21})$ of the $(\bar \alpha^{-1} \bar y=1,\bar \alpha=1)_{21}$-chart. We therefore consider \eqref{eqn21} and define the sections as follows $\Pi_{21}^1:\, \epsilon_{21}=c_{in},p\in I_{in}$ to $\Pi_{21}^2:\,\nu_{21}=c_{out},\,p\in I_{out}$, with $I_{in}$ and $I_{out}$ open neighborhoods of $p=0$. Notice, for these values of $p$, the set $B_{21}$ is normally hyperbolic, see \figref{pws_hysteresis_21}. 

To describe the mapping $\mathcal P_{21}^1:\Pi_{21}^1\rightarrow \Pi_{21}^2$, it is \response{convenient} to divide the right hand side of the equation \eqref{eqn21} by the square bracket $\left[\cdots\right]$, which is $\approx 1-p$ and therefore positive for all $\epsilon_{21},\nu_{21}\ge 0$ sufficiently small. Seeing that $\frac{dp}{d\nu_{21}}=1$ for $\epsilon_{21}=0$, $\nu_{21}>0$ it is also \response{convenient} to express the map in terms of $\tilde p:=p-\nu_{21}$. This gives
\begin{equation}\eqlab{eqn21mod}
 \begin{aligned}
  \dot x &= \nu_{21}^2 \epsilon_{21} \alpha \widetilde X_{21}(x,\nu_{21},\tilde p,\epsilon_{21},\alpha),\\
  \dot \nu_{21} &=\nu_{21},\\
  \dot{\tilde p}&=\nu_{21}\epsilon_{21} H_{21}(x,\nu_{21},\tilde p,\epsilon_{21},\alpha),\\
  \dot \epsilon_{21}&=-\epsilon_{21},
%    \dot y_{22}&=\phi(y_{22})-p+\epsilon Y(x,-\alpha p+\epsilon \alpha y_{22},p),\\
%   \dot p &=\epsilon (\phi(y_{22})-p).
 \end{aligned}
 \end{equation}
for some new smooth functions $\widetilde X_{21}$ and $H_{21}$. We then have the following.
\begin{lemma}\lemmalab{P211}
 The transition map $\mathcal P_{21}^1$ for system \eqref{eqn21mod} from $\Pi_{21}^1$ to $\Pi_{21}^2$ is given by
 \begin{align*}
  \mathcal P_{21}^1(x,\nu_{21},\tilde p,c_{in},\alpha)=\begin{pmatrix}
                                              \mathcal P_{21x}^1(x,\nu_{21},\tilde p,\alpha)\\
                                              c_{out}\\
                                              \mathcal P_{21p}^1(x,\nu_{21},\tilde p,\alpha)\\
                                              \frac{\nu_{21} c_{in}}{c_{out}}
                                             \end{pmatrix},
 \end{align*}
where $x,\tilde p\mapsto \mathcal P_{21x}^1(x,\nu_{21},\tilde p,\alpha),\mathcal P_{21p}^1(x,\nu_{21},\tilde p,\alpha)$ are both smooth and satisfy
\begin{align*}
 \mathcal P_{21x}^1(x,\nu_{21},\tilde p,\alpha)=\mathcal O(\nu_{21} \alpha \log \nu_{21} ),\quad \mathcal P_{21p}^1(x,\nu_{21},\tilde p,\alpha) = \mathcal O(\nu_{21}\log \nu_{21}),
\end{align*}
with the order of the remainder unchanged upon differentiation with respect to $x$ and $\tilde p$. 
\end{lemma}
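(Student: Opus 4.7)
The plan is to exploit the near-decoupled structure of \eqref{eqn21mod}: the $\nu_{21}$ and $\epsilon_{21}$ equations are linear and explicitly solvable, while the $x$ and $\tilde p$ equations are driven by quantities that vanish as the orbit approaches the invariant set $B_{21}\cap\{\tilde p=0\}$. First I would integrate the $\nu_{21}$ and $\epsilon_{21}$ equations along the fast time, obtaining $\nu_{21}(t)=\nu_{21,\mathrm{in}} e^{t}$ and $\epsilon_{21}(t)=c_{\mathrm{in}} e^{-t}$. This immediately yields the transition time $T=\log(c_{\mathrm{out}}/\nu_{21,\mathrm{in}})$ from $\nu_{21}(T)=c_{\mathrm{out}}$, and gives $\epsilon_{21}(T)=c_{\mathrm{in}}\nu_{21,\mathrm{in}}/c_{\mathrm{out}}$ as claimed. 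Crucially, the product $\nu_{21}(t)\epsilon_{21}(t)\equiv c_{\mathrm{in}}\nu_{21,\mathrm{in}}$ is a conserved quantity along any orbit in the chart.

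Using this conservation law, the right-hand sides of the remaining two equations are small: $\dot{\tilde p}= c_{\mathrm{in}}\nu_{21,\mathrm{in}}\, H_{21}$ and $\dot x=c_{\mathrm{in}}\nu_{21,\mathrm{in}}\,\nu_{21}(t)\,\alpha\,\widetilde X_{21}$. Since $\widetilde X_{21}$ and $H_{21}$ are smooth, hence uniformly bounded on the compact domain of interest, direct integration from $0$ to $T$ produces
\[
\tilde p(T)-\tilde p(0)=c_{\mathrm{in}}\nu_{21,\mathrm{in}}\int_0^T H_{21}\,dt=\mathcal O(\nu_{21,\mathrm{in}} T)=\mathcal O(\nu_{21,\mathrm{in}}\log \nu_{21,\mathrm{in}}),
\]
and a similar (in fact slightly tighter) estimate for $x(T)-x(0)$ that gives $\mathcal O(\nu_{21,\mathrm{in}}\alpha\log\nu_{21,\mathrm{in}})$ as required. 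Taken together, these estimates produce exactly the form of $\mathcal P_{21}^1$ stated in the lemma.

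To control the partial derivatives with respect to $x$ and $\tilde p$, I would differentiate the variational equations, again using the closed-form expressions for $\nu_{21}(t)$ and $\epsilon_{21}(t)$ so that the variational system for $(\partial x,\partial \tilde p)$ has the same conserved prefactor $c_{\mathrm{in}}\nu_{21,\mathrm{in}}$ on its right-hand side. A standard Gronwall argument on the interval $[0,T]$ then shows that the partial derivatives of the remainder inherit the same $\nu_{21,\mathrm{in}}(\log \nu_{21,\mathrm{in}})$-type bound, establishing the final claim that the orders are preserved under differentiation.

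The routine bookkeeping aside, the main subtle point I would watch is the interplay between the blowing-up transition time $T=\log(c_{\mathrm{out}}/\nu_{21,\mathrm{in}})$, which is unbounded as $\nu_{21,\mathrm{in}}\to 0$, and the smallness of the driving terms. The conservation of $\nu_{21}\epsilon_{21}$ is what saves the estimate: without it, one would only get $\mathcal O(\log \nu_{21,\mathrm{in}})$ from a naive bound, and the analyticity/smoothness of the transition map across $\nu_{21,\mathrm{in}}=0$ would be lost. This is therefore the one step where the specific geometry introduced by the blowup of $Q$, rather than pure ODE calculus, actually enters the argument.
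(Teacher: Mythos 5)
Your proposal is correct, and it is essentially the argument the paper relies on: the paper's proof consists of a one-line citation to a standard transition result near a (resonant) hyperbolic saddle, and your explicit integration — solving the decoupled linear $\nu_{21},\epsilon_{21}$ equations, using the conserved product $\nu_{21}\epsilon_{21}=c_{in}\nu_{21}$ to bound the $x$- and $\tilde p$-drifts over the logarithmically long transition time $T=\log(c_{out}/\nu_{21})$, and running Gronwall on the variational equations for the derivative bounds — is precisely the content of that standard result in this fully decoupled setting. Your identification of the conservation of $\nu_{21}\epsilon_{21}$ as the step that converts a naive $\mathcal O(\log\nu_{21})$ bound into $\mathcal O(\nu_{21}\log\nu_{21})$ is exactly the right key observation.
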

\begin{proof}
 The proof is standard, see e.g. \cite[Proposition 2.1]{de2016a}. 
\end{proof}

\subsubsection*{$\Pi^2\rightarrow \Pi^3$}
The transition map from $\Pi^2\rightarrow \Pi^3$ is regular in the $(\bar \alpha=1,\bar{\bar y}=1)_{21}$-chart and further details are therefore left out.
\subsubsection*{$\Pi^3\rightarrow \Pi^{4,5}$}
The transition map from $\Pi^3$ to $\Pi^4$ is obtained from Fenichel's theory near the normally attracting manifold $C$ e.g. by working in the $(\bar \alpha=1)_2$-chart. 
In fact, by working in chart $(\bar \alpha=1,\bar{\bar y}=1)_{21}$ and using the blowup transformation \eqref{blowupsp} this result can be extended all the way up to the section $\Pi^5$ on the blowup of $Q_{21}$. The details were given in \lemmaref{P114}. 
\subsubsection*{$\Pi^5\rightarrow \Pi^{6}$}
The transition map from $\Pi^5$ to $\Pi^6$ is best described in the chart $(\bar \alpha=1,\bar{\bar y}=1,\bar \nu_{21}\bar \epsilon_{21}=1)_{213}$ where the equations are slow-fast. The transition map is then given as a regular fold, jump set with $\rho_{213} = \epsilon$ as the small parameter. See e.g. \cite{szmolyan2004a} for further details. 
\subsubsection*{$\Pi^6\rightarrow \Pi^{7}$}
The exit from the blowup sphere, that we describe by a transition map from $\Pi^6$ to $\Pi^7$ is given by the transition near a resonance saddle. The details were given in \lemmaref{P27}. 

\subsubsection*{$\Pi^7\rightarrow \Pi^{8}$}
The transition map from $\Pi^7\rightarrow \Pi^8$ is regular in the $(\bar \alpha=1,\bar \epsilon=1)_{22}$-chart and further details are therefore left out.
\subsubsection*{Analyzing the half-map: $\Pi^0\rightarrow \Pi^8$}
First, we state a simple corollary of the analysis above.
\begin{corollary}
Upon extension by the forward flow, the slow manifold $S_{\epsilon,\alpha}$ intersects $\Pi^8_{22}$ in chart $(\bar \alpha=1,\bar \epsilon=1)_{22}$ in a curve defined by
\begin{align*}
 y_{22}=0,\,p=s_{22}(x,\epsilon,\alpha),
\end{align*}
where
\begin{align*}
 s_{22}(x,\epsilon,\alpha) =1+\mathcal O(\epsilon^{\frac{k}{k+1}}),
\end{align*}
with the order of the remainder being unchanged upon differentiation with 
respect to $x$. 
\end{corollary}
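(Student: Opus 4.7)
The plan is to track the slow manifold $S_{\epsilon,\alpha}$ of \lemmaref{Zpreg} by the forward flow through every local chart analysed in \secref{lem:P22}, and to read off its $p$-coordinate at $y_{22}=0$ in chart $(\bar\alpha=1,\bar\epsilon=1)_{22}$. First, I would start from the $(\bar y=1)_1$- and $(\bar\alpha=1)_2$-chart descriptions of $S_{\epsilon,\alpha}$, which by \lemmaref{Seps1} (resp.\ the analysis of \secref{bary1} and the subsequent section) is a Fenichel graph of the form $p=1+\mathcal O(\epsilon^k)$ smooth in $x$, and push it into the entry chart $(\bar\nu_{21}=1)_{211}$ of the $Q$-blowup by applying \lemmaref{P114}. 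At the exit section $\Pi^5$ the deviation $\tilde p_{211}$ from the attracting center manifold is exponentially small and the remaining coordinates are determined by the conservation of $\epsilon$.

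Second, I would cross the blowup sphere of $Q$. In the central chart $(\bar\nu_{21}\bar\epsilon_{21}=1)_{213}$ the extension lies $\mathcal O(\rho_{213})$-close to the attracting branch $R_{213,a}$ with the slow-manifold expansion \eqref{p213slowman}. It reaches the fold point $\nu_{213,f}$ of \lemmaref{R213} and undergoes a regular fold jump; here I would invoke the standard planar-fold analysis of \cite{szmolyan2004a} to continue the extension past the jump while preserving $x$-smoothness and the algebraic-order-$k$ remainder structure. Then \lemmaref{P27} transports the manifold across the resonance saddle in the exit chart $(\bar\epsilon_{21}=1)_{212}$, producing $p=1+\rho_{212}^k p_{212}$ at the section $\{\rho_{212}=c_{out}\}$ with $p_{212}$ smooth in $x$.

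Third, I would unravel the coordinates. Using the chart transitions \eqref{cc211} together with the chart-$213$ relation $\rho_{213}=\epsilon^{1/(k+1)}$, one gets $p-1=\rho_{213}^k p_{213}=\epsilon^{k/(k+1)} p_{213}$ with $p_{213}$ uniformly bounded; this is the origin of the announced $\epsilon^{k/(k+1)}$-remainder. A final regular transit of \eqref{eps22eqns} in chart $22$, along the perturbation of the singular arc $\gamma_{x3}$, takes the extended manifold from $y_{22}=1/c_{out}$ down to $y_{22}=0$. For this transit $\dot p=\mathcal O(\epsilon)$ with transit time of order $\mathcal O(1)$, so the change in $p$ is $\mathcal O(\epsilon)\subset\mathcal O(\epsilon^{k/(k+1)})$. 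Combining yields $p=s_{22}(x,\epsilon,\alpha)=1+\mathcal O(\epsilon^{k/(k+1)})$ at $y_{22}=0$. Smoothness of $s_{22}$ in $x$ and the unchanged remainder order under $\partial_x$ follow because each local transition map in the chain is smooth in $x$ with $\partial_x$-stable remainders.

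The main technical obstacle will be the fold-jump step in chart $213$: one must verify that the planar-fold blowup machinery of \cite{szmolyan2004a} transports the extended slow manifold past the jump with $x$-uniform algebraic-order-$k$ estimates as $\epsilon,\alpha\to 0$. Assumption~\ref{assumption:4} ($Y_+(x,0)<0$ uniformly on $\Sigma$) guarantees that the fold is a regular jump point for every $x$, and the $\mathcal O(\rho_{213},\alpha)$-corrections in \eqref{p213slowman} are bounded smooth perturbations, so the $x$-parametric version of the fold analysis applies.
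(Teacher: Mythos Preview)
Your proposal is correct and follows the same route as the paper: the paper's own proof is the one-liner ``This essentially follows from \lemmaref{P27} with $\rho_{212}\approx\epsilon$'', and your argument is exactly the chain of local transitions (through $\Pi^4\!\to\!\Pi^5\!\to\!\Pi^6\!\to\!\Pi^7\!\to\!\Pi^8$ across the $Q$-blowup) that this sentence leaves implicit. The key step is the same in both: the exit-chart estimate of \lemmaref{P27} combined with $\epsilon=\rho_{212}^{k+1}\nu_{212}$ on $\Pi^7$ (so $\rho_{212,in}\sim\epsilon^{1/(k+1)}$, hence $p-1=\mathcal O(\rho_{212,in}^{k})=\mathcal O(\epsilon^{k/(k+1)})$), after which the regular transit in chart $22$ to $y_{22}=0$ only adds an $\mathcal O(\epsilon)$ correction.
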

This essentially follows from \lemmaref{P27} with $\rho_{212}\response{\approx} \epsilon$.
%  with $x\mapsto s_{22}(x,\epsilon,\alpha)$ smooth, in particular the order of the remainder does not change up

Now, let $\Pi^8_{22}$ be defined by $y_{22}=0$, $p\in I^8$ a small neighborhood of $p=1$ and $x\in \Sigma$. 
From the proceeding analysis, the map $\mathcal Q_{22}:\,\Pi^0_{22}\rightarrow \Pi^8_{22}$, $(x,p)\mapsto (x_+,p_+)$ is well-defined for all $\epsilon,\alpha>0$ sufficiently small. In particular, we have
\begin{align*}
 x_+(x,p,\epsilon,\alpha) &=x+\alpha (1-p)\vert Y_+(x,0)\vert^{-1} X_+(x,0)+\mathcal O(\alpha^2,\epsilon^{\frac{k}{k+1}} \alpha),\\
 p_+(x,p,\epsilon,\alpha)&=s_{22}(x,\epsilon,\alpha)+\mathcal O(e^{-c/\epsilon}),
\end{align*}
with the order of the remainder unchanged under differentiation with respect to $x,p$. Here the leading order expression for $x_+$ follows from \lemmaref{P114} with $c_{in}=(1-p)$, recall also \lemmaref{het}. The expression for the map from $\Pi^8_{22}$ to $\Pi^0_{22}$ is similar; the leading order terms follow by replacing $+$ by $-$ and by replacing $1$ in the expression for $s_{22}$ by $0$. This completes the proof of \lemmaref{lem:P22} (upon redefining $s_{22}$). 
%In particular, for $\alpha=0$
%  \begin{align*}
%  \frac{\partial p_+}{\partial p}(x,p,\epsilon,0)=\mathcal O(e^{-c/\epsilon}),
%  \end{align*}
%  for each $x\in \Sigma_{sl}$. 
% and $p_+(x,p,d
% This completes the proof of \eqref{P22} and then through \cite[Theorem A.1]{szmolyan2004a} the proof of the existence of the invariant curve. 
% \end{proof}

% We obtain the invariant cylinder $\mathcal C_{\epsilon,\alpha}$ in \thmref{main1} by flowing the invariant curve in the previous proposition forward. 
\begin{figure}
\begin{center}
\includegraphics[width=.75\textwidth]{./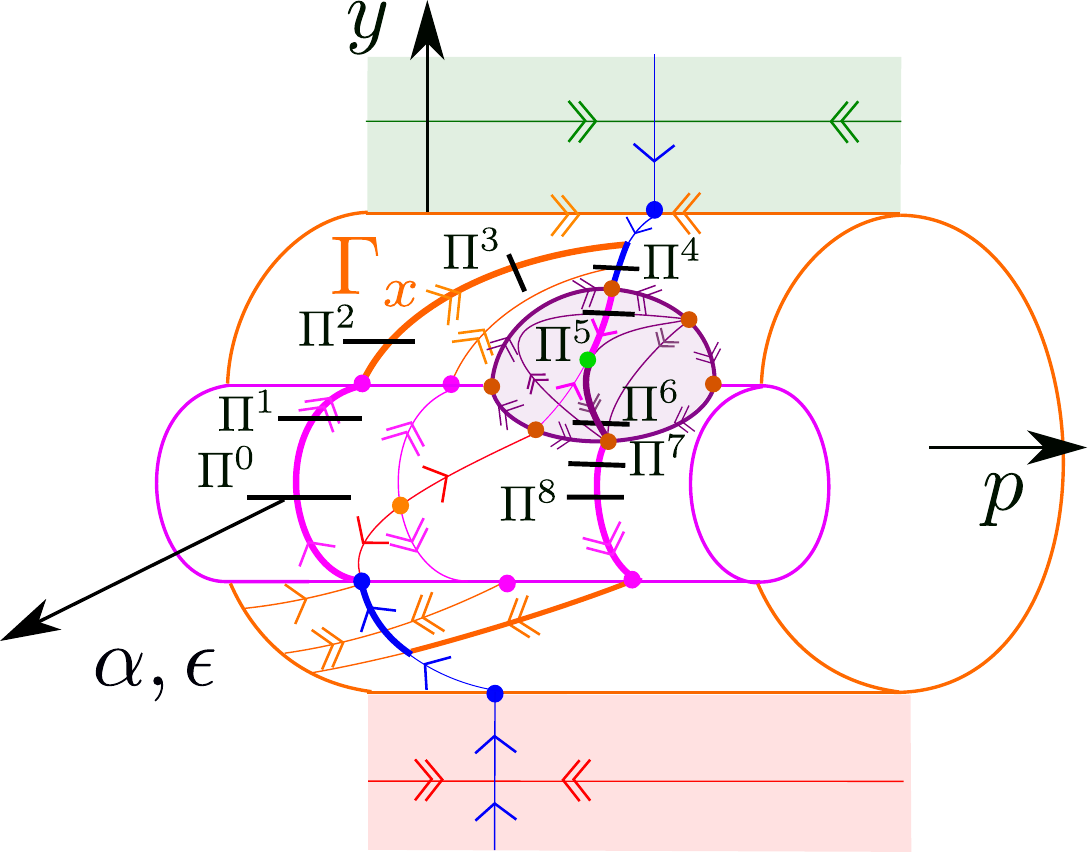}
 \end{center} \caption{The dynamics of the full desingularized system, including the spherical blowup of $Q$. \response{The cycle $\Gamma_x$ (thick curves) has improved hyperbolicity properties}. We also indicate the sections $\Pi^{0-8}$ used in the proof of \lemmaref{lem:P22}.  }
 \figlab{pws_hysteresis_final}
% \caption{$q=r_2q_2$, $x=r_2x_2$, $\epsilon=r_2^2$}
              \end{figure}
              
% \fbox{I morgen: visible}
%  \begin{lemma}
%   
%  \end{lemma}
 \section{Main results in the case of grazing}\seclab{main2}
 In this section, we consider \eqref{xypmodel} under the following assumption (which replaces assumption \ref{assumption:4} henceforth):
 \begin{assumption}
	\label{assumption:5}
	The PWS system $Z_\pm$ is planar $z=(x,y)\in \mathbb R^2$ and each $Z_\pm$ depends smoothly on an unfolding parameter \response{$\mu\approx 0$ defined in a neighborhood of $0$}. In particular, for $\mu=0$, $Z_+$ has a hyperbolic and repelling limit cycle $\gamma_0$ that has a quadratic tangency with $\Sigma$ at $x=0$. $Z_-$, on the other hand, is assumed to be transverse to $\Sigma$. 
% 	The smooth `regularization function' $\phi : \mathbb R \to \mathbb R$ satisfies the monotonicity condition
% 	\[
% 	\frac{\partial \phi(s)}{\partial s} > 0 ,
% 	\]
% 	for all $s \in \mathbb R$ and, moreover,
% 	\begin{equation}
% 	\label{eq:mono}
% 	\phi(s) \to
% 	\begin{cases}
% 	1  & \text{for } s \to \infty , \\
% 	0  & \text{for } s \to -\infty .
% 	\end{cases}
% 	\end{equation}
\end{assumption}

% By assumption \ref{assumption:5}, 
Consequently, for $\mu=0$ we have that $(x,y)=(0,0)$ is a visible fold point  \cite{jeffrey_geometry_2011,2019arXiv190806781U} of the piecewise smooth system $Z_\pm$, see $T$ in \figref{visible0}. In fact, by the implicit function theorem, $Z_+$ has visible fold point for each $\mu\response{\approx} 0$ and this point depends smoothly on $\mu$. Then upon using \response{\cite[Proposition 14]{reves_regularization_2014}, see also \cite{2019arXiv190806781U},} we \response{can} transform the PWS system $Z_\pm$ locally into 
\begin{align}
 Z_+(z,\mu) =\begin{pmatrix}
            1+f(z,\mu)\\
            2x+yg(z,\mu)
           \end{pmatrix},\quad Z_-(z,\mu) = \begin{pmatrix}
           0\\
           1
\end{pmatrix},\eqlab{Zpmvf}
\end{align}
by a $C^\infty$-diffeomorphism.
Here $f$ and $g$ are smooth functions with $f(0,\mu)=0$ for all $\mu\response{\approx} 0$; for \eqref{Zpmvf} the fold point is therefore fixed at $(x,y)=(0,0)$. 
This is the system that we will use to study the local dynamics near $(x,y)=(0,0)$. We will henceforth suppress the dependency of $f$ and $g$ on $\mu$ since this will play little role.

Since the limit cycle $\gamma_0$ in assumption \ref{assumption:5} is hyperbolic for $Z_+$, we have a repelling limit cycle $\gamma_\mu$ of $Z_+$ for every $\mu\response{\approx} 0$. Let $Y(\mu)=\min_t y(t)$ along $\gamma_\mu$ so that $Y(0)=0$.  We assume the following degeneracy condition.
\begin{assumption}
 \label{assumption:6}
 $Y'(0)> 0$. 
\end{assumption}
We illustrate the setting in \figref{grazing}. 

Under these assumptions, the reference \cite{2019arXiv190806781U} proved that the system obtained from regularization by smoothing \eqref{xysm}, has a locally unique saddle-node bifurcation of limit cycles at $\mu=o(1)$ with respect to $\epsilon\rightarrow 0$. On the other hand, the reference \cite{rev2021a} also showed that the system obtained from regularization by hysteresis has chaotic dynamics (through a Baker-like map) for all $\alpha>0$ sufficiently small provided $\mu\response{\approx} 0$ is sufficiently small. In this section, we try to bridge these two results by working on \eqref{xypmodel}, using (as in \cite{2019arXiv190806781U}) the normal form \eqref{Zpmvf} to perform the analysis near $(x,y)=(0,0)$.

\begin{figure}
\begin{center}
\includegraphics[width=.47\textwidth]{./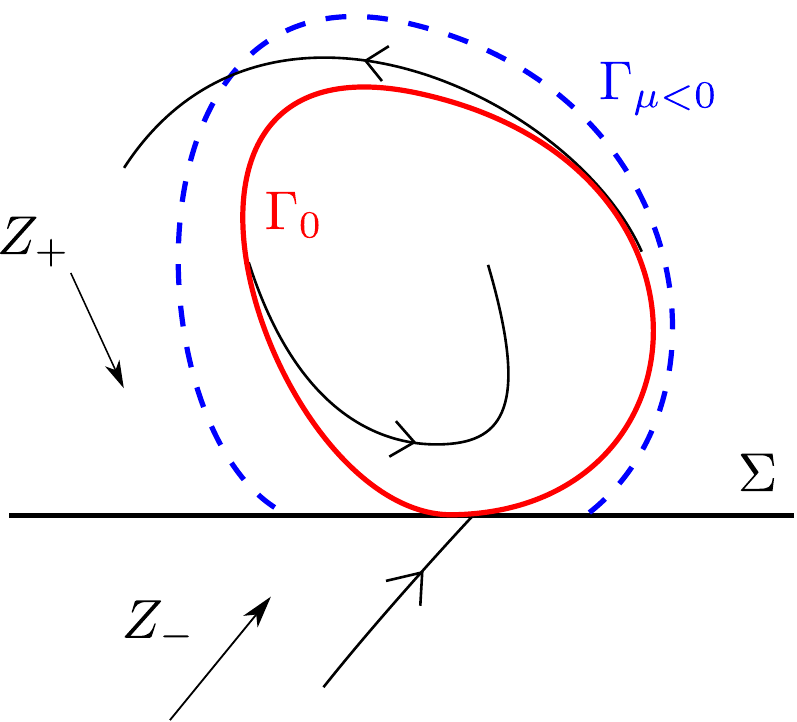}
 \end{center} \caption{The grazing bifurcation. We assume that the smooth vector-field $Z_+$ has a repelling limit $\Gamma_0$ for $\mu=0$ having a quadratic tangency with $\Sigma$. Under a further degeneracy condition, which ensures that the perturbation $\Gamma_\mu$ of $\Gamma_0$ as a limit cycle of $Z_+$ for $\mu\response{\approx} 0$ transverses $\Sigma$ with nonzero speed, see assumption \ref{assumption:6}, the reference \cite{rev2021a} have shown that, while  regularization by smoothing leads to a saddle-node bifurcation of limit cycle \cite{2019arXiv190806781U}, regularization by hysteresis leads to chaotic dynamics. \thmref{main2} is an attempt to bridge these two regimes by working on \eqref{xypmodel}.}
 \figlab{grazing}
% \caption{$q=r_2q_2$, $x=r_2x_2$, $\epsilon=r_2^2$}
              \end{figure}

%               I THINK WE NEED TO CHANGE THE DOMAIN 1? $\alpha\le \epsilon^{2k}\alpha_0$?

To present the result, we define two wedge-shaped regions in the $(\epsilon,\alpha)$-plane. Firstly, for $\epsilon_0>0$, $\alpha_0>0$, let $W_1(\epsilon_0,\alpha_0)$ be the region defined by $0<\alpha\le \epsilon^{2k}\alpha_0$ for $0<\epsilon\le\epsilon_0$. On the other hand, let $W_2(\epsilon_0,\epsilon_1,\alpha_0)$ be the region defined by $$0<\alpha^{\frac{k+1}{k}}\epsilon_0<\epsilon\le \alpha^{\frac{k+1}{k}}\epsilon_1,$$ for $0<\alpha\le \alpha_0$ and $0<\epsilon_0<\epsilon_1$. We illustrate the two regions in \figref{W12}. These regions do not overlap for $\epsilon_1>\epsilon_0> 0$ and $\alpha_0\ge 0$ sufficiently small.

In the following, we will sometimes write $W_1(\epsilon_0,\alpha_0)$ and $W_2(\epsilon_0,\epsilon_1,\alpha_0)$ as $W_1$ and $W_2$ for simplicity. 
\begin{figure}
\begin{center}
\includegraphics[width=.46\textwidth]{./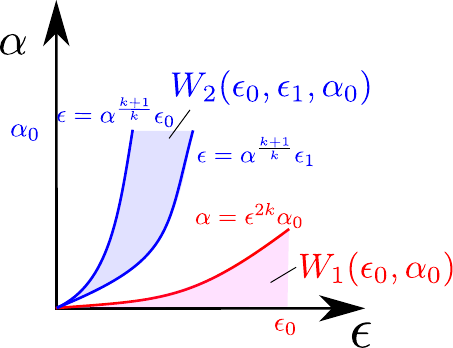}
 \end{center} \caption{The two regions in the $(\epsilon,\alpha)$-plane relevant for \thmref{main2}.}
 \figlab{W12}
% \caption{$q=r_2q_2$, $x=r_2x_2$, $\epsilon=r_2^2$}
              \end{figure}
% For this purpose, we introduce $\rho_{213}$ and $\alpha_3$ as follows:
% \begin{align*}
% \epsilon = \rho_{213}^{k+1},\quad 
% \end{align*}

\response{
For $N\in \mathbb N$, $N\ge 2$, let $\Sigma^N$ denote the space of all $N$-symbol sequences $s=\{\ldots,s_{-1},s_0,s_1,\ldots\}$, $s_i\in \{0,1,\ldots,N-1\}$ for all $i\in \mathbb Z$, equipped with the complete metric:
\begin{align*}
 d(s,\tilde s) = \sum_{i=-\infty}^\infty \frac{1}{2^{\vert i\vert}} \frac{\delta_{s_i,\tilde s_i}}{1+\delta_{s_i,\tilde s_i}},\quad \delta_{i,j}:=\begin{cases}
1& \text{for}\,i=j\\                                                                                                                                                0& \text{for}\,i\ne j,                                                                                                                                                \end{cases}
\end{align*}
see \cite[Chapter 24.1]{wiggins2003a}. 
\begin{proposition}\proplab{shift}
\cite[Proposition 24.2.2]{wiggins2003a}
 The full shift on $N$ symbols $\sigma:\Sigma^N\rightarrow \Sigma^N$, defined by 
\begin{align*}
 \sigma(s)=\{\ldots,s_{0},s_{1},s_2,\ldots\},\quad \mbox{that is}\quad (\sigma(s))_i=s_{i+1}\quad\mbox{for all}\quad i\in \mathbb Z,
\end{align*}
is continuous and chaotic in the following sense:
\begin{enumerate}
 \item There is a countable infinity of periodic orbits, consisting of orbits of all periods.
 \item There is an uncountable infinity of nonperiodic orbits.
 \item There is a dense orbit. 
\end{enumerate}
\end{proposition}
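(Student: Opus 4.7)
The plan is to verify each listed property directly from the combinatorial/symbolic structure of $\Sigma^N$, since the shift map admits a transparent description in terms of finite words. Before addressing (1)--(3), I would first verify continuity of $\sigma$, which is essentially a quantitative restatement of the definition of the metric $d$: given $\varepsilon>0$, pick $M$ large enough that $\sum_{|i|>M}2^{-|i|}<\varepsilon/2$ and set $\delta=2^{-(M+2)}$. Then $d(s,\tilde s)<\delta$ forces $s_i=\tilde s_i$ for all $|i|\le M+1$, which gives $(\sigma(s))_i=(\sigma(\tilde s))_i$ for all $|i|\le M$, and hence $d(\sigma(s),\sigma(\tilde s))<\varepsilon$.

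For (1), I would observe that a point $s\in\Sigma^N$ is periodic for $\sigma$ of period $p$ if and only if $s_{i+p}=s_i$ for every $i\in\mathbb Z$, i.e. if and only if $s$ is the two-sided periodic extension of a word $(s_0,\ldots,s_{p-1})\in\{0,\ldots,N-1\}^p$. This yields exactly $N^p$ periodic points of period dividing $p$. Letting $p$ range over $\mathbb N$ gives a countable infinity of periodic orbits and, in particular, orbits of every period. For (2), I would note that $\Sigma^N$ is homeomorphic to a Cantor set and therefore uncountable (one can see this directly by a diagonal argument on sequences in $\{0,\ldots,N-1\}^{\mathbb Z}$). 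Since the set of periodic sequences is a countable union of finite sets, it is countable, and the complement --- the set of nonperiodic sequences --- is uncountable.

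For (3), I would use the standard concatenation construction. Enumerate all finite words $W_1,W_2,W_3,\ldots$ over the alphabet $\{0,1,\ldots,N-1\}$, e.g.\ by length and lexicographically within each length. Define $s^\star\in\Sigma^N$ by placing the concatenation $W_1W_2W_3\cdots$ on the nonnegative indices $i\ge 0$ and extending arbitrarily (say by zeros) to the negative indices. Given any target $\tilde s\in\Sigma^N$ and any $\varepsilon>0$, choose $M$ so that any sequence agreeing with $\tilde s$ on $\{-M,\ldots,M\}$ lies within $\varepsilon$ of $\tilde s$. The block $(\tilde s_{-M},\ldots,\tilde s_M)$ is one of the $W_j$, so it appears as a contiguous subword of $s^\star$ starting at some index $k+M$, and then $\sigma^k(s^\star)$ agrees with $\tilde s$ on $\{-M,\ldots,M\}$, so $d(\sigma^k(s^\star),\tilde s)<\varepsilon$. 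Hence the forward $\sigma$-orbit of $s^\star$ is dense in $\Sigma^N$.

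The main technical point to get right is property (3): one has to be careful that the metric is two-sided, so approximation requires simultaneously matching coordinates on $\{-M,\ldots,-1\}$ and $\{0,\ldots,M\}$. The concatenation must therefore be constructed so that arbitrarily long centered blocks of every prescribed pattern appear in $s^\star$, which is why I would list $W_j$ by \emph{full} blocks rather than by one-sided initial segments, and why the dense orbit is obtained only after a suitable shift $\sigma^k$ with $k\ge M$ rather than at $k=0$. The other two properties are essentially counting arguments and require no further work beyond what is sketched above.
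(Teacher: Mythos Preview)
The paper does not supply its own proof of this proposition: it is simply quoted from \cite[Proposition 24.2.2]{wiggins2003a} and used as a black box in the statement and proof of \thmref{main2}~(\ref{item2}). There is therefore no ``paper's proof'' to compare against.

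Your argument is the standard one and is essentially correct. One small indexing slip in part (3): if the block $(\tilde s_{-M},\ldots,\tilde s_M)$ appears in $s^\star$ starting at position $j\ge 0$ (so $s^\star_{j+m}=\tilde s_{-M+m}$ for $0\le m\le 2M$), then it is $\sigma^{j+M}(s^\star)$, not $\sigma^{j-M}(s^\star)$, that agrees with $\tilde s$ on $\{-M,\ldots,M\}$; your phrasing ``starting at some index $k+M$, and then $\sigma^k(s^\star)$ agrees \ldots'' has the sign of the offset wrong. This does not affect the argument, since the point is only that \emph{some} iterate lands within $\varepsilon$ of $\tilde s$.
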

The case $N=2$ is the most familiar one, since this is the shift map relevant to the standard Smale's horseshoe. 
}

\begin{thm}\thmlab{main2}
 Consider \eqref{xypmodel} under the assumptions \ref{assumption:1}, \ref{assumption:2}, \ref{assumption:3}, \ref{assumption:5} and \ref{assumption:6} so that \eqref{Zpaf} holds with $Z_\pm$ given in a small neighborhood of $(x,y)$ by \eqref{Zpmvf}. Fix any $n\in \mathbb N$. Then for $\epsilon_1>\epsilon_0>0$ and $\alpha_0>0$ all sufficiently small, we have the following:
 \begin{enumerate}
  \item \label{item1} For any $(\epsilon,\alpha)\in W_1(\epsilon_0,\alpha_0)$, there exists a $\mu\response{\approx} 0$ such that the system \eqref{xypmodel} has a saddle-node bifurcation of limit cycles.
  \item \label{item2} For any $(\epsilon,\alpha)\in W_2(\epsilon_0,\epsilon_1,\alpha_0)$, there exists a $\mu\response{\approx} 0$ such that there is a return map defined by the system \eqref{xypmodel} having an invariant cantor set upon which the map is homeomorphic to the full shift $\sigma:\Sigma^N\rightarrow \Sigma^N$ on $N$ symbols. 
 \end{enumerate}
\end{thm}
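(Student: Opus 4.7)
The plan is to adapt the blowup framework developed in \secref{sec:xypblowup} and \secref{lem:P22} to the grazing setting, using the normal form \eqref{Zpmvf}. The two cylindrical blowup transformations \eqref{blowupcyl1}, \eqref{blowupcyl2}, and the subsequent spherical blowup \eqref{blowupsp} of $Q$, apply essentially unchanged in each chart. The decisive new feature, arising from replacing assumption \ref{assumption:4} by assumption \ref{assumption:5}, is that $Y_+(x,0)=2x$ now vanishes and changes sign at $x=0$. This modifies the reduced flow on the critical manifolds $M_{22}$ (via \lemmaref{K22}) and, more importantly, on $R_{213}$ (via \eqref{R213red}): the regular fold jump point identified in \lemmaref{R213} persists only for $x$ bounded away from $0$, while at $x=0$ it becomes a folded singularity. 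Under assumption \ref{assumption:6}, a standard local calculation in the spirit of \cite{szmolyan_canards_2001} shows that this folded singularity is a folded saddle (the linearization of the desingularized flow on $R_{213}$ has a saddle structure, and the canard direction points into the blown-up sphere).

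For part \ref{item1}, in the regime $W_1(\epsilon_0,\alpha_0)$ we have $\alpha\ll \epsilon^{2k}$, so the hysteresis correction is negligible compared to the smoothing scale. The local transition map across the blowup of the visible fold can therefore be matched to the one constructed in \cite{2019arXiv190806781U} for pure smoothing, with the $\alpha$-dependent corrections absorbed into higher-order remainders. Composing this local map with the regular flow along the hyperbolic limit cycle $\gamma_\mu$ of $Z_+$ yields a well-defined return map on a section transverse to $\gamma_\mu$. This map inherits from \cite{2019arXiv190806781U} a quadratic fold structure in $\mu$, and the implicit function theorem delivers a locally unique saddle-node bifurcation at some $\mu=\mu_*(\epsilon,\alpha)\approx 0$.

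For part \ref{item2}, in the regime $W_2(\epsilon_0,\epsilon_1,\alpha_0)$ the balance $\epsilon\sim \alpha^{(k+1)/k}$ is precisely the one that fully unfolds the folded saddle on $R_{213}$ (the scaling can be read off from \eqref{rho3eps} together with the reduced equation \eqref{R213red}). The folded saddle produces true canard trajectories through the fold, organising a one-parameter family of forward passages across $Q$. Concatenating this local canard passage with the global return via the repelling limit cycle $\gamma_\mu$, which by assumption \ref{assumption:6} sweeps transversally across the canard selector as $\mu$ varies, yields a return map to a section near $Q$. The geometry is analogous to the one in \cite{kristiansen2021a}: the return stretches and folds a thin strip across the canard fan, creating horizontal and vertical strips that satisfy the Conley--Moser conditions for a Smale horseshoe. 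The number of symbols $N$ is controlled by how many preimage strips can be fitted into the target rectangle, and can be made arbitrarily large by choosing $\epsilon_0/\epsilon_1$ sufficiently small.

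The main obstacle is the horseshoe construction in part \ref{item2}. Verifying the Conley--Moser conditions requires quantitative control of the canard passage through the folded saddle across three consecutive blowups, including exponentially small estimates on the separation of true canards and careful tracking of the contraction/expansion rates noted in \remref{lambda}. Matching asymptotic expansions of the local transition map with the global return map along $\gamma_\mu$, uniformly in $(\epsilon,\alpha)\in W_2$ and in $\mu$, is the most delicate step; this is where the quantitative $W_2$-scaling enters in an essential way to ensure that the hyperbolicity of the horseshoe survives in the singular limit $(\epsilon,\alpha)\to (0,0)$.
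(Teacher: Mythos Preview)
Your outline is correct in spirit, but both parts have substantive gaps. For part~\ref{item1}, you omit the two additional blowup transformations \eqref{blowupvf1} and \eqref{blowup12cyl} that the paper applies to the reduced flow on $S_{\epsilon,\alpha}$ in the $(\bar y=1)_1$-chart; the visible fold produces a degenerate line $x=r_1=0$, $\alpha_1\ge 0$ that these blowups resolve, and only then does the Chini equation \eqref{chini} emerge on the invariant subspace $\sigma_{12}=0$ selected by the $W_1$ scaling. The saddle-node does not come from a ``quadratic fold in $\mu$'' but from the concavity of the Chini transition map (\lemmaref{chini}) balanced against the affine expansion of $\mathcal P_{glo}^{-1}$, with assumption~\ref{assumption:6} entering only through $\upsilon_1\ne 0$ in \eqref{Pglo}. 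Also, the folded-saddle property on $R_{213}$ does not rely on assumption~\ref{assumption:6}: the eigenvalues of the desingularized system \eqref{x213nu213redds} are real and of opposite sign for every $\alpha_{213}>0$.

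For part~\ref{item2}, the horseshoe mechanism you describe is not the one that works. The parameter $\mu$ is \emph{fixed}, not swept ``across the canard selector''. The actual mechanism is this: the unstable foliation of the canard along $N_{\epsilon,\alpha}$ is carried across the second cylinder back to $S_{\epsilon,\alpha}$, producing a curve $F$ of base points parametrized by $\alpha_1\in[c_1,c_2]\subset(1,\infty)$; independently, the canard followed \emph{backward} along $S_{\epsilon,\alpha}$ has the repelling cycle $\gamma_\mu$ as its $\alpha$-limit set and therefore spirals around it infinitely often, forcing at least $n$ transverse intersections with $F$ once $[c_1,c_2]$ is taken large enough. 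The number of symbols is thus controlled by $c_2-c_1$, not by $\epsilon_0/\epsilon_1$. The Conley--Moser strips are then constructed in backward time on a section transverse to $M$ (since, by \remref{lambda}, expansion along $N_{\epsilon,\alpha}$ dominates contraction along $S_{\epsilon,\alpha}$), with each of the $n$ intersection points generating one horizontal/vertical pair. Without the curve $F$ and the backward-limit-onto-$\gamma_\mu$ argument, your proposal supplies no source for the multiple strips.
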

The assumption \ref{assumption:3} ($p\mapsto Z(z,p)$ is affine) is mainly added for simplicity. In fact, it is not needed in item \ref{item1} and the statement of item \ref{item2} could also be generalized by including a milder assumption on $p\mapsto Z(z,p)$ at $x=0$, see \remref{ass3main2}. \response{We again expect that our approach can be modified to obtain a similar result for the Sotomayor-Teixeira regularization functions, see \eqref{Sotomayor}.} We leave these generalizations to the interested reader.

% \response{It should be mentioned that the horseshoe-mechanism  Any time-series of the orbit would be almost
% oscillatory, just with slightly varying amplitudes at each oscillation. The author
% could reference [1] where this is termed ‘micro-chaos’. \cite{glendinning2010a}

To prove the theorem, we have to describe the local transition near the grazing point with $\Sigma$. 
Before going into details, we first emphasize that $Z_\pm$ in \eqref{Zpmvf} has stable sliding for $x<0$ and crossing for $x>0$ along $\Sigma$. Therefore the blowup dynamics for $x<0$ in a compact interval is covered by \thmref{main1} and the blowup dynamics for $\epsilon=\alpha=0$ is therefore as in  \figref{pws_hysteresis_final} in this case. The blowup dynamics for $x>0$ on the other hand, where assumption \ref{assumption:4} is violated and crossing occurs, is shown in \figref{pws_hysteresis_final2}. This follows from the blowup analysis with $Y_+>0$. In each of the two diagrams, \figref{pws_hysteresis_final} and  \figref{pws_hysteresis_final2}, $x$ is constant on the cylinders and there is only slow flow in the $\bar y$-direction. In order to describe the details of the dynamics associated with the visible fold, we will need to zoom in on $x=0$ so that the dynamics in this direction for $0<\epsilon,\alpha\ll 1$ becomes comparable with the dynamics in the $\bar y$-direction. We achieve this zoom through blowup. In particular,  in \secref{grazingC1}, we first reduce to the slow manifold $S_{\epsilon,\alpha}$ obtained as a perturbation of the critical manifold $C$ in the $(\bar y=1)_1$-chart and then perform two separate blowup transformations. In the parameter regime $(\epsilon,\alpha)\in W_1$, this is sufficient to prove \thmref{main2} (\ref{item1}). Interestingly, we find that the details are similar to those in \cite{2019arXiv190806781U} covering the grazing bifurcation in the case of regularization by smoothing. %Once we have laid out this connection, we   % and we therefore only sketch the proof in \secref{main21}.

\begin{figure}
\begin{center}
\includegraphics[width=.7\textwidth]{./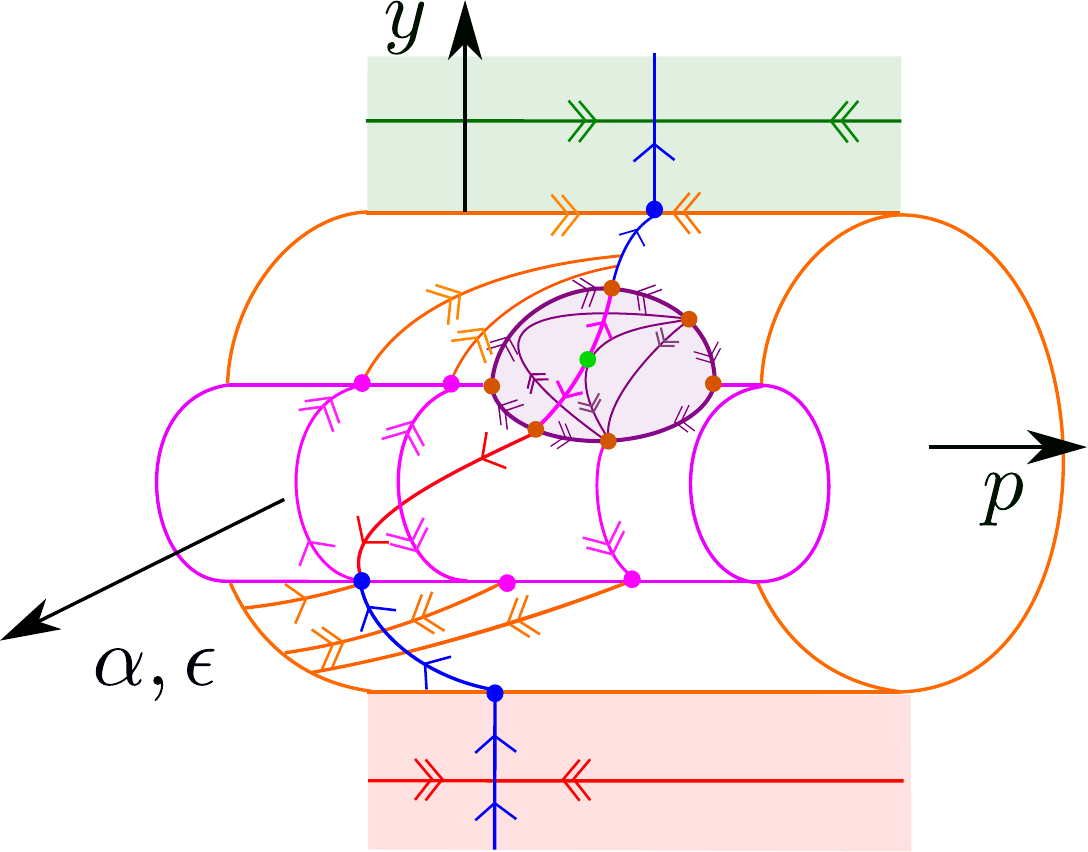}
 \end{center} \caption{Blowup dynamics in the case of crossing upwards where $Y_+(x,0)>0,Y_-(x,0)>0$ (corresponding to $x>0$ for the visible fold in \figref{visible0}). In this case, the flow on $C$ (blue) moves upwards and there is no equilibrium of the reduced flow on $M$ (moving downwards) when assumption \ref{assumption:3} holds.}
 \figlab{pws_hysteresis_final2}
% \caption{$q=r_2q_2$, $x=r_2x_2$, $\epsilon=r_2^2$}
              \end{figure}

On the other hand, in order to prove \thmref{main2} (\ref{item2}) in the regime $(\epsilon,\alpha)\in W_2$ we have to follow dynamics that becomes unbounded in the chart $(\bar y=1)_1$. In \secref{Qblowupvf}, we will specifically work on the blowup of $Q$. Here we will study the reduced problem on the critical manifold $R_{213}$ in the $(\bar \alpha=1,\bar{\bar y}=1,\bar \nu_{21}\bar \epsilon_{21}=1)_{213}$-chart for $x\response{\approx} 0$ using a separate blowup transformation. This gives rise to a folded saddle singularity \response{\cite{szmolyan_canards_2001}} for $(\epsilon,\alpha)\in W_2$ and \response{an associated} canard orbit  \response{along which (extended versions of) the slow manifolds $S_{\epsilon,\alpha}$ and $N_{\epsilon,\alpha}$, obtained as perturbations of $C$ and $M$ in chart $(\bar y=1)_1$ and $(\bar \alpha=1,\bar \epsilon=1)_{22}$, respectively, intersect transversally (see \propref{canard})}. This provides the main horseshoe-like mechanism  for the chaotic dynamics in \thmref{main2} (\ref{item2}). In fact, the geometric construction is similar to \cite{kristiansen2021a}, which (inspired by the work of \cite{haiduc2009a} on the forced van der Pol) proved existence of chaos in a friction oscillator in the presence of slow-fast and nonsmooth effects. We therefore complete the proof of \thmref{main2} (\ref{item2}) in \secref{main22} by exploiting this connection. 

In the proof of \thmref{main2}, we will therefore again try to strike the balance between including a complete, rigorous and self-contained analysis, while at the same time avoiding too many details, that can be found elsewhere (\cite{2019arXiv190806781U} for item \ref{item1} and \cite{kristiansen2021a,haiduc2009a} for item \ref{item2}) in similar contexts. 

\response{Finally, we should emphasize that the mechanism we find for the chaotic dynamics in case (ii) is very similar in nature to the one used in \cite{rev2021a} to prove existence of chaos in the case of hysteresis. This horseshoe-like mechanism occurs in an exponentially small regime (with respect to $\epsilon,\alpha\rightarrow 0$) and is therefore probably not troubling from
an engineering perspective. Moreover, any time-series of the chaotic dynamics would appear to be
periodic, with only very minor changes in the amplitudes at each oscillation. This has been referred to as micro-chaotic dynamics, see \cite{glendinning2010a} (for micro-chaotic dynamics in the context of hysteresis). }

% In \secref{main2proof}, we collect the findings and complete the proof of \thmref{main2}. 

%In summary, in the proof of \thmref{main2} item \ref{item1}, we will use results of \cite{2019arXiv190806781U} on the existence of a saddle-node bifurcation of limit cycle in the case of regularization by smoothing, whereas in the proof of \thmref{main2} item \ref{item2} and the existence of chaotic dynamics, we will use \cite{kristiansen2021a} (and \cite{haiduc2009a}) to obtain a horseshoe from a folded saddle singularity.

% In our analysis, we proceed as follows. 
\subsection{Analysis of the slow flow on $S_{\epsilon,\alpha}$ in the case of the visible fold}\seclab{grazingC1}
In this section, we work in the $(\bar y=1)_1$-chart and consider the reduced flow on $S_{\epsilon,1}$, recall \lemmaref{Seps1}, in the case of \eqref{Zpmvf}. For this, we use \eqref{xr1alpha1} with $Z_\pm$ as in \eqref{Zpmvf}:
\begin{equation}\eqlab{xr1alpha1vis}
\begin{aligned}
 \dot x &= r_1 (1+f(x,r_1))\left(1-\beta\epsilon^k\alpha_1^k + \mathcal O(\epsilon^{k+1}  \alpha_1^{k+1})\right),\\
 \dot r_1 &= r_1 \left[(2x+r_1g(x,r_1))\left(1-\beta\epsilon^k\alpha_1^k\right)+\beta\epsilon^k\alpha_1^k + \mathcal O(\epsilon^{k+1} \alpha_1^{k+1})\right],\\
 \dot \alpha_1 &= -\alpha_1 \left[(2x+r_1g(x,r_1))\left(1-\beta\epsilon^k\alpha_1^k\right)+\beta\epsilon^k\alpha_1^k + \mathcal O(\epsilon^{k+1} \alpha_1^{k+1})\right].
\end{aligned}
\end{equation}
% \fbox{NEED TO UPDATE THIS}
% \fbox{Eigenvalues of points.talk about eigenvalues}
The dynamics of this system within the invariant subspaces $\alpha_1=0$ and $r_1=0$ are illustrated in \figref{pws_hysteresis_visiblexr1a1}. Notice that $r_1=x=0,\alpha_1\ge 0$ is a line of degenerate singularities for $\epsilon=0$. We will again need to perform consecutive blowup transformations to resolve the degeneracy stemming from the terms of the form $\epsilon^k\alpha_1^k$. For this, we first blowup with respect to $\alpha_1$ and then subsequently blowup with respect to $\epsilon$. In further details, we first apply the transformation \response{(also used in \cite{2019arXiv190806781U})}:
\begin{align}
 \sigma\ge 0,(\bar x,\bar r_1,\bar \alpha_1)\in S^2\mapsto \begin{cases}
                                                     x &=\sigma^k \bar x,\\
                                                     r_1 &=\sigma^{2k} \bar r_1,\\
                                                     \alpha_1 &= \sigma \bar \alpha_1.
                                                    \end{cases}\eqlab{blowupvf1}
\end{align}
\response{Notice that weights on $x$ and $\alpha_1$ are so that the terms $2x$ and $\beta \epsilon^k \alpha_1^k$ in the equations for $r_1$ and $\alpha_1$ in \eqref{xr1alpha1vis} balance up. At the same time, the weights on $x$ and $r_1$ are so that the quadratic tangency of the grazing orbit within $\alpha_1=0$ of the vector-field $Z_+$ with the $x$-axis (see \figref{pws_hysteresis_visiblexr1a1}) is ``broken''.}

Since $r_1, \alpha_1\ge 0$ we are only interested in the subset of $S^2$ where $\bar r_1, \bar \alpha_1\ge 0$. 
This gives a vector-field $\overline V_1$ on $\sigma\ge 0,(\bar x,\bar r_1,\bar \alpha_1)\in S^2$ by pull-back of \eqref{xr1alpha1vis}. It has $\sigma^k$ as common factor and it is therefore $\widehat V:=\sigma^{-k}\overline V$ that has improved hyperbolicity properties. 

 \begin{figure}
\begin{center}
\includegraphics[width=.65\textwidth]{./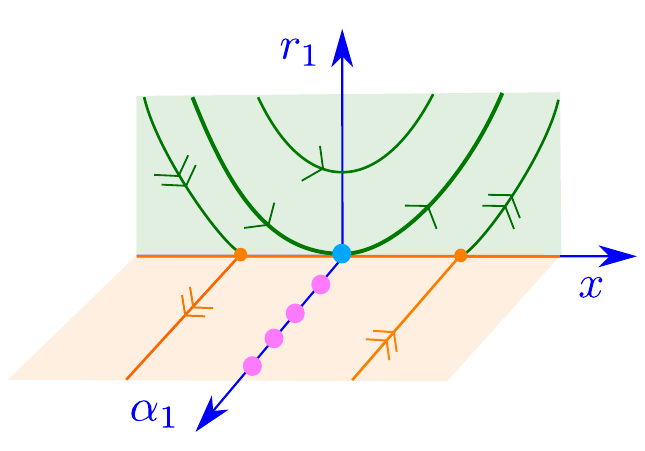}
 \end{center}\caption{Dynamics of the reduced problem on $S_{\epsilon,1}$, see \eqref{xr1alpha1vis}, within the invariant subspaces $\alpha_1=0$ and $r_1=0$.  }
 \figlab{pws_hysteresis_visiblexr1a1}
% \caption{$q=r_2q_2$, $x=r_2x_2$, $\epsilon=r_2^2$}
              \end{figure}

It is not difficult to analyze $\widehat V$  in the directional charts. In particular, in the chart defined by
\begin{align*}
 (\bar y=1,\bar r_1 =1)_{11}:\quad \begin{cases}
                                    x &=\sigma_{11}^k x_{11},\\
                                    r_1 &=\sigma_{11}^{2k},\\
                                    \alpha_1 &=\sigma_{11} \alpha_{11},
                                   \end{cases}
\end{align*}
where $\alpha = \sigma_{11}^{2k+1}\alpha_{11}$, we obtain the following equations
\begin{equation}\eqlab{eqns11vf}
\begin{aligned}
 \dot x_{11} &= (1+ \sigma_{11}^k f_{11}(x_{11},\sigma_{11}^k))(1+\mathcal O(\epsilon^k \sigma_{11}^k\alpha_{11}^k))
 -\frac12 x_{11}\left[\cdots \right],\\
 \dot \sigma_{11}&=\frac{1}{2k}\sigma_{11} \left[\cdots \right],\\
 \dot \alpha_{11}&=-\frac{2k+1}{2k}\alpha_{11} \left[\cdots\right],
\end{aligned}
\end{equation}
where
\begin{align*}
 \left[\cdots \right] = (2x_{11}+\sigma_{11}^k g_{11}(x_{11},\sigma_{11}^k))(1+\mathcal O(\epsilon^k \sigma_{11}^k \alpha_{11}^k))+\beta\epsilon^k \alpha_1^k + \mathcal O(\epsilon^{k} \sigma_{11}\alpha_{11}^k),
\end{align*}
with 
$\sigma_{11}^k f_{11}(x_{11},\sigma_{11}^k):=f(x,r_1)$, and $g_{11}(x_{11},\sigma_{11}^k):=g(x,r_1)$.
% \begin{align*
We find two hyperbolic equilibria:
\begin{align}
q_{11}^\pm :\quad (x_{11},\sigma_{11},\alpha_{11})=(\pm 1,0,0),\eqlab{q11pm}
\end{align} 
for any $\epsilon\ge 0$. The eigenvalues of the linearization around these points are 
\begin{align}
 -2x_{11},\frac{1}{k}x_{11},-\frac{2k+1}{k}x_{11},\eqlab{eigval1}
\end{align}
with $x_{11}=\pm 1$ at the two points $q_{11}^\pm$,
respectively. Whereas the point $q_{11}^-$ has a two-dimensional unstable manifold within $\sigma_{11}=0$, and a one-dimensional unstable manifold within $\alpha_{11}=0$ (corresponding to the grazing orbit of the PWS system \eqref{Zpmvf} within $x<0$), the point $q_{11}^+$ has a two-dimensional stable manifold within $\sigma_{11}=0$ and a one dimensional unstable manifold within $\alpha_{11}=0$ (corresponding to the grazing orbit of the PWS system \eqref{Zpmvf} within $x>0$).
Compare also with \figref{pws_hysteresis_visiblexr1a1} and \figref{pws_hysteresis_visibleblowup1}.

The dynamics on the sphere is given by 
\begin{equation}\eqlab{eqnsphere}
\begin{aligned}
 \dot x_{13} &=1,\\
 \dot r_{13}&=2x_{13},
\end{aligned}
 \end{equation}
upon using the coordinates $(x_{13},r_{13})$ defined by 
\begin{align*}
 (\bar y=1,\bar r_1 \bar \alpha_1=1)_{13}:\,\begin{cases}
                                             x&=\sigma_{13}^k x_{13},\\
                                             r_1 &=\sigma_{13}^{2k} r_{13},\\
                                             \alpha_1 &=\sigma_{13} r_{13}^{-1},
                                            \end{cases}
\end{align*}
where $\alpha=r_1\alpha_1 = \sigma_{13}^{2k+1}$. However, in the chart defined by 
\begin{align*}
 (\bar y=1, \bar \alpha_1=1)_{12}:\,\begin{cases}
                                             x&=\sigma_{12}^k x_{12},\\
                                             r_1 &=\sigma_{12}^{2k} r_{12},\\
                                             \alpha_1 &=\sigma_{12},
                                            \end{cases}
\end{align*}
we find that $x_{12}=r_{12}=0,\sigma_{12}\ge 0$ is a degenerate line for $\epsilon=0$. We summarize the findings in \figref{pws_hysteresis_visibleblowup1}.

             \begin{figure}
\begin{center}
\includegraphics[width=.7\textwidth]{./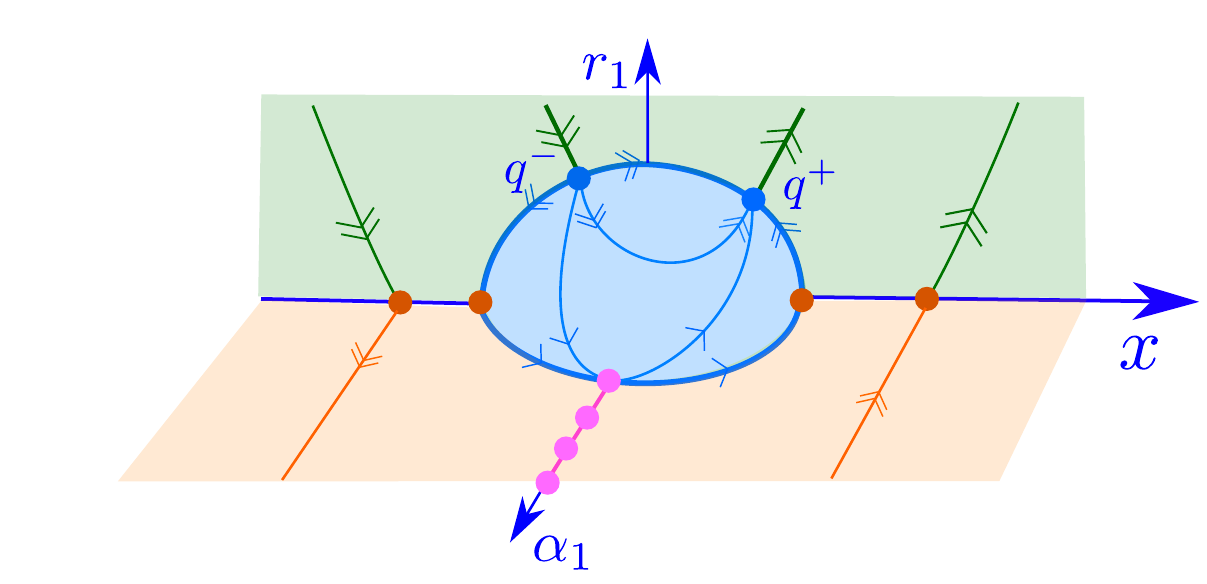}
 \end{center}\caption{The reduced flow on $S_{\epsilon,1}$ in the $(\bar \alpha=1)_1$-chart upon blowing up $\alpha_1=r_1=x=0$ to a sphere (in blue).  }
 \figlab{pws_hysteresis_visibleblowup1}
% \caption{$q=r_2q_2$, $x=r_2x_2$, $\epsilon=r_2^2$}
              \end{figure}
% Consider therefore the coordinates $(x_2,r_2,\rho_{212})$ corresponding to the directional chart $\bar \alpha=1$:
% \begin{align*}
%  x&=\rho_{212}^k x_2,\quad r= \rho_{212}^{2k} r_2,\quad \alpha=\rho_{212}. 
% \end{align*}
% Then in this chart, the local vector-field $\widehat V_2$ has a line of degenerate singularities at $x_2=0,r_2=0$, $\rho_{212}\ge 0$ for $\epsilon=0$. 

To gain hyperbolicity and resolve the dynamics near the degenerate line (pink in \figref{pws_hysteresis_visibleblowup1}), we proceed to augment $\dot \epsilon=0$ and then apply the following cylindrical blowup transformation:
\begin{align}
 \xi \ge 0,(\bar x_{12},\bar r_{12},\bar \epsilon)\in S^2\mapsto \begin{cases}
                                                     x_{12} &=\xi^k \bar x_{12},\\
                                                     r_{12} &=\xi^{2k} \bar r_{12},\\
                                                  \epsilon &= \xi \bar \epsilon,
                                                    \end{cases}\eqlab{blowup12cyl}
\end{align}
leaving $\sigma_{12}$ untouched. \response{(This transformation can be motivated in the same way as \eqref{blowupvf1}.)}
Let $\widehat V_{12}$ be the vector-field in the $(\bar y=1,\bar \alpha_{1}=1)_{12}$-chart with $\dot \epsilon=0$ augmented. The blowup transformation \eqref{blowup12cyl} then gives a vector-field $\overline{\widehat V}_{12}$ by pull-back of $\widehat V_{12}$. It has $\xi^{k}$ as a common factor and it is therefore $\widehat{\widehat V}_{12}:=\xi^{-k} \overline{\widehat V}_{12}$ that we shall study. 

To study $\widehat{\widehat V}_{12}$ and cover the relevant part of the sphere, we use two charts:
\begin{align}
 (\bar y=1,\bar \alpha_1=1,\bar r_{12}=1)_{121}:\quad \begin{cases} 
                                                       x_{12} &=\xi_{121}^k x_{121},\\
                                                       r_{12}&=\xi_{121}^{2k},\\
                                                       \epsilon &=\xi_{121}\epsilon_{121},
                                                      \end{cases}\eqlab{c121}\\
                                                    (\bar y=1,\bar \alpha_1=1,\bar \epsilon=1)_{122}:\quad \begin{cases} 
                                                       x_{12} &=\xi_{122}^k x_{122},\\
                                                       r_{12}&=\xi_{122}^{2k}r_{122},\\
                                                       \epsilon &=\xi_{122}.
                                                      \end{cases}\nonumber
\end{align}
The change of coordinates is given by the following expressions:
\begin{align*}
 \begin{cases}
\xi_{121} &= \xi_{122}r_{122}^{\frac{1}{2k}},\\
 x_{121} &= r_{122}^{-\frac12} x_{122},\\
 \epsilon_{121}&=r_{122}^{-\frac{1}{2k}}.
 \end{cases}
\end{align*}

 In the chart $(\bar y=1,\bar \alpha_1=1,\bar r_{12}=1)_{121}$, where 
 \begin{align}
  \alpha = \sigma_{12}^{2k+1}\xi_{121}^{2k},\quad \epsilon = \xi_{121}\epsilon_{121},\eqlab{alphaeps121}
 \end{align}
are conserved,
  we (again) find two hyperbolic equilibria at 
  \begin{align}\eqlab{eq121}
  z_{121}^\pm:\quad (x_{121},\sigma_{12},\xi_{121},\epsilon_{121})=(\pm 1,0,0,0).
  \end{align}
  The eigenvalues of the linearization around these points are given by
  \begin{align}
 -2x_{121},-2x_{121},-\frac{2k+1}{k}x_{121},\frac{2k+1}{k}x_{121},\eqlab{eigval2}
\end{align}
with $x_{121}=\pm 1$ at the two points $z_{121}^\pm$, respectively. The unstable manifold for $z_{121}^-$ is three-dimensional and contained within $\xi_{121}=0$. However, for $z_{121}^-$ it is the stable manifold that is three dimensional; in fact, $z_{121}^+$ will be the $\omega$-limit set of all points with $\xi_{121}=0,\epsilon_{121}\ne 0$. Notice, that since
\begin{align}
 \alpha \epsilon^{-2k} = \sigma_{12}^{2k+1} \epsilon_{121}^{-2k},\eqlab{aeps_2k}
\end{align}
see \eqref{alphaeps121},
each three-dimensional invariant manifold is foliated by constant values of $\sigma_{12}^{2k+1} \epsilon_{121}^{-2k} = \text{const}.$
  
  To describe the dynamics in further details, we focus on the cylinder $\xi=0$, $(\bar x_{12},\bar r_{12},\bar \epsilon)\in S^2$, $\sigma_{12}\ge 0$, and the two invariant subspace of $\widehat{\widehat V}_{12}\vert_{\xi=0}$ given by $\sigma_{12}=0$ and $\bar \epsilon=0$. The reason for doing so, is that these invariant spaces capture different scaling regimes of $\epsilon$ and $\alpha$. In particular, within the  $(\bar  y=1,\bar \alpha_1=1,\bar r_{12}=1)_{121}$-chart, \eqref{alphaeps121} holds and on $\sigma_{12}=\text{const.}$ we therefore have by \eqref{aeps_2k} that
\begin{align}
\epsilon^{2k}{\sim} \alpha \epsilon_{121}^{2k}.\eqlab{sim}
\end{align}
(Here we have used $\sim$ to indicate that two quantities differ by a constant that only depends upon the constant value of $\sigma_{12}$.)
Consequently, orbits lie close to $\bar \epsilon=0$ (i.e. $\epsilon_{121}=0$) provided that 
\begin{align}\eqlab{regime}
0<\epsilon^{2k}\ll \alpha\ll 1.
\end{align}
Notice also that on $\sigma_{12}=\text{const.}$ we have 
\begin{align}
 x\sim \sqrt{\alpha} x_{121},\eqlab{xorderoncyl}
\end{align}
upon eliminating $\xi_{121}$. This will be important later on. 

On the other hand, in the  $(\bar y=1,\bar \alpha_1=1,\bar \epsilon =1)_{122}$-chart, we have 
% \begin{align*}
%  \alpha \epsilon^{-2k} = \sigma_{12}^{2k+1} r_{122},
% \end{align*}
% 
% 
% In this chart we have
\begin{align*}
 \alpha =\sigma_{12}^{2k+1} \xi_{122}^{2k} r_{122},\quad   \epsilon = \xi_{122}.
\end{align*}
and
$r_{122}=\text{const}.$ therefore corresponds to 
\begin{align*}
 \alpha \sim \sigma_{12}^{2k+1} \epsilon^{2k}.
\end{align*}
Consequently, orbits follow $\sigma_{12}=0$ provided that $0<\alpha\ll \epsilon^{2k}\ll 1$. 

  We study each of these invariant subspaces in the following using the two charts $(\bar y=1,\bar \alpha_1=1,\bar r_{12}=1)_{121}$ and $(\bar y=1,\bar \alpha_1=1,\bar \epsilon =1)_{122}$. 
    \subsubsection*{Dynamics of $\widehat{\widehat V}_{12}\vert_{\xi=0}$ in the invariant subspace $\bar \epsilon=0$}
  In the $(\bar y=1,\bar \alpha_1=1,\bar r_{12}=1)_{121}$-chart, we obtain the following local form of $\widehat{\widehat V}_{12}$ within $\xi_{121}=\epsilon_{121}=0$:
  \begin{equation}\eqlab{boring121}
  \begin{aligned}
   \dot x_{121} &= 1-x_{121}^2,\\
   \dot \sigma_{12}&=2 \sigma_{12}x_{121}.
  \end{aligned}
  \end{equation}
%   \fbox{eigenvalues}
The dynamics of this system are illustrate in \figref{pws_hysteresis_visible121}. Notice in particular, that there are two invariant lines 
\begin{align}\eqlab{L121pm}
L^\pm_{121}:\, x_{121}=\pm 1,
\end{align}along which we have $\dot \sigma_{12}>0$ and $\dot \sigma_{12}<0$ for $\sigma_{12}\ne 0$. These sets therefore belong to the stable and unstable manifolds of the points $z_{121}^\pm$, given by \eqref{eq121}, respectively. Notice also that the dynamics within $\xi=\bar \epsilon=0$ is unbounded (only bounded on one side of $L_{121}^-$). %\fbox{need a remark on full system and linearization of this}
\begin{lemma}\lemmalab{reflection}
Consider any $\tilde c^3>0$ and let $\widetilde{\mathcal P}_{121x}^3$ denote the $x_{121}$-component of the transition map of \eqref{boring121} from 
 \begin{align*}
  \widetilde \Pi_{121}^3:\quad \sigma_{12}=\tilde c^3>0,\,x_{121}<0,
 \end{align*}
to 
\begin{align*}
  \widetilde \Pi_{121}^4:\quad \sigma_{12}=\tilde c^3>0,\,x_{121}>0.
 \end{align*}
 Then $\widetilde{\mathcal P}_{121x}^3$ is only well-defined for $x_{121}\in (-1,0)$ and here it is given by the reflection around $x_{121}=0$:
 \begin{align}
  \widetilde{\mathcal P}_{121x}^3(x_{121})=-x_{121},\quad x_{121}\in (-1,0).\eqlab{reflection}
 \end{align}

\end{lemma}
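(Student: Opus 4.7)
The plan is to exhibit a conserved quantity for the two-dimensional system \eqref{boring121} and then read off the transition map directly. Dividing the equations, we get
\begin{align*}
\frac{d\sigma_{12}}{dx_{121}} = \frac{2\sigma_{12}x_{121}}{1-x_{121}^2} = -\frac{\sigma_{12}\,(1-x_{121}^2)'}{1-x_{121}^2},
\end{align*}
so on the strip $|x_{121}|<1$ the quantity $H(x_{121},\sigma_{12}):=\sigma_{12}(1-x_{121}^2)$ is a first integral, as one verifies immediately by computing $\dot H = 0$ using \eqref{boring121}.

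Next I would analyze the qualitative flow. Since $\dot x_{121}=1-x_{121}^2>0$ for $|x_{121}|<1$, the coordinate $x_{121}$ is strictly increasing along any orbit that stays in the strip; in particular, an initial point $(x_{121,0},\tilde c^3)$ with $x_{121,0}\in(-1,0)$ evolves forward, crosses $x_{121}=0$ (where $\sigma_{12}$ is minimized since $\dot\sigma_{12}$ changes sign), and thereafter enters $x_{121}>0$. The level set $\{H=\tilde c^3(1-x_{121,0}^2)\}$ meets the section $\widetilde\Pi_{121}^4$ precisely at the two values $x_{121}=\pm x_{121,0}$, and since $x_{121}$ increases monotonically, the unique first intersection of the forward orbit with $\widetilde\Pi_{121}^4$ occurs at $x_{121}=-x_{121,0}>0$. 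This establishes the formula \eqref{reflection}.

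Finally, I would rule out the other possible initial values. For $x_{121,0}=-1$ the orbit lies on the invariant line $L^-_{121}$, where $\dot\sigma_{12}=-2\sigma_{12}<0$; it approaches $\sigma_{12}=0$ without ever satisfying $x_{121}>0$. For $x_{121,0}<-1$ we have $\dot x_{121}<0$, so $x_{121}$ strictly decreases, and $\widetilde\Pi_{121}^4$ is again unreachable. Hence $\widetilde{\mathcal P}_{121x}^3$ is only defined on $x_{121}\in(-1,0)$, as stated.

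The argument is essentially algebraic and the only mild obstacle is bookkeeping the direction of time and verifying that the level curve of $H$ does not exit the strip $|x_{121}|<1$ between the two endpoints — but this follows from the monotonicity of $x_{121}$ and the fact that $H$ is constant with $1-x_{121}^2 = 1-x_{121,0}^2>0$ throughout the orbit, so no further qualitative analysis is needed.
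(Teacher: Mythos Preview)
Your argument is correct. The paper's own proof is even terser: it simply observes that the system \eqref{boring121} is reversible with respect to the involution $(x_{121},\sigma_{12},t)\mapsto(-x_{121},\sigma_{12},-t)$, so that any orbit crossing the symmetry line $x_{121}=0$ is mapped to itself under reflection, which immediately gives $\widetilde{\mathcal P}_{121x}^3(x_{121})=-x_{121}$.

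Your approach via the first integral $H(x_{121},\sigma_{12})=\sigma_{12}(1-x_{121}^2)$ is an equally valid and closely related route: the existence of this integral is in fact a consequence of the reversibility combined with the two-dimensionality of the system. Your method has the mild advantage of being fully explicit (one can read off the level curves and the minimum value of $\sigma_{12}$ along the orbit), while the reversibility argument is slightly more conceptual and generalises more readily to perturbations that preserve the symmetry but not the integrability. One small slip in wording: the level set of $H$ meets the full section $\{\sigma_{12}=\tilde c^3\}$ at the two values $x_{121}=\pm x_{121,0}$, but $\widetilde\Pi_{121}^4$ itself (which requires $x_{121}>0$) only at $x_{121}=-x_{121,0}$; your conclusion is unaffected.
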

\begin{proof}
 Direct calculation. Notice in particular, that if $t_{121}$ denotes the time in \eqref{boring121}, then this system is reversible with respect to $(x_{121},\sigma_{12},t_{121})\mapsto (-x_{121},\sigma_{12},-t_{121})$. From this \eqref{reflection} follows. 
\end{proof}

  \subsubsection*{Dynamics of $\widehat{\widehat V}_{12}\vert_{\xi=0}$ in the invariant subspace $\sigma_{12}=0$}
  In the $(\bar y=1,\bar \alpha_1=1,\bar \epsilon =1)_{122}$-chart, we obtain the following local form of $\widehat{\widehat V}_{12}$ within $\xi_{122}=\sigma_{12}=0$:
  \begin{equation}\eqlab{eqns122}
  \begin{aligned}
   \dot x_{122} &= kx_{122}\left[\beta +2x_{122}\right]+r_{122},\\
   \dot r_{122}&=(2k+1)r_{122} [\beta+2x_{122}].
  \end{aligned}
  \end{equation}
Within $r_{122}=0$ we find two equilibria, one given by $x_{122}=0$ and another given by $x_{122} = -\frac{\beta}{2}$. The first point is hyperbolic and repelling for \eqref{eqns122} whereas the second one is partially hyperbolic, the linearization having a single nonzero and negative eigenvalue. A simple calculation, reveals the following:
\begin{lemma}
 There exists a unique, attracting center manifold $G_{122}$ for \eqref{eqns122} of the point $(x_{122},r_{122}) =( -\frac{\beta}{2},0)$. $G_{122}$ is its (nonhyperbolic) unstable manifold, along which $r_{122}$ is increasing.
\end{lemma}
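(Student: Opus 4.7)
The plan is a standard center-manifold analysis combined with a saddle-node type classification of the reduced flow, which simultaneously establishes uniqueness and the geometric properties claimed.

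First I would translate the equilibrium to the origin via $u=x_{122}+\beta/2$ and $v=r_{122}$, which casts \eqref{eqns122} as
\begin{align*}
 \dot u &= -k\beta u + 2k u^2 + v,\\
 \dot v &= 2(2k+1)\,u\,v.
\end{align*}
The linearization at $(u,v)=(0,0)$ has eigenvalues $-k\beta<0$ and $0$, with associated eigenvectors $(1,0)$ and $(1,k\beta)$. The classical center-manifold theorem therefore yields a one-dimensional locally invariant manifold $G_{122}$ tangent to $\operatorname{span}(1,k\beta)$ at the equilibrium, while the strictly negative transverse eigenvalue $-k\beta$ makes $G_{122}$ attracting in its normal direction.

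Next I would compute the reduced flow on $G_{122}$ by writing it as a graph $v=h(u)$ with $h(0)=0$ and $h'(0)=k\beta$, and matching Taylor coefficients in the invariance equation
\[
 h'(u)\bigl(-k\beta u+2k u^2+h(u)\bigr)=2(2k+1)\,u\,h(u).
\]
A short calculation produces $h(u)=k\beta u+(2k+2)u^2+O(u^3)$ and hence $\dot u=(4k+2)u^2+O(u^3)$, so the reduced flow is semi-hyperbolic with $\dot u>0$ for all sufficiently small $u\ne 0$. Consequently, every nontrivial orbit on $G_{122}$ tends to the equilibrium in backward time.

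It follows that $G_{122}$ locally coincides with the nonhyperbolic unstable set $\{q:\varphi_{-t}(q)\to(-\beta/2,0)\text{ as }t\to\infty\}$, which is defined intrinsically by the flow and is therefore unique as a set; this resolves the only genuine obstacle in the proof, namely the general non-uniqueness of center manifolds. (The complementary statement $W^u_{loc}\subset G_{122}$ is standard, since the strong stable foliation transverse to $G_{122}$, associated with the eigenvalue $-k\beta$, expands in backward time, so backward-converging orbits must lie on the center manifold itself.) Finally, on the physical branch $\{u>0\}\subset G_{122}$ one has $v=h(u)>0$ and therefore $\dot v=2(2k+1)\,u\,v>0$, establishing monotonicity of $r_{122}$ along $G_{122}$.
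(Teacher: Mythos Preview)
Your proof is correct and supplies the details the paper omits (the paper states the lemma as following from ``a simple calculation'' and gives no proof). One small imprecision worth noting: the claim that \emph{every} nontrivial orbit on $G_{122}$ tends to the equilibrium in backward time is only true on the branch $u>0$; for $u<0$ the reduced flow $\dot u\approx(4k+2)u^2>0$ drives $u$ toward $0$ in \emph{forward} time, so backward orbits escape. This does not affect your conclusion, since the physical domain here is $r_{122}\ge 0$ (equivalently $v\ge 0$, i.e.\ $u\ge 0$ near the equilibrium on the center manifold), and on that side your identification of $G_{122}$ with the intrinsically defined unstable set, and the resulting uniqueness argument, go through exactly as written.
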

% a unique center manifold $G_{122}$ of this point along which $r_{122}$ is increasing. 

% \begin{remark}\remlab{chini}
Upon using that $[\cdots]$ occurs in both equations of \eqref{eqns122}, it is a direct calculation to show that the transformation:
\begin{align}
 (x_{122},r_{122})\mapsto \begin{cases}
                           u &= \left(\beta r_{122}^{-\frac{1}{2}}\right)^{-\frac{1}{2k+1}} x_{122},\\
                           v &=\left(\beta r_{122}^{-\frac{1}{2}}\right)^{-\frac{2}{2k+1}},
                          \end{cases}\eqlab{chinimap}
\end{align}
for $r_{122}>0$, brings \eqref{eqns122} into the Chini-equation \cite{2019arXiv190806781U,trifonov2011a}:
\begin{equation}\eqlab{chini}
\begin{aligned}
 \dot u &= 1,\\
 \dot v &=2u+v^{-k}.
\end{aligned}
\end{equation}
% through 
This equation also appeared in the blowup analysis of the grazing bifurcation for regularization by smoothing in \cite{2019arXiv190806781U}. In particular, from this reference we obtain the following result (see \figref{pws_hysteresis_visible122} for an illustration).
\begin{lemma}\lemmalab{chini}
Consider any $c^3>0$ and
 let $x_{122}\mapsto \mathcal P^3_{122x}(x_{122})$ denote the $x$-component of the transition map of \eqref{eqns122} from $$\Pi_{122}^3:\quad r_{122}=c^3,\,x_{122}<-\frac12 \beta,$$ to $$\Pi_{122}^4:\quad r_{122}=c^3,\,x_{122}>-\frac{1}{2}\beta.$$ Then 
 \begin{align}
  (\mathcal P^3_{122x})'(x_{122})\in (-1,0),\quad (\mathcal P^3_{122x})''(x_{122})<0,\eqlab{P3122xest}
 \end{align}
% In particular, $\mathcal P^3_{122}'(x_{122})$ attains all values in the interval $(-1,0)$ with 
and
\begin{align*}
 \lim_{x_{122}\rightarrow -\frac12 \beta^-} (\mathcal P^3_{122x})'(x_{122}) = -1,\quad \lim_{x_{122}\rightarrow -\infty} (\mathcal P^3_{122x})'(x_{122}) = 0. 
\end{align*}

\end{lemma}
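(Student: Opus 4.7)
The plan is to reduce the entire statement to the analogous statement for the Chini equation \eqref{chini}, for which the corresponding analysis is already available in \cite{2019arXiv190806781U}. The starting point is the transformation \eqref{chinimap}, which on the section $r_{122}=c^3>0$ assigns the constant value $v_0:=(\beta (c^3)^{-1/2})^{-2/(2k+1)}>0$ and the rescaled horizontal coordinate $u=C\,x_{122}$, where $C:=(\beta (c^3)^{-1/2})^{-1/(2k+1)}>0$ depends only on $c^3$ and $\beta$. Hence both $\Pi_{122}^3$ and $\Pi_{122}^4$ correspond to the same horizontal line $\{v=v_0\}$ in $(u,v)$-space, split by the $\dot v$-nullcline $\{u=-\tfrac12 v^{-k}\}$ into its left and right halves. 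Under this change of coordinates, the transition map $\mathcal P_{122x}^3$ is conjugated (up to the positive scalar $C$) to the Chini transition map $\widetilde{\mathcal P}\colon u_-\mapsto u_+$ defined on $\{v=v_0,\,u<-\tfrac12 v_0^{-k}\}$.

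First I would verify that the conditions $x_{122}<-\beta/2$ on $\Pi_{122}^3$ and $x_{122}>-\beta/2$ on $\Pi_{122}^4$ match precisely the left/right partition by the Chini nullcline. The clean way to do this is to notice that for $r_{122}=c^3$ fixed, the locus $\dot x_{122}=0$ of \eqref{eqns122} meets $\Pi_{122}^{3,4}$ tangentially at $x_{122}=-\beta/2$ (which is the unique partially hyperbolic point within the invariant line $r_{122}=0$), and this tangency is transported by \eqref{chinimap} into the tangency of $\dot v=0$ with $\{v=v_0\}$ at $u=-\tfrac12 v_0^{-k}$. The scaling $u=Cx_{122}$ is orientation-preserving, so $(\mathcal P_{122x}^3)'(x_{122}) = \widetilde{\mathcal P}'(Cx_{122})$ and the concavity/limits likewise transfer directly. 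Therefore it suffices to establish \eqref{P3122xest} and the two one-sided limits for $\widetilde{\mathcal P}$.

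For the Chini map $\widetilde{\mathcal P}$, the argument to quote from \cite{2019arXiv190806781U} proceeds by exploiting that \eqref{chini} can be written as the single scalar ODE $dv/du = 2u + v^{-k}$ with $u$ acting as ``time'', so solutions are level sets of a monotone structure and two-sided boundary behaviour at $v=v_0$ is governed by the unique grazing orbit tangent to $\{v=v_0\}$ at $u=-\tfrac12 v_0^{-k}$. The strict inequality $\widetilde{\mathcal P}'\in(-1,0)$ and $\widetilde{\mathcal P}''<0$ are then obtained by a variational argument along orbits (a Gronwall-type comparison between $\partial_{u_-}u_+$ and the linearization of $dv/du = 2u + v^{-k}$), and the one-sided limits follow from the tangency at $u_-\to -\tfrac12 v_0^{-k}$ (forcing $\widetilde{\mathcal P}'\to -1$) and from direct asymptotic expansion of the grazing orbit as $u_-\to-\infty$ (forcing $\widetilde{\mathcal P}'\to 0$). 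The main obstacle is purely bookkeeping: translating the partition at $x_{122}=-\beta/2$ into the Chini-nullcline partition and confirming that the result from \cite{2019arXiv190806781U} is stated in a form that directly yields \eqref{P3122xest} without further adaptation; the analytical heart of the lemma is entirely contained in the Chini analysis already carried out there.
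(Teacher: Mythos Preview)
Your approach is essentially identical to the paper's own proof: reduce to the Chini equation via the transformation \eqref{chinimap} and then invoke \cite[Lemma 3.12]{2019arXiv190806781U}. One small slip worth correcting: the tangency of the flow to the section $r_{122}=c^3$ occurs where $\dot r_{122}=0$, not where $\dot x_{122}=0$; on $r_{122}=c^3>0$ this is precisely $x_{122}=-\beta/2$, which is what you need for the left/right partition.
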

\begin{proof}
See 
\cite[Lemma 3.12]{2019arXiv190806781U} \response{(and \cite{kristiansen2023a})} describing a similar transition map for the Chini equation. By inverting \eqref{chinimap}, we obtain the desired result.
\end{proof}

% \end{remark}

\begin{remark}
Within $\xi_{122}=r_{122}=0$ we have the following
 \begin{equation}\nonumber
  \begin{aligned}
   \dot x_{122} &= kx_{122}\left(\beta +2x_{122}\right),\\
   \dot \sigma_{12}&=-\sigma_{12}\left(\beta +2x_{122}\right),
  \end{aligned}
  \end{equation}
  and hence $\sigma_{12}\ge 0$, $x_{12}=\xi_{122}=r_{122}=0$ is contained within the stable manifold of $(x_{122},r_{122},\sigma_{12},\xi_{122})=0$. 
Moreover, $x_{122}=-\frac{\beta}{2}$, $\sigma_{12}\ge 0$, $\xi_{122}=r_{122}=0$ is a normally hyperbolic critical manifold $H_{122}$. Through desingularization (by division by $r_{122}$) it is possible to show that $\sigma_{12}$ is monotonically decreasing on $H_{122}$. 
\end{remark}
We summarize the findings in the two charts in \figref{pws_hysteresis_visible2}.
 \begin{figure}
\begin{center}
\includegraphics[width=.57\textwidth]{./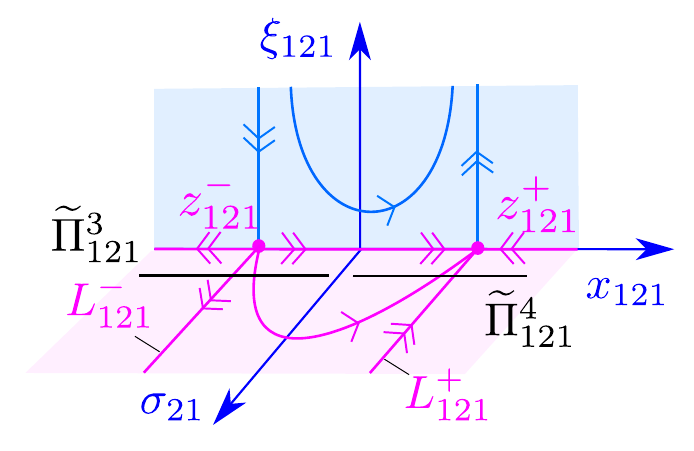}
 \end{center}\caption{Dynamics in the $(\bar y=1,\bar \alpha_1=1,\bar r_{12} =1)_{122}$-chart on $\bar \epsilon=0$. }
 \figlab{pws_hysteresis_visible121}
% \caption{$q=r_2q_2$, $x=r_2x_2$, $\epsilon=r_2^2$}
              \end{figure}
              \begin{figure}
\begin{center}
\includegraphics[width=.65\textwidth]{./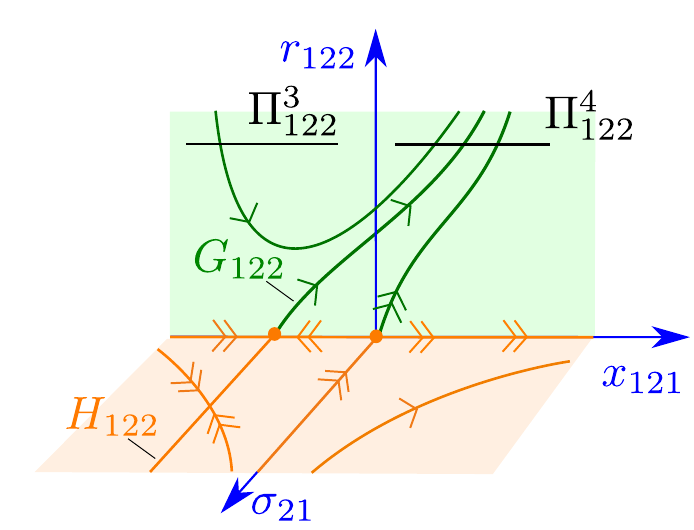}
 \end{center}\caption{Dynamics in the $(\bar y=1,\bar \alpha_1=1,\bar \epsilon =1)_{122}$-chart. In this chart we find a unique center manifold $G_{122}$ within $\sigma_{21}=0$. The mapping from $\Pi_{122}^0\rightarrow \Pi_{122}^1$ is described by the Chini-equation and it is contractive and concave as a function of $x$ on $\Pi_{122}^1$, see \lemmaref{chini}. This property is essential in the proof of \thmref{main2} (\ref{item2}). }
 \figlab{pws_hysteresis_visible122}
% \caption{$q=r_2q_2$, $x=r_2x_2$, $\epsilon=r_2^2$}
              \end{figure}
             \begin{figure}
\begin{center}
\includegraphics[width=.7\textwidth]{./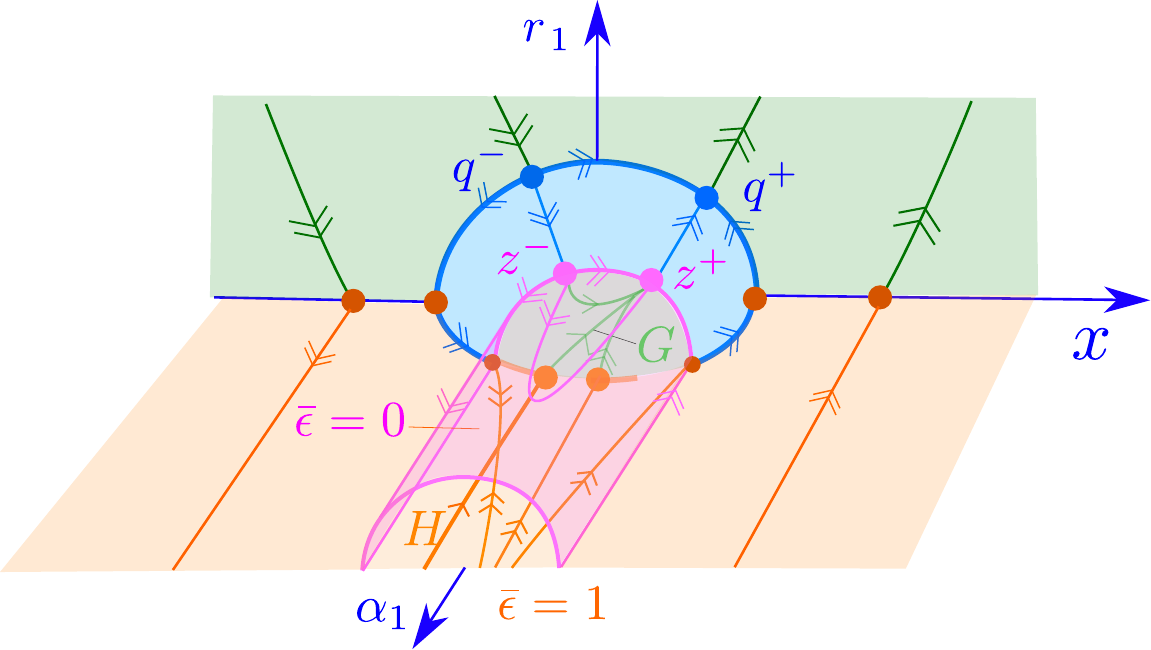}
 \end{center}\caption{The reduced flow on $C_1$ in the $(\bar \alpha=1)_1$-chart upon two consecutive blowup transformations of the degenerate set $\alpha_1\ge 0$, $r_1=x=\epsilon=0$. The dynamics on the cylinder obtained by the blowup transformation \eqref{blowup12cyl} (its boundary $\bar \epsilon=0$ being indicated in pink) breaks up into different regimes, depending on the ratio of $\epsilon$ and $\alpha$. For example, whenever $(\epsilon,\alpha)\in W_2$ then the dynamics near $\bar \epsilon=0$ (pink) becomes relevant, whereas within $(\epsilon,\alpha)\in W_1$ the green region where $\bar \epsilon>0$, described by the Chini-equation \eqref{chini}, becomes relevant. In this region, which is more visible in \figref{pws_hysteresis_visible2item1}, the attracting center manifold $G$ produces a contraction -- which is absent for $(\epsilon,\alpha)\in W_2$, see \lemmaref{reflection} -- of the return map $\mathcal P_{loc}$, see \lemmaref{chini}. It is the balance of this contraction and the expansion along $\gamma_0$ that gives rise to the saddle-node bifurcation in \thmref{main2} (\ref{item1}).}
 \figlab{pws_hysteresis_visible2}
% \caption{$q=r_2q_2$, $x=r_2x_2$, $\epsilon=r_2^2$}
              \end{figure}
\subsection{Proof of \thmref{main2} (\ref{item1})} \seclab{main21}

For the proof \thmref{main2} (\ref{item1}) we work on the slow manifold $S_{\epsilon,\alpha}$ that has been extended, through the blowup approach in \secref{main1}, to the first blowup cylinder. On this manifold, using the $(x,y)$-coordinates and the system \eqref{Zpmvf} locally near $(x,y)=0$, we then consider the return map $\mathcal P$ on a section $\Pi_{in}=\{(x,y): y=c_{in},x\in I_{in}\}$, for some appropriate closed interval $I_{in}\subset (-\infty,0)$ so that $\Pi_{in}$ is transverse to $\gamma_0$. We then decompose $\mathcal P$ into a local transition map $\mathcal P_{loc}:\Pi_{in}\rightarrow \Pi_{out}$,  with $\Pi_{out}=\{(x,y,p): y=c_{out},x\in I_{out}\}$, see \figref{visible0}, and a global map $\mathcal P_{glo}:\Pi_{in}\rightarrow \Pi_{out}$. The latter is regular on the attracting slow manifold, and we therefore turn our attention to $\mathcal P_{loc}$.

In order to describe $\mathcal P_{loc}$, we use the chart $(\bar y=1)_1$ and the blowup transformations \eqref{blowupvf1} and \eqref{blowup12cyl}, that resolve the degeneracy of $x=r_1=0$, $\alpha_1\ge 0$ for $\epsilon=0$, and chop the mapping into separate transition maps, see \figref{pws_hysteresis_visible2item1}: $\mathcal P^0:\,\Pi^0\rightarrow \Pi^1$ near $q^-$, a regular map $\mathcal P^1:\Pi^1\rightarrow \Pi^2$ being a regular perturbation of \eqref{eqnsphere}, $\mathcal P^2:\Pi^2\rightarrow \Pi^3$ near $z_-$, a regular map $\mathcal P^3:\Pi^3 \rightarrow \Pi^4$ being a regular perturbation of the map in \lemmaref{chini}, $\mathcal P^4:\Pi^4\rightarrow \Pi^5$ near $z^+$, a regular map $\mathcal P^5:\Pi^5\rightarrow \Pi^6$ being a regular perturbation of \eqref{eqnsphere}, and finally $\mathcal P^6:\Pi^6\rightarrow \Pi^7$ near $q^+$. 

 \begin{figure}
\begin{center}
\includegraphics[width=.7\textwidth]{./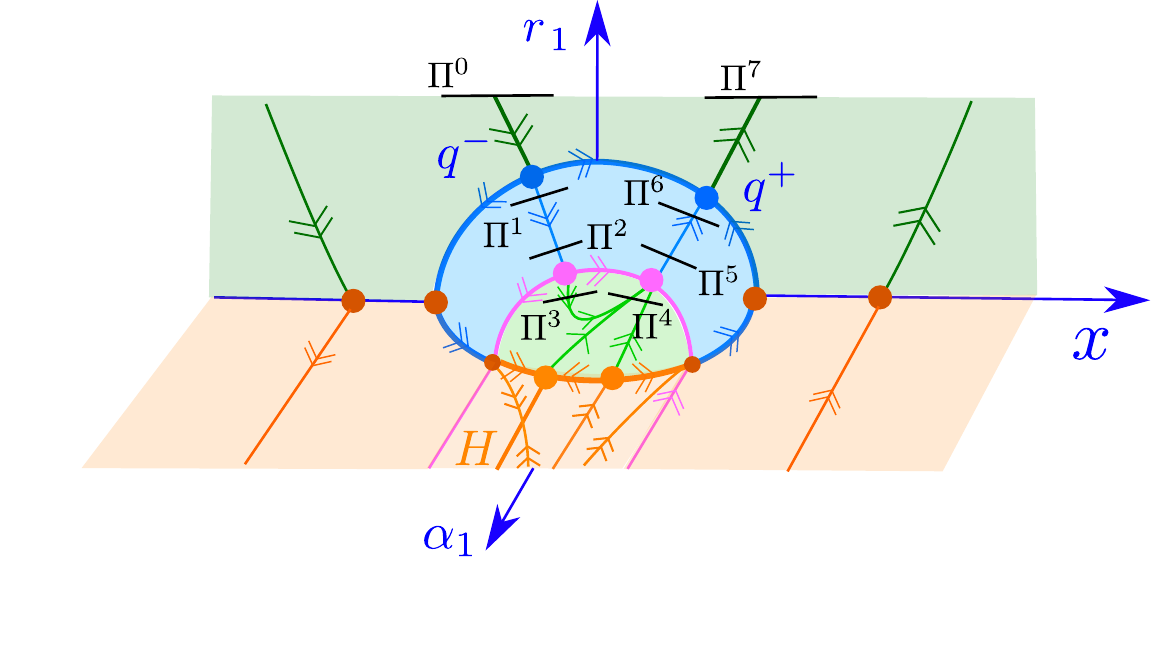}
 \end{center}\caption{Illustration of the sections $\Pi^{0-7}$ relevant in the proof of \thmref{main2} (\ref{item1}). In comparison with \figref{pws_hysteresis_visible2}, we leave out the dynamics on the cylinder $\bar \epsilon=0$, since this regime is not relevant for the proof of \thmref{main2} (\ref{item1}). }
 \figlab{pws_hysteresis_visible2item1}
% \caption{$q=r_2q_2$, $x=r_2x_2$, $\epsilon=r_2^2$}
              \end{figure}
Although the eigenvalues near the points $q^\pm,z^\pm$ are resonant, it is possible, following \cite{2019arXiv190806781U}, to achieve a (suitable) linearization near each of this points. We will only present the details near $q^-$ and $z^-$. 
\subsubsection*{Local transition map near $q^-$}
Consider \eqref{eqns11vf} and divide the right hand side $-\frac12 [\cdots]$, which is $\approx -x_{11}$ and therefore positive near $q_{11}^-$. This gives
\begin{equation}\eqlab{eqns11vf2}
\begin{aligned}
 \dot x_{11} &= x_{11}-\frac{2(1+\sigma_{11}^k f_{11}(x_{11},\sigma_{11}^k))}{2x_{11}+\sigma_{11}^kg_{11}(x_{11},\sigma_{11}^k)}+\epsilon^k \alpha_{11}^k A_{11}(x_{11},\sigma_{11},\alpha_{11},\epsilon),\\
 \dot \sigma_{11}&=-\frac{1}{k}\sigma_{11},\\
 \dot \alpha_{11}&=\frac{2k+1}{k}\alpha_{11} ,
\end{aligned}
\end{equation}
for $A_{11}$ smooth. 
\begin{lemma}\lemmalab{qminusmap}
 There exists a smooth diffeomorphism of the form
 \begin{align*}
  (\tilde x_{11},\tilde \sigma_{11},\tilde \alpha_{11})\mapsto \begin{cases}
                                                                x_{11} &=\mathcal X_{11}(\tilde x_{11},\tilde \sigma_{11}^k,\tilde \alpha_{11},\epsilon),\\
                                                                \sigma_{11} &=\tilde \sigma_{11} \mathcal S_{11}(\tilde x_{11},\tilde \sigma_{11}^k,\tilde \alpha_{11},\epsilon),\\
                                 \alpha_{11} &=\tilde \alpha_{11} \mathcal S_{11}(\tilde x_{11},\tilde \sigma_{11}^k,\tilde \alpha_{11},\epsilon)^{-2k-1},                        \end{cases}
 \end{align*}
 as well as a regular transformation of time, such that \eqref{eqns11vf2} becomes
 \begin{equation}\eqlab{eqns11vf3}
\begin{aligned}
 \dot{\tilde x}_{11} &= 2\tilde x_{11}+\epsilon^k \tilde \alpha_{11}^k \widetilde A_{11}(\tilde x_{11},\tilde \sigma_{11},\tilde \alpha_{11},\epsilon),\\
 \dot{\tilde \sigma}_{11}&=-\frac{1}{k}\tilde \sigma_{11},\\
 \dot{\tilde \alpha}_{11}&=\frac{2k+1}{k}\tilde \alpha_{11} .
\end{aligned}
\end{equation}
 Here $\mathcal X_{11}$, $\mathcal S_{11}$ and $\widetilde A_{11}$ are all smooth and satisfy $\mathcal X_{11}(0,0,0,0)=-1$, $\mathcal S_{11}(0,0,0,0)=1$ and
\begin{align*}
\widetilde A_{11} (x_{11},\sigma_{11},\alpha_{11},\epsilon)=\mathcal O(\sigma_{11}\alpha_{11}, \sigma_{11}^k),
\end{align*}
respectively.
\end{lemma}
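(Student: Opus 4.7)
The plan is to construct the smooth change of variables in two stages: first linearize the $\epsilon=0$ system via a parametrized Koenigs--type construction, then handle the $\epsilon^k\alpha_{11}^k$ perturbation by a near-identity correction. The approach mirrors the analogous normal form step in \cite{2019arXiv190806781U} for the grazing bifurcation under regularization by smoothing, so I would only write out the details that differ.

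The preparatory step is to exploit the structural features of \eqref{eqns11vf2}: the planes $\sigma_{11}=0$ and $\alpha_{11}=0$ are invariant, and $\sigma_{11}^{2k+1}\alpha_{11}=\alpha$ is a first integral of the desingularized flow (corresponding to $\alpha$ being a parameter of the original problem). The proposed rescaling $\sigma_{11}=\tilde\sigma_{11}\mathcal S_{11}$, $\alpha_{11}=\tilde\alpha_{11}\mathcal S_{11}^{-2k-1}$ automatically preserves $\tilde\sigma_{11}^{2k+1}\tilde\alpha_{11}=\alpha$, providing one scalar degree of freedom $\mathcal S_{11}$ that I will use later. After the natural time reparametrization by $-\tfrac12[\cdots]\approx 1$ near $q_{11}^-$, the $\tilde\sigma_{11}$ and $\tilde\alpha_{11}$ equations in \eqref{eqns11vf3} are automatic; only the $\tilde x_{11}$-equation needs real work.

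Setting $u=x_{11}+1$ and expanding \eqref{eqns11vf2} gives $\dot u = 2u + g(u,\sigma_{11}) + \epsilon^k\alpha_{11}^k a(u,\sigma_{11},\alpha_{11},\epsilon)$ with $g=\mathcal O(u^2,u\sigma_{11}^k,\sigma_{11}^k)$ and $a$ smooth. At $\epsilon=0$ the $(u,\sigma_{11})$-subsystem is a planar hyperbolic saddle with eigenvalues $2$ and $-1/k$; the integer ratio $-2k$ produces resonance monomials $u^n\sigma_{11}^{2k(n-1)}$, so Poincar\'e--Dulac a priori leaves a polynomial normal form in $\tilde x_{11}$. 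However, because $\dot\sigma_{11}$ is independent of $u$, one can combine the pointwise Koenigs linearization of the family $\dot u = 2u+g(u,\sigma_{11})$ (parametrized by $\sigma_{11}$) with an integrating factor that corrects for the drift in $\sigma_{11}$, and build a smooth $\tilde x_{11}=H_0(u,\sigma_{11})$ with $H_0(0,0)=0$, $\partial_u H_0(0,0)=1$, such that $\dot{\tilde x}_{11}=2\tilde x_{11}$ at $\epsilon=0$; this is the construction used in the corresponding step of \cite{2019arXiv190806781U}. For $\epsilon>0$ the perturbation $\epsilon^k\alpha_{11}^k a$ transforms under $H_0$ into $\epsilon^k\tilde\alpha_{11}^k B$ for some smooth $B$. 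Tracing each $\epsilon^k$ contribution back to $\alpha_1^k=\sigma_{11}^k\alpha_{11}^k$ in \eqref{xr1alpha1vis} yields $a=\mathcal O(\sigma_{11}^k)$, while the subleading $\mathcal O(\epsilon^k\sigma_{11}\alpha_{11}^k)$ piece in the bracket $[\cdots]$ of \eqref{eqns11vf} contributes the $\mathcal O(\sigma_{11}\alpha_{11})$ part; together these give the required $\widetilde A_{11}=\mathcal O(\tilde\sigma_{11}\tilde\alpha_{11},\tilde\sigma_{11}^k)$ after a final near-identity correction $H_0\mapsto H_0+\epsilon^k H_1$ and a choice $\mathcal S_{11}=1+\mathcal O(\epsilon^k\tilde\alpha_{11}^k)$.

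The main obstacle is the integer resonance of the $(u,\sigma_{11})$-subsystem at $q_{11}^-$, which a priori prevents full smooth linearization of the $x_{11}$-equation at $\epsilon=0$. This is overcome by the semi-direct product structure (the $\sigma_{11}$-equation is independent of $u$) together with the explicit rational form of the unperturbed $x_{11}$-equation, which allows the explicit Koenigs-type linearization in place of the generic Poincar\'e--Dulac normal form.
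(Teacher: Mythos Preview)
Your proposal is essentially correct and, since both you and the paper defer to \cite{2019arXiv190806781U} for the construction, the approaches coincide in substance. The difference is in how you frame the first step. You treat the $\epsilon=0$ system as a resonant saddle (eigenvalues $2$ and $-1/k$) and argue that the skew-product structure together with a Koenigs-type construction produces a smooth linearization despite the integer ratio. The paper's framing is more direct: within $\alpha_{11}=0$ the system is nothing but the regular vector field $z'=Z_+(z)$ written in blown-up coordinates, so the flow-box theorem gives an explicit smooth straightening without ever confronting the resonance. This is the actual mechanism in \cite{2019arXiv190806781U}, and it is worth stating it this way, because it makes the smoothness manifest rather than something to be argued for. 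For the second step, the paper linearizes the remaining system within $\sigma_{11}=0$, where the eigenvalues $2$ and $\tfrac{2k+1}{k}$ are non-resonant, which is a cleaner decomposition than your ``near-identity correction in $\epsilon$''. Both routes land in the same place; the paper's just avoids the detour through the resonance discussion.
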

\begin{proof}
The proof can be found in \cite{2019arXiv190806781U}, see Lemma 3.5 and Lemma 3.6 in this reference, but essentially we use that the $\alpha_{11}=0$ subsystem is equivalent to $z'=Z_+(z)$ which is regular. This enables a linearization within $\alpha_{11}=0$ through the flow box theorem. Subsequently, we linearize the non-resonant system within $\sigma_{11}=0$.
\end{proof}
Consider \eqref{eqns11vf3} and notice that $\alpha=\tilde \sigma_{11}^{2k+1}\tilde \alpha_{11}$ is still conserved in the tilde variables. We therefore drop the tildes and describe the transition map $\mathcal P_{11}^0$ from $\Pi_{11}^0:\,\sigma_{11}=c_{in}$ to $\Pi^1_{11}:\,\alpha_{11}=c_{out}$ by integrating these equations. This produces the following result.
\begin{lemma}\lemmalab{P110}
 $\mathcal P_{11}^0$ is well-defined for $x_{11} \in \left[-c\left(\alpha_{11} c_{out}^{-1}\right)^{\frac{2k}{2k+1}},c\left(\alpha_{11} c_{out}^{-1}\right)^{\frac{2k}{2k+1}}\right]$ with $c>0$ fixed small enough and 
 given by  $(x_{11},c_{in},\alpha_{11})\mapsto (\mathcal P_{11x}^0,c_{in}(\alpha_{11}c_{out}^{-1})^{\frac{1}{2k+1}},c_{out})$ 
%  for $x_{11} \in \left[-c\left(\alpha_{11} c_{out}^{-1}\right)^{\frac{2k}{2k+1}},c\left(\alpha_{11} c_{out}^{-1}\right)^{\frac{2k}{2k+1}}\right]$ with $c>0$ fixed small enough  
 with 
 \begin{align*}
 \mathcal P_{11x}^0(x_{11},\alpha_{11},\epsilon) = \left(\alpha_{11} c_{out}^{-1}\right)^{-\frac{2k}{2k+1}} x_{11}+\mathcal O(\epsilon^{k}\alpha_{11}^{\frac{1}{2k+1}}).
\end{align*}
The order of the remainder terms does not change upon differentiation with respect to $x_{11}$.
\end{lemma}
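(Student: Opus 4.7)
The strategy exploits the fact that \eqref{eqns11vf3} has a fully decoupled and linear $(\sigma_{11},\alpha_{11})$-subsystem, with the $x_{11}$-equation being linear in $x_{11}$ and carrying a small, $x_{11}$-dependent forcing $\epsilon^k\alpha_{11}^k\widetilde A_{11}$. I would begin by integrating $\dot\sigma_{11}=-\sigma_{11}/k$ and $\dot\alpha_{11}=(2k+1)\alpha_{11}/k$ explicitly to obtain $\sigma_{11}(s)=c_{in}e^{-s/k}$ and $\alpha_{11}(s)=\alpha_{11,0}e^{(2k+1)s/k}$, where $\alpha_{11,0}$ denotes the input value on $\Pi^0_{11}$. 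Setting $\alpha_{11}(T)=c_{out}$ produces the transition time $T=\tfrac{k}{2k+1}\log(c_{out}/\alpha_{11,0})$, whence $\sigma_{11}(T)=c_{in}(\alpha_{11,0}/c_{out})^{1/(2k+1)}$ and $e^{2T}=(\alpha_{11,0}/c_{out})^{-2k/(2k+1)}$. This already produces the $\sigma_{11}$- and $\alpha_{11}$-components of $\mathcal P_{11}^0$ stated in the lemma, and identifies the expansion factor that multiplies $x_{11,0}$ at leading order.

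For the $x_{11}$-equation I apply variation of parameters,
\begin{align*}
x_{11}(T)=e^{2T}x_{11,0}+\epsilon^k\alpha_{11,0}^k e^{2T}\int_0^T e^{(2k-1)s}\widetilde A_{11}\bigl(x_{11}(s),\sigma_{11}(s),\alpha_{11}(s),\epsilon\bigr)\,ds.
\end{align*}
On the prescribed domain $|x_{11,0}|\le c(\alpha_{11,0}/c_{out})^{2k/(2k+1)}$ the homogeneous term satisfies $|e^{2T}x_{11,0}|\le c$, and a standard bootstrap argument -- using the smallness of the prefactor $\epsilon^k\alpha_{11,0}^k$ -- keeps $x_{11}(s)$ in a bounded neighbourhood of $0$ throughout $s\in[0,T]$ so that $\widetilde A_{11}$ is uniformly controlled along the orbit. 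I would then write $\widetilde A_{11}=\sigma_{11}\alpha_{11}\,A+\sigma_{11}^{k}\,B$ for smooth $A,B$, substitute the explicit relations $\sigma_{11}(s)\alpha_{11}(s)=c_{in}\alpha_{11,0}e^{2s}$ and $\sigma_{11}(s)^{k}=c_{in}^{k}e^{-s}$, and evaluate the two resulting elementary exponential integrals. Using $e^{(2k+1)T}=c_{out}/\alpha_{11,0}$, each piece reduces to an explicit power of $\alpha_{11,0}$ that, after multiplication by $\epsilon^k\alpha_{11,0}^k e^{2T}$, is contained in the claimed remainder $O(\epsilon^k\alpha_{11,0}^{1/(2k+1)})$.

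For the derivative with respect to $x_{11,0}$ I differentiate the integral equation under the integral sign, using the variational equation $\tfrac{d}{ds}\partial_{x_{11,0}}x_{11}(s)=\bigl(2+\epsilon^k\alpha_{11}(s)^k\partial_{x_{11}}\widetilde A_{11}\bigr)\partial_{x_{11,0}}x_{11}(s)$ with initial value $1$, which gives $\partial_{x_{11,0}}x_{11}(s)=e^{2s}(1+o(1))$. Since $\partial_{x_{11}}\widetilde A_{11}$ inherits the decomposition $O(\sigma_{11}\alpha_{11},\sigma_{11}^{k})$ from smoothness of $A$ and $B$, the same exponential bookkeeping applies to the derivative integral, now with an extra $e^{2s}$ factor inside. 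Verifying that this extra factor does not damage the order of the remainder relative to the leading expansion factor $(\alpha_{11,0}/c_{out})^{-2k/(2k+1)}$ is the main technical step.

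The hard part is exactly this exponential bookkeeping. The integrand is a sum of exponentials $e^{\lambda s}$ with $\lambda\in\{2k-2,2k,2k+1,2k+3\}$, and one must check that each resulting contribution, after integration and multiplication by the prefactor $\epsilon^k\alpha_{11,0}^k e^{2T}$ and re-expression through $e^{(2k+1)T}=c_{out}/\alpha_{11,0}$, produces a power of $\alpha_{11,0}$ at least as large as $1/(2k+1)$. Borderline cases (in particular $k=1$, where $e^{(2k-2)s}\equiv 1$ and integration generates a logarithmic factor $\log(1/\alpha_{11,0})$) must be handled with care and absorbed into the stated polynomial $O$-bound.
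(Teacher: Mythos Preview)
Your approach is correct and is exactly what the paper intends; the paper's own proof reads simply ``Simple calculation,'' and your direct integration of the linearized system \eqref{eqns11vf3} followed by variation of parameters for the $x_{11}$-component is the natural way to carry this out. One small slip: from $T=\tfrac{k}{2k+1}\log(c_{out}/\alpha_{11,0})$ you get $e^{(2k+1)T}=(c_{out}/\alpha_{11,0})^{k}$, not $c_{out}/\alpha_{11,0}$; with this correction the exponential bookkeeping you outline goes through and yields the stated $\mathcal O(\epsilon^{k}\alpha_{11,0}^{1/(2k+1)})$ remainder (the $\sigma_{11}\alpha_{11}$-piece contributes the exact power $1/(2k+1)$, while the $\sigma_{11}^{k}$-piece gives the better power $k/(2k+1)$ for $k\ge 2$ and, for $k=1$, the same power times a harmless $\log(1/\alpha_{11,0})$).
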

\begin{proof}
 Simple calculation.
\end{proof}

The analysis near $q^+$ is almost identical. In particular, although the local mapping near $q^-$ is expanding, the local mapping near $q^+$ contracts by the same order. 
\subsubsection*{Local transition map near $z^-$}
We work in the $(\bar y=1,\bar \alpha_1=1,\bar r_{12}=1)_{121}$-chart. Here we have the following equations
\begin{align*}
\dot x_{121}&=(1+\sigma_{12}^k \xi_{121}^k f_{121}(x_{121},\sigma_{12}^k \xi_{121}^k))(1+\mathcal O(\xi_{121}^k \epsilon_{121}^k \alpha_{12}^k))-\frac12 x_{121}\left[\cdots \right],\\
 \dot \xi_{121}&=\frac{2k+1}{2k}\xi_{121} \left[\cdots\right],\\
 \dot \sigma_{12}&=-\sigma_{12} \left[\cdots\right],\\
 \dot \epsilon_{121}&=-\frac{2k+1}{2k}\epsilon_{121}\left[\cdots\right],
\end{align*}
where
\begin{align*}
 \left[\cdots \right] &= 2x_{121}+\sigma_{12}^k\xi_{121}^{k(2k-1)} g_{121}(x_{121},\sigma_{12}^k \xi_{121}^k)(1-\beta \xi_{121}^k \epsilon_{121}^k\sigma_{12}^k)\\
 &+\beta \epsilon_{121}^k \sigma_{12}^k + \mathcal O(\xi_{121} \epsilon_{121}^{k+1} \sigma_{12})
\end{align*}
Moreover, $$\sigma_{12}^k \xi_{121}^k f_{121}(x_{121},\sigma_{12}^k \xi_{121}^k):=f(\sigma_{12}^k \xi_{121}^k x_{121},\sigma_{12}^{2k} \xi_{121}^{2k}),$$ which is well-defined since $f(0,0)=0$, and $$g_{121}(x_{121},\sigma_{12}^k \xi_{121}^k):=g(\sigma_{12}^k \xi_{121}^k x_{121},\sigma_{12}^{2k} \xi_{121}^{2k}).$$
Working near $z_{121}^-$ where $x_{121}=-1$, we divide the right hand side by $-\frac12 \left[\cdots\right]\approx 1$. This gives the following equivalent system
\begin{equation}\eqlab{eqs121}
\begin{aligned}
\dot x_{121}&=x_{121}-\frac{2(1+\sigma_{12}^k \xi_{121}^k f_{121}(x_{121},\sigma_{12}^k \xi_{121}^k))}{2x_{121}+\sigma_{12}^k\xi_{121}^{k(2k-1)} g_{121}(x_{121},\sigma_{12}^k \xi_{121}^k)}+ A_{121}(x_{121},\xi_{121},\sigma_{12},\epsilon_{121}),\\
 \dot \xi_{121}&=-\frac{2k+1}{k}\xi_{121},\\
 \dot \sigma_{12}&=2\sigma_{12},\\
 \dot \epsilon_{121}&=\frac{2k+1}{k}\epsilon_{121},
\end{aligned}
\end{equation}
with $A_{121}(x_{121},\xi_{121},\sigma_{12},\epsilon_{121})=\mathcal O(\epsilon_{121}^k\sigma_{12}^k,\xi_{121}\epsilon_{121}^{k+1}\sigma_{12}).$
\begin{lemma}
 There exists a smooth diffeomorphism of the form
 \begin{align*}
  (\tilde x_{121},\tilde \xi_{121},\tilde \sigma_{12},\tilde \epsilon_{121})\mapsto \begin{cases}
                                                                x_{121} &=\mathcal X_{121}(\tilde x_{121},\tilde \xi_{121},\tilde \sigma_{12},\tilde \epsilon_{121}),\\
                                                                \xi_{121} &=\tilde \xi_{121} \mathcal S_{121}(\tilde x_{121},\tilde \xi_{121},\tilde \sigma_{12},\tilde \epsilon_{121}),\\
                                 \sigma_{12} &=\tilde \sigma_{12} \mathcal S_{121}(\tilde x_{121},\tilde \xi_{121},\tilde \sigma_{12},\tilde \epsilon_{121})^{-\frac{2k}{2k+1}},               \\
                                 \epsilon_{121} &= \tilde \epsilon_{121}\mathcal S_{121}(\tilde x_{121},\tilde \xi_{121},\tilde \sigma_{12},\tilde \epsilon_{121})^{-1}\end{cases}
 \end{align*}
  as well as a regular transformation of time, such that \eqref{eqs121} becomes
 \begin{equation}\eqlab{eqns121vf3}
\begin{aligned}
 \dot{\tilde x}_{121} &= 2\tilde x_{121} +\widetilde A_{121}(\tilde x_{121},\tilde \xi_{121},\tilde \sigma_{12},\tilde \epsilon_{121}),\\
 \dot{\tilde \xi}_{121}&=-\frac{2k+1}{k}\tilde \xi_{121},\\
 \dot{\tilde \sigma}_{12}&=2\tilde \sigma_{12},\\
 \dot{\tilde \epsilon}_{121}&=\frac{2k+1}{k}\tilde \epsilon_{121}. 
\end{aligned}
\end{equation}
Here $\mathcal X_{121}$, $\mathcal S_{121}$ and $\widetilde A_{121}$ are all smooth and satisfy $\mathcal X_{121}(0,0,0,0)=-1$, $\mathcal S_{121}(0,0,0,0)=1$ and 
\begin{align*}
\widetilde A_{121}(\tilde x_{121},\tilde \xi_{121},\tilde \sigma_{12},\tilde \epsilon_{121})=\mathcal O(\tilde \xi_{121}\tilde \epsilon_{121}^k\tilde \sigma_{12})%\mathcal O(\tilde\xi_{121}\tilde\epsilon_{121}^k\tilde\sigma_{12}^k,\tilde\xi_{121}\tilde\epsilon_{12}^{k+1}\tilde\sigma_{12},\tilde \xi_{121}^k \tilde\epsilon_{121}^k \tilde\sigma_{12}^k),
\end{align*}
respectively.
\end{lemma}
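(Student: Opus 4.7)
The plan is to mirror the two-step linearization used for \lemmaref{qminusmap}, which is carried out in detail in \cite{2019arXiv190806781U}. First I would compute the linear part of \eqref{eqs121} at $z_{121}^{-}=(-1,0,0,0)$: the $x_{121}$-equation at $\sigma_{12}=\xi_{121}=\epsilon_{121}=0$ reduces to $\dot x_{121}=x_{121}-1/x_{121}$, whose derivative at $x_{121}=-1$ equals $2$. Together with the three remaining diagonal equations, the spectrum at $z_{121}^{-}$ is therefore $(2,-\tfrac{2k+1}{k},2,\tfrac{2k+1}{k})$. Two resonances are present: $\lambda_{x}=\lambda_{\sigma}$ and $\lambda_{\xi}+\lambda_{\epsilon}=0$, the latter creating the resonant monomial $\xi_{121}\epsilon_{121}=\epsilon$. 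Any admissible transformation must preserve the two conserved quantities $\alpha=\sigma_{12}^{2k+1}\xi_{121}^{2k}$ and $\epsilon=\xi_{121}\epsilon_{121}$ (cf.\ \eqref{alphaeps121}); a direct substitution confirms that the ansatz in the statement does so, and that the exponents $-\tfrac{2k}{2k+1}$ and $-1$ on $\mathcal S_{121}$ are forced by precisely these constraints.

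Next I would straighten the flow inside the invariant subspace $\sigma_{12}=0$. Since $\sigma_{12}=\alpha_1$ in this chart, the restriction of \eqref{xr1alpha1vis} to $\alpha_1=0$ is equivalent, up to a smooth time reparametrisation, to the regular vector field $Z_{+}$ given in \eqref{Zpmvf}, and $z_{121}^{-}$ corresponds under the two blowups to a regular point of an orbit of $Z_{+}$ transverse to a suitable cross-section. The flow-box theorem then supplies a smooth change of coordinates on $\sigma_{12}=0$ that brings the $(x_{121},\xi_{121},\epsilon_{121})$-subsystem into the linear form claimed, while respecting the relation $\xi\epsilon=\epsilon$. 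I would then extend this coordinate change $\sigma_{12}$-smoothly by a Poincar\'e--Dulac-type iteration. Because the $\sigma_{12}$-equation is already linear, and because the only resonant monomials that survive in the full four-dimensional system reduce to the two distinguished parameters $\alpha$ and $\epsilon$ themselves, every non-resonant term can be removed by successive smooth changes of coordinates having the prescribed $\mathcal S_{121}$-form, and the remaining resonant contribution is absorbed into $\widetilde A_{121}$.

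The main technical obstacle is to maintain both conservation laws $\alpha=\mathrm{const}$ and $\epsilon=\mathrm{const}$ throughout this iteration: each normal-form step must be realised via transformations that factor through a single scalar function $\mathcal S_{121}$ raised to the three exponents $1,-\tfrac{2k}{2k+1},-1$ on the $\xi_{121}$-, $\sigma_{12}$- and $\epsilon_{121}$-components, since these are the unique choices under which both $\tilde\sigma^{2k+1}\tilde\xi^{2k}=\sigma_{12}^{2k+1}\xi_{121}^{2k}$ and $\tilde\xi\tilde\epsilon=\xi_{121}\epsilon_{121}$ hold identically. Finally, tracking the input structure $A_{121}=\mathcal O(\epsilon_{121}^{k}\sigma_{12}^{k},\xi_{121}\epsilon_{121}^{k+1}\sigma_{12})$ from \eqref{eqs121} through the combined normalization produces the asserted estimate $\widetilde A_{121}=\mathcal O(\tilde\xi_{121}\tilde\epsilon_{121}^{k}\tilde\sigma_{12})$: the non-resonant $\epsilon_{121}^{k}\sigma_{12}^{k}$-part is removed by the iteration, whereas the resonant piece, proportional to the distinguished parameter $\xi\epsilon=\epsilon$ to order $k$ and to the $\alpha$-like factor $\tilde\sigma_{12}$, is retained.
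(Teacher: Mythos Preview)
Your overall strategy—mirror \lemmaref{qminusmap}, respect the conserved quantities via the $\mathcal S_{121}$-exponents, and track the remainder—is the paper's strategy too, and your observations about the spectrum and the constraints on the transformation are correct. However, you apply the flow-box step on the wrong invariant subspace, and this creates a genuine gap in the second step.

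You choose $\sigma_{12}=0$ as the subspace on which to invoke the regularity of $Z_+$. But in this chart $x=\sigma_{12}^k\xi_{121}^k x_{121}$ and $r_1=\sigma_{12}^{2k}\xi_{121}^{2k}$, so $\sigma_{12}=0$ collapses to the single point $(x,r_1,\alpha_1)=(0,0,0)$ of \eqref{xr1alpha1vis}; the induced $(x_{121},\xi_{121},\epsilon_{121})$-system is simply $\dot x_{121}=x_{121}-1/x_{121}$ together with two linear equations, and no $Z_+$ flow-box is available (you are at an equilibrium in blown-up coordinates, not at a regular point of $Z_+$). The correct analogue of the $\alpha_{11}=0$ step in \lemmaref{qminusmap} is $\epsilon_{121}=0$: there $A_{121}$ vanishes and the remaining $(x_{121},\xi_{121},\sigma_{12})$-system depends on $f_{121},g_{121}$ only through the product $\sigma_{12}^k\xi_{121}^k$, which blows down to a genuine neighbourhood of the visible fold in the $(x,r_1)$-plane. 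The flow-box theorem for $Z_+$ then linearizes this three-dimensional subsystem \emph{smoothly}, eliminating the $f_{121},g_{121}$ contributions in one stroke. Afterwards, one linearizes within the complementary subspace $\xi_{121}=0$ (the analogue of $\sigma_{11}=0$), where the only potential resonance $\lambda_{x}=\lambda_{\sigma}$ carries zero coefficient and the remaining problem is effectively non-resonant.

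Because you put the flow-box on $\sigma_{12}=0$, your second step has to remove the $f_{121},g_{121}$ terms by a Poincar\'e--Dulac iteration. That iteration is only formal in general; with the eigenvalues $2,-\tfrac{2k+1}{k},2,\tfrac{2k+1}{k}$ there are infinitely many resonances in the $\dot x_{121}$-component, and you do not explain why the normalization converges in $C^\infty$. (Your identification of which input terms are ``resonant'' is also off: neither $\epsilon_{121}^k\sigma_{12}^k$ nor $\xi_{121}\epsilon_{121}^{k+1}\sigma_{12}$ is a resonant monomial except for special $k$.) The paper sidesteps all of this precisely by using the geometric regularity of $Z_+$ on the correct subspace, which is the content of ``minor modifications'' of \lemmaref{qminusmap}.
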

\begin{proof}
 The proof follows the proof of \lemmaref{qminusmap}, with only minor modifications.
\end{proof}
Consider \eqref{eqns121vf3}  and notice that $\alpha=\tilde \sigma_{12}^{2k+1}\tilde \xi_{121}^{2k}$ and $\epsilon =\tilde \xi_{121}\tilde \epsilon_{121}$ are still conserved in the tilde variables. We therefore drop the tildes and describe the transition map $\mathcal P_{121}^2$ from $\Pi_{121}^2:\,\xi_{121}=c_{in}$ to $\Pi^3_{121}:\,\epsilon_{121}=c_{out}$ by integrating these equations. 
\begin{lemma}\lemmalab{P1212}
 The transition map $\mathcal P_{121}^2$ is well-defined for 
 \begin{align}
 0\le \sigma_{12} \le \epsilon_{121}^{\frac{2k}{2k+1}}\alpha_0^{\frac{1}{2k+1}},\eqlab{sigma12cond}
\end{align}
 and $x_{121} \in \left[-c\left(\epsilon_{121} c_{out}^{-1}\right)^{\frac{2k}{2k+1}},c\left(\epsilon_{121} c_{out}^{-1}\right)^{\frac{2k}{2k+1}}\right]$ with $c>0$ and $\alpha_0$ small enough and given by  
  $(x_{121},c_{in},\sigma_{12},\epsilon_{121})\mapsto (\mathcal P_{121x}^2,(\epsilon_{121}c_{out}^{-1}) c_{in},\mathcal P_{12\sigma}^2,c_{out})$ 
  with 
  \begin{align*}
 \mathcal P_{12\sigma}^2(x_{121},\sigma_{12},\epsilon_{121}) &= \left(\epsilon_{121} c_{out}^{-1}\right)^{-\frac{2k}{2k+1}} \sigma_{12}\\
% \end{align*}
% and
% \begin{align*}
 \mathcal P_{121x}^2(x_{121},\sigma_{12},\epsilon_{121}) &= \left(\epsilon_{121} c_{out}^{-1}\right)^{-\frac{2k}{2k+1}} x_{121}+\mathcal O(\epsilon_{121}\mathcal P_{12\sigma}^2(x_{121},\sigma_{12},\epsilon_{121})). 
\end{align*}
The order of the remainder terms does not change upon differentiation with respect to $x_{121}$. Moreover, by \eqref{sigma12cond}
\begin{align}
  \mathcal P_{12\sigma}^2(x_{121},\sigma_{12},\epsilon_{121})\in (0,c_{out}^{\frac{2k}{2k+1}} \alpha_0^{\frac{1}{2k+1}}).\eqlab{sigmaest}
\end{align}
\end{lemma}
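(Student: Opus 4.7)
The plan is to integrate the decoupled linear subsystem in \eqref{eqns121vf3} explicitly and then treat the $x_{121}$-equation by variation of parameters, in the spirit of the proof of \lemmaref{P110}. Three of the four equations in \eqref{eqns121vf3} are autonomous and linear, so with initial data on $\Pi_{121}^2$ one immediately has
\begin{align*}
\xi_{121}(t) = c_{in} e^{-(2k+1)t/k},\quad \sigma_{12}(t) = \sigma_{12} e^{2t},\quad \epsilon_{121}(t) = \epsilon_{121} e^{(2k+1)t/k}.
\end{align*}
The transition time $T$ to $\Pi_{121}^3$ is fixed by $\epsilon_{121}(T) = c_{out}$, giving $e^{2T} = (c_{out}/\epsilon_{121})^{2k/(2k+1)}$. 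Substituting back immediately produces the $\xi$- and $\sigma$-components of $\mathcal P_{121}^2$, in particular the stated formula for $\mathcal P_{12\sigma}^2$; the bound \eqref{sigmaest} is then a direct consequence of \eqref{sigma12cond}. Well-definedness of the map on the stated $x_{121}$-window follows because the expansion factor $e^{2T}$ forces $|x_{121}(T)|$ to remain in the chart only when the incoming $|x_{121}|$ is shrunk by the corresponding power of $\epsilon_{121}/c_{out}$.

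For the $x_{121}$-component I would use variation of parameters:
\begin{align*}
x_{121}(T) = e^{2T} x_{121} + e^{2T}\int_0^T e^{-2s}\, \widetilde A_{121}(x_{121}(s),\xi_{121}(s),\sigma_{12}(s),\epsilon_{121}(s))\, ds.
\end{align*}
The first term reproduces the claimed leading order $(\epsilon_{121}/c_{out})^{-2k/(2k+1)} x_{121}$. For the remainder, I substitute the explicit solutions above into the bound $\widetilde A_{121} = \mathcal O(\xi_{121}\epsilon_{121}^k \sigma_{12})$. Using the conservation of $\epsilon = \xi_{121}(s)\epsilon_{121}(s) = c_{in}\epsilon_{121}$ along the orbit collapses the integrand to a single exponential in $s$; evaluating this integral up to time $T$ and multiplying by $e^{2T}$ gives a remainder whose powers of $\epsilon_{121}$, $c_{out}$ and $\sigma_{12}$ match $\mathcal O(\epsilon_{121}\,\mathcal P_{12\sigma}^2)$.

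The smoothness statements and the claim that the orders of the remainder are preserved under differentiation with respect to $x_{121}$ follow by differentiating the integral equation above and applying Gr\"onwall to the resulting scalar variational equation: the structural bound $\partial_{x_{121}}\widetilde A_{121} = \mathcal O(\xi_{121}\epsilon_{121}^k \sigma_{12})$ keeps the relevant integrals uniformly controlled over $[0,T]$. I expect the main technical point to be the careful bookkeeping of the competing exponential rates in the remainder estimate, so that the final bound lands with the specific dependence $\mathcal O(\epsilon_{121}\,\mathcal P_{12\sigma}^2)$ rather than some weaker polynomial-in-$\epsilon_{121}$ bound; everything else is direct integration, parallel to the proof of \lemmaref{P110}.
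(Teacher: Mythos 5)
Your proposal is correct and is precisely the ``simple calculation'' the paper has in mind: explicit integration of the three decoupled linear equations of \eqref{eqns121vf3}, determination of the transition time from $\epsilon_{121}(T)=c_{out}$ (so $e^{2T}=(\epsilon_{121}c_{out}^{-1})^{-\frac{2k}{2k+1}}$, which yields the $\xi$- and $\sigma$-components and \eqref{sigmaest} at once), and variation of parameters with the bound $\widetilde A_{121}=\mathcal O(\xi_{121}\epsilon_{121}^k\sigma_{12})$ together with the conserved quantity $\xi_{121}\epsilon_{121}$ for the $x_{121}$-component. The only caveat in your bookkeeping is that for $k=1$ the forcing rate $\frac{2k+1}{k}(k-1)+2$ resonates with the homogeneous rate $2$, so the integral picks up a secular factor $T=\mathcal O(\log(c_{out}\epsilon_{121}^{-1}))$ and the remainder is really $\mathcal O(\epsilon_{121}\log(\epsilon_{121}^{-1})\,\mathcal P_{12\sigma}^2)$ in that case --- a harmless imprecision (it does not affect the later use of the lemma) that is already present in the paper's own statement.
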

\begin{proof}
 Simple calculation.
\end{proof}
 % \begin{align*}
%  0<\mathcal P_{12\sigma}^2(x_{121},\sigma_{12},\epsilon_{121})\le c_{out}^{\frac{2k}{2k+1}} \alpha_0^{\frac{1}{2k+1}}.
% \end{align*}
% Notice that 
The analysis near $z^+$ is almost identical. In particular, although the local mapping near $z^-$ is expanding, the local mapping near $z^+$ contracts by the same order. 

\subsubsection*{The local map $\mathcal P_{loc}$}
Let $x_{in}$ denote the value of $x$ on $\Pi^0=\Pi_{in}$ of the grazing orbit of $Z_+$. Similarly, let $x_{out}$ be the corresponding value on $\Pi^7=\Pi_{out}$. 
$(\epsilon,\alpha)\in W_1(\epsilon_0,\alpha_0)$ implies that 
\begin{align*}
 0<\sigma_{12} \le \epsilon_{121}^{\frac{2k}{2k+1}}\alpha_0^{\frac{1}{2k+1}},
\end{align*}
in the $(\bar y=1,\bar \alpha_1=1,\bar r_{12}=1)_{121}$-chart and it is therefore consistent with \eqref{sigma12cond}. Consequently, by \lemmaref{P110} and \lemmaref{P1212}, we consider any $(\epsilon,\alpha)\in W_1(\epsilon_0,\alpha_0)$ with $\alpha_0>0$ small enough and $x$ in a small neighborhood of $x_{in}$:
\begin{align*}
 x-x_{in}\in \left[-c\epsilon^{\frac{2k}{2k+1}}\alpha^{\frac{2k}{2k+1}},c\epsilon^{\frac{2k}{2k+1}}\alpha^{\frac{2k}{2k+1}}\right],
\end{align*}
for some $c>0$. This leads to the following.
\begin{lemma}
 Let $x\mapsto \mathcal P_{loc,x}(x)$ denote the $x$-component of the map $\mathcal P_{loc}$ from $\Pi^0\rightarrow \Pi^7$. For any $x_2\in [-c,c]$, we then have
 \begin{align}
  \epsilon^{-\frac{2k}{2k+1}}\alpha^{-\frac{2k}{2k+1}} \left(\mathcal P_{loc,x}(x_{in} +\epsilon^{\frac{2k}{2k+1}}\alpha^{\frac{2k}{2k+1}}x_2)-x_{out}\right) =  \widehat{\mathcal P}_{122x}^3(x_2) +o(1),\eqlab{expandPloc}
 \end{align}
with $\widehat{\mathcal P}_{122x}^3=\psi_+  \circ \mathcal P_{122x}^3\circ \psi_-$ for some diffeomorphisms $\psi_\pm$, for $(\epsilon,\alpha)\in W_1(\epsilon_0,\alpha_0)$ with $\alpha_0,\epsilon_0>0$ sufficiently small.

The following can be said about $\psi_\pm$: \responsenew{For any $\delta>0$ and any $n\in \mathbb N$, there are constants $c_{in},c_{out},c>0$ such that $\vert \psi_\pm'-1\vert\le \delta$, $\vert \psi_\pm ^{(k)}\vert \le \delta$ for all $k=2,\ldots,n$}.

Moreover, the remainder term $o(1)$ is bounded by a constant $c_m(\alpha_0)\rightarrow 0$ for $\alpha_0\rightarrow 0$ in $C^m$, $m\in \mathbb N$ fixed. %The order of the remainder does not change upon differentiation with respect to $x_2$.
\end{lemma}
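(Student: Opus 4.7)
\medskip

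\noindent\textbf{Proof proposal.} The plan is to obtain $\mathcal P_{loc}$ as the composition of the seven local transition maps $\mathcal P^i:\Pi^i\to \Pi^{i+1}$, $i=0,\ldots,6$, indicated in \figref{pws_hysteresis_visible2item1}, and then to verify that once the input $x$ near $x_{in}$ is rescaled by the small factor $\epsilon^{\frac{2k}{2k+1}}\alpha^{\frac{2k}{2k+1}}$ (dictated by the expansion/contraction rates at $q^{\pm}$ and $z^{\pm}$), the composition collapses, to leading order, to a smooth conjugate of the Chini-map $\mathcal P^3_{122x}$ from \lemmaref{chini}. First I would invoke \lemmaref{P110} (and its mirror near $q^+$) for $\mathcal P^0$ and $\mathcal P^6$, and \lemmaref{P1212} (and its mirror near $z^+$) for $\mathcal P^2$ and $\mathcal P^4$. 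By the symmetry of the linearized resonant normal forms at the pairs $q^\pm$ and $z^\pm$, the expansion factors $(\alpha_{11}/c_{out})^{-\frac{2k}{2k+1}}$ and $(\epsilon_{121}/c_{out})^{-\frac{2k}{2k+1}}$ at $q^-,z^-$ are exactly cancelled by the corresponding contractions at $z^+,q^+$; what survives is a pure change of the $x$-scale coming from the relations $\alpha=\sigma_{11}^{2k+1}\alpha_{11}=\sigma_{12}^{2k+1}\xi_{121}^{2k}$ and $\epsilon=\xi_{121}\epsilon_{121}$, which recovers the overall rescaling $\epsilon^{\frac{2k}{2k+1}}\alpha^{\frac{2k}{2k+1}}$ appearing on both sides of \eqref{expandPloc}.

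Next, for the intermediate regular maps $\mathcal P^1,\mathcal P^3,\mathcal P^5$, I would note that $\mathcal P^1$ and $\mathcal P^5$ are regular perturbations (in $\epsilon,\alpha$) of the diffeomorphisms of the flow \eqref{eqnsphere} on the first blowup sphere, while $\mathcal P^3$ is a regular perturbation of the diffeomorphism induced by the Chini equation \eqref{chini} via the substitution \eqref{chinimap}, i.e.\ of $\mathcal P^3_{122x}$. The condition $(\epsilon,\alpha)\in W_1(\epsilon_0,\alpha_0)$ enforces $\sigma_{12}\le \epsilon_{121}^{\frac{2k}{2k+1}}\alpha_0^{\frac{1}{2k+1}}$, which is precisely the hypothesis \eqref{sigma12cond} of \lemmaref{P1212}, so that $\mathcal P^2$ indeed delivers us into the regime $r_{122}>0$ where the Chini dynamics governs $\mathcal P^3$, rather than into the $\bar\epsilon=0$ boundary regime of \lemmaref{reflection}.

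Having fixed these pieces, I would define $\psi_- :=$ (rescaled composition of $\mathcal P^0,\mathcal P^1,\mathcal P^2$, conjugated into $x_{122}$ coordinates on $\Pi^3$) and $\psi_+ :=$ (rescaled composition of $\mathcal P^4,\mathcal P^5,\mathcal P^6$, conjugated out of $x_{122}$ coordinates on $\Pi^4$), so that after the global $\epsilon^{-\frac{2k}{2k+1}}\alpha^{-\frac{2k}{2k+1}}$ rescaling we obtain $\widehat{\mathcal P}_{122x}^3=\psi_+\circ \mathcal P^3_{122x}\circ \psi_-$ as claimed. Because the expansion rate near $q^-$ and the contraction rate near $q^+$ are identical (likewise for $z^\pm$), both $\psi_\pm$ are affine with slope $1$ to leading order as $\epsilon,\alpha\to 0$, the corrections being $\mathcal O(\epsilon^k\alpha^{1/(2k+1)})$ and $\mathcal O(\alpha_0^{1/(2k+1)})$ from \lemmaref{P110} and \lemmaref{P1212} together with the $\mathcal O(\sigma_{12}^k\xi_{121}^k)$ perturbations of the sphere flow. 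By choosing $c_{in}$, $c_{out}$ and $c$ successively small (and then $\alpha_0,\epsilon_0$ small accordingly), these corrections can be forced below any prescribed $\delta$ in $C^n$; this yields the claim about $\psi_\pm'-1$ and $\psi_\pm^{(k)}$.

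Finally, the $o(1)$ remainder in \eqref{expandPloc} is obtained by adding up, at each section $\Pi^i$, the $C^m$-sized perturbation estimates of \lemmaref{P110}, \lemmaref{P1212} and the smooth-dependence statements of Fenichel's theorem on the intermediate maps; the explicit $\alpha_0$-dependence of the estimates in \lemmaref{P1212} and of the $\mathcal P^2,\mathcal P^4$-linearizations guarantees a common upper bound $c_m(\alpha_0)\to 0$ as $\alpha_0\to 0$. The main obstacle in the execution is not any single estimate but the bookkeeping: one must verify, working through the explicit chart transitions \eqref{alphaeps121}--\eqref{aeps_2k} and the analogues for the $q^\pm$ pair, that the $x$-expansion and $\sigma_{12}$-expansion factors from \lemmaref{P110} and \lemmaref{P1212} conspire to produce exactly $\epsilon^{\frac{2k}{2k+1}}\alpha^{\frac{2k}{2k+1}}$, and that the composition of the two near-identity transformations on either side remains close to the identity in $C^n$ uniformly in $(\epsilon,\alpha)\in W_1$.
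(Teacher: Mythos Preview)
Your proposal is correct and follows essentially the same approach as the paper's own proof: decompose $\mathcal P_{loc}$ as $\mathcal P^6\circ\cdots\circ\mathcal P^0$, invoke \lemmaref{P110} and \lemmaref{P1212} at $q^-,z^-$ together with their mirror versions at $q^+,z^+$, and identify the middle piece with the Chini transition of \lemmaref{chini}; the $c_m(\alpha_0)\to 0$ bound on the remainder is obtained exactly as you indicate, from the $\sigma_{12}$-estimate \eqref{sigmaest}. In fact you have spelled out more of the bookkeeping (the cancellation of expansion and contraction factors across the $q^\pm$ and $z^\pm$ pairs, the role of the constraint $(\epsilon,\alpha)\in W_1$ in ensuring \eqref{sigma12cond}) than the paper does, which simply refers to the analogous argument in \cite[Lemma~4.3]{2019arXiv190806781U}.
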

\begin{proof}
The proof is similar to \cite[Lemma 4.3]{2019arXiv190806781U}. In particular, we write $\mathcal P_{loc}$ as the composition of the maps $\mathcal P^{0-6}$ and the result then follows from \lemmaref{chini} and \lemmaref{P110} and \lemmaref{P1212}, near $q^-$ and $z^-$, along with similar results (these maps are basically the inverses (to leading order) of those in \lemmaref{P110} and \lemmaref{P1212}) near $q^+$ and $z^+$. The fact that the remainder term can be bounded by a constant $c_m(\alpha_0)$ follows from \eqref{sigmaest}. %The parameter regime basically and the scaling reduces the map a pertubation of the regular map in \lemmaref{chini}
\end{proof}
% The fact that $\phi_\pm $ are near-identity diffeomorphisms means that for any 
From this lemma, it follows that $\widehat{\mathcal P}_{122x}^3$ also satisfies the estimates \eqref{P3122xest} on $x_2\in [-c,c]$. In fact, one can show (see \cite[Theorem 1.3]{2019arXiv190806781U} and \cite{kristiansen2023a}) that for any $l\in (0,1)$, there exists constants, including $c>0$, such that $(\widehat{\mathcal P}_{122x}^3)(x_2)$ can be extended in such a way that \eqref{expandPloc} holds and such that $(\widehat{\mathcal P}_{122x}^3)'(x_2)$ attains all values in $[-1+l,-l]$ while $(\widehat{\mathcal P}_{122x}^3)''(x_2)<0$. To do this one just extends $\mathcal P_{loc}$ through a redefinition of $\Pi^3$ and $\Pi^4$. Specifically, in the $(\bar y=1,\bar \alpha_1=1,\bar r_{12}=1)_{121}$-chart, we would consider $\Pi_{121}^3:\,x_{121}=-1\pm c_{out}$. 
%  
% \end{proof}

We now write the regular map $\mathcal P_{glo}$ in a similar way. In fact, we focus on $\mathcal P_{glo}^{-1}$. Let $\mathcal P_{glox}^{-1}(x,\mu)$ be the $x$-component of $\mathcal P_{glo}^{-1}$. Since it is regular it depends smoothly on $x$ and on the unfolding parameter $\mu$. By assumption \ref{assumption:5}, we have that $\mathcal P_{glox}^{-1}(x_{in},0)=x_{out}$. Consequently, we obtain the following expansion: There exists $\nu_0\in (-1,0)$ and $\nu_1>0$ such that 
\begin{align}
 \epsilon^{-\frac{2k}{2k+1}} \alpha^{-\frac{2k}{2k+1}} \left(\mathcal P_{glox}^{-1}(x,\mu )-x_{out}\right) =\upsilon_0 x_2 +  \upsilon_1 \mu_2+\mathcal O(\epsilon^{\frac{2k}{2k+1}} \alpha^{\frac{2k}{2k+1}}),\eqlab{Pglo}
\end{align}
for
\begin{align}
 x=x_{in}+\epsilon^{\frac{2k}{2k+1}} \alpha^{\frac{2k}{2k+1}}x_2,\quad \mu = \epsilon^{\frac{2k}{2k+1}} \alpha^{\frac{2k}{2k+1}}\mu_2,\eqlab{x2mu2scaling}
\end{align}
The fact that $\nu_0\in (-1,0)$ follows from the fact that $\gamma_0$ is repelling, see \cite[Lemma 1.6]{2019arXiv190806781U}. Moreover, $\nu_1>0$ follows by assumption \ref{assumption:6}. 

To solve the fixed-point equation $\mathcal P_x(x,\mu)=x$, we therefore solve $\mathcal P_{locx}= \mathcal P_{glox}^{-1}$. By \eqref{expandPloc} and \eqref{Pglo} this gives
\begin{align*}
 \nu_0 x_2 + \nu_1 \mu_2 = \widehat{\mathcal P}_{122x}^3 (x_2)+o(1),
\end{align*}
setting $x$ and $\mu$ equal to the expressions in \eqref{x2mu2scaling}.
Seeing that $(\widehat{\mathcal P}_{122x}^3)' (x_2)$ attains all values in $[-1+l,-l]$ with $0<l<1+\nu_0<1$, we obtain a (locally unique) saddle-node of the fixed point by applying the implicit function theorem, see \cite[Lemma 4.5]{2019arXiv190806781U}. The proof in the present case is identical. In this way, we have completed the proof of \thmref{main2} (\ref{item1}).

\begin{figure}
\begin{center}
\includegraphics[width=.7\textwidth]{./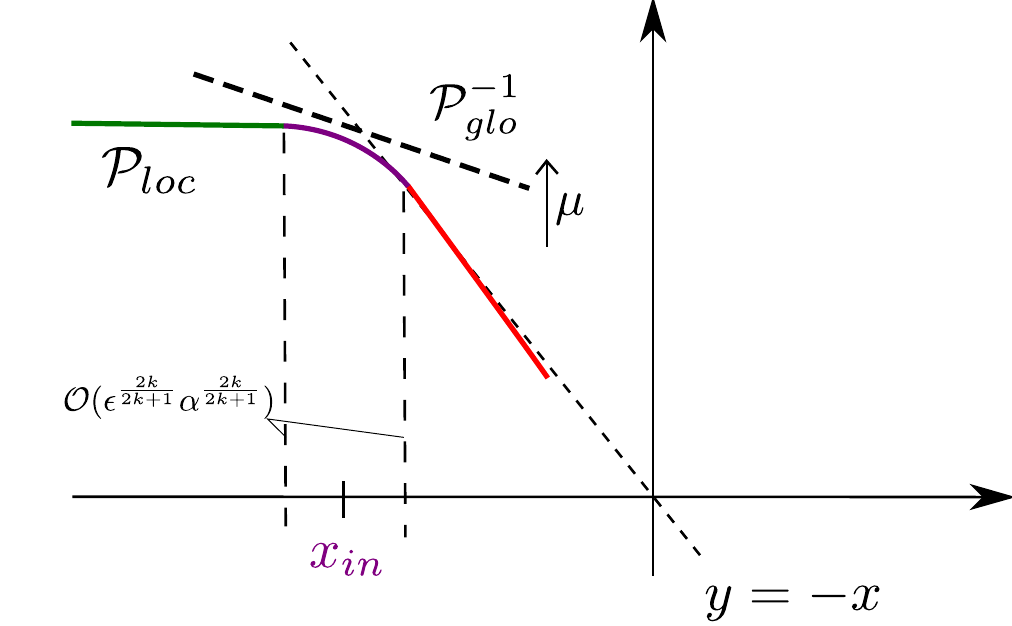}
  \end{center}
  %x' = x^2, y'=-y+x y^2
\caption{Illustration of the maps $\mathcal P_{loc}$ and $\mathcal P_{glo}^{-1}$ restricted to the slow manifold in 
  the case $(\epsilon,\alpha)\in W_1$, see \thmref{main2} (\ref{item1}). Here $x_{in}$ is the $x$-values of the orbit of $Z_+$ that grazes $\Sigma$ on $\Pi^0$. In the parameter regime $(\epsilon,\alpha)\in W_1$, the mapping $\mathcal P_{loc}$ is then dominated by the attraction towards the attracting center manifold $G_{122}$ on one side (green) $x\lesssim x_{in}$ and the dynamics of $Z_+$ (which itself is as close to $x\mapsto -x$ as desired upon adjusting the domains) on the other side $x\gtrsim x_{in}$. The transition inbetween (in purple), which extends over a $\mathcal O(\epsilon^{\frac{2k}{2k+1}}\alpha^{\frac{2k}{2k+1}})$-neighborhood of $x_{in}$, is described by the Chini-equation, see \eqref{chini}, and it is concave cf. \lemmaref{chini}. On the other hand, since $\gamma_0$ is repelling, it follows that $\mathcal P_{glo}$ is expanding. In particular, $\mathcal P_{glo}$ moves with nonzero speed for $\mu\response{\approx} 0$ by assumption \ref{assumption:6} and this therefore gives the saddle-node bifurcation of limit cycles as solutions of $\mathcal P_{glo}^{-1}=\mathcal P_{loc}$ when the two graphs are tangent at a point.
  }
 \figlab{QR}
% \caption{$q=r_2q_2$, $x=r_2x_2$, $\epsilon=r_2^2$}
              \end{figure}
% 
% %   To 
% \begin{figure}
% \begin{center}
% \includegraphics[width=.7\textwidth]{./}
%   \end{center}\caption{}
%  \figlab{pws_hysteresis_visible1}
% % \caption{$q=r_2q_2$, $x=r_2x_2$, $\epsilon=r_2^2$}
%               \end{figure}
 
\subsection{Dynamics on the blowup of $Q$}\seclab{Qblowupvf}
To prove \thmref{main2} (\ref{item2}), we consider the regime $(\epsilon,\alpha)\in W_2(\epsilon_0,\epsilon_1,\alpha_0)$ where $0< \alpha^{\frac{k+1}{k}}\epsilon_0\le \epsilon\le \alpha^{\frac{k+1}{k}}\epsilon_1$. In this case, the dynamics within $\bar \epsilon=0$ becomes relevant, recall \eqref{regime}. We therefore decompose $\mathcal P_{loc}$ in a different way, replacing $\Pi^3$ and $\Pi^4$ with $\widetilde \Pi^3$ and $\widetilde \Pi^4$, respectively, see \lemmaref{reflection}. In this way, since the mapping from $\widetilde \Pi^3$ and $\widetilde \Pi^4$ within $\epsilon_{121}=0$ is completely ``neutral'' with no contraction, see \eqref{reflection}, it follows that for all $x\in I_{in}$ and $\alpha_0$ small enough, so that the dynamics is uniformly bounded in the $(\bar y=1)_1$-chart, then $\mathcal P_{loc}$ is as close as desired (upon adjusting the domains) to a reflection $x\mapsto -x$. Consequently, there can be no saddle-node bifurcations of limit cycles in this chart within this parameter regime. 

In order to prove \thmref{main2} (\ref{item2}) and describe the chaotic dynamics, we have to follow the set $L^-$. Recall that this set is unbounded in the $(\bar y=1,\bar \alpha_1=1,\bar r_{12}=1)_{121}$-chart, see \eqref{L121pm}, so we follow it across the first blowup cylinder and towards the blowup of the point $Q$. 

In the following, we focus on the $(\bar \alpha=1,\bar \epsilon=1,\bar \nu_1\bar \epsilon_{1}=1)_{213}$-chart of the blowup of $Q$ and the equations \eqref{eqs213}, repeated here for with $Z_\pm$ as given in \eqref{Zpmvf}:
\begin{equation}\eqlab{eqs213new}
 \begin{aligned}
  \dot x &=\rho_{213}^{k+1}\nu_{213} \alpha \left[1+\mathcal O(x,\alpha)\right] ,\\
  \dot \nu_{213} &= \nu_{213}\left(\rho_{213}  Y_{213}(x,\nu_{213},p_{213},\rho_{213},\alpha) -\phi_+(\rho_{213}\nu_{213}^{-1}) \nu_{213}^{-k}-p_{213}\right),\\
  \dot p_{213} &=-\nu_{213} \left(\phi_+(\rho_{213}\nu_{213}^{-1}) \nu_{213}^{-k}+p_{213}\right) ,
%   \dot \rho_{213} &=0,
 \end{aligned}
 \end{equation}
 and $\rho_{213}=\epsilon^{\frac{1}{k+1}}$,
 where 
 \begin{align}
  Y_{213}(x,\nu_{213},p_{213},\rho_{213},\alpha) = (2x +y g(x,y))p+1-p,\eqlab{Y213}
 \end{align}
 using assumption \ref{assumption:3},
for 
\begin{align*}
 y &=-\alpha(1+\rho_{213}^k p_{213})+\alpha \rho_{213}^{2k+1} \nu_{213},\\
 p &=1+\rho_{213}^k p_{213},
\end{align*}
on the right hand side of \eqref{Y213}. \response{Therefore for $\rho_{213}=0$, we find the critical manifold $R_{213}$ as a graph $p_{213}=-\beta \nu_{213}^{-k}$ over $\nu_{213}>0$. $R_{213}$} divides into an attracting part $R_{213,a}$ for \response{$\nu_{213}>\nu_{213,f}$} and a repelling part $R_{213,r}$ for $\nu_{213}<\nu_{213,f}$. 

For $x=0$ so that $Y_+=0$ on $y=0$, \response{see \eqref{Zpmvf}}, the fold curve $J_{213}$ given by $R_{213}\cap \{\nu_{213}=\nu_{213,f}\}$ no longer consist purely of jump points. \response{In particular, we will now show that it also includes folded singularities/canard points \cite{szmolyan_canards_2001}}: 

The system \eqref{eqs213new} is slow-fast (in nonstandard form) with respect to $\rho_{213}=0$ (which corresponds to $\epsilon=0$, recall \eqref{rho3eps}). %Away from $J_{213}$, we can perturb compact subsets of $R_{213}$ and obtain a slow manifold given by a graph over $x$ and $\nu_{213}$, see  \eqref{p213slowman}. This gives 
The reduced problem on $R_{213}$ is given in \eqref{R213red} for $\rho_{213},\alpha\rightarrow 0$, repeated here for \response{convenience}:
\begin{equation}\eqlab{R213rednew}
\begin{aligned}
 x' &=0,\\
 \nu_{213}' &=2x\frac{\nu_{213}^{2}}{\nu_{213}-k\beta\nu_{213}^{-k}}.
\end{aligned}
\end{equation}
% \begin{align*}
% \dot x &= \rho_{213}^{k}\nu_{213}\alpha (1+\mathcal O(x,\alpha,\rho_{213}^k)),\\
% \dot \nu_{213}&=\nu_{213}\left[2x+\alpha g_0+\rho_{213}^k +\beta \nu_{213}^{-k}+\mathcal O(x\rho_{213}^{k},x\alpha,\alpha \rho_{213}^{k},\rho_{213}^{k+1}) \right],
% \end{align*}
% for $p_{213}$ given by the expression in \eqref{p213slowman}
%upon dividing the right hand side by $\rho_{213}$. Here we have introduced $g_0=g(0)$. 
%ab
Consequently, for $\rho_{213}=\alpha=0$ the set $x=0$ is completely degenerate. We therefore proceed to blowup $x=\alpha=\rho_{213}=0$. We will only need one chart: Let $\alpha_{213}$, $x_{213}$ be defined by 
\begin{align}\eqlab{scalingalphax213}
 \begin{cases}
  \alpha &=\rho_{213}^{k}\alpha_{213},\\
  x &=\rho_{213}^k x_{213}.
 \end{cases}
\end{align}
Seeing that $\epsilon=\rho_{213}^{k+1}$, the scaling of $\alpha$ can be written as 
$\alpha = \epsilon^{\frac{k}{k+1}}\alpha_{213}$ which is therefore consistent with the regime $W_{2}(\epsilon_0, \epsilon_1,\alpha_0)$. In particular, $\epsilon_1>0$ sufficiently small in $W_2(\epsilon_0,\epsilon_1,\alpha_0)$ implies that $\alpha_{213}>0$ is large enough. 
Upon using \eqref{scalingalphax213}, we then obtain the following equations for the reduced problem:
\begin{equation}\eqlab{x213nu213red}
\begin{aligned}
\dot x_{213} &= \nu_{213}\alpha_{213},\\
\dot \nu_{213}&= \left[2x_{213} -\alpha_{213} g_0 +\beta \nu_{213}^{-k}\right]\frac{\nu_{213}^{2}}{\nu_{213}-k\beta\nu_{213}^{-k}},
\end{aligned}
\end{equation}
after having desingularized through division of the right hand side by $\rho_{213}^k$.  Here we have introduced $g_0:=g(0)$, see \eqref{Zpmvf}. Recall that $R_{213,a}$ corresponds to $\nu_{213}>\nu_{213,f}$ whereas $R_{213,r}$ corresponds to \response{$\nu_{213}<\nu_{213,f}$}. $\nu_3=\nu_{213,f}$, where the denominator of the right hand side of \eqref{x213nu213red} vanishes, is the degenerate set $J_{213}$. To analyze this situation, we proceed as usual \cite{szmolyan_canards_2001} by considering the desingularized system, obtained by multiplying the right hand side by $1-k\beta\nu_{213}^{-k-1}$:
\begin{equation}\eqlab{x213nu213redds}
\begin{aligned}
\dot x_{213} &= \alpha_{213} \left({\nu_{213}-k\beta\nu_{213}^{-k}}\right) ,\\
\dot \nu_{213}&=\left[2x_{213} -\alpha_{213} g_0 +\beta \nu_{213}^{-k}\right]\nu_{213}.
\end{aligned}
\end{equation}
On $R_{213,a}$ this multiplication corresponds to a time reparametrization, whereas on $R_{213,r}$ the direction of orbits of \eqref{x213nu213redds} have to be reversed to agree with \eqref{x213nu213red}. The dynamics of \eqref{x213nu213redds} is easy to study: For each $\alpha_{213}>0$, there exists a unique equilibrium at 
\begin{align}\eqlab{fs}
(x_{213,f},\nu_{213,f}),\quad x_{213,f}:=\frac12 \alpha_{213} g_0-\frac12 \beta\nu_{213,f}^{-k}.
\end{align}
It is a saddle; the linearization having the following eigenvalues eigenvalues:
\begin{align*}
 -\frac12v_{213,f} \pm \frac12\sqrt{8(k+1) \nu_{213,f} \alpha_{213}+\nu_{213}^2}.
\end{align*}
These eigenvalues are clearly real and of opposite sign for any $\alpha_{213}>0$. %Notice also that $x_{213}$ is increasing on $R_{213,f}$. 
See \figref{pws_hysteresis_Rvf}.
% \figref{pws_hysteresis_visible2item2}
\begin{figure}
\begin{center}
\includegraphics[width=.56\textwidth]{./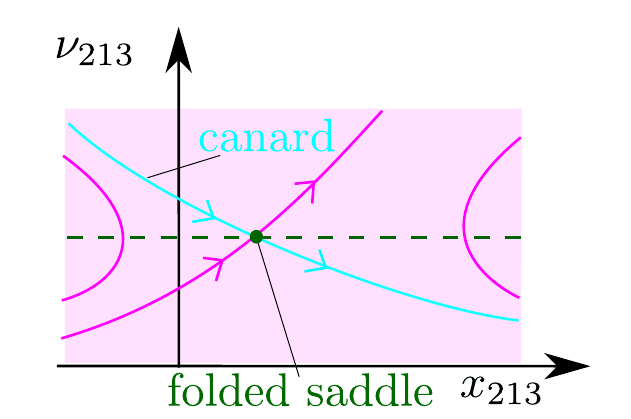}
 \end{center} \caption{Reduced dynamics on the critical manifold $R_{213}$ within the scaling regime defined by \eqref{scalingalphax213}. $R_{213}$ is attracting for $\nu_{213}>\nu_{213,f}$ and repelling for \response{$\nu_{213}<\nu_{213,f}$}. For any $\alpha_{213}>0$, there exists a (singular) canard \response{(for $\rho_{213}=0$, in cyan)} of the folded saddle \eqref{fs}, which perturbs \response{for all $0<\rho_{213}\ll 1$ within $(\epsilon,\alpha)\in W_2$ by slow-fast theory, see \propref{canard}}. The \response{(singular)} canard is a stable manifold of \response{the (folded) saddle of} the desingularized \response{reduced problem on $R_{213}$}. }
 \figlab{pws_hysteresis_Rvf}
% \caption{$q=r_2q_2$, $x=r_2x_2$, $\epsilon=r_2^2$}
              \end{figure}
In terms of the slow-fast system obtained from \eqref{eqs213new}, with $x$ and $\alpha$ scaled according to \eqref{scalingalphax213} \response{and $\rho_{213}>0$ being the small time scale separation parameter}:
\begin{equation}\eqlab{eqs213new2}
 \begin{aligned}
  \dot x_{213} &=\rho_{213}^{2k+1}\nu_{213} \alpha_{213} \left[1+\mathcal O(\rho_3^k)\right] ,\\
  \dot \nu_{213} &= \nu_{213}\left(\rho_{213}  Y_{213}(\rho_{213}^k x_{213},\nu_{213},p_{213},\rho_{213},\rho_{213}^k\alpha_{213}) -\phi_+(\rho_{213}\nu_{213}^{-1}) \nu_{213}^{-k}-p_{213}\right),\\
  \dot p_{213} &=-\nu_{213} \left(\phi_+(\rho_{213}\nu_{213}^{-1}) \nu_{213}^{-k}+p_{213}\right),
%   \dot \rho_{213} &=0,
 \end{aligned}
 \end{equation}
the point $(x_{213},\nu_{213},p_{213})= (x_{213,f},\nu_{213,f},-\beta\nu_{213,f}^{-k})$ is \response{therefore a \textit{folded saddle}} \cite{szmolyan_canards_2001}. In particular, by \cite[Theorem 4.1]{szmolyan_canards_2001} we have the following:

 Consider the slow-fast system \eqref{eqs213new2}, having $R_{213,a}$ and $R_{213,r}$ as attracting and repelling (but noncompact) normally hyperbolic critical manifolds. \response{Fix appropriate compact submanifolds of $R_{213,a}$ and $R_{213,r}$; basically these sets have to contain an open subset of the singular canard in their enterior. Then by extending the resulting Fenichel slow manifolds (obtained as perturbations of these compacts sets) by the forward and backward flow, respectively, we obtain the \textnormal{extended attracting and repelling slow manifolds}.}
\begin{proposition}\proplab{canard}
 \response{Fix a compact interval $K\subset (0,\infty)$. 
Then there exists a $\rho_{2130}>0$ sufficiently small, such that for any $\alpha_{213}\in K$, $0\le \rho_{213}<\rho_{2130}$ there exists a canard trajectory as a transverse intersection of the extended attracting and repelling slow manifolds. The canard trajectory is an $\mathcal O(\sqrt{\rho_{213}})$-perturbation of the stable manifold of the (folded) saddle (cyan in \figref{pws_hysteresis_Rvf}).}
\end{proposition}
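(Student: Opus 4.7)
The plan is to apply the folded saddle canard theorem of Szmolyan--Wechselberger \cite{szmolyan_canards_2001} (Theorem 4.1 there) to the slow--fast system \eqref{eqs213new2}, viewed with $\rho_{213}$ as the small singular parameter and with $\alpha_{213}\in K$ as a regular parameter. The identification of the folded saddle has already been done: the desingularized reduced problem \eqref{x213nu213redds} has a genuine hyperbolic saddle at $(x_{213,f},\nu_{213,f})$ for every $\alpha_{213}>0$, and the corresponding point on the fold curve $J_{213}$ is a folded saddle of \eqref{eqs213new2} by definition. So the first step is simply to verify that the hypotheses of \cite[Theorem~4.1]{szmolyan_canards_2001} hold in our setting; then the conclusion of the proposition follows directly.

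The verification amounts to checking three kinds of conditions. First, normal hyperbolicity of $R_{213,a}$ and $R_{213,r}$ away from $J_{213}$: this was established in \lemmaref{R213}, where the nontrivial eigenvalue of the layer problem at a point of $R_{213}$ was computed to be $k\beta\nu_{213}^{-k-1}-1$, vanishing precisely on $\nu_{213}=\nu_{213,f}$ and of opposite signs on the two branches. Second, genuineness of the fold along $J_{213}$: we need the quadratic tangency condition and transversality of the slow flow through the fold. The former follows from a direct computation of $\partial^2$ of the defining equation $\phi_+(0)\nu_{213}^{-k}+p_{213}=0$ in $\nu_{213}$, which is nonzero at $\nu_{213,f}$; the latter is exactly the condition that the right-hand side of the slow equation $\dot \nu_{213}'$ in \eqref{x213nu213red} does not vanish on $J_{213}$ at the candidate folded singularity, which is the folded-saddle condition $x_{213}\neq x_{213,f}$ off the singularity and degenerates to zero at $x_{213,f}$ itself. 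Third, the saddle condition on the linearization of the desingularized flow \eqref{x213nu213redds} at $(x_{213,f},\nu_{213,f})$: this has already been shown, as the two eigenvalues are real and of opposite sign for every $\alpha_{213}\in K$.

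Once these are in place, the standard blowup argument of \cite{szmolyan_canards_2001} applies to the folded saddle at $(x_{213,f},\nu_{213,f},-\beta\nu_{213,f}^{-k})$ with $\rho_{213}$ as the blowup parameter (the sphere blowup mixing $\rho_{213}$ with the shifted coordinates $x_{213}-x_{213,f}$, $\nu_{213}-\nu_{213,f}$, $p_{213}+\beta\nu_{213,f}^{-k}$ with appropriate weights). In a suitable chart, the desingularized vector field gains a hyperbolic saddle whose two-dimensional stable and unstable manifolds extend the Fenichel slow manifolds on $R_{213,a}$ and $R_{213,r}$, respectively. These manifolds depend smoothly on $(\rho_{213},\alpha_{213})$, are $\mathcal O(\sqrt{\rho_{213}})$-close to the singular canard (the stable manifold of the planar saddle in \figref{pws_hysteresis_Rvf}), and meet transversally along a one-dimensional orbit for all sufficiently small $\rho_{213}\geq 0$ and all $\alpha_{213}$ in the compact set $K$. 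This orbit is the required canard trajectory.

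The main obstacle is bookkeeping rather than substance: the system \eqref{eqs213new2} is not in the standard normal form used in \cite{szmolyan_canards_2001}, because of the explicit $\phi_+(\rho_{213}\nu_{213}^{-1})\nu_{213}^{-k}$ terms and the nontrivial $Y_{213}$ dependence on all variables. The cleanest way around this is to first center the system at the folded singularity and rescale so that the layer problem matches the normal form $(\dot\xi,\dot\eta)=(\eta-\xi^2,0)$ to leading order; uniformity in $\alpha_{213}\in K$ then comes from the fact that the coordinate change and the eigenvalue ratio at the desingularized saddle are smooth in $\alpha_{213}$ and bounded away from the degenerate values $0$ (jump point) and the folded node/focus threshold throughout $K$. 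With this normal-form reduction in hand, \cite[Theorem~4.1]{szmolyan_canards_2001} yields the result without further modification.
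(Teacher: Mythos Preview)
Your proposal is correct and takes essentially the same approach as the paper: the paper's entire proof consists of the sentence immediately preceding the proposition, namely the citation of \cite[Theorem~4.1]{szmolyan_canards_2001} after having verified that the desingularized reduced problem \eqref{x213nu213redds} has a hyperbolic saddle at $(x_{213,f},\nu_{213,f})$. You have simply made explicit the hypothesis-checking (normal hyperbolicity away from $J_{213}$, genuineness of the fold, saddle type of the desingularized equilibrium, and uniformity in $\alpha_{213}\in K$) that the paper leaves implicit, together with a remark on bringing \eqref{eqs213new2} into the standard normal form required by \cite{szmolyan_canards_2001}.
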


In fact, by working in separate charts, we can fix the Fenichel slow manifolds as extended versions of the slow manifolds $S_{\epsilon,\alpha}$ and $N_{\epsilon,\alpha}$, by applying the forward and backward flow to these manifolds. In this way, we can therefore extend the canard in \propref{canard} near $M$ on the second cylinder, see \figref{pws_hysteresis_visible2item2}. The canard has an unstable foliation along $N_{\epsilon,\alpha}$. By following this foliation back  towards $C$ on the $\bar {\bar y}$-positive side of $M$, see \figref{pws_hysteresis_visible2item2} (black orbits), we obtain a foliation of points on $S_{\epsilon,\alpha}$, specifically on $C$ for $\epsilon=\rho_{213}^{k+1},\alpha=\rho_{213}^k \alpha_{213}\rightarrow 0$, with $\alpha_{213}>0$ fixed. In fact, these points form a curve which is a graph over $y_2={\bar y}/{\bar \alpha} \in (0,1)$ in chart $(\bar \alpha=1)_2$, or equivalent a graph over $\alpha_1=y_{2}^{-1}\in (1,\infty)$ in chart $(\bar y=1)_1$, recall \eqref{cc12}. We focus on a compact subset $F_{\epsilon,\alpha}$ of this curve given by $\alpha_1\in [c_1,c_2]$ in the chart $(\bar y=1)_1$ with 
\begin{align}\eqlab{c1c2}
1<c_1<c_2,
\end{align}
fixed. For simplicity, we will frequently suppress $\epsilon$ and $\alpha$ and write $F_{\epsilon,\alpha}$ as $F$. 

We have the following regarding $F$: By applying the scaling \eqref{scalingalphax213} with $\epsilon=\rho_{213}^{k+1}$ to the system \eqref{reducedN22}, and upon using assumption \ref{assumption:3}, we obtain a desingularized flow on the manifold $M_{22}$ in the $(\bar \alpha=1,\bar \epsilon=1)_{22}$-chart:
\begin{equation}\eqlab{x213y22}
\begin{aligned}
  \dot x_{213} &=  \alpha_{213} \phi(y_{22}),\\
  \dot y_{22} &= -\frac{1-\phi(y_{22})}{\phi'(y_{22})},
%   \dot \rho_{213} &=0,
\end{aligned}
\end{equation}
for $\rho_{213}\rightarrow 0$. Consequently, along the canard orbit following $M_{22}$, $x_{213}$ changes by an $\mathcal O(1)$-amount. Seeing that $x=\alpha\alpha_{213}^{-1} x_{213}$, we can therefore write the curve $F$ in the $(\bar y=1,\bar \alpha_1=1,\bar r_{12}=1)_{121}$-chart using the coordinates $(x_{121},\xi_{121},\epsilon_{121},\sigma_{12})$ on $C_1$, see \eqref{c121}, with $x=\sqrt{\alpha}x_{121}$, as follows 
\begin{align*}
 F_{121}:\,\sigma_{12}\in [c_1,c_2],\, x_{121}=0,\,r_{121}=\epsilon_{121}=0,
\end{align*}
 recall \eqref{xorderoncyl},
for $\rho_{213}\rightarrow 0$. In particular, we use that $x_{121}\sim \sqrt{\alpha}x_{213}\rightarrow 0$ for $\alpha\rightarrow 0$. See  \figref{pws_hysteresis_visible2item2}.
% \begin{align*}
%  
% \end{align*}

\begin{remark}\remlab{ass3main2}
\eqref{x213y22} is the only place in the proof of \thmref{main2}, where we use assumption \ref{assumption:3}. This assumption could easily be relaxed; we only need that the slow flow of $x_{213},y_{22}$ is well-defined on $M_{22}$ with $y_{22}$ decreasing.
\end{remark}

\begin{figure}
\begin{center}
\includegraphics[width=.9\textwidth]{./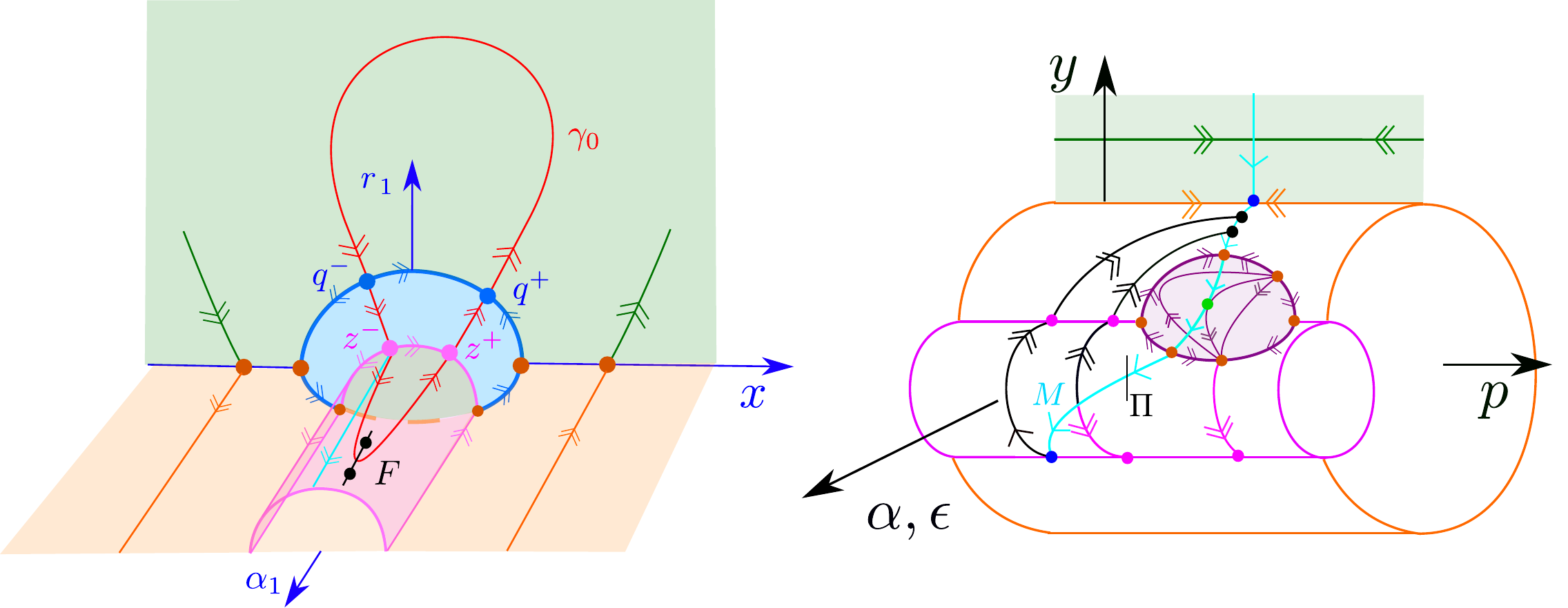}
 \end{center} \caption{Blowup dynamics for $(\epsilon,\alpha)\in W_2$. On the left, we show the reduced, desingularized dynamics on $C_1$ in the $(\bar y=1)_1$-chart upon application of the two consecutive blowup transformations, see \eqref{blowupvf1} and \eqref{blowup12cyl}. In red we have indicated the repelling limit cycle $\gamma_0$. It extends onto the cylinder $\bar \epsilon=0$, due to the blowup \eqref{blowup12cyl}, in the singular limit $\epsilon,\alpha\rightarrow 0$, with the limit understood within the parameter regime $W_2$. In comparison with \figref{pws_hysteresis_visible2}, we leave out the dynamics along $\bar \epsilon>0$ since this is not relevant for the regime $(\epsilon,\alpha)\in W_2$. At the same time, we also indicate the canard in cyan, see also \figref{pws_hysteresis_Rvf}, and the set $F$ (black) which is the set of base points on $C_1$, obtained by following the unstable foliation of the canard along $M$. On the right, we illustrate the dynamics in the projection also used in \figref{pws_hysteresis_final}, where the fast dynamics are also visible. Here we specifically indicate how the canard (cyan) extends across the two cylinders following $C$ on top and $M$ below. The section $\Pi$, transverse to $M$ and the canard, is used in the proof of \thmref{main2} (\ref{item2}).  }
 \figlab{pws_hysteresis_visible2item2}
% \caption{$q=r_2q_2$, $x=r_2x_2$, $\epsilon=r_2^2$}
              \end{figure}

% FIX THE SLOW MANIFOLDS. 
% that there exists  $\rho_{30}$ such that for any  any $\alpha_{213}>0$ a fix  % on the other hand, the direction of the orbits are reversed.
% 
% 
% To explain this further, define $\alpha_{213}$ so that 
% \begin{align*}
%  \alpha = \rho_{213}^k \alpha_{213}.
% \end{align*}
\subsection{Completing the proof of \thmref{main2} (\ref{item2})}\seclab{main22}

Our strategy for completing the proof of \thmref{main2} is as follows:
Let $\mu\response{\approx} 0$ and consider $\epsilon_1>0$ small enough, so that the system has a repelling limit cycle, that when written in the $(\bar y=1,\bar \alpha_1=1,\bar r_{12}=1)_{121}$-chart intersects $\sigma_{12}=1$ transversally for each $(\epsilon,\alpha)\in W_2(0,\epsilon_1,\alpha_0)$ provided that $\alpha_0>0$ is small enough. The fact that this is possible follows from the analysis above in the $(\bar y=1,\bar \alpha_1=1,\bar r_{12}=1)_{121}$-chart, see the start of \secref{Qblowupvf}. Next, by decreasing $\alpha_0>0$ if necessary there exists an $0<\epsilon_0<\epsilon_1$ such that there is a canard trajectory for each $(\epsilon,\alpha)\in W_2$. In fact, the canard has an unstable foliation on the repelling side, which -- when carried across $N_{\epsilon,\alpha}$ near $M$ -- gives a twist-like return to the slow manifold $S_{\epsilon,\alpha}$. This induces the foliation of point on $S_{\epsilon,\alpha}$ given by the curve $F$. % In particular, we obtain an associated foliation of points on $S_{0,1}\subset C_1$ in the singular limit. 

At the same time, since the limit cycle is repelling, we can track the canard backwards on $S_{\epsilon,\alpha}$, and conclude that the limit cycle is the $\alpha$-limit set of the canard. Upon increasing the interval $[c_1,c_2]\subset (1,\infty)$, recall \eqref{c1c2}, we can therefore ensure that the canard  transversally intersects the curve $F_{\epsilon,\alpha}$ on $S_{\epsilon,\alpha}$ in at least $n\in \mathbb N$ points for all $\rho_{213}>0$ small enough. The proof of the theorem then follows \response{\cite[Theorem 4.1]{kristiansen2021a}}, which is inspired by \response{\cite[Theorem 1]{haiduc2009a}} in a similar setting. In particular, we define a return map in the $(\bar \alpha=1,\bar \epsilon=1)_{22}$-chart using the scaling \eqref{scalingalphax213}, with $\epsilon=\rho_{213}^{k+1}$, defined on a section $\Pi_{22}$ transverse to $M_{22}$ and the canard. Since the expansion along $M$ is greater than the contraction along $C$, recall \remref{lambda}, we will study this mapping in backward time (so that $M$ becomes attracting and $C$ repelling). By flowing $N_{\epsilon,\alpha}\cap \Pi_{22}$ backwards near the canard, we obtain -- due to the transverse intersection of $S_{\epsilon,\alpha}$ and $N_{\epsilon,\alpha}$ along the canard -- a stable foliation of the canard on the $S_{\epsilon,\alpha}$-side. For each transverse intersection $i=1,\ldots,n$ of the canard with $F_{\epsilon,\alpha}$ on $S_{\epsilon,\alpha}$, we then further obtain a small subset of this foliation which, upon extension by the backward flow, eventually returns to $\Pi_{22}$ in a ``horizontal'' curve $H_i$ that extends an $\mathcal O(1)$ distance in the direction tangent to $N_{\epsilon,\alpha}\cap \Pi_{22}$ at the canard. At the same time, $H_i$ is exponentially close to $N_{\epsilon,\alpha}\cap \Pi_{22}$. This gives $n$ disjoint horizontal curves $H_{1},\ldots,H_n$, whose preimages are $n$ disjoint exponentially small intervals $I_1,\ldots,I_n$ on $N_{\epsilon,\alpha}\cap \Pi_{22}$. By the unstable foliation of $N_{\epsilon,\alpha}$, we obtain $n$ ``vertical strips'' $V_1,\ldots,V_n$ over $I_1,\ldots,I_n$. These strips get mapped to  horizontal strips that contain the curves $H_1,\ldots,H_n$, respectively. We call these thickened (although exponentially small) versions by the same symbols. 

This gives the basics of the horseshoe, with $n$ disjoint horizontal strips $H_1,\ldots,H_n$ and $n$ disjoint vertical strips $V_1,\ldots,V_n$ that intersect in $n\times n$ exponentially small squares. \thmref{main2} (\ref{item2}) therefore follows from the Conley-Moser theorem, see e.g. \cite[Theorem 25.2.1]{wiggins2003a}. In particular, the verification of the cone-properties of this theorem can be done in the exact same way as in the proof of \cite[Theorem 4.1]{kristiansen2021a}, \response{see \cite[p. 2387]{kristiansen2021a}}, using the foliations of the slow manifolds and the transverse intersection of $S_{\epsilon,\alpha}$ and $N_{\epsilon,\alpha}$ along the canard. \response{A similar verification (in the context of the forced van der Pol) can be found in \cite[Chapter 14.5]{kuehn2015}, and we therefore leave out further details.} %

% The base points on $S_{0,1}$ gives a curve which intersects.  Moreover, if we follow the canard backwards on the slow manifold, since the limit cycle is repelling, this backward orbit intersects $C_b$ in at least $N$ points. Each intersection point gives rise to at least one periodic orbit. To see this....  % Since the limit cycle is repelling and belongs to the slow manifold, we consider the  

% In the chart $(\bar y=1,\bar \alpha_1=1,\bar r_{12}=1)_{121}$-chart, we found that $\alpha_1=\sigma_{12}$ becomes unbounded and the dynamics therefore removes across the first cylinder, towards the point $Q$. 
\section{Discussion}
In this paper, we have described the dynamics of a new model \eqref{xypmodel} of hysteresis based upon singular perturbations. We \response{focussed upon} $\alpha>0$, \response{as this case corresponds to hysteresis}, and studied two scenarios where the associated PWS system (1) has stable sliding, see \thmref{main1}, and (2) has a repelling limit cycle grazing $\Sigma$ in the plane, see \thmref{main2}. In particular, in \thmref{main2} we identified two parameter regimes in the $(\epsilon,\alpha)$-plane, where the dynamics of \eqref{xypmodel} resembles regularization by smoothing and regularization by hysteresis, respectively. 

In future work, it would be interesting to perform the same analysis for $\alpha<0$, but also, in the case of the grazing bifurcation, to explore the transition between the two regimes of \thmref{main2}. Presumably there is an actual curve in the $(\epsilon,\alpha)$-plane along which  saddle-node limit cycles ``touch'' or ``grazes'' the foliation of points, described by $F$ in the singular limit and bounded by $\alpha_1=1$ from above, due to the twist and return to $S_{\epsilon,\alpha}$ away from the canard. An analysis of such a bifurcation scenario is interesting in its own right and in future work we aim to describe this in a simpler setting. 

% \subsection{Collecting the results for the visible fold}\secref{main2proof}
% s
% Show blowup picture for $x<0$ and $x>0$. Show reduced dynamics on $S_{0,1}$. 
% 
% we proceed as in the proof of \thmref{main1} using our blowup approach. In fact, much of the analysis can be reused.  
% 

%  
%  In this section, we consider \eqref{xypmodel} in the case where the associated PWS system is two-dimensional $z=(x,y)\in \mathbb R^2$ and has a planar visible fold at the origin: $Z_+(h)(0)=0$, $Z^2_+(h)(0)>0$, $Z_-(h)(0)\ne 0$, recall \figref{visible0}. Here $Z_\pm(h)(z)=Z_\pm(z) \cdot \nabla h(z)$ denotes the Lie-derivative of the function $h(x,y)=y$, whose zero level set gives $\Sigma$. Following \cite{}, any PWS visible fold can locally be brought into the following local form
%  :

% The analysis in the previous section still 

% retaining for simplicity only lowest order terms. 

%               LOOK AT Hysteresis2212visible. AND PPLANE FILE OF THE SAME NAMEx.

\newpage
\bibliography{refs}
\bibliographystyle{plain}
% \newpage
% \input{reply}
 \end{document}